\newtheorem{theorem}{Theorem}[section]
\newtheorem{proposition}[theorem]{Proposition}
\newtheorem{lemma}[theorem]{Lemma}
\DeclareMathOperator{\Vol}{Vol}
\DeclareMathOperator{\interior}{int}
\DeclareMathOperator{\inj}{inj}
\DeclareMathOperator{\dVol}{dVol}
\DeclareMathOperator{\dL}{dL}
\DeclareMathOperator{\length}{L}
\DeclareMathOperator{\Image}{Im}
\DeclareMathOperator{\support}{spt}
\DeclareMathOperator{\mass}{\mathbf{M}}
\DeclareMathOperator{\diameter}{diam}
\DeclareMathOperator{\dist}{dist}
\DeclareMathOperator{\faces}{Faces}
\DeclareMathOperator{\radius}{rad}
\DeclareMathOperator{\proj}{proj}
\DeclareMathOperator{\Lip}{Lip}
\DeclareMathOperator{\Center}{Center}
\DeclareMathOperator{\Adj}{Adj}
\DeclareMathOperator{\Exp}{Exp}
\newcommand{\F}{\mathcal{F}}
\theoremstyle{definition}
\newtheorem{definition}[theorem]{Definition}
\newtheorem{remark}[theorem]{Remark}
\def\XXint#1#2#3{{\setbox0=\hbox{$#1{#2#3}{\int}$}
    \vcenter{\hbox{$#2#3$}}\kern-.5\wd0}}
\def\YYint#1#2#3{{\setbox0=\hbox{$#1{#2#3}{\int}$}
    \lower1ex\hbox{$#2#3$}\kern-.46\wd0}}
\def\YYYint#1#2#3{{\setbox0=\hbox{$#1{#2#3}{\int}$}
    \lower0.35ex\hbox{$#2#3$}\kern-.48\wd0}}
\def\ZZint#1#2#3{{\setbox0=\hbox{$#1{#2#3}{\int}$}
    \raise1.15ex\hbox{$#2#3$}\kern-.57\wd0}}
\def\ZZZint#1#2#3{{\setbox0=\hbox{$#1{#2#3}{\int}$}
    \raise0.85ex\hbox{$#2#3$}\kern-.53\wd0}}
\title{Weyl law for 1-cycles}
\author{Bruno Staffa}
\begin{document}
\maketitle

\begin{abstract}
    We prove the Weyl law for the volume spectrum for $1$-cycles in $n$-dimensional manifolds which was conjectured by Gromov. We follow the strategy of Guth and Liokumovich of obtaining the Weyl law from parametric versions of the coarea inequality and the isoperimetric inequality. A version of the later for families of $0$-cycles is shown in this article. We also obtain approximation results by $\delta$-localized families, which are used to prove the parametric inequalities.

\end{abstract}


\tableofcontents

\section{Introduction}

Let $(M,g)$ be an $n$-dimensional compact Riemannian manifold, possibly with boundary. In the 1960’s Almgren initiated a program of developing a Morse theory on the space
$\mathcal{Z}_{k}(M;G)$ of flat cycles with coefficients in an abelian group $G$ ({see} \cite{FA62},\cite{FA65}). It follows from Almgren’s work that for each $0\leq k\leq n-1$ there is a non-trivial cohomology
class $\lambda_{k}\in H^{n-k}(\mathcal{Z}_{k}(M;\mathbb{Z}_{2});\mathbb{Z}_{2})$ such that the following is true: a continuous  $(n-k)$-parameter family of $k$-cycles $F:X\to\mathcal{Z}_{k}(M;\mathbb{Z}_{2})$ is a sweepout of $M$ if and only if $F^{*}(\lambda_{k})\neq 0$ in $H^{n-k}(X;\mathbb{Z}_{2})$ (see \cite{GuthMinMax}).
Moreover, all cup powers $\lambda_{k}^{p}$ are also non-trivial and we call the corresponding families of
cycles $p$-sweepouts (namely, $F:X\to\mathcal{Z}_{k}(M;\mathbb{Z}_{2})$ is a $p$-sweepout if and only if $F^{*}(\lambda_{k}^{p})\neq 0$). We denote $\mathcal{P}_{p}^{k}$ the set of all $p$-sweepouts of $M$ by families of $k$-cycles. Corresponding to each $p$ we can define the $p$-width

\begin{equation*}
    \omega_{p}^{k}(M,g)=\inf_{F\in\mathcal{P}_{p}^{k}}\sup_{x\in X}\{\Vol_{g}(F(x))\}.
\end{equation*}
The $p$-widths correspond to volumes of certain generalized minimal submanifolds
that arise via a min-max construction. In dimension $k=n-1$, $3\leq n\leq 7$, they are
smooth minimal hypersurfaces \cite{JP81},\cite{SchoenSimon} and for $k=1$ they are stationary geodesic nets \cite{JP73}. The study of $p$-widths led to many remarkable breakthroughs in recent years, including the resolution of Yau’s conjecture on existence of infinitely many minimal surfaces by Irie-Marques-Neves \cite{IMN} in the generic case (see also \cite{LS}  and \cite{LiSta} for similar results on stationary geodesic nets) and by Song \cite{Song} for arbitrary metrics. The related Multiplicity One Conjecture was proved by Chodosh-Mantoulidis \cite{ChoMan20} in the $3$-dimensional case and by Xin Zhou \cite{ZhouMult1} and Marques-Neves \cite{MNMult1} for $3\leq n\leq 7$. Gromov suggested to view these widths, or “volume spectrum”, as non-linear
analogs of the eigenvalues of the Laplacian (see \cite{Gromov86}, \cite{Gromov02}, \cite{Gromov09}). This framework is very
useful for obtaining new results about widths and some other non-linear min-max
geometric quantities. In line with this analogy Gromov conjectured that $\omega_{p}^{k}$'s satisfy an asymptotic Weyl law:
\begin{equation*}
    \lim_{p\to\infty}\omega_{p}^{k}(M,g)p^{-\frac{n-k}{n}}=\alpha(n,k)\Vol(M,g)^{\frac{k}{n}}
\end{equation*}
for a certain universal constant $\alpha(n,k)>0$ depending only on $n$ and $k$. In \cite{LMN}, the conjecture was resolved for domains of $\mathbb{R}^{n}$ for arbitrary $n$ and $k$, and for $k=n-1$ for smooth compact manifolds. Later Guth and Liokumovich solved the case $k=1$, $n=3$ on compact manifolds in their article \cite{GL22}. In the present work we solve the conjecture for $k=1$ and $n$ arbitrary.

\begin{theorem}[Weyl law for $1$-cycles]\label{Thm Weyl law}
    For each $n\in\mathbb{N}$, there exists a constant $\alpha(n)>0$ such that the following is true. Let $(M,g)$ be a compact $n$-dimensional Riemannian manifold, possibly with boundary. Then
    \begin{equation*}
        \lim_{p\to\infty}\omega_{p}^{1}(M,g)p^{-\frac{n-1}{n}}=\alpha(n)\Vol(M,g)^{\frac{1}{n}}.
    \end{equation*}
\end{theorem}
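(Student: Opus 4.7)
The plan is to follow the scheme of Liokumovich--Marques--Neves \cite{LMN} and Guth--Liokumovich \cite{GL22}: establish the Weyl law first for Euclidean cubes via a subadditivity argument, then propagate it to arbitrary Riemannian manifolds via matching upper and lower bounds. The two principal tools are parametric versions of the coarea inequality and the isoperimetric inequality in codimension $n-1$; the latter, for families of $0$-cycles in $\R^n$, is the main new technical input of the article.

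For the upper bound, I would construct explicit $p$-sweepouts of the unit cube $[0,1]^n$ whose maximum mass is close to $\alpha(n)\, p^{(n-1)/n}$. The construction starts from a subdivision into roughly $p$ subcubes of side $p^{-1/n}$, on each of which a canonical $1$-parameter sweepout is placed; the parametric coarea inequality, applied to successive Lipschitz slicings, controls the total mass of the assembled family. A subadditivity argument then shows that $\omega_p^1([0,1]^n)\, p^{-(n-1)/n}$ converges to a constant $\alpha(n) > 0$. For a general manifold $(M,g)$, a fine almost-Euclidean triangulation reduces to the cube case, and assembling near-optimal sweepouts on each cell with an appropriately distributed number of parameters gives $\alpha(n) \Vol(M,g)^{1/n}$ asymptotically.

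For the lower bound, let $F\colon X\to \mathcal{Z}_1(M;\Z_2)$ be a $p$-sweepout with small max mass. First, use the approximation theorem to replace $F$ by a $\delta$-localized family $\tilde F$, whose cycles split as sums of pieces supported in disjoint cells of a fine partition of $M$, without substantially increasing $\sup_x \mass(F(x))$. Inside each cell $C_i$ the restriction $\tilde F(x)\llcorner C_i$ is a $1$-chain whose boundary is a $0$-cycle on $\partial C_i$; the parametric isoperimetric inequality produces continuous fillings of those $0$-cycle families by $1$-chains of controlled mass within a neighborhood of the codimension-one skeleton. Comparing the resulting competitor against the cube version of the result, and using that $\tilde F$ remains a $p$-sweepout, forces $\sup_x \mass(F(x)) \geq \alpha(n) \Vol(M,g)^{1/n}\, p^{(n-1)/n} - o(p^{(n-1)/n})$.

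The main obstacle is the parametric isoperimetric inequality for $0$-cycles: given a continuous map $X \to \mathcal{Z}_0(Q; \Z_2)$ on a cube $Q\subset\R^n$, construct a continuous family of $1$-chain fillings whose boundaries recover the original $0$-cycles and whose sup-mass is controlled. Even for a single $0$-cycle of $N$ signed points in the unit cube, the optimal filling has mass $\sim N^{(n-1)/n}$ and is realized by a (discontinuous) nearest-neighbor matching; a parameter-dependent pairing cannot be chosen canonically, so the fillings must be built multi-scale, matching first at the finest dyadic level where the $0$-cycles are truly local and then cascading upward through intermediate scales. Controlling the accumulated mass through this cascade so that it saturates the $N^{(n-1)/n}$ bound uniformly over $X$, together with the continuity of the construction, is the technical heart of the argument, and is precisely why the approximation by $\delta$-localized families is needed as a preliminary step.
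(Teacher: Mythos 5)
Your proposal correctly names the two parametric inequalities as the key tools, but it deploys them in the wrong half of the argument, and the step where the paper's real work happens is missing. In the paper the lower bound $\alpha(n)\Vol(M,g)^{1/n}\leq\liminf_p\omega_p^1(M,g)p^{-\frac{n-1}{n}}$ is simply quoted from \cite{LMN}, where it follows from restricting a $p$-sweepout of $M$ to disjoint almost-Euclidean subdomains and a Lusternik--Schnirelmann superadditivity argument; no $\delta$-localization and no isoperimetric filling is needed there. Your lower-bound sketch, which builds fillings of the boundary $0$-cycles to produce a ``competitor'' and then compares with the cube case, is not actually a lower-bound mechanism: producing low-mass competitors from a given sweepout can only help upper bounds, and the restriction $\tilde F(x)\llcorner C_i$ already defines a family of relative $1$-cycles in $(C_i,\partial C_i)$ without any filling. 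Likewise, re-deriving the Euclidean Weyl law for cubes by subadivision and subadditivity is unnecessary (it is in \cite{LMN} for all Lipschitz domains and all codimensions), and your ``canonical $1$-parameter sweepout on each subcube'' assembly quietly runs into the same gluing problem described below, which in codimension one can be avoided by sweeping out with boundaries of open sets but cannot be avoided for $1$-cycles.

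The genuine gap is in your upper bound, which is the only part the paper has to prove. Saying ``a fine almost-Euclidean triangulation reduces to the cube case, and assembling near-optimal sweepouts on each cell with an appropriately distributed number of parameters'' skips both difficulties: (i) it is not clear that a sum of $p_i$-sweepouts of the cells is detected as a $p$-sweepout of $M$, and (ii) each per-cell family consists of $1$-chains whose $0$-dimensional boundaries lie on the interior $(n-1)$-faces, with a priori uncontrolled boundary mass, so the naive sum is not even a family of relative cycles in $(M,\partial M)$. The paper resolves (i) by never distributing parameters: it tubes the bilipschitz Euclidean images $U_i$ of the simplices into a single contractible Lipschitz domain $U$ with $\Vol(U)\leq(1+\varepsilon)^n\Vol(M,g)$, takes one near-optimal $p$-sweepout of $U$, and pulls its restrictions back to the $Q_i$. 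It resolves (ii) in two steps: the parametric coarea inequality (Theorem \ref{Thm Parametric Coarea}) replaces each piece by absolute $1$-chains whose boundary on $\partial Q_i$ has mass $O(p^{1+\alpha})$, and then the mismatches of adjacent pieces along a shared $(n-1)$-face $A_m$ form a contractible family of $0$-cycles which is filled by the new parametric isoperimetric inequality (Theorem \ref{Isoperimetric inequality 2}) at a mass cost $O(p^{1+\alpha-\frac{1}{n-1}})$, which is $o(p^{\frac{n-1}{n}})$ precisely because $\alpha<\frac{1}{n-1}-\frac{1}{n}$. This inductive gluing over the cells, with the exponent bookkeeping that makes the correction terms negligible, is the heart of the proof of Theorem \ref{Thm Weyl law} and does not appear in your outline; without it the construction neither lands in $\mathcal{Z}_1(M,\partial M)$ nor yields the bound $\limsup_p\omega_p^1(M,g)p^{-\frac{n-1}{n}}\leq(1+\varepsilon)^3\alpha(n)\Vol(M,g)^{1/n}$.
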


In \cite{MNS}, Marques, Neves and Song used the Weyl law for codimension-$1$ cycles proved in \cite{LMN} to show generic equidistribution of minimal hypersurfaces in ambient manifolds of dimension $3\leq n\leq 7$. Later, in a work with Xinze Li (\cite{LiSta}) we extended that result to the case $n=2$, proving generic equidistribution of immersed closed geodesics on surfaces. For that, we applied the following result of Chodosh and Mantoulidis which is proved in \cite{ChoMan}: on closed surfaces, the $p$-widths $\omega_{p}^{1}(M,g)$ are realized by unions of immersed closed geodesics. When $n\geq 3$, they are realized by stationary geodesic nets, which are embedded graphs in $M$ that are stationary with respect to the length functional. One could ask whether an equidistribution result like the one showed by Marques, Neves and Song holds for $k$-dimensional minimal surfaces (possibly with singularities) in $n$-dimensional manifolds, $k<n-1$. In \cite{LiSta}, we solved the case $k=1$ and $n=3$ using the Weyl law for $1$-cycles in $3$-manifolds from \cite{GL22}; proving generic equidistribution of stationary geodesic nets in $3$-manifolds. The only reason to restrict ourselves to $n=3$ was that the Weyl law for $1$-cycles was still open for $n\geq 4$. Using Theorem \ref{Thm Weyl law} and \cite{LiSta}[Theorem~1.5], we can now extend that result to $n\geq 4$, obtaining the following.

\begin{theorem}
Let $M^n$, $n\geq 3$ be a closed manifold. Then for a Baire-generic set of $C^{\infty}$ Riemannian metrics $g$ on $M$, there exists a set of connected embedded stationary geodesic nets that is equidistributed in $M$. Specifically, for every $g$ in the generic set, there exists a sequence $\{\gamma_i:\Gamma_i \rightarrow M\}$ of connected embedded stationary geodesic nets in $(M, g)$, such that for every $C^\infty$ function $f:M \rightarrow \mathbb{R}$ we have
\begin{equation*}
    \lim_{k \rightarrow \infty}\frac{\sum_{i = 1}^k\int_{\gamma_i}f \dL_g}{\sum_{i = 1}^k \length_{g}(\gamma_i)} = \frac{\int_M f \dVol_g}{\Vol(M, g)}.
\end{equation*}
\end{theorem}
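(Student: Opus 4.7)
The plan is to apply the abstract equidistribution theorem \cite{LiSta}[Theorem~1.5], using Theorem \ref{Thm Weyl law} as the missing hypothesis. That theorem is a general Marques--Neves--Song style result which takes as input the Weyl law for $1$-cycles on $(M^n,g)$, together with the regularity fact that each $\omega_p^1(M,g)$ is attained by a stationary geodesic net, and outputs the generic equidistribution statement above. In \cite{LiSta} the Weyl-law hypothesis was only available in dimension $3$ through \cite{GL22}, which is why the conclusion was restricted to $3$-manifolds; Theorem \ref{Thm Weyl law} verifies the hypothesis in every dimension $n\geq 3$, so the conclusion follows for all such $n$.

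For transparency I sketch the internal structure of \cite{LiSta}[Theorem~1.5]. First I would pass to a Baire-generic set $\mathcal{G}$ of $C^\infty$ metrics on which every width is realized by a connected embedded multiplicity-one stationary geodesic net $\gamma_p$, and on which the function $g\mapsto\omega_p^1(M,g)$ is differentiable for each $p$ with derivative computed by the first variation of length; along a conformal perturbation $g_t=(1+2tf)g$ this takes the form
\[
\frac{d}{dt}\bigg|_{t=0}\omega_p^1(M,g_t)=\int_{\gamma_p} f\,\dL_g.
\]
This differentiability/bumpiness package is the genuine technical core of \cite{LiSta}; it relies only on general tools for stationary geodesic nets and carries over to $n\geq 4$ without modification.

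Theorem \ref{Thm Weyl law} is then invoked to sum these identities. It gives
\[
\sum_{p=1}^P \omega_p^1(M,g_t)=\alpha(n)\,\Vol(M,g_t)^{1/n}\,\tfrac{n}{2n-1}P^{(2n-1)/n}+o\bigl(P^{(2n-1)/n}\bigr),
\]
and since $\frac{d}{dt}\big|_{t=0}\Vol(M,g_t)^{1/n}=\Vol(M,g)^{(1-n)/n}\int_M f\,\dVol_g$, a monotonicity/Lipschitz argument justifies averaged differentiation of the Weyl asymptotic and matching it against the pointwise identities above, yielding
\[
\lim_{P\to\infty}\frac{\sum_{p=1}^P\int_{\gamma_p}f\,\dL_g}{\sum_{p=1}^P\length_g(\gamma_p)}=\frac{\int_M f\,\dVol_g}{\Vol(M,g)}
\]
for each $f$ in a countable $C^\infty$-dense family of test functions. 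A standard diagonal argument over that family produces a single residual set of metrics and a common subsequence $\{\gamma_i\}$ for which the equidistribution extends to every $f\in C^\infty(M)$. The main point at which dimension could have interfered is the Cesàro/Lipschitz comparison used to differentiate the Weyl asymptotic term by term, but this step is carried out in \cite{LiSta} in a dimension-independent manner, so no new analytical input is needed beyond Theorem \ref{Thm Weyl law} itself.
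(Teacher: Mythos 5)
Your proposal matches the paper's argument: the paper proves this theorem simply by invoking \cite{LiSta}[Theorem~1.5], whose only missing hypothesis in dimension $n\geq 4$ was the Weyl law for $1$-cycles, now supplied by Theorem \ref{Thm Weyl law}. Your additional sketch of the internal Marques--Neves--Song style mechanism of \cite{LiSta} is consistent with that reference but is not needed here, since the paper treats it as a black box.
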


The proof of Theorem \ref{Thm Weyl law} is discussed in Section \ref{Section Weyl law}. We follow the strategy proposed in \cite{GL22} which consists of obtaining the Weyl law from parametric versions of the Isoperimetric Inequality and of the Coarea Inequality. For that purpose, we prove the following theorem, which solves \cite{GL22}[Conjecture~1.3] for $k=0$.

\begin{theorem}[Parametric Isoperimetric Inequality for $0$-cycles]\label{Thm Parametric Isoperimetric}
    Let $(M,g)$ be a compact $n$-dimensional Riemannian manifold (possibly with boundary), $n\geq 2$. Then there exists a constant $C=C(M,g)$ such that the following holds. Let $X=X^{p}$ be a cubical complex of dimension $p$ and let $F:X^{p}\to\mathcal{Z}_{0}(M;\mathbb{Z}_{2})$ be a contractible family of $0$-cycles, $\dim(X)=p$. Denote
    \begin{equation*}
            \mass_{0}=\sup\{\mass(F(x)):x\in X\}.
        \end{equation*}
    Then there exists a family $G:X^{p}\to\mathcal{I}_{1}(M;\mathbb{Z}_{2})$ such that
    \begin{enumerate}
        \item $\partial G(x)=F(x)$ and
        \item $\mass(G(x))\leq C(M,g)(\mass_{0}p^{-\frac{1}{n}}+p^{\frac{n-1}{n}})$
    \end{enumerate}
    for every $x\in X$. The same result holds for families of relative cycles $F:X^{p}\to\mathcal{Z}_{0}(M,\partial M;\mathbb{Z}_{2})$.
\end{theorem}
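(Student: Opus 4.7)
The plan is to follow the strategy of Guth and Liokumovich \cite{GL22}, who proved the analogous statement in dimension $n=3$, and extend it to arbitrary $n$. The argument breaks into three broad phases: (1) approximate $F$ by a $\delta$-localized family at the scale $\delta = c(n) p^{-1/n}$; (2) fill the approximation locally inside small balls of $M$; and (3) glue the local fillings together by induction on the skeleta of a suitable subdivision of $X$. The scale $\delta \sim p^{-1/n}$ is forced on us because a ball of radius $\delta$ in $M$ then has volume on the order of $1/p$, and this is the scale at which both terms in the target bound $C(\mass_0 p^{-1/n} + p^{(n-1)/n})$ arise from the same counting argument.

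For the approximation step, I would invoke the $\delta$-localization results promised in the abstract to produce a family $\tilde F: X \to \mathcal{Z}_0(M;\mathbb{Z}_2)$ homotopic to $F$ and $\delta$-localized: $X$ should admit a subdivision into cubes of controlled combinatorial size such that on each cube $\tilde F$ decomposes as a sum of subfamilies, each taking values in $0$-cycles supported in a single ball of radius $\delta$ in $M$. The error $F - \tilde F$ admits a pointwise filling of small mass and can be absorbed separately into the final bound. For the local filling, I would use the classical isoperimetric inequality inside each $\delta$-ball: a $0$-cycle in a ball of radius $\delta$ with $2k$ points is filled by a $1$-chain of mass $\leq C k \delta$ via a spanning tree or a cone to the centre of the ball. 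Globalization is by induction on the skeleton of the subdivision of $X$; the contractibility of $F$ ensures that the residual cycles appearing on the boundaries of higher-dimensional cells do bound, so an extension exists at each step of the induction.

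The mass estimate then separates into two contributions. The pointwise cost of filling the $0$-cycle $F(x)$ itself contributes at most $C \delta \cdot \mass(F(x)) \leq C \mass_0 p^{-1/n}$, giving the first term. The parametric overhead, coming from the skeletal induction — summing extensions over the chain of cells that carry the contraction of $F$ up to the parameter $x$ — contributes at most $C p \cdot \delta \sim p^{(n-1)/n}$, the second term. Here the factor $p$ encodes the number of cells in the skeletal chain (up to constants coming from the combinatorics of the cubical subdivision), and the factor $\delta$ is the cost per active $\delta$-ball. Assembling the pieces and noting that $F - \tilde F$ has been absorbed yields the stated bound.

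The main obstacle, in my view, is the approximation step. For a single $0$-cycle the $\delta$-localization is elementary, but for a continuous $p$-parameter family one must construct the localization continuously in the parameter, preserve the homotopy class of $F$, and keep the approximation error controlled in a strong (essentially $L^\infty$) sense. The construction must also work in arbitrary ambient dimension $n$, so the specific three-dimensional tricks used in \cite{GL22} (which rely on covering arguments peculiar to $3$-manifolds) have to be replaced by a general scheme. This is presumably the technical core of the paper, and once it is available the rest of the argument follows in a relatively standardized fashion from the local isoperimetric estimate and the skeletal induction.
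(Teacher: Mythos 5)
There is a genuine gap, and it sits in the two steps you treat as routine. First, the local filling inside $\delta$-balls does not exist in general: a $0$-cycle mod $2$ restricted to a ball bounds a $1$-chain \emph{inside that ball} only if it has even mass there, and nothing forces this parity -- a single point of $F(x)$ landing in a ball cannot be filled locally. The unavoidable cross-ball connections are not a lower-order nuisance; they are precisely the source of the $p^{\frac{n-1}{n}}$ term, and constructing them continuously in $x$ with controlled mass is the actual content of the theorem. The paper does this not by local cones but by a Guth-style bend-and-cancel (Theorem \ref{Isoperimetric inequality 1}): cone $F(x)$ to $\partial\mathbb{D}^{n}$ along lines through a generic exterior point and push the resulting segments onto the $1$-skeleton $S$ of a grid of width $r=p^{-\frac{1}{n}}$ via the fixed PL map $\Psi$ of Lemma \ref{Deformation Lemma 2}; the bound $C(\mass_{0}r+r^{-(n-1)})$ comes from the total length of $S$, a geometric count in the manifold, and continuity in $x$ is automatic because the construction is a fixed continuous operator applied to $F(x)$. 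Your accounting of the second term as ``(number of cells in a skeletal chain of $X$)\,$\times\,\delta$'' is not justified: extending over a cell of $X$ requires filling a family of $1$-cycles whose mass is not $O(\delta)$, and the paper's skeletal machinery (Section \ref{Section delta localized families}) yields bounds of the form $C\varepsilon/\delta^{n(k,p)}$, useful only because they are applied to an approximation error $\varepsilon$ taken arbitrarily small at fixed $p$ -- they cannot produce the main term. Relatedly, the paper's $\delta$-localization is done at a scale $\delta$ arbitrarily small and unrelated to $p$; the scale $p^{-\frac{1}{n}}$ enters only as the grid width in the disk argument, not as the localization radius.

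Second, your scheme has no mechanism for passing from the disk to a general manifold, which is where contractibility and most of the remaining work enter. The paper triangulates $M$, but $F(x)\llcorner Q_{i}$ is only a relative cycle in $(Q_{i},\partial Q_{i})$, and an absolute family representing it can carry enormous mass on $\partial Q_{i}$, which would destroy the disk estimate. This is resolved by Proposition \ref{Avoid ball D^n} (a perturbation making the mass in a tiny boundary ball controlled, proved via the hyperplane-avoidance Lemmas \ref{Lemma skeleton} and \ref{Lemma delta hyperplane}), combined with Proposition \ref{Prop Q} and an induction on the skeleta of the triangulation of $M$ -- not of $X$ -- in Theorem \ref{Isoperimetric inequality 2}. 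Your opening moves (approximate by a $\delta$-localized family, absorb the error via a small-mass filling as in Proposition \ref{Prop filling small continuous families}) do match the paper, but the core of the proof -- the bend-and-cancel filling in $\mathbb{D}^{n}$ and the boundary-mass control needed to glue over a triangulation of $M$ -- is absent, and the local-ball filling plus skeletal induction proposed in its place fails for the parity and mass-accounting reasons above.
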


The proof of Theorem \ref{Thm Parametric Isoperimetric} is discussed in Section \ref{Section isoperimetric inequality}. In Section \ref{Section Isoperimetric in the disk}, we prove the result for families on the Euclidean disk $\mathbb{D}^{n}$ and then in Section \ref{Section Isoperimetric in manifolds} we extend it to any manifold $M$ by considering a fine triangulation $\{Q_{i}\}_{1\leq i\leq N}$ of $M$ and applying the result for the disk to families $F_{i}(x)$ with $\support(F_{i}(x))\subseteq Q_{i}$ and $\sum_{i=1}^{N}F_{i}(x)=F(x)$. The previous reduces the problem to filling a family in the $(n-1)$-skeleton of $M$, allowing us to argue by induction. Breaking the family $F$ into continuous pieces $F_{i}(x)$ supported in the $Q_{i}$ yields the following obstacle: $\mass(F_{i}(x)\llcorner\partial Q_{i})$ might be very large, making the fillings on each $Q_{i}$ very long. In Section \ref{Section Two important propositions}, we prove Proposition \ref{Avoid ball D^n} which states that each $F_{i}$ can be slightly perturbed to a continuous family of absolute $0$-cycles $F_{i}'$ such that $\mass(F_{i}'(x)\llcorner B)$ is controlled for some small ball $B\subseteq\partial Q_{i}$, which turns out to be enough to resolve the previously mentioned obstacle.

To prove the Weyl law, we also use the Parametric Coarea Inequality for $1$-cycles in $n$-manifolds, which is proved in \cite{StaCoarea}. It resolves \cite{GL22}[Conjecture 1.6] in the case $k=1$ and $n$ arbitrary. We state it below.

\begin{theorem}\label{Thm Parametric Coarea}
    Let $M$ be a compact $n$-dimensional Riemannian manifold with boundary, $n\geq 4$. Let $\alpha\in(0,1)$. There exist $p_{0}=p_{0}(M,\alpha)$, a constant $c=c(n)$ and a sequence of numbers $\gamma_{p}=\gamma_{p}(n,\alpha)$ converging to $0$ such that the following is true. Let $p\geq p_{0}$ and let $F:X^{p}\to\mathcal{Z}_{1}(M,\partial M;\mathbb{Z}_{2})$ be a family of relative $1$-cycles which is continuous in the flat topology and has no concentration of mass. Then for every $\varepsilon>0$, there exists a continuous family $F':X^{p}\to\mathcal{I}_{1}(M;\mathbb{Z}_{2})$ of absolute chains in $M$ such that
    \begin{enumerate}
        \item $\support(\partial F'(x))\subseteq\partial M$. Therefore $F'$ induces a family of relative $1$-cycles $F':X^{p}\to\mathcal{Z}_{1}(M,\partial M;\mathbb{Z}_{2})$ which is denoted in the same way.
        \item $\mathcal{F}(F(x),F'(x))\leq\varepsilon$ as relative cycles in the space $\mathcal{Z}_{1}(M,\partial M;\mathbb{Z}_{2})$.
        \item $\mass(F'(x)) \leq(1+\gamma_{p})\mass(F(x))+c\Vol_{n-1}(\partial M)p^{\frac{n-2}{n-1}+\alpha}$.
        \item $\mass(\partial F'(x)) \leq c\Vol_{n-1}(\partial M)p^{1+\alpha}$.
    \end{enumerate}
\end{theorem}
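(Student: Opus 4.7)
The plan is to build $F'$ as a local modification of $F$ near $\partial M$, pushing the relevant portion of each cycle onto $\partial M$ via a coarea slicing in a collar neighborhood, with the slice chosen continuously in $x$ by a pigeonhole argument responsible for the factor $p^{\alpha}$. First I would apply the approximation by $\delta$-localized simplicial families from the paper to replace $F$ by a nearby family $\tilde F$ on a fine refinement $Y^{p}$ of $X^{p}$, such that at each vertex $y$ of $Y$, $\tilde F(y)$ is supported on the $1$-skeleton of a fine triangulation $\mathcal{T}$ of $M$ of scale $\eta \sim p^{-1/n}$, with $\mathcal{F}(F(x),\tilde F(x)) \leq \varepsilon/2$ throughout. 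The no-concentration-of-mass hypothesis ensures this simplicial approximation preserves continuity and mass bounds. At each vertex, take the natural integral lift $\hat F(y) \in \mathcal{I}_{1}(M)$ as the polyhedral $1$-chain with $\partial \hat F(y)$ supported on $\mathcal{T}^{(0)} \cap \partial M$.

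Next, I interpolate inductively over the skeleta of $Y$ inside a tubular collar $U_{\delta} = \{\dist(\cdot, \partial M) < \delta\}$ of $\partial M$. Using the distance function $h = \dist(\cdot, \partial M)$, a coarea average over levels $t \in (0,\delta)$ combined with a pigeonhole across cubes of $Y$ at scale $p^{-\alpha}$ produces a continuous function $t(x) \in (0,\delta)$ such that the slice $\langle \hat F(y), h, t(x)\rangle$ has controlled mass for every vertex $y$ adjacent to $x$. Inside $\{h \leq t(x)\}$, I replace $\hat F$ by a filling in $\partial M$ of the slice together with the pre-existing boundary, using the $0$-cycle isoperimetric inequality in the $(n-1)$-dimensional manifold $\partial M$; outside, I leave $\hat F$ unchanged, and extend over higher-dimensional cells of $Y$ by the Almgren interpolation procedure. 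The mass estimate $\mass(F'(x)) \leq (1+\gamma_{p})\mass(F(x)) + C p^{(n-2)/(n-1)+\alpha}$ then emerges, since the $(n-2)/(n-1)$ exponent is precisely the isoperimetric exponent for filling a $0$-cycle of mass $\sim p^{1+\alpha}$ inside the $(n-1)$-dimensional boundary, and $\gamma_{p}\to 0$ absorbs the error of the simplicial approximation. The boundary bound $\mass(\partial F'(x)) \leq C p^{1+\alpha}$ follows from the controlled size of the slice together with the refinement scale of $\mathcal{T}$ on $\partial M$.

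The main obstacle is the continuous parametric choice of the slicing level $t(x)$: for a single $x$ the ordinary coarea inequality easily furnishes a good $t$, but producing $t$ continuously across all $x \in X^{p}$ while keeping the slice mass controlled requires the delicate pigeonhole responsible for the $p^{\alpha}$ loss, which must be executed skeleton-by-skeleton on $Y$ so that the local choices glue. The no-concentration-of-mass hypothesis is used crucially here to guarantee that the coarea estimate is stable under small perturbations of $x$, so that the pigeonhole has enough slack to succeed and so that the inductive filling in the collar produces a family that is truly continuous in the flat topology with $\mathcal{F}(F, F') \leq \varepsilon$.
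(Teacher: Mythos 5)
First, a point of reference: this paper does not prove Theorem \ref{Thm Parametric Coarea} at all — it is imported from the companion work \cite{StaCoarea} and only used here (in Section \ref{Section Weyl law}), so there is no internal proof to measure your proposal against; I can only assess it as a proof sketch on its own terms. As such, it has the right general shape (approximate by a $\delta$-localized family as in Section \ref{Section delta localized families}, slice in a collar of $\partial M$ by the distance function, choose the slicing level parametrically, interpolate skeleton by skeleton), but the two places where the theorem is actually hard are either skipped or handled in a way that would break the stated bounds.

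Concretely: (i) Your first step asks for an approximation whose vertex values are supported on the $1$-skeleton of a triangulation at scale $p^{-1/n}$. The paper's approximation theorems (Theorems \ref{Thm discrete delta approx cycles}--\ref{Thm continuous delta approx chains}) give $\delta$-localization and the additive mass bound $\max_{v}\mass(F(v))+\varepsilon$, but they do not give skeletal support; obtaining it would require a Federer--Fleming type push, which multiplies mass by a dimensional constant $C(n)>1$ and is therefore incompatible with the leading term $(1+\gamma_{p})\mass(F(x))$ — a loss that $\gamma_{p}\to 0$ cannot ``absorb''. Nor does a ``natural integral lift'' automatically control $\mass(\partial F'(x))$, which is the whole content of conclusion (4). (ii) The heart of the matter — producing a level $t(x)$ continuously in $x$ over a $p$-dimensional complex so that the slice mass is $O(p^{1+\alpha})$ for \emph{every} $x$, with only a $p^{\alpha}$ loss — is named as ``the main obstacle'' but not actually carried out; a naive union/pigeonhole over cells does not obviously work, since a cell of $X^{p}$ has up to $2^{p}$ vertices and the coarea-for-several-chains device (cf. Lemma \ref{Lemma balls}) then loses a factor exponential in $p$, not $p^{\alpha}$. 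This is precisely the step for which the external paper exists. (iii) The proposed filling of the slice inside $\partial M$ via Theorem \ref{Thm Parametric Isoperimetric} is not available as stated, because that theorem requires the family of $0$-cycles being filled to be contractible, which you have not verified for the slice family; moreover filling in $\partial M$ is not the operation you need — the slice lives on the level set $\{h=t(x)\}$, and what relocates the boundary to $\partial M$ is connecting slice points to $\partial M$ (e.g.\ by vertical collar segments, with mass $\lesssim \delta\, p^{1+\alpha}$ and with the flat-distance estimate coming from pushing the collar part into $\partial M$), after which the exponent $(n-2)/(n-1)+\alpha$ must come out of a consistent choice of the collar width $\delta$ against the slice bound $p^{1+\alpha}$ — an accounting your sketch asserts rather than performs. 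So as it stands the proposal is an outline of a strategy, with the decisive parametric-slicing argument and the boundary-relocation bookkeeping missing, and with one step (skeletal approximation) that would violate the $(1+\gamma_{p})$ requirement.
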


In order to obtain the parametric inequalities, it is convenient to work with $\delta$-localized families, which were first defined in \cite{GL22}[Section~2]. These are families $F:X\to\mathcal{Z}_{k}(M,\partial M;\mathbb{G})$ such that for every cell $C$ of $X$ and for every $x,y\in C$, $F(x)-F(y)$ is supported in a collection of balls with sum of radii less than $\delta$ (see Definition \ref{Def delta localized}). $\mathbb{G}$ can be taken to be $\mathbb{Z}$ or $\mathbb{Z}_{p}$ for some prime $p$. Having this control over how the family $F$ behaves on the cells of $X$ allows us to produce new families with desired mass bounds and additional properties. For example, it permits us to construct families inductively skeleton by skeleton, starting from a discrete family defined in the $0$-skeleton of $X$, keeping track of quantities like the masses of the chains involved, of its boundaries, $\mathcal{F}(F(x)-F(y))$ for $x,y\in C$ and the supports of the $F(x)$. In order to use these families for our constructions, we need to show that every continuous family can be arbitrarily well approximated by a $\delta$-localized one. This is proved both for families of cycles and for families of chains, as stated below.

\begin{theorem}
    Let $\mathbb{G}=\mathbb{Z}$ or $\mathbb{Z}_{p}$ and let $F:X\to\mathcal{Z}_{k}(M,\partial M;\mathbb{G})$ be a continuous map without concentration of mass. Let $\varepsilon,\delta>0$. Then there exists a continuous map $F':X\to\mathcal{Z}_{k}(M,\partial M;\mathbb{G})$ without concentration of mass and a fine cubulation of $X$ such that
    \begin{enumerate}
        \item $F'$ is $\delta$-localized.
        \item $\mathcal{F}(F(x),F'(x))\leq\varepsilon$ for every $x\in X$.
        \item If $x\in C$ for some cell $C$ of $X$ then 
        \begin{equation*}
            \mass(F'(x))\leq\max\{\mass(F(v)):v\in V(C)\}+\varepsilon.
        \end{equation*}
    \end{enumerate}
\end{theorem}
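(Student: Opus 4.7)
The plan is to follow the skeleton-by-skeleton strategy of Guth--Liokumovich \cite{GL22}, refining the cubulation of $X$ so finely that on each cell $C$ all the needed interpolation can be carried out inside a pre-chosen finite collection of small balls in $M$ whose radii sum to less than $\delta$.

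First, I use the absence of concentration of mass to choose $r_{0}>0$ such that $\sup_{x\in X,\, p\in M}\mass(F(x)\llcorner B(p,r_{0}))<\eta$ for some small $\eta>0$ to be fixed later, together with a finite open cover $\mathcal{U}=\{B_{j}\}$ of $M$ by balls of radius at most $r_{0}$ whose radii sum to less than $\delta$. By continuity of $F$ in the flat topology and compactness of $X$, I then refine the cubulation of $X$ until $\F(F(x),F(y))<\eta'$ whenever $x,y$ lie in a common cell, for a suitably small $\eta'$.

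Next I construct $F'$ inductively on the skeleta. Set $F'(v)=F(v)$ on the $0$-skeleton. Assuming $F'$ is already defined on the $(k-1)$-skeleton compatibly with the earlier steps, I extend to each $k$-cell $C$ as follows: for each pair of adjacent vertices $v,w$ of $C$, write $F(v)-F(w)=\partial W_{v,w}$ with $\mass(W_{v,w})<\eta''$ via the filling inequality applied to cycles close in flat norm, and decompose $W_{v,w}$ along ball boundaries into pieces $W_{v,w}^{j}$ supported in $B_{j}$, using a coarea choice of slicing radii so that the boundary-slice masses are small. Over $C$ I interpolate in barycentric coordinates by successively adding the corrections $\partial W_{v,w}^{j}$, ordering the additions so that at each parameter value only one ball $B_{j}$ is being activated. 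The result is automatically $\delta$-localized on $C$, since $\support(F'(x)-F'(y))\subseteq\bigcup_{j}B_{j}$ for any $x,y\in C$ and $\sum_{j}r_{j}<\delta$.

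The main obstacle is preserving continuity of $F'$ across cells of different dimensions throughout the induction while maintaining the sharp mass bound (3). Continuity requires a careful ordering of the extensions so that the $k$-cell extension restricts to what has already been built on $\partial C$, which in turn forces the decomposition of $W_{v,w}$ for cells of higher dimension to be compatible with those chosen for their faces. For the mass bound, at each $x\in C$ the cycle $F'(x)$ differs from some vertex value $F(v)$ only by finitely many chains $\partial W_{v,w}^{j}$, whose masses are controlled by $\mass(F(v)\llcorner B_{j})+\mass(F(w)\llcorner B_{j})$ plus the coarea slice terms --- each at most $\eta$ by the choice of $r_{0}$. Estimate (2) follows from $\F(F(x),F'(x))\leq\F(F(x),F(v))+\mass(F'(x)-F(v))$, both terms small by construction; taking $\eta,\eta',\eta''$ sufficiently small in terms of $\varepsilon$ then delivers all three conclusions.
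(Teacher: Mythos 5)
Your outline reproduces the naive interpolation that the paper explicitly identifies as insufficient, and the two places where you wave at the difficulty are exactly where the real work lies. First, the mass bound (3) does not follow from your estimate. When you add the corrections $\partial[W_{v,w}\llcorner B_{j}]=[F(v)-F(w)]\llcorner B_{j}+W_{v,w}\llcorner\partial B_{j}$ over all balls of a cover of $M$, each individual term is indeed of size at most $\eta$ plus a slice term, but there are on the order of $L\approx\Vol(M)/r_{0}^{n}$ of them, and the sum of $\mass(F(x)\llcorner B_{j})$ over a cover is comparable to $\mass(F(x))$, not to $\varepsilon$; at intermediate stages of the interpolation your cycle can carry the $F(v)$-part on some domains and the $F(w)$-part on others, so the best you get this way is roughly $2\max_{v}\mass(F(v))$. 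The paper avoids this by an extra device you do not have: on each domain $D$ of a coarea-chosen grid it records which vertex restriction $F(w_{C}^{D}(x))\llcorner D$ is present, writes $F_{j}(x)=\sum_{D}F(w_{C}^{D}(x))\llcorner D+I_{j}(x)$, and only ever replaces a restriction by the mass-minimizing vertex restriction $F(w_{C}^{D})\llcorner D$ as one moves toward the center of a cell; then the main part never exceeds $\max_{v}\mass(F(v))$, and only the interface term $I_{j}$ (slices $\tau\llcorner\partial D$, controlled by the fineness $\varepsilon'$ divided by powers of $\delta$, not by the concentration scale $\eta$) adds mass.

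Second, the compatibility across cells, which you defer to ``a careful ordering of the extensions,'' cannot be fixed by ordering alone: since in general $\tau(v_{1},v_{2})+\tau(v_{2},v_{3})\neq\tau(v_{1},v_{3})$, the extensions built over different faces of a higher-dimensional cell disagree near its center even though they use the same grid, and the paper notes that attempts of precisely this kind fail. Its resolution is that the discrepancy $I_{j}^{C}(x)=F_{j}(x)-z_{C}$ along $\partial\Center(C)$ is a $\delta$-localized family of $k$-cycles of small mass, which is then contracted in a localized, mass-controlled way using a separate result (Proposition \ref{Prop filling small families}/Theorem \ref{Thm filling small families}), itself proved by a nontrivial induction with the chopping construction $d_{l}(\tau)$ and the auxiliary complex $\tilde{X}$; nothing in your sketch plays this role. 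Finally, your construction is phrased as a direct barycentric interpolation, but the successive additions form a discrete path of cycles; to obtain a genuinely continuous $F'$ with the stated mass bound one still needs the discrete-to-continuous extension of \cite{GL22}[Proposition~2.6], and it is exactly there that the no-concentration-of-mass hypothesis is used quantitatively (to bound $\mass(F'(x)\llcorner\bigcup_{i}U_{i}^{C})$), a step absent from your argument.
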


\begin{theorem}
    Let $\mathbb{G}=\mathbb{Z}$ or $\mathbb{Z}_{p}$ and let $G:X\to\mathcal{I}_{k+1}(M,\partial M;\mathbb{G})$ be a continuous map without concentration of mass. Denote $F=\partial G$ and let $\varepsilon,\delta>0$. Then there exists a continuous map $G':X\to\mathcal{I}_{k+1}(M,\partial M;\mathbb{G})$ without concentration of mass and a fine cubulation of $X$ such that if $F'=\partial G'$ then
    \begin{enumerate}
        \item $G'$ and $F'$ are $\delta$-localized.
        \item $\mathcal{F}(G(x),G'(x))\leq\varepsilon$ and $\mathcal{F}(F'(x),F(x))\leq\varepsilon$ for every $x\in X$.
        \item If $x\in C$ for some cell $C$ of $X$ then 
        \begin{equation*}
            \mass(G'(x))\leq\max\{\mass(G(v)):v\in V(C)\}+\varepsilon.
        \end{equation*}
        and
        \begin{equation*}
            \mass(F'(x))\leq\max\{\mass(F(v)):v\in V(C)\}+\varepsilon.
        \end{equation*}
    \end{enumerate}
\end{theorem}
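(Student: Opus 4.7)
The plan is to adapt the skeleton-by-skeleton construction used to prove the preceding (cycles) version of the theorem, running it for the chain family $G$ while simultaneously tracking the chain $G'$ and its boundary $F'=\partial G'$ at every stage. By uniform continuity in the flat topology of both $G$ and $F=\partial G$ on the compact complex $X$, and by no concentration of mass (which transfers from $G$ to $F$ automatically), we fix parameters $\eta=\eta(\varepsilon)>0$ and $r=r(\eta)>0$ such that for all $x\in X$ and $p\in M$,
\begin{equation*}
\mass(G(x)\llcorner B_r(p))+\mass(F(x)\llcorner B_r(p))<\eta,
\end{equation*}
and cubulate $X$ finely enough that the flat oscillations of $G$ and $F$ on each cell are $<\eta$. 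Set $G'(v)=G(v)$ on the $0$-skeleton, so $F'(v)=F(v)$.

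For the inductive extension, suppose $G'$ has been defined on the $(j-1)$-skeleton with the required properties for both $G'$ and $F'$. For each $j$-cell $C$ with distinguished vertex $v_0\in V(C)$, the family $x\mapsto G'(x)-G'(v_0)$ on $\partial C$ has small flat norm, and its support, together with that of its boundary, is contained in a union $\mathcal U$ of small balls of total radius $<\delta/2$. To extend over $C$, apply a parametric Federer--Fleming deformation at a grid scale $\xi\ll r$ to push $G'|_{\partial C}$ into the finite-dimensional subcomplex generated by the $(k+1)$-faces of a cubical grid of $M$ of size $\xi$. Within this finite complex, extend the family to $C$ by a discrete straight-line (cone) interpolation toward the pushed image of $G'(v_0)$. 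Because both the deformation and the cone are local --- they only modify chains on cubes intersecting $\support(G'|_{\partial C})\cup\support(F'|_{\partial C})$ --- both $G'$ and $F'=\partial G'$ remain $\delta$-localized, with the union-of-balls covering the new supports having total radius $<\delta$.

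The mass bounds follow from standard Federer--Fleming estimates: at scale $\xi\ll r$, the deformation increases $\mass(G)$ and $\mass(F)$ by a multiplicative factor $\leq 1+c\xi/r$ plus an additive $O(\eta)$ contribution, while the discrete cone adds mass proportional to the flat distance to $G'(v_0)$ times a combinatorial factor bounded by the number of cubes in $\mathcal U$, which is again $O(\eta)$. Choosing $\eta$ so that these contributions total at most $\varepsilon$, and using the triangle inequality along the induction, yields both the flat bounds $\mathcal{F}(G,G')\leq\varepsilon$, $\mathcal{F}(F,F')\leq\varepsilon$ and the cell-wise mass bounds.

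The principal obstacle is coupling the two dimensional scales at once: a single deformation scale $\xi$ must be small enough that the no-concentration-of-mass assumption works simultaneously in dimension $k+1$ for $G$ and in dimension $k$ for $F$, and the resulting cone extension must not inflate $\mass(F')$ beyond $\max_{v\in V(C)}\mass(F(v))+\varepsilon$. This is resolved by taking $\xi$ below the minimum of the two radii returned by no-concentration for $G$ and $F$, and by selecting $v_0$ as the vertex of $C$ minimizing $\mass(F(v))$, so that the cone interpolation for $F'$ is performed from the cheapest boundary value. The delicate coordination of these choices --- rather than any one of the steps in isolation --- is the technical heart of the proof.
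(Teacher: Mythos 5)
Your proposal has two genuine gaps, and they sit exactly at the points the paper's machinery is designed to handle. First, the localization claim in your inductive step is unjustified. You keep $G'(v)=G(v)$ at all vertices of a single fine cubulation and then assert that, on a cell $C$ with preferred vertex $v_{0}$, the family $x\mapsto G'(x)-G'(v_{0})$ on $\partial C$ is supported in a union of balls of total radius $<\delta/2$. Already for a $1$-cell this is the difference $G(v_{1})-G(v_{0})$ of two values of the original family: it has small flat norm by fineness, but its support (and that of its boundary) can be essentially dense in $M$ with huge mass --- this is precisely the phenomenon that the notion of $\delta$-localization is introduced to rule out, and it cannot be recovered by refining the cubulation of $X$ alone. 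The paper never keeps $G'=G$ at all vertices of the fine complex; it agrees with $G$ only on the coarse vertices $X(q)_{0}$ and rebuilds the family on a refinement $X(qq')$ through a long chain of intermediate chains (the grid-swap interpolation of Propositions \ref{Prop delta loc aprox cycles}--\ref{Prop filling big families}, together with the filling result Proposition \ref{Prop filling small families}), so that each consecutive difference is supported in a single small ball. A direct cell-by-cell extension of the given values cannot produce this. (Also, your parenthetical claim that no concentration of mass ``transfers from $G$ to $F$ automatically'' is false: a $(k+1)$-chain of tiny mass in a small ball can have boundary of large mass there.)

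Second, the mass estimates for your Federer--Fleming step do not hold. Deforming a general $(k+1)$-chain onto a cubical grid of scale $\xi$ multiplies its mass by a dimensional constant $C(n)>1$ (think of a plane diagonal to the grid), not by a factor $1+c\xi/r$ plus $O(\eta)$; no-concentration of mass controls the mass in individual small balls but does nothing to remove this multiplicative loss. Since $\mass(G(x))$ is unbounded in the intended applications (it grows like a power of $p$), any fixed multiplicative factor destroys the required bound $\mass(G'(x))\leq\max_{v\in V(C)}\mass(G(v))+\varepsilon$, and similarly for $F'$. The paper obtains the additive-$\varepsilon$ bound by a different mechanism: $G'(x)$ is built as a patchwork $\sum_{D\in\mathcal{D}_{C}}G(w^{D}_{C}(x))\llcorner D+I_{j}(x)$ of restrictions of the original vertex chains to a grid chosen by the coarea inequality, with the swaps ordered so the main term never exceeds the maximal vertex mass and the error $I_{j}$ stays of size $\varepsilon$; the continuous statement is then deduced from this discrete approximation via the extension result \cite{GL22}[Proposition~2.6], again using no concentration of mass only to control the restriction of the vertex chains to the admissible balls. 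Without replacing your deformation-plus-cone step by a construction of this type, both conclusion (1) and conclusion (3) of the theorem are out of reach.
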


In order to prove the previous theorems, we first obtain versions of them for discrete families $F:X_{0}\to\mathcal{Z}_{k}(M,\partial M;\mathbb{G})$ (resp. $\mathcal{I}_{k+1}(M,\partial M;\mathbb{G})$) and then use the techniques to go from discrete to continuous families developed in \cite{GL22}[Section~2]. All this is discussed in Section \ref{Section delta localized families}.

\textbf{Acknowledgments.} I am very grateful to Yevgeny Liokumovich for suggesting this problem and for all the valuable conversations we have had while I was working on it. I would also like to express my deep gratitude to the Hausdorff Research Institute for Mathematics, where part of this work was completed during my participation in the Trimester Program: Metric Analysis, funded by the Deutsche Forschungsgemeinschaft (DFG, German Research Foundation) under Germany's Excellence Strategy – EXC-2047/1 – 390685813.

\section{Preliminaries}\label{Section Preliminaries}

\subsection{Piecewise smooth Riemannian manifolds}

We will need to work with a class of Riemannian manifolds with piecewise smooth boundary which admit a compatible PL structure. For that purpose, we introduce the following definitions.

\begin{definition}\label{Def Euclidean polyhedron}
    A Euclidean polyhedron $P$ is a linearly embedded finite simplicial complex in $\mathbb{R}^{N}$ for some $N\in\mathbb{N}$.
\end{definition}

\begin{definition}
    Let $P$ be a Euclidean polyhedron. A map $\phi:P\to\mathbb{R}^{N}$ is piecewise smooth if $\phi$ is continuous and for each face $F$ of a certain triangulation $T$ of $P$ the map $\phi|_{F}$ is smooth. $\phi$ is a piecewise smooth embedding if it is a topological embedding and $\phi|_{F}$ is a smooth embedding for each face $F$ of $P$.
\end{definition}

\begin{definition}
    An $n$-dimensional Riemannian polyhedron is a is a piecewise smoothly embedded $n$-dimensional polyhedron in $\mathbb{R}^{N}$ (i.e. the image of a Euclidean polyhedron under a piecewise smooth embedding into $\mathbb{R}^{n}$ for some value of $N$) equipped with the piecewise smooth Riemannian metric $g$ induced by the Euclidean metric in $\mathbb{R}^{N}$. 
\end{definition}

\begin{definition}\label{Def smooth triangulation}
    Let $P$ be a Riemannian polyhedron embedded in $\mathbb{R}^{N}$ and denote the corresponding embedding by $\iota:P\to\mathbb{R}^{N}$. A smooth triangulation of $P$ is a homeomorphism $\tau:P'\to P$ where $P'$ is a Euclidean polyhedron provided with a triangulation $T'$ and $\iota\circ\tau:P'\to\mathbb{R}^{n}$ is a piecewise smooth embedding with respect to the triangulation $T'$. A triangulated Riemannian polyhedron is a pair $(P,T)$ where $P$ is a Riemannian polyhedron and $T$ is the triangulation on $P$ induced by the $T'$ just defined. A refinement of $T$ is the triangulation induced by taking a refinement of the corresponding $T'$.
\end{definition}

\begin{remark}
    Riemannian polyhedra (and hence Euclidean ones) admit different triangulations. For example, we can obtain infinitely many of them from a single one by refinement. 
\end{remark}


\begin{definition}
    A piecewise smooth Riemannian manifold $(M^{n},g)$ is an $n$-dimensional Riemannian polyhedron provided with a topological manifold with boundary structure (i.e. with an atlas of class $C^{0}$). In other words, it is an $n$-dimensional topological submanifold with boundary of $\mathbb{R}^{N}$ (for some $N\in\mathbb{N}$) which admits a smooth triangulation and is equipped with a continuous Riemannian metric $g$ which is smooth on each simplex of that triangulation.
\end{definition}

\begin{definition}\label{Def almost 1-Lip triangulable}
    A piecewise smooth Riemannian manifold $(M,g)$ is almost $1$-Lipchitz triangulable if for every $\varepsilon>0$ there exists a smooth $(1+\varepsilon)$-bilipschitz triangulation $\tau_{\varepsilon}:P_{\varepsilon}\to M$ (here $P_{\varepsilon}$ is a Euclidean polyhedron according to Definition \ref{Def smooth triangulation}).
\end{definition}

\begin{remark}
    By the Nash Embedding Theorem and Theorems 1.1 and 1.2 of \cite{Bowditch}, every smooth compact Riemannian manifold with boundary is almost $1$-Lipschitz triangulable. So is every piecewise linear submanifold of $\mathbb{R}^{N}$. Thus the collection of all almost $1$-Lipschitz triangulable piecewise smooth Riemannian manifolds includes both the category of compact smooth manifolds with boundary and the category of compact PL manifolds with boundary. It will be the correct set up for our constructions because we are interested in proving results about compact Riemannian manifolds but our methods will involve considering $(1+\varepsilon)$-bilipschitz triangulations.
\end{remark}

\begin{remark}
    Because of the works of Akopyan \cite{Akopyan} and Minemyer \cite{Minemyer}, every Euclidean simplicial complex as defined by Bowditch in \cite{Bowditch} can be piecewise linearly embedded in $\mathbb{R}^{N}$ and hence can be regarded as a Euclidean polyhedron.
\end{remark}

\begin{remark}
    As far as the author knows, the question whether every piecewise smooth Riemannian manifold is almost $1$-Lipschitz triangulable has not been studied. Maybe the methods of Bowditch \cite{Bowditch} also work in this case.
\end{remark}

\begin{remark}
    Notice that in Definition \ref{Def almost 1-Lip triangulable}, $P_{\varepsilon}$ inherits a topological manifold structure from $M$ and hence it is a compact PL manifold with boundary. So the condition of being almost $1$-Lipschitz triangulable can be re-expressed as being $(1+\varepsilon)$-bilipschitz diffeomorphic to a compact PL manifold with boundary for every $\varepsilon>0$. 
\end{remark}

\begin{definition}\label{Def tubular neighborhood}
    Let $(M,g)$ be a piecewise smooth Riemannian manifold. We denote $\Sigma=\partial M$ (which is a subpolyhedron of $M$) and $d$ the metric on $M$ induced by the Riemannian metric $g$. Given two subsets $A,B\subseteq M$ we define
    \begin{equation*}
        \dist(A,B)=\inf\{d(x,y):x\in A,y\in B\}.
    \end{equation*}
    Let $A\subseteq M$ be a subset of $M$. For each $r>0$, we denote
    \begin{align*}
        N_{r}A &=\{x\in M:\dist(x,A)< r\}\\
        D_{r}A & =\{x\in M:\dist(x,A)=r\}.
    \end{align*}
\end{definition}

\begin{definition}\label{Def Exp}
    Given a smooth Riemannian manifold with boundary $(M,g)$, we denote its exponential map by $\Exp$. 
\end{definition}

\subsection{Almgren-Pitts Min-Max Theory}\label{Section Almgren-Pitts}

Let $(M,g)$ be an $n$-dimensional piecewise smooth Riemannian manifold and let $\mathbb{G}=\mathbb{Z}$ or $\mathbb{Z}_{p}$ for some prime $p$. Let $\mathcal{Z}_{k}(M,\partial M)$ be the space of flat $k$-cycles with coefficients in $\mathbb{G}$ on $M$ relative to $\partial M$ and let $\mathcal{I}_{k}(M,\partial M)$ the corresponding space of flat $k$-chains, $0\leq k\leq n$. We also consider the spaces $\mathcal{Z}_{k}(M)$ and $\mathcal{I}_{k}(M)$ of absolute flat cycles and chains on $M$ with coefficients in $\mathbb{G}$ as defined in \cite{Fleming66} and in \cite{FedererGMT}[Section~4.2.26]. The mass functional on the previous spaces with respect to the metric $g$ is denoted by $\mass$. Notice that for a relative chain $\tau\in\mathcal{I}_{k}(M,\partial M)$,
\begin{equation}\label{Mass of a relative chain}
    \mass(\tau)=\inf\{\mass(\tau'):\tau'\in\mathcal{I}_{k}(M),[\tau']=\tau\}
\end{equation}
where given an absolute $k$-chain $\tau'$, $[\tau']$ denotes its class as a relative $k$-chain in $(M,\partial M)$. In addition, $\support(\tau)=\support(\tau'-\tau'\llcorner\partial M)$ for $\tau'\in\mathcal{I}_{k}(M)$ such that $[\tau']=\tau$ (the definition is independent of the representative $\tau'$).
The flat norm associated to $\mass$ is denoted by $\mathcal{F}$, which for absolute $k$-chains is defined as
\begin{equation*}
    \mathcal{F}(\tau)=\inf\{\mass(\alpha)+\mass(\beta):\tau=\alpha+\partial\beta,\alpha\in\mathcal{I}_{k}(M),\beta\in\mathcal{I}_{k+1}(M)\}
\end{equation*}
and the analogous definition holds for relative chains.

\begin{definition}
    We denote $I=[0,1]$ and $I^{d}=[0,1]^{d}$ for each $d\in\mathbb{N}$. In addition, given $q\in\mathbb{N}$ we denote $I(q)$ the complex obtained by dividing $I$ into $q$ equal parts, and $I^{d}(q)=I(q)\times...\times I(q)$ ($d$ times) with the product structure. We say that $I^{d}(q)$ is obtained from $I^{d}$ by performing a $q$-refinement.
\end{definition}

\begin{definition}
    A cubical complex $X$ is a subcomplex of $I^{d}(q)$ for some $d,q\in\mathbb{N}$. Given $q'\in\mathbb{N},$ we denote $X(q')$ the complex obtained by performing a $q'$-refinement of each cell of $X$. Notice that $X(q')$ is a subcomplex of $I^{d}(qq')$.
\end{definition}

\begin{remark}
    The convention in the previous two definitions is different from that in 
    \cite{MN}, \cite{Willmore} and \cite{GL22}.
\end{remark}

\begin{definition}
    Given a $p$-dimensional cubical complex $X$ and $0\leq j\leq p$, we denote by $X_{j}$ the $j$-skeleton of $X$.
\end{definition}

\begin{definition}
    Given a cubical complex $X$ and a cell $C$ of $X$, we denote $V(C)$ the set of vertices of $C$.
\end{definition}

\begin{definition}
    Given a $p$-dimensional cubical complex $X$ and $0\leq j\leq p$, we denote by $\faces_{j}(X)$ the set of $j$-dimensional faces of $X$.
\end{definition}

\begin{definition}
    Let $X$ be a cubical complex, let $F:X_{0}\to\mathcal{Z}_{k}(M,\partial M)$ ($\mathcal{I}_{k}(M,\partial M)$, $\mathcal{Z}_{k}(M)$ or $\mathcal{I}_{k}(M)$ respectively) be a discrete family and let $q$ be an odd natural number. We define the $q$-refinement of $F$ to be the discrete family $R^{q}F:X(q)_{0}\to\mathcal{Z}_{k}(M,\partial M)$ given by $R^{q}F(x)=F(v)$ where $v$ is the point of $X_{0}$ closest to $x\in X(q)_{0}$.
\end{definition}

\begin{definition}
    Let $X$ be a $p$-dimensional cubical complex, let $0\leq j\leq p$ and  let $F:X_{j}\to\mathcal{Z}_{k}(M)$ be a continuous map in the flat topology. We say that $F$ is $\varepsilon$-fine if for every cell $C$ of $X$ and every $x,y\in C\cap X_{j}$
    \begin{equation*}
        \mathcal{F}(F(x),F(y))\leq\varepsilon.
    \end{equation*}
    When $j=p$, we say that $F$ has no concentration of mass if
    \begin{equation*}
        \limsup_{r\to 0}\sup_{x\in X}\sup_{p\in M}\{\mass(F(x)\llcorner B(p,r)\}=0.
    \end{equation*}
\end{definition}

\begin{definition}
    For $\mathbb{G}=\mathbb{Z}_{2}$, we denote by $\lambda_{k}\in H^{n-k}(\mathcal{Z}_{k}(M,\partial M;\mathbb{Z}_{2});\mathbb{Z}_{2})$ the fundamental cohomology class of $\mathcal{Z}_{k}(M,\partial M;\mathbb{Z}_{2})$ (see \cite{GuthMinMax} and \cite{LS}[Section~3]). It is defined by the property that a continuous family $F:X\to\mathcal{Z}_{k}(M,\partial M;\mathbb{Z}_{2})$ is a sweepout of $M$ (i.e. the gluing homomorphism described in \cite{GuthMinMax} is nontrivial) if and only if $F^{*}(\lambda_{k})\neq 0$.
\end{definition}

\begin{definition}
    Let $X$ be a cubical complex and $\mathbb{G}=\mathbb{Z}_{2}$. We say that a continuous map $F:X\to\mathcal{Z}_{k}(M,\partial M;\mathbb{Z}_{2})$ is a $p$-sweepout of $(M,\partial M)$ by $k$-cycles if $F^{*}(\lambda_{k}^{p})\neq 0$. We denote $\mathcal{P}_{p}^{k}=\mathcal{P}_{p}^{k}(M,\partial M)$ the collection of $p$-sweepouts on $(M,\partial M)$ by $k$-cycles with no concentration of mass.

\end{definition}

\begin{definition}
     We define the $p$-width $\omega_{p}^{k}(M,g)$ of $(M,g)$ as
    \begin{equation*}
        \omega_{p}^{k}(M,g)=\inf_{F\in\mathcal{P}_{p}^{k}}\sup_{x\in X}\mass(F(x)).
    \end{equation*}
\end{definition}

Notice that in the definition of $p$-sweepout and of $p$-widths, $X$ can be any cubical complex; this means any subcomplex of the $q$-refinement of $[0,1]^{d}$ for $q$ and $d$ arbitrary. Nevertheless, as $\lambda_{k}^{p}\in H^{p(n-k)}(\mathcal{Z}_{k}(M;\mathbb{Z}_{2});\mathbb{Z}_{2})$ is a $p(n-k)$-cohomology class, we can expect the simplicial complexes of dimension $p(n-k)$ to be enough to detect $\lambda_{k}^{p}$. As any $m$-dimensional polyhedron embeds in $\mathbb{R}^{2m+1}$, we should expect to obtain the same definition for the $p$-widths if we restrict $d=2(n-k)p+1$. This is indeed true, as the following proposition shows.

\begin{lemma}\label{Lemma equiv def of widths}
    Denote $\overline{\mathcal{P}}_{p}^{k}$ the set of $F\in\mathcal{P}_{p}^{k}$ such that the domain $X$ of $F$ is a $p(n-k)$-dimensional subcomplex of $I^{2p(n-k)+1}(q)$ for some $q\in\mathbb{N}$. Let
    \begin{equation*}
        \overline{\omega}_{p}^{k}(M,g)=\inf_{F\in\overline{\mathcal{P}}_{p}^{k}}\sup_{x\in X}\mass(F(x)).
    \end{equation*}
    Then $\overline{\omega}_{p}^{k}(M,g)=\omega_{p}^{k}(M,g)$ for every $p,k\in\mathbb{N}$.
\end{lemma}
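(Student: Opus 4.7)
The inequality $\overline{\omega}_p^k(M,g) \geq \omega_p^k(M,g)$ is immediate from $\overline{\mathcal{P}}_p^k \subseteq \mathcal{P}_p^k$. For the reverse direction, the plan is to show that any $F: X \to \mathcal{Z}_k(M, \partial M)$ in $\mathcal{P}_p^k$, with $X$ a subcomplex of some $I^d(q)$, admits a replacement $F' \in \overline{\mathcal{P}}_p^k$ with the same upper mass bound. Throughout set $m := p(n-k)$, so that $\lambda_k^p \in H^m(\mathcal{Z}_k(M, \partial M); \mathbb{Z}_2)$.

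First I would reduce to the $m$-skeleton $X_m$ of $X$. The cubical complex $X$ is a finite CW complex, and since the relative cellular cochain complex of the pair $(X, X_m)$ vanishes in degrees $\leq m$ (there are no cells of $X/X_m$ in such dimensions), the long exact sequence of the pair forces the restriction $H^m(X; \mathbb{Z}_2) \to H^m(X_m; \mathbb{Z}_2)$ to be injective. Consequently $(F|_{X_m})^*(\lambda_k^p) \neq 0$, so $F|_{X_m}$ is still a $p$-sweepout; it clearly has no concentration of mass, and satisfies $\sup_{x \in X_m} \mass(F(x)) \leq \sup_{x \in X} \mass(F(x))$.

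Next I would realize $X_m$ as a cubical subcomplex of $I^{2m+1}(q')$ for some $q'$. Since $|X_m|$ is an $m$-dimensional finite polyhedron, the Menger--N\"obeling theorem provides a PL embedding $\phi: |X_m| \hookrightarrow \mathbb{R}^{2m+1}$ which, after rescaling and translating, lies in the interior of $I^{2m+1}$. I would then construct, by induction on the skeletal dimension $j = 0, 1, \dots, m$, a subdivision $\tilde X_m$ of $X_m$ together with a cubical identification of $\tilde X_m$ with a subcomplex $Y$ of some fine refinement $I^{2m+1}(q')$. At each stage $j$, the new $j$-cubes of $\tilde X_m$ are attached along PL $j$-disks whose boundaries sit in the already-cubified $(j-1)$-skeleton; general position provides the room needed for such embedded extensions in $\mathbb{R}^{2m+1}$ because $2m+1 \geq 2j+1$, and after a further cubical refinement each PL $j$-disk can be replaced, rel boundary, by a union of axis-aligned grid $j$-cubes.

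Setting $F' := F|_{X_m} \circ \psi$, where $\psi: Y \to X_m$ is the cubical identification composed with the subdivision map $\tilde X_m \to X_m$, produces a continuous family without concentration of mass, with $\sup_{y \in Y} \mass(F'(y)) \leq \sup_{x \in X} \mass(F(x))$, and with $(F')^*(\lambda_k^p) \neq 0$ by naturality under the homeomorphism $\psi$. Therefore $F' \in \overline{\mathcal{P}}_p^k$, yielding $\overline{\omega}_p^k(M,g) \leq \omega_p^k(M,g)$. The principal technical obstacle is the cubification step: converting a general PL polyhedron into an axis-aligned cubical subcomplex of $I^{2m+1}(q')$ requires careful PL-to-cubical approximation, exploiting both the flexibility of general position in codimension $\geq m+1$ and the fact that arbitrarily fine cubical refinements of $I^{2m+1}$ approximate any PL polyhedron to any desired tolerance.
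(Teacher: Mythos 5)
Your reduction to the $m$-skeleton ($m=p(n-k)$) is fine: $H^{m}(X;\mathbb{Z}_{2})\to H^{m}(X_{m};\mathbb{Z}_{2})$ is indeed injective, so $F|_{X_{m}}$ is still a $p$-sweepout with the same mass bound. The gap is in the cubification step, which you flag as the principal obstacle but never actually prove. What your argument requires is that (a subdivision of) the $m$-dimensional polyhedron $X_{m}$ be PL homeomorphic to an axis-aligned cubical subcomplex $Y$ of $I^{2m+1}(q')$; the homeomorphism $\psi$ is the only thing in your proof guaranteeing $(F')^{*}(\lambda_{k}^{p})\neq 0$. Your inductive sketch reduces this to the claim that each PL $j$-disk, with boundary in the already-cubified $(j-1)$-skeleton, can be replaced rel boundary by a union of grid $j$-cubes. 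But what is needed there is an \emph{embedded} grid $j$-disk with the prescribed boundary, meeting the previously built cells only along the prescribed faces and staying disjoint from the replacements of all other $j$-cells. General position in $\mathbb{R}^{2m+1}$ keeps distinct PL $m$-cells disjoint, but it does not produce grid-aligned embedded disks, and none of the tools you invoke (Menger--N\"obeling embedding, refinement, general position) yields one: a Federer--Fleming-type push onto the grid gives only a chain homologous to the disk, not an embedded disk, and controlling the global intersection pattern so that $Y$ is a bona fide cubical complex homeomorphic to $X_{m}$ is a genuinely nontrivial PL realization theorem that is left unproved.

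This is precisely what the paper's proof is built to avoid. Since nontriviality of $F^{*}(\lambda_{k}^{p})$ is detected by pairing with a single $m$-dimensional cycle $z$ in $X$, one takes its underlying polyhedron $Z$, PL-embeds it in $\mathbb{R}^{2m+1}$, chooses a tubular neighborhood $B$ with retraction $R:B\to Z$, and applies the Federer--Fleming Deformation Theorem to get a grid cycle $Z'$ and a chain $T$ in $B$ with $\partial T=Z'-Z$. Then $R_{*}([Z'])=[Z]$, so $F':=F\circ R|_{Z'}$ is a $p$-sweepout defined on an honest cubical subcomplex of $I^{2m+1}(q)$ with the same mass bound; no homeomorphism between domains is needed, only a homologous cycle, which is exactly what Federer--Fleming provides. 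To salvage your route you would have to supply a proof of the cubical realization theorem for $m$-polyhedra inside $I^{2m+1}$, or else weaken your requirement on $\psi$ from a homeomorphism to a map that carries a detecting homology class --- at which point you are back to the paper's argument.
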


\begin{proof}
    As $\overline{\mathcal{P}}_{p}^{k}\subseteq\mathcal{P}_{p}^{k}$, it is clear that $\omega_{p}^{k}(M,g)\leq\overline{\omega}_{p}^{k}(M,g)$. To show the other inequality, let $F:X\to\mathcal{Z}_{k}(M,\partial M)$ be an element of $\mathcal{P}_{p}^{k}$. As $F^{*}(\lambda_{k}^{p})\neq 0$, there exists some $p(n-k)$-cycle $z$ in $X$ such that $\langle\lambda_{k}^{p},F_{*}([z])\rangle=0$, where $\langle,\rangle$ denotes the pairing between cohomology and homology in $\mathcal{Z}_{k}(M,\partial M)$. By the equivalence between singular homology and simplicial homology, we can assume that $z$ is a polyhedral cycle in $X$. Denote $Z$ its underlying cubical subcomplex. As $\dim(Z)=p(n-k)$, by the PL embedding theorem we can see $Z$ embedded in $\mathbb{R}^{2p(n-k)+1}$ (as some piecewise linear polyhedron whose cells are arbitrary simplices, not necessarily equilateral). Notice that $F:Z\to\mathcal{Z}_{k}(M,\partial M)$ is a $p$-sweepout. We would like to replace the domain $Z$ by some polyhedral cycle $Z'$ supported in a cubical grid of $\mathbb{R}^{2p(n-k)+1}$ so that we can see $Z'$ as a subcomplex of $I^{2p(n-k)+1}(q)$ for some $q\in\mathbb{N}$. Let $B$ be some tubular neighborhood of $Z$ in $\mathbb{R}^{2p(n-k)+1}$ and $R:B\to Z$ the corresponding retraction. By the Federer-Fleming Deformation Theorem, we can find a polyhedral $k$-cycle $Z'$ and a $(k+1)$-chain $T$ both supported in $B$ such that $\partial T=Z'-Z$. Then $[Z']=[Z]$ in $B$ therefore $R_{*}([Z'])=R_{*}([Z])=[Z]$ in $Z$ which implies that the map $F':Z'\to\mathcal{Z}_{k}(M)$ given by $F'(x)=F(R(x))$ is a $p$-sweepout because
    \begin{equation*}
        \langle\lambda_{k}^{p},F'_{*}([Z'])\rangle=\langle\lambda_{k}^{p},F_{*}([Z])\rangle\neq 0.
    \end{equation*}
    Then by construction $F'\in\overline{\mathcal{P}}_{p}^{k}$ therefore
    \begin{equation*}
        \overline{\omega}_{p}^{k}(M,g)\leq\sup_{x\in Z'}\mass(F'(x))\leq\sup_{x\in X}\mass(F(x)).
    \end{equation*}
    As $F\in\mathcal{P}_{p}^{k}$ is arbitrary, the previous implies $\overline{\omega}_{p}^{k}(M,g)\leq\omega_{p}^{k}(M,g)$.
\end{proof}

\begin{remark}
    As a consequence of Lemma \ref{Lemma equiv def of widths}, in the rest of the paper a cubical complex will denote a $p$-dimensional subcomplex of $I^{2p+1}(q)$ for some $p,q\in\mathbb{N}$. This assumption will simplify some of the proofs, but they could still be carried out without it.
\end{remark}

\section{$\delta$-localized families}\label{Section delta localized families}

\subsection{Definitions}
Throughout this section, $(M,g)$ is a fixed $n$-dimensional smooth Riemannian manifold with boundary $\partial M=\Sigma$. We fix $\mathbb{G}=\mathbb{Z}$ or $\mathbb{Z}_{p}$ for some prime number $p$ and we denote $\mathcal{Z}_{k}(M,\partial M)$ the space of flat relative $k$-cycles (respectively relative chains, and absolute cycles and chains) with coefficients in $\mathbb{G}$ as introduced in the previous section (in the rest of the paper we take $\mathbb{G}=\mathbb{Z}_{2}$, but the results of this section hold for more general $\mathbb{G}$). In fact, all the arguments of this section also work by replacing the pair $(M,\partial M)$ by a pair $(P,P')$ where $P$ is an $n$-dimensional Riemannian polyhedron and $P'$ is a subpolyhedron of $P$. This will be used in Section \ref{Section Isoperimetric in manifolds} when proving the Parametric Isoperimetric Inequality for $0$-cycles. We will introduce the notion of $\delta$-localized family which was first defined in \cite{GL22}[Section~2]. Given an arbitrary continuous family $F:X\to\mathcal{Z}_{k}(M,\partial M)$, there might be arbitrarily close points $x,y\in X$ such that $\mathcal{F}(F(x)-F(y))$ is very small (because of continuity) but $\support(F(x)-F(y))$ is very large; in fact the support of $F(x)-F(y)$ could be almost dense in $M$ (and its mass might be very large too). The property of $\delta$-localization prevents from this to happen, as it requires that if $x$ and $y$ are close then the difference $F(x)-F(y)$ is supported in a collection of small balls. We make this more precise in the definitions below.

\begin{definition}\label{Def delta admissible}
    Let $\delta\leq\inj(M,g)$. A $\delta$-admissible family on $M$ is a finite collection of disjoint open sets $\{U_{i}\}_{i\in I}$ such that
    \begin{enumerate}
        \item If $U_{i}\cap\partial M=\emptyset$ then $U_{i}$ is a ball of radius $r_{i}$.
        \item If $U_{i}$ intersects $\partial M$ then there exists a ball $\beta_{i}$ in $\partial M$ with $\radius(\beta_{i})=r_{i}$ such that $U_{i}=\Exp(\beta_{i}\times[0,r_{i}])$ ($\Exp$ is defined in Definition \ref{Def Exp}).
        \item $\sum_{i\in I}r_{i}<\delta$.
    \end{enumerate}
    An $(N,\delta)$-admissible family is a $\delta$-admissible family such that $|I|\leq N$.
\end{definition}

\begin{remark}
    If $P$ is a Riemannian polyhedron, we just require the $U_{i}$ to be metric balls such that $\sum_{i\in I}r_{i}<\delta$. 
\end{remark}

\begin{definition}\label{Def delta localized}
    Let $X$ be a $p$-dimensional cubical complex and $1\leq j\leq p$. A family of chains $F:X_{j}\to\mathcal{I}_{k}(M)$ is said to be $\delta$-localized if for every cell $C$ of $X$ there exists a $\delta$-admissible family $\{U_{i}^{C}\}_{i\in I_{C}}$ such that for every $x,y\in C\cap X_{j}$
    \begin{equation*}
        \support(F(x)-F(y))\subseteq \bigcup_{i\in I_{C}}U_{i}^{C}.
    \end{equation*}
     If each $\delta$-admissible family $\{U_{i}^{C}\}$ is also $(N,\delta)$-admissible, we say that $F$ is $(N,\delta)$-localized. We introduce the same definition for families of relative chains and for families of absolute and relative cycles by replacing $\mathcal{I}_{k}(M)$ by $\mathcal{I}_{k}(M,\partial M)$, $\mathcal{Z}_{k}(M)$ and $\mathcal{Z}_{k}(M,\partial M)$ respectively.
\end{definition}

\begin{definition}
    In relation to the previous definitions, we say that a family $F:X\to\mathcal{I}_{k}(M)$ (resp. $\mathcal{I}_{k}(M,\partial M)$, $\mathcal{Z}_{k}(M)$ or $\mathcal{Z}_{k}(M,\partial M)$) is localized in a collection of open sets $\{U_{i}\}_{i\in I}$ if for every $x,y\in X$
    \begin{equation*}
        \support(F(x)-F(y))\subseteq\bigcup_{i\in I}U_{i}.
    \end{equation*}
\end{definition}

We will often need to construct $\delta'$-admissible (respectively, $(N',\delta')$-admissible) families from $\delta$-admissible (respectively $(N,\delta)$-admissible) ones. When carrying out such constructions, we normally first need to deal with families of balls which are not pairwise disjoint. The following lemma tells us that we can always get an accurate $\delta$-admissible (resp. $(N,\delta)$-admissible) family out of that.

\begin{lemma}\label{Lemma non disjoint}
    Let $\{B_{j}\}_{1\leq j\leq N}$ be a finite collection of open sets in $M$ such that each $B_{j}$ is either a metric ball in $(M,g)$ of radius $r_{j}$ or it is of the form $B_{j}=\Exp(\beta_{j}\times[0,r_{j}])$ for some ball $\beta_{j}$ in $\partial M$ of radius $r_{j}$. Assume that
    \begin{equation*}
        \sum_{j=1}^{N}r_{j}\leq\frac{\inj(M,g)}{3}.
    \end{equation*}
    Then there exists an $(N,\delta)$-admissible family $\{U_{i}\}_{i\in I}$ such that $\bigcup_{j=1}^{N}B_{j}\subseteq\bigcup_{i\in I}U_{i}$ and
    \begin{equation*}
        \delta\leq 3\sum_{j=1}^{N}\radius(B_{j}).
    \end{equation*}
\end{lemma}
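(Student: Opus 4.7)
My plan is to prove this by a Vitali-type covering argument, handling interior and boundary balls uniformly via the map $\Exp$ of Definition \ref{Def Exp}. The idea is first to select a maximal disjoint subfamily whose $3$-enlargements cover $\bigcup_j B_j$, and then to aggregate overlapping enlargements into a disjoint covering family while keeping the total of radii below $3R$, where $R=\sum_j r_j$.

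I would begin by reordering the balls so that $r_1\ge r_2\ge\dots\ge r_N$ and running the greedy Vitali selection: include $B_{i_k}$ if it is disjoint from all previously selected balls. This yields a disjoint subfamily $\{B_{i_1},\dots,B_{i_m}\}$ with $m\le N$, and every discarded $B_j$ meets some $B_{i_k}$ with $r_{i_k}\ge r_j$; hence $B_j\subseteq 3B_{i_k}$ (the concentric ball of triple radius). Therefore
\[
\bigcup_{j=1}^N B_j\subseteq\bigcup_{k=1}^m 3B_{i_k},\qquad \sum_{k=1}^m\radius(3B_{i_k})=3\sum_{k=1}^m r_{i_k}\le 3R.
\]
Because $R\le\inj(M,g)/3$, every construction takes place well inside the injectivity radius and the relevant triangle inequalities behave essentially as in Euclidean space.

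The main obstacle is to pass from the covering $\{3B_{i_k}\}$, which may overlap, to a truly disjoint covering family without breaking the $3R$ budget. My approach is to partition $\{1,\dots,m\}$ into the connected components of the intersection graph of $\{3B_{i_k}\}$, and in each component $C$ to replace $\{3B_{i_k}\}_{k\in C}$ by a single enclosing open set $U_C$ centered at the Vitali ball of largest radius in $C$. The key numerical input is the disjointness of the underlying $\{B_{i_k}\}$, which forces $d(p_{i_k},p_{i_\ell})\ge r_{i_k}+r_{i_\ell}$ for distinct $k,\ell$; combined with a telescoping chain-of-overlaps estimate, this controls the diameter of each component by a constant times $\sum_{k\in C}r_{i_k}$, and a careful choice of center yields $\radius(U_C)\le 3\sum_{k\in C}r_{i_k}$. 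Summing over components preserves the bound $\sum_i\radius(U_i)\le 3R$, and the number of components is at most $m\le N$. The delicate part is pinning down the constant $3$ rather than a weaker multiple, which requires tracking the fact that along any overlap chain emanating from the largest Vitali ball the subsequent radii are strictly smaller.

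To produce the required admissible form, whenever an enclosing open set $U_C$ meets $\partial M$ I would replace it by an open set of the shape $\Exp(\beta\times[0,r])$ with $\beta\subseteq\partial M$, using the bilipschitz map of Definition \ref{Def Exp}. Since $\mu$ may be chosen arbitrarily close to $1$ and all scales are comfortably under the injectivity radius, this conversion costs only a negligible factor that can be absorbed into the strict inequality appearing in the definition of a $\delta$-admissible family. The resulting collection is $(N,\delta)$-admissible with $\delta\le 3\sum_j r_j$ and covers $\bigcup_j B_j$, as required.
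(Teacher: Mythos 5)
Your proposal has two genuine gaps, and the Vitali step is actually working against you.

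\textbf{The disjointness of the final family is not established.} An $(N,\delta)$-admissible family must consist of \emph{pairwise disjoint} sets. You replace each connected component $C$ of the intersection graph of the triple balls $\{3B_{i_k}\}$ by a single enclosing ball $U_C$. The unions $\bigcup_{k\in C}3B_{i_k}$ over distinct components are disjoint, but the enclosing balls $U_C$ are strictly larger than those unions, and nothing in your argument prevents $U_{C_1}\cap U_{C_2}\neq\emptyset$ for distinct components. If this happens you would have to merge again, and you give no mechanism that terminates with a disjoint family inside the budget. This is the central difficulty the lemma is about and it is exactly the step your plan skips.

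\textbf{The claimed constant is not reachable by the route you describe.} After the Vitali selection you work with triple balls $3B_{i_k}$; for a chain $k_0,k_1,\dots,k_m$ in a component $C$ with all indices distinct, the telescoping estimate you invoke gives
\[
d\bigl(p_{i_{k_0}},y\bigr)\;<\;\sum_{l}3\bigl(r_{i_{k_l}}+r_{i_{k_{l+1}}}\bigr)+3r_{i_{k_m}}\;\le\;6\sum_{k\in C}r_{i_k},
\]
so the enclosing ball centered at $p_{i_{k_0}}$ has radius on the order of $6\sum_{k\in C}r_{i_k}$, not $3\sum_{k\in C}r_{i_k}$. The disjointness bound $d(p_{i_k},p_{i_\ell})\ge r_{i_k}+r_{i_\ell}$ does not visibly improve this, and in any case the Vitali enlargement has already spent the entire factor of $3$ before any merging takes place, leaving nothing for the boundary conversion.

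\textbf{What actually works, and what the cited reference does.} You do not need Vitali at all. Merge overlapping balls pairwise: if $B(p,r)\cap B(q,s)\neq\emptyset$ with $d(p,q)=d<r+s$, then either one ball contains the other, or the ball of radius $\tfrac{d+r+s}{2}<r+s$ centered at the point on the geodesic from $p$ to $q$ at distance $\tfrac{d+s-r}{2}$ from $p$ contains both (this uses only the triangle inequality and works inside the injectivity radius; the hypothesis $\sum r_j\le\inj(M,g)/3$ keeps everything in range). Each merge reduces the number of balls by one and does not increase $\sum\radius$, so the process terminates with a genuinely disjoint family of at most $N$ balls covering $\bigcup_jB_j$ and satisfying $\sum_i\radius(U_i)\le\sum_jr_j$. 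The factor $3$ in the statement is then entirely available to absorb the bilipschitz constant of $\Exp$ and the geometric loss in converting a ball that meets $\partial M$ into a set of the form $\Exp(\beta\times[0,r])$. This is, in substance, what \cite{GL22}[Lemmas~2.4 and~2.5] establish, which is why the paper's own proof is just a citation.
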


\begin{proof}
    This is a consequence of \cite{GL22}[Lemma~2.4] and \cite{GL22}[Lemma~2.5]. 
\end{proof}

\begin{definition}
    A monotonously $(N,\delta)$-localized family is an $(N,\delta)$-localized family such that given $C\subseteq C'$ cells of $X$,
    \begin{equation*}
        \bigcup_{i\in I_{C}}U_{i}^{C}\subseteq\bigcup_{i\in I_{C'}}U_{i}^{C'}.
    \end{equation*}
    We state the same definition replacing $(N,\delta)$-localized by $\delta$-localized.
\end{definition}

\begin{lemma}\label{Lemma monotonously localized}
    For each $p\in\mathbb{N}$, there exists a constant $c(p)\in\mathbb{N}$ such that the following is true. If $F:X^{p}\to\mathcal{I}_{k}(M)$ (resp. $\mathcal{I}_{k}(M,\partial M)$, $\mathcal{Z}_{k}(M)$ or $\mathcal{Z}_{k}(M,\partial M)$ ) is $(N,\delta)$-localized (resp. $\delta$-localized) then it is monotonously $(c(p)N,c(p)\delta)$-localized (resp. monotonously $c(p)\delta$-localized). 
\end{lemma}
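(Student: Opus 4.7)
The natural strategy is to enlarge each admissible family $\{U_i^C\}_{i \in I_C}$ to force nesting under cell inclusion, and then to apply Lemma \ref{Lemma non disjoint} to convert the resulting (generally overlapping) collection of balls and boundary cylinders back into a genuine admissible family. I would build the new admissible families $\{\tilde U_i^C\}_{i \in \tilde I_C}$ by induction on $\dim C$.

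For a vertex $v$, set $\{\tilde U_i^v\} = \{U_i^v\}$. For a $j$-cell $C$ with $1\leq j\leq p$, enumerate its $2j$ facets as $C^1,\dots,C^{2j}$ (already processed by induction) and form the auxiliary collection
\[
\mathcal V^C \;=\; \{U_i^C : i \in I_C\} \;\cup\; \bigcup_{l=1}^{2j}\{\tilde U_i^{C^l}:i\in\tilde I_{C^l}\}.
\]
Each element of $\mathcal V^C$ is a metric ball in $(M,g)$ or a boundary cylinder of the form permitted in Definition \ref{Def delta admissible}, though members of $\mathcal V^C$ need not be pairwise disjoint. Provided the total radius of $\mathcal V^C$ is at most $\inj(M,g)/3$, I apply Lemma \ref{Lemma non disjoint} to $\mathcal V^C$ to obtain an admissible family $\{\tilde U_i^C\}_{i \in \tilde I_C}$ whose union contains $\bigcup\mathcal V^C$, with cardinality at most $|\mathcal V^C|$ and total radius at most three times that of $\mathcal V^C$.

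Two properties then hold. First, the localization of $F$ survives on $C$: for $x,y\in C\cap X_j$,
\[
\support(F(x)-F(y)) \;\subseteq\; \bigcup_{i\in I_C} U_i^C \;\subseteq\; \bigcup \mathcal V^C \;\subseteq\; \bigcup_{i\in\tilde I_C}\tilde U_i^C.
\]
Second, monotonicity for facet inclusions $C^l\subseteq C$ is automatic from the construction, since $\bigcup_i \tilde U_i^{C^l}\subseteq\bigcup\mathcal V^C\subseteq\bigcup_i \tilde U_i^C$; and monotonicity for an arbitrary subcell $C'\subseteq C$ follows by chaining through a sequence of facet inclusions $C' = D_0 \subsetneq D_1 \subsetneq \dots \subsetneq D_s = C$.

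For the bookkeeping, let $N_j$ and $\delta_j$ denote uniform bounds on $|\tilde I_C|$ and on $\sum_{i\in\tilde I_C}\radius(\tilde U_i^C)$ over all $j$-cells $C$. Starting from $N_0\leq N$ and $\delta_0\leq\delta$, the construction yields the recursions
\[
N_j \;\leq\; N + 2j\,N_{j-1}, \qquad \delta_j \;\leq\; 3(\delta + 2j\,\delta_{j-1}),
\]
which terminate at $j=p$ with bounds $N_p\leq c_1(p)\,N$ and $\delta_p\leq c_2(p)\,\delta$ for explicit (factorial-type) constants depending only on $p$. Taking $c(p)=\max\{c_1(p),c_2(p)\}$ settles the $(N,\delta)$-localized case; the $\delta$-localized case is identical after dropping the cardinality bookkeeping, and the argument transfers verbatim to $\mathcal I_k(M,\partial M)$, $\mathcal Z_k(M)$, and $\mathcal Z_k(M,\partial M)$ because the construction never touches the chains $F(x)$ themselves, only the balls covering the supports of their differences. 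The only mild obstacle is ensuring that Lemma \ref{Lemma non disjoint} is applicable at every step of the induction, which translates into the smallness hypothesis $c(p)\delta\leq\inj(M,g)/3$; this is harmless in all the intended applications of the lemma, where $\delta$ is chosen a priori very small.
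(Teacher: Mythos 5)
Your proposal is correct and is essentially the paper's argument: the paper also proceeds by induction (on the dimension of the complex, which unrolled is exactly your cell-dimension induction), enlarging each top cell's admissible family to cover its original family together with the already-monotonized families of its $2p$ facets, and invoking Lemma \ref{Lemma non disjoint} to restore disjointness, yielding the same recursion $c(p)=3(1+2p\,c(p-1))$ that your bounds $N_j\leq N+2jN_{j-1}$, $\delta_j\leq 3(\delta+2j\delta_{j-1})$ encode. Your explicit remark about the smallness hypothesis needed to apply Lemma \ref{Lemma non disjoint} is a reasonable point the paper leaves implicit, but it does not change the argument.
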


\begin{proof}
    By induction in $p$. For $p=1$, any $(N,\delta)$-localized family is automatically monotonously localized so the result holds for $c(1)=1$. Assume that the result holds for $p-1$. Let $F:X^{p}\to\mathcal{I}_{k}(M)$ be $(N,\delta)$-localized and for each cell $C$ of $X$ denote $\{U_{i}^{C}\}_{i\in I_{C}}$ the $(N,\delta)$-admissible family associated to $C$. Then by inductive hypothesis $F_{p-1}=F|_{X_{p-1}}$ is monotonously $(c(p-1)N,c(p-1)\delta)$-localized. Given $E$ cell of $X$ with $\dim(E)\leq p-1$, denote $\{\tilde{U}^{E}_{i}\}_{i\in\tilde{I}_{E}}$ its corresponding $(c(p-1)N,c(p-1)\delta)$-admissible family. Given $C\in\faces_{p}(X)$, let $\{\tilde{U}^{C}_{i}\}_{i\in\tilde{I}_{C}}$ be an $(c(p)N,c(p)\delta)$ admissible family which covers
    \begin{equation*}
        \bigcup_{i\in I_{C}}U_{i}^{C}\cup\bigcup_{\substack{E\in\faces_{p-1}(X) \\ E\subseteq C}}\bigcup_{i\in\tilde{I}_{E}}\tilde{U}_{i}^{E}.
    \end{equation*}
    By Lemma \ref{Lemma non disjoint}, this can be done provided $c(p)\geq 3(1+2pc(p-1))$, as the  total number of balls in the previous union is $N(1+2pc(p-1))$ (being $2p$ the number of $(p-1)$-faces of $C$) and the sum of their radius is at most $(1+2pc(p-1))\delta$. Thus setting $c(p)=3(1+2pc(p-1))$ we can see that $F$ is $(c(p)N,c(p)\delta)$-monotonously localized in the above defined families $\{\tilde{U}_{i}^{C}\}_{i\in\tilde{I}_{C}}$ for each $C\in\faces(X)$.
\end{proof}

Given a Riemannian polyhedron $P$,  a $\delta$-localized family $G:X\to\mathcal{I}_{k}(P)$ and a top cell $Q$ of $P$, $G$ induces a continuous family $\overline{G}_{Q}:X\to\mathcal{I}_{k}(Q,\partial Q)$ of relative $k$-chains given by $\overline{G}_{Q}(x)=[G(x)\llcorner Q]$. We may wonder whether we can represent $\overline{G}_{Q}$ by a continuous $\delta$-localized family of absolute chains in $Q$. The first attempt is to consider the family $x\mapsto G(x)\llcorner Q$ but this one may not be continuous. However, it is possible to construct such family by only adding $G(x)\llcorner Q$ certain $k$-chains supported in $\partial Q$ as the next lemma shows.

\begin{lemma}\label{Lemma G_Q}
    Let $P$ be a Riemannian polyhedron and let $Q$ be a top dimensional cell of $P$. Let $G:X\to\mathcal{I}_{k}(P)$ be a continuous $\delta$-localized family of absolute $k$-chains, $k\geq 1$. Then there exists a continuous $G_{Q}:X\to\mathcal{I}_{k}(Q)$ such that
    \begin{enumerate}
        \item $G(x)\llcorner Q-G_{Q}(x)$ is supported in $\partial Q$ for every $x\in X$.
        \item Given a cell $C$ of $X$ and $x,y\in C$, $G_{Q}(x)-G_{Q}(y)$ is supported in $\{\tilde{U}_{i}^{C}\}_{i\in I_{C}}$ where $\tilde{U}_{i}^{C}=U_{i}^{C}\cap Q$. Thus $G_{Q}$ is $\delta$-localized.
    \end{enumerate}
    The same result holds taking $P=M$ and taking $Q$ to be a closed embedded $n$-dimensional piecewise smooth Riemannian submanifold of $M$.
    
\end{lemma}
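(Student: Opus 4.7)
The plan is to construct $G_{Q}$ by induction on the skeleta of $X$, using on each cell $C$ a cell-adapted Lipschitz map $\rho^{C}:P\to P$ which fixes $Q$ pointwise, equals the identity outside a (slightly enlarged) $\bigcup_{i}U_{i}^{C}$, and sends each $U_{i}^{C}\cap(P\setminus Q)$ into $U_{i}^{C}\cap\partial Q$. Such $\rho^{C}$ is built locally in each ball from collar coordinates for $\partial Q$ inside $U_{i}^{C}$ (using $\Exp$ from Definition~\ref{Def Exp} when the ball meets $\partial M$), with a cutoff near the boundary of the ball. The small enlargement of balls needed for the cutoff is absorbed by Lemma~\ref{Lemma non disjoint} and accounts for the final $3\delta$ bound.

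On $X_{0}$, set $G_{Q}(v):=G(v)\llcorner Q$; both conclusions hold trivially. For the inductive step, first apply Lemma~\ref{Lemma monotonously localized} so that the $\delta$-admissible families are monotonous, and choose the maps $\rho^{C}$ compatibly so that $\rho^{C'}$ agrees with $\rho^{C}$ on $\bigcup_{i}U_{i}^{C'}$ for every face $C'\subsetneq C$. For each $j$-cell $C$ with a chosen base vertex $v_{C}\in V(C)$, the natural extension formula is
\[
G_{Q}(x):=G_{Q}(v_{C})+\rho^{C}_{*}\bigl(G(x)-G(v_{C})\bigr),\qquad x\in C.
\]
Continuity is immediate from the Lipschitz pushforward, $G_{Q}(x)\in\mathcal{I}_{k}(Q)$ since $\rho^{C}_{*}(G(x)-G(v_{C}))$ is supported in $\bigcup_{i}\tilde{U}_{i}^{C}\subseteq Q$, conclusion~(1) holds because $\rho^{C}$ fixes $\interior(Q)$ (so $G_{Q}(x)$ and $G(x)\llcorner Q$ agree there), and conclusion~(2) follows from $G_{Q}(x)-G_{Q}(y)=\rho^{C}_{*}(G(x)-G(y))$, which is supported in $\bigcup_{i}\rho^{C}(U_{i}^{C})\subseteq\bigcup_{i}\tilde{U}_{i}^{C}$.

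The principal difficulty is matching this formula on $\partial C$ with the values already defined by induction. A direct computation shows that on any face $C'\subsetneq C$ the discrepancy between the two expressions equals the $x$-independent chain
\[
\rho^{C}_{*}\bigl((G(v_{C'})-G(v_{C}))\llcorner(P\setminus Q)\bigr),
\]
which is supported in $\partial Q\cap\bigcup_{i}\tilde{U}_{i}^{C}$. Hence the discrepancy defines a continuous family on $\partial C$ taking values in chains supported in the fixed admissible set $\partial Q\cap\bigcup_{i}\tilde{U}_{i}^{C}$. Using the contractibility of $C$ and the fact that the discrepancy on each face is built from a canonical $\rho^{C}$-pushforward, I extend it to a continuous correction on $C$ and subtract it from the naive formula; both conclusions survive because the correction is supported in $\partial Q\cap\bigcup_{i}\tilde{U}_{i}^{C}$. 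A final application of Lemma~\ref{Lemma non disjoint} to the family $\{\tilde{U}_{i}^{C}\}_{i\in I_{C}}$ (whose sum of radii is at most $\delta$) yields a disjoint $(|I_{C}|,3\delta)$-admissible family containing $\bigcup_{i}\tilde{U}_{i}^{C}$, showing that $G_{Q}$ is $3\delta$-localized. The case $P=M$ with $Q$ a closed embedded $n$-submanifold with corners is handled identically, using $\Exp$ for the collar of $\partial Q$ inside the ambient manifold.
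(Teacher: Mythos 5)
Your strategy is in substance the paper's: push the ball-localized difference $G(x)-G(v_{C})$ into $Q$ by a retraction that fixes $Q$ and sends the part of each ball outside $Q$ into $\partial Q$, then repair the mismatch with the values already defined on $\partial C$ by a correction supported in $\partial Q\cap\bigcup_{i}\tilde{U}_{i}^{C}$, extended over the cell. However, as written the construction of $\rho^{C}$ has a genuine flaw: nothing forces the balls $U_{i}^{C}$ to meet $Q$, and whenever $U_{i}^{C}\cap Q=\emptyset$ your requirement that $\rho^{C}$ send $U_{i}^{C}\cap(P\setminus Q)=U_{i}^{C}$ into $U_{i}^{C}\cap\partial Q=\emptyset$ is impossible. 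If on such a ball you instead let $\rho^{C}$ be the identity, then $\rho^{C}_{*}(G(x)-G(v_{C}))$ is not supported in $Q$, so $G_{Q}(x)\notin\mathcal{I}_{k}(Q)$; if you push such a ball to some distant part of $\partial Q$, conclusion (2) fails. The chains lying in balls disjoint from $Q$ must simply be discarded: the paper does this by summing only over $\{i:U_{i}^{E}\cap Q\neq\emptyset\}$ in $r_{E}(\tau)=\sum_{i}r_{i}^{E}(\tau\llcorner U_{i}^{E})$, i.e. only the balls meeting $Q$ are retracted and the rest of the localized difference is dropped (equivalently, compose your pushforward with $\llcorner Q$). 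With this modification your formula coincides with the paper's $G_{Q}'(y)=G_{Q}(v)+r_{E}(G_{E}(y))$.

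Two further steps need tightening. First, the claim that the discrepancy on a face $C'$ equals the $x$-independent chain $\rho^{C}_{*}\bigl((G(v_{C'})-G(v_{C}))\llcorner(P\setminus Q)\bigr)$ is only correct at the first stage of the induction; for higher-dimensional faces $G_{Q}|_{C'}$ already contains the corrections added over $C'$, so the discrepancy is $x$-dependent. This is harmless, since all you actually use is that it is a continuous family of chains supported in $\partial Q\cap\bigcup_{i}\tilde{U}_{i}^{C}$, which still holds by monotonicity of the admissible families. Second, ``contractibility of $C$'' is not the operative fact for extending the correction from $\partial C$ to $C$: what is needed is a canonical contraction of the space of $k$-chains supported in $\partial Q\cap\bigcup_{i}\tilde{U}_{i}^{C}$, which the paper produces by radially contracting supports inside each $\tilde{U}_{i}^{C}\cap\partial Q$ (this is where $k\geq 1$ enters, so the contraction terminates continuously at the zero chain). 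Once these points are made explicit, your argument reduces to the proof in the paper, up to the bookkeeping of constants via Lemma \ref{Lemma non disjoint}.
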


\begin{proof}
    Let $p=\dim(X)$. By Lemma \ref{Lemma monotonously localized}, we can assume that $F$ is monotonously $\delta$-localized, being $\{U_{i}^{C}\}_{i\in I_{C}}$ the $\delta$-admissible family of each cell $C$.
    Given $E\in\faces(X)$ denote $\tilde{I}_{E}=\{i\in I_{E}:U_{i}\cap Q\neq\emptyset\}$ and $\tilde{U}_{i}^{E}=U_{i}^{E}\cap Q$, being $\{\tilde{U}_{i}^{E}\}_{i\in\tilde{I}_{E}}$ a collection of metric balls in $Q$ whose radii sum less than $\delta$. In addition, given a chain $\tau$ supported in $\bigcup_{i\in I_{E}}U_{i}^{E}$ denote
    \begin{equation*}
        r_{E}(\tau)=\sum_{i\in\tilde{I}_{E}}r_{i}^{E}(\tau\llcorner U_{i}^{E})
    \end{equation*}
    where $r_{i}^{E}:U_{i}^{E}\to\tilde{U}_{i}^{E}$ is a Lipschitz retraction mapping $U_{E}\cap(M\setminus Q)$ onto $U_{E}\cap\partial Q$.

    We will construct $G_{Q}$ inductively with the following properties in the $j$-skeleton:
    \begin{enumerate}
        \item $G_{Q}(x)-G(x)\llcorner Q$ is supported in $\partial Q$ for $x\in X_{j}$.
        \item $G_{Q}|_{E}$ is localized in $\{\tilde{U}_{i}\}_{i\in \tilde{I}_{E}}$ for every $E\in\faces_{j}(X)$.
    \end{enumerate}

    At the $0$-skeleton we just define $G_{Q}(x)=G(x)\llcorner Q$. Suppose $G_{Q}$ was defined in the $j$-skeleton and let $E$ be an $(j+1)$-cell of $X$. Pick a vertex $v$ of $E$. As $G|_{E}$ is localized in $\{U_{i}^{E}\}_{i\in I_{E}}$, there exists $G_{E}$ continuous and supported there such that
    \begin{equation*}
        G(y)=G(v)+G_{E}(y)
    \end{equation*}
    for $y\in E$. For $y\in E$ set
    \begin{equation*}
        G_{Q}'(y)=G_{Q}(v)+r_{E}(G_{E}(y))
    \end{equation*}
    which is localized in $\{\tilde{U}_{i}^{E}\}_{i\in\tilde{I}_{E}}$ and verifies
    \begin{align*}
        G_{Q}'(y)-G(y)\llcorner Q & =G_{Q}(v)+r_{E}(G_{E}(y))-G(v)\llcorner Q-G_{E}(y)\llcorner Q \\
        & =r_{E}(G_{E}(y))-G_{E}(y)\llcorner Q \\
        & =r_{E}(G_{E}(y)\llcorner(M\setminus Q))
    \end{align*}
    which is supported in $\partial Q$. But $G_{Q}'$ may not coincide with $G_{Q}$ in $\partial E$. As $G_{Q}(x)$ is localized in $\{\tilde{U}_{i}^{E}\}_{i\in\tilde{I}_{E}}$ for $x\in\partial E$ (by inductive hypothesis and monotonously $\delta$-localization) and $G_{Q}(v)=G_{Q}'(v)$, if $x\in\partial E$ then $G_{Q}(x)-G_{Q}'(x)$ is supported in $\bigcup_{i\in\tilde{I}_{E}}\tilde{U}_{i}^{E}$ and also in $\partial Q$ as
    \begin{equation*}
        G_{Q}(x)-G_{Q}'(x)=[G_{Q}(x)-G(x)\llcorner Q]-[G_{Q}'(x)-G(x)\llcorner Q]
    \end{equation*}
    and both terms on the right are supported in $\partial Q$. Let $H:E\to \mathcal{I}_{k}(\partial Q\cap\bigcup_{i\in\tilde{I}_{E}}\tilde{U}_{i}^{E})$ be the map obtained by contracting the family $G_{Q}(x)-G_{Q}'(x)$ radially on each $\tilde{U}^{E}_{i}\cap\partial Q$ (in particular, $H(x)=G_{Q}(x)-G_{Q}'(x)$ for $x\in\partial E$). Then if we set
    \begin{equation*}
        G_{Q}(y)=G_{Q}'(y)+H(y)
    \end{equation*}
    for $y\in E$ then we obtain an extension of the previously defined $G_{Q}$ in $\partial E$ which verifies properties (1) and (2).
\end{proof}

\subsection{Main Theorems}

In this subsection, we state the approximation results by $\delta$-localized families in the discrete and continuous setting. These are crucial in the proof of the Parametric Isoperimetric Inequality discussed later in this article and also to obtain the Parametric Coarea Inequality in \cite{StaCoarea}. For simplicity, they are stated for families of absolute cycles and chains, but the corresponding relative versions in $(M,\partial M)$ also hold. They apply to any coefficient group $\mathbb{G}$ ($\mathbb{G}=\mathbb{Z}$ or $\mathbb{G}=\mathbb{Z}_{p}$). The manifold $M$ is compact, of dimension at least $2$.

\begin{theorem}\label{Thm filling small families}
    Given $0\leq k\leq n-1$ and $p,N\in\mathbb{N}$ there exist natural numbers $K(N,k,p)$, $n(k,p)$ and positive constants $c(k,p,M)$, $\varepsilon_{0}(k,p,M)$ and $\delta_{0}(k,p,M)$ so that the following is true. Let $\varepsilon\leq\varepsilon_{0}$, $\delta\leq\delta_{0}$ and let $F:X_{0}\to\mathcal{Z}_{k}(M)$ be a discrete $(N,\delta)$-localized family of cycles with $\mathcal{F}(F(x))\leq\varepsilon$ for every $x\in X_{0}$ and $\dim(X)=p$. Then there exists a $(K(N,k,p),K(N,k,p)\delta)$-localized family $\tau:X(q)_{0}\to\mathcal{I}_{k+1}(M)$ such that $\partial\tau(x)=R^{q}F(x)$ for $q=q(k,p,M,\delta)$ and $\mass(\tau(x))\leq c(k,p,M)\frac{\varepsilon}{\delta^{n(k,p)}}$.
\end{theorem}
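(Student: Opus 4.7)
The plan is to build $\tau$ on a fine cubical grid of $M$, using the Federer--Fleming deformation theorem to discretize each cycle $F(v)$ and then performing a coordinated combinatorial filling cell-by-cell over $X$, in the spirit of the Guth--Liokumovich argument for $1$-cycles in $3$-manifolds.

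\emph{Setup and deformation.} After invoking Lemma \ref{Lemma monotonously localized} I may assume $F$ is monotonously $(c(p)N,c(p)\delta)$-localized. Fix a cubical grid $\mathcal{G}$ on $M$ of scale $\rho$ equal to a small positive power of $\delta$, to be optimized. For each $v\in X_{0}$, write $F(v)=\alpha(v)+\partial\beta(v)$ with $\mass(\alpha(v))+\mass(\beta(v))\leq\varepsilon$; since $F(v)$ is a cycle, $\alpha(v)$ is also a cycle. Applying the Federer--Fleming deformation theorem to $\alpha(v)$ relative to $\mathcal{G}$ yields a cellular cycle $\tilde{F}(v)$ on the $k$-skeleton $\mathcal{G}_{k}$ and a chain $Q(v)$ with $\alpha(v)=\tilde{F}(v)+\partial Q(v)$, $\mass(\tilde{F}(v))\leq C\varepsilon$, and $\mass(Q(v))\leq C\rho\varepsilon$. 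By correlating the flat-norm representations across vertices using a small-scale isoperimetric argument inside each $\mathcal{U}_{C}$ (valid since $\mathcal{F}(F(v)-F(v'))\leq 2\varepsilon_{0}$), one can arrange that $\tilde{F}(v)-\tilde{F}(v')$ is supported in a $\rho$-enlargement of $\mathcal{U}_{C}$ whenever $v,v'\in V(C)$, so that $\tilde{F}$ is $(K_{1},K_{1}\delta)$-localized with $K_{1}=K_{1}(N,k,p)$.

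\emph{Cellular fillings.} The core step is to produce coordinated cellular fillings $\tilde{\tau}(v)\in\mathcal{I}_{k+1}(\mathcal{G}_{k+1})$ of each $\tilde{F}(v)$ so that $\{\tilde{\tau}(v)\}$ is $(K_{2},K_{2}\delta)$-localized. I would carry this out by induction on the dimension $j$ of cells of $X$: fill $\tilde{F}(v_{0})$ at a base vertex by an arbitrary cellular chain of small mass, then for each cell $C$ of $X$ of increasing dimension $j$ extend by a canonical cellular filling of the excess cycle $\tilde{F}(v)-\tilde{F}(v_{C})$ inside an enlargement of $\mathcal{U}_{C}$, where $v_{C}$ is a vertex of $C$ whose value was already set on a lower-dimensional face. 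Since a $\delta$-neighborhood contains $O((\delta/\rho)^{n})$ $k$-cells of $\mathcal{G}$, the canonical filling has mass $O((\delta/\rho)^{n}\rho^{k+1})$; optimizing $\rho$ as a power of $\delta$ produces the required bound $c(k,p,M)\varepsilon/\delta^{n(k,p)}$.

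\emph{Assembly and main obstacle.} Setting $\tau(x):=\tilde{\tau}(v_{x})+Q(v_{x})+\beta(v_{x})$ for $x\in X(q)_{0}$, with $v_{x}$ the closest vertex in $X_{0}$ and $q=q(k,p,M,\delta)$ chosen large enough that the localization of $\tilde{\tau}$ passes cleanly to the refined cells of $X(q)$, verifies $\partial\tau(x)=R^{q}F(x)$ and the required mass and localization bounds. The main obstacle is the coordination step inside the cellular filling: a single vertex $v$ of $X$ typically belongs to many cells, and to keep $\tilde{\tau}(v)$ compatible with the localization constraint imposed by each of these cells one must process the cells in a carefully chosen order (e.g., by increasing dimension, after fixing a spanning tree of $X_{1}$), absorbing accumulated discrepancies into chains supported in $\mathcal{U}_{C}$. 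Controlling the propagation of these corrections across cells of increasing dimension, so that the total number of admissible balls and their total radius remain polynomially bounded in $\delta^{-1}$, is exactly what forces the polynomial loss $\delta^{-n(k,p)}$ and the dependence of $K(N,k,p)$ and $n(k,p)$ on $p$.
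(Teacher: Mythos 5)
Your proposal identifies the right obstacle (coordinating the individual fillings $\tilde{\tau}(v)$ across cells of $X$) but does not actually resolve it, and the step that is supposed to do so---setting $\tau(x) := \tilde{\tau}(v_x)$ for $x\in X(q)_0$, with $v_x$ the nearest vertex of $X_0$---defeats the purpose of refining $X$. With that definition, $\tau$ takes only $|X_0|$ distinct values; on any edge of $X(q)$ crossing between the Voronoi cells of two adjacent vertices $v,v'\in X_0$, the jump is the full difference $\tilde{\tau}(v)-\tilde{\tau}(v')$ (plus the $Q$ and $\beta$ corrections). There is no reason for that difference to be supported in a $\delta$-admissible family. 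Indeed $\tilde{\tau}(v)-\tilde{\tau}(v')$ is a $(k+1)$-chain whose \emph{boundary} $F(v)-F(v')$ is localized, but two fillings of cycles with localized difference can have supports that are almost dense in $M$; this is exactly the phenomenon the theorem is designed to control, so assuming it away is circular. The Federer--Fleming deformation onto a fine grid does not help here: it gives mass bounds for each individual $\tilde{\tau}(v)$ but gives no control on the support of differences.

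What is missing is the interpolation mechanism. The paper's proof does not define $\tau$ by snapping to the nearest vertex of $X_0$; it genuinely uses the intermediate vertices of $X(q)$. For adjacent $v,v'$ (in the same shrunken cell $C_v$), the difference $\tau(v)-\tau(v')$ is a $(k+1)$-cycle of small flat norm, so it bounds a $(k+2)$-chain $\sigma$ of small mass (by the isoperimetric inequality, and then inductively by the codimension-$(n-k-1)$ version of the same theorem applied to the family of $(k+1)$-cycle differences). The chopping operation $d_l(\sigma)=\sigma\llcorner(M\setminus\bigcup_{l'\le l}B(x_{l'},r_{l'}))$ of Lemma \ref{Lemma chopping} then produces, over a chain of $L$ intermediate vertices, a sequence of fillings from $\tau(v)$ to $\tau(v')$ in which each consecutive jump is supported in a single ball $B(x_l,2r)$ of radius $O(\delta)$. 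This two-fold induction---on the skeleta of $X$ and simultaneously on the codimension $n-k$, the latter used to fill the $(k+1)$-cycle discrepancies by $(k+2)$-chains in a $\delta$-localized way---is the engine of the proof, and it is precisely what generates the polynomial loss $\delta^{-n(k,p)}$ via the recursion $n(k,j)=n(k+1,j-1)+n(k,j-1)+1$. Your mass heuristic $O((\delta/\rho)^n\rho^{k+1})$ also fails to recover a bound of the form $c\,\varepsilon/\delta^{n(k,p)}$ because it does not involve $\varepsilon$; the $\varepsilon$ dependence in the paper comes from the isoperimetric and coarea estimates applied inside the codimension induction, not from counting cells of a background grid. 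Without the refinement being used for genuine interpolation, and without the codimension induction to fill the cycle discrepancies, the coordination step you flag as the ``main obstacle'' has no mechanism behind it.
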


Roughly speaking, the previous theorem tells us that if we have a discrete family of $k$-cycles $F:X_{0}\to\mathcal{Z}_{k}(M)$ which has flat norm at most $\varepsilon$ at every point and is $\delta$-localized for $\varepsilon$ and $\delta$ small enough, then we can construct a discrete filling with similar properties. To be precise, the filling is a $\delta'$-localized family of $(k+1)$-chains $\tau$ such that up to refinement $\partial \tau(x)=F(x)$, $\delta'$ is controlled in terms of $\delta$ and $\mass(\tau(x))$ is controlled in terms of $\varepsilon$ and $\delta$. Theorem \ref{Thm filling small families} is proved in subsection \ref{Subsection Filling small families} and is used to obtain the following result.

\begin{theorem}\label{Thm discrete delta approx cycles}
    There exist constants $\varepsilon_{0}(k,p,M)>0$, $\delta_{0}(k,p,M)>0$, $C(k,p,M)>0$, $n(k,p)\in\mathbb{N}$ and $N(k,p)\in\mathbb{N}$ such that the following is true. Let $F:X_{0}\to\mathcal{Z}_{k}(M)$ be an $\varepsilon$-fine family, $0<\varepsilon\leq\varepsilon_{0}$, $0\leq k\leq n-1$, $\dim(X)=p$. Let $0<\delta<\delta_{0}$. Then there exists a discrete family $F':X(q)_{0}\to\mathcal{Z}_{k}(M)$ for $q=q(k,p,M,\delta)$ such that
    \begin{enumerate}
        \item $F'(x)=F(x)$ for every $x\in X_{0}$.
        \item Given a cell $C$ of $X$ and $x\in C$,
        \begin{equation*}
            \mass(F'(x))\leq\max_{v\in V(C)}\{\mass(F(v))\}+C(k,p,M)\frac{\varepsilon}{\delta^{n(k,p)}}.
        \end{equation*}
        \item $F'$ is $C(k,p,M)\frac{\varepsilon}{\delta^{n(k,p)}}$-fine in $X$ and $(N(k,p),\delta)$-localized in $X(q)$.
    \end{enumerate}
\end{theorem}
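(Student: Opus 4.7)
The plan is to induct on the skeleton dimension of $X$, constructing $F'$ on $X(q)_{0}\cap X_{j}$ for successive $j$ and refining $q$ as needed. The base case $j=0$ is tautological: set $F'=F$ on $X_{0}$, and both the fineness and localization conditions are vacuous on the $0$-skeleton. At each stage of the induction the workhorse is Theorem \ref{Thm filling small families}, which converts a discrete family of small-flat-norm, $\delta$-localized $k$-cycles into a localized discrete filling by $(k+1)$-chains.

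\textbf{Inductive step.} Assume $F'$ has been built on $X(q_{j-1})_{0}\cap X_{j-1}$ satisfying the conclusions on all cells of dimension at most $j-1$. Fix a $j$-cell $C$ of $X$, pick a base vertex $v_{0}\in V(C)\subseteq X_{0}$, and consider the difference family
\begin{equation*}
    G(x)=F'(x)-F(v_{0}),\qquad x\in\partial C\cap X(q_{j-1})_{0},
\end{equation*}
a discrete family of $k$-cycles parametrized by the $(j-1)$-dimensional cubical complex $\partial C$. The inductive fineness on $\partial C$ gives $\mathcal{F}(G(x))\leq C'\varepsilon/\delta^{n(k,j-1)}$ uniformly in $x$, and $G$ inherits an $(N',\delta)$-localization structure from $F'$. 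Provided $\varepsilon_{0}$ and $\delta_{0}$ are chosen small enough, $G$ meets the hypotheses of Theorem \ref{Thm filling small families}; that theorem then produces a further refinement $q_{j}$, a $(K,K\delta)$-localized discrete filling $\tau:\partial C\cap X(q_{j})_{0}\to\mathcal{I}_{k+1}(M)$ with $\partial\tau=R^{q_{j}/q_{j-1}}G$, and a mass bound $\mass(\tau(x))\leq c\varepsilon/\delta^{n'}$ for an appropriate updated exponent $n'$.

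\textbf{Interior extension.} To extend $F'$ into the interior of $C$, I extend $\tau$ to a discrete family $\widetilde{\tau}:C\cap X(q_{j})_{0}\to\mathcal{I}_{k+1}(M)$ via a cone-type construction from the boundary toward $v_{0}$: for each interior vertex $w$, choose a lattice path $v_{0}=w_{0},\dots,w_{m}=w$ in $C(q_{j})_{1}$ and transport $\tau$ along this path so that each consecutive difference $\widetilde{\tau}(w_{i})-\widetilde{\tau}(w_{i+1})$ is supported in a single localization ball inherited from the parent boundary cell. Then setting $F'(w)=F(v_{0})+\partial\widetilde{\tau}(w)$ yields a $k$-cycle agreeing with the previously constructed $F'$ on $\partial C\cap X(q_{j})_{0}$; the mass bound
\begin{equation*}
    \mass(F'(w))\leq\max_{v\in V(C)}\mass(F(v))+C''\varepsilon/\delta^{n}
\end{equation*}
follows from the triangle inequality combined with the mass control on $\tau$, and the cell-by-cell localization in $X(q_{j})$ is inherited from that of $\widetilde{\tau}$.

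\textbf{Main obstacle.} The principal difficulty is the bookkeeping of constants through the $p$ iterations: each application of Theorem \ref{Thm filling small families} inflates the locality pair $(N,\delta)$ by factors depending on $k$ and the current skeleton dimension and raises the ambient flat-norm bound by a multiplicative $1/\delta^{n(k,j-1)}$. One must therefore define $n(k,p)$ and $N(k,p)$ as cumulative versions of the corresponding constants supplied by Theorem \ref{Thm filling small families}, and shrink $\varepsilon_{0}(k,p,M)$ and $\delta_{0}(k,p,M)$ at each stage so that the smallness hypothesis of Theorem \ref{Thm filling small families} continues to hold throughout the induction. A secondary technical point is the radial extension of $\tau$ to interior vertices: to ensure that the union of localization balls encountered along each lattice path still forms an $(N(k,p),\delta)$-admissible family in the sense of Definition \ref{Def delta admissible}, one invokes Lemma \ref{Lemma non disjoint} to absorb overlapping balls and Lemma \ref{Lemma monotonously localized} to render the localization structure monotone along the skeleton induction.
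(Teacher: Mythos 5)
There is a genuine gap, and it sits exactly at the heart of the theorem: the mass bound (2). Your construction sets $F'(w)=F(v_{0})+\partial\widetilde{\tau}(w)$, where $\widetilde{\tau}$ is a small-mass filling of the difference family $G(x)=F'(x)-F(v_{0})$ transported (chopped) into the interior of the cell. Theorem \ref{Thm filling small families} controls $\mass(\widetilde{\tau}(w))$, \emph{not} $\mass(\partial\widetilde{\tau}(w))$: with any chopping/transport extension one has, for some region $\Omega$,
\begin{equation*}
    \partial\widetilde{\tau}(w)=\bigl(F'(x)-F(v_{0})\bigr)\llcorner\Omega+\widetilde{\tau}\llcorner\partial\Omega,
    \qquad\text{hence}\qquad
    F'(w)=F(v_{0})\llcorner(M\setminus\Omega)+F'(x)\llcorner\Omega+(\text{small}),
\end{equation*}
a hybrid of two cycles of possibly very large mass. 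Its mass is only bounded by $\mass(F(v_{0})\llcorner(M\setminus\Omega))+\mass(F'(x)\llcorner\Omega)$ plus a small term, which in general exceeds $\max_{v\in V(C)}\mass(F(v))+C\varepsilon/\delta^{n(k,p)}$: an $\varepsilon$-fine family can concentrate its mass in $\Omega$ at one vertex and in $M\setminus\Omega$ at another, and the pairs of nearby points of $F(v_{0})$ and $F'(x)$ straddling $\partial\Omega$ get double-counted, their number being uncontrolled by $\varepsilon$ unless $\partial\Omega$ is chosen by a coarea argument compatible with the connecting chains. So "triangle inequality combined with the mass control on $\tau$" does not yield (2); at best you get something like $2\max_{v}\mass(F(v))$, and the sharp "max plus error" bound is precisely what the later applications (Theorems \ref{Thm continuous delta approx cycles}, \ref{Isoperimetric inequality 2} and the Weyl law) need.

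The paper's proof (Proposition \ref{Prop delta loc aprox cycles}) uses a different mechanism for exactly this reason. It fixes chains $\tau_{vw}$ with $\partial\tau_{vw}=F(v)-F(w)$, chooses grids $\mathcal{D}_{C}$ by the coarea inequality so that $\mass(\tau_{vw}\llcorner\partial D)$ is small, and writes the interpolant in the form $F_{j}(x)=\sum_{D\in\mathcal{D}_{C}}F(w^{D}_{C}(x))\llcorner D+I_{j}(x)$, switching in each domain $D$ to the vertex minimizing $\mass(F(\cdot)\llcorner D)$; this makes $\sum_{D}\mass(F(w^{D}_{C}(x))\llcorner D)\leq\max_{v\in V(C)}\mass(F(v))$ monotonically, while the error $I_{j}(x)$ (built from restrictions $\tau_{vw}\llcorner\partial D$) has mass $O(L\varepsilon/r)$. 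Theorem \ref{Thm filling small families} enters only at the end of each inductive step, to contract the small discrepancy family $\overline{I}^{C}_{j}$ on $\partial\Center(C)$ caused by the failure of $\tau(v_{1},v_{2})+\tau(v_{2},v_{3})=\tau(v_{1},v_{3})$. Your skeleton-by-skeleton scheme, the localization bookkeeping via Lemmas \ref{Lemma non disjoint} and \ref{Lemma monotonously localized}, and the fineness estimate are fine as far as they go (they are essentially how the paper proves Proposition \ref{Prop filling small families} itself), but without the per-domain minimal-mass selection relative to coarea-adapted grids the key conclusion (2) is not reachable from a single base vertex $v_{0}$.
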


We can interpret the previous theorem as that every $\varepsilon$-fine discrete family $F$ of $k$-cycles can be refined to an $\varepsilon'$-fine and $\delta$-localized $F'$ without increasing the mass more than $\varepsilon'$, for $\varepsilon'$ controlled in terms of $\varepsilon$ and $\delta$. The proof is discussed in subsection \ref{Subsection approximating families of cycles}. The following is a generalization of Theorem \ref{Thm discrete delta approx cycles} for families of chains and is discussed in subsection \ref{Subsection approximating families of chains}.

\begin{theorem}\label{Thm discrete delta approx chains}
    There exist constants $\varepsilon_{0}(k,p,M)>0$, $\delta_{0}(k,p,M)>0$, $C(k,p,M)$, $n(k,p)\in\mathbb{N}$ and $N(k,p)\in\mathbb{N}$ such that the following is true. Let $G:X_{0}\to\mathcal{I}_{k+1}(M)$ be an $\varepsilon$-fine family, $0<\varepsilon\leq\varepsilon_{0}$, $0\leq k\leq n-1$, $\dim(X)=p$ and denote $F=\partial G$. Let $0<\delta<\delta_{0}$. Then there exists a discrete family $G':X(q)_{0}\to\mathcal{I}_{k+1}(M)$ for $q=q(k,p,M,\delta)$ such that if we denote $F'=\partial G'$ then
    \begin{enumerate}
        \item $G'(x)=G(x)$ and $F'(x)=F(x)$ for every $x\in X_{0}$.
        \item Given a cell $C$ of $X$ and $x\in C$,
        \begin{equation*}
            \mass(G'(x))\leq\max_{v\in V(C)}\{\mass(G(v))\}+C(k,p,M)\frac{\varepsilon}{\delta^{n(k,p)}}
        \end{equation*}
        and
        \begin{equation*}
            \mass(F'(x))\leq\max_{v\in V(C)}\{\mass(F(v))\}+C(k,p,M)\frac{\varepsilon}{\delta^{n(k,p)}}.
        \end{equation*}
        \item $F'$ and $G'$ are $C(k,p,M)\frac{\varepsilon}{\delta^{n(k,p)}}$-fine in $X$ and $(N(k,p),\delta)$-localized in $X(q)$.
    \end{enumerate}
\end{theorem}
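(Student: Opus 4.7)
The strategy is to reduce the chain case to the already-established cycle case (Theorem \ref{Thm discrete delta approx cycles}) applied to $F = \partial G$, and then to build $G'$ above the resulting cycle family $F'$ by induction on the skeleta of $X$, filling appropriate auxiliary cycle families at each stage via Theorem \ref{Thm filling small families}.

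For the first step, since $\partial$ is $1$-Lipschitz in the flat norm, the family $F = \partial G$ is also $\varepsilon$-fine. Applying Theorem \ref{Thm discrete delta approx cycles} to $F$ produces $F': X(q_1)_0 \to \mathcal{Z}_k(M)$ with $F'|_{X_0} = F$, the required mass bound $\mass(F'(x)) \leq \max_{v \in V(C)}\mass(F(v)) + O(\varepsilon/\delta^{n(k,p)})$, $\varepsilon'$-fineness on $X$ for $\varepsilon' = C_1 \varepsilon / \delta^{n(k,p)}$, and $(N(k,p), \delta)$-localization on $X(q_1)$. This settles the $F'$-portion of the conclusion outright.

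For the chain family $G'$, I would proceed by induction on the skeleta of $X$, starting with $G' = G$ on $X_0$. For the inductive step on a $j$-cell $C$ with chosen base vertex $v_C \in V(C)$, form the auxiliary discrete cycle family
\begin{equation*}
    H_C(w) = F'(w) - F(v_C), \qquad w \in C(q_1)_0.
\end{equation*}
Because $F'$ is $\varepsilon'$-fine on $X$ and $F'(v_C) = F(v_C)$, we have $\mathcal{F}(H_C(w)) \leq \varepsilon'$ for every $w$; because $F'$ is $(N, \delta)$-localized on $X(q_1)$, the family $H_C$ inherits the same localization on $C(q_1)$; and $H_C(v_C) = 0$. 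Applying Theorem \ref{Thm filling small families} to $H_C$ yields a filling $\tau_C: C(q_1 q_2)_0 \to \mathcal{I}_{k+1}(M)$ with $\partial \tau_C = R^{q_2} H_C$, mass bound $\mass(\tau_C(w)) \leq C_2 \varepsilon'/\delta^{n'} = O(\varepsilon/\delta^{n(k,p)})$, and suitable localization. Setting $G'(w) := G(v_C) + \tau_C(w)$ on the interior vertices of $C(q)_0$ then gives $\partial G'(w) = F(v_C) + H_C(w) = F'(w)$ together with the desired mass and fineness bounds for $G'$.

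The main obstacle is \emph{coherence at shared cell boundaries}: the filling $\tau_C$ produced by Theorem \ref{Thm filling small families} will in general not match the $G'$ already defined on $\partial C \cap X(q)_0$ by the inductive hypothesis. Both $\tau_C$ and the previously-constructed $G' - G(v_C)$ fill $H_C$ on $\partial C(q)_0$, so their difference is a discrete family of $(k+1)$-cycles on $\partial C(q)_0$, of small mass and $\delta$-localized. I would resolve this by applying Theorem \ref{Thm filling small families} \emph{one chain dimension higher} to this discrepancy cycle family on $\partial C$ to obtain a $(k+2)$-chain family whose boundary equals the discrepancy, and then using a parametric extension procedure across $C$ to convert this into a correction of $\tau_C$ that restores consistency with the inductive data while preserving the mass and localization bounds. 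The cumulative factor of $\delta^{-n(k,p)}$ incurred each time Theorem \ref{Thm filling small families} or Theorem \ref{Thm discrete delta approx cycles} is invoked, together with the combinatorial blow-up of the localization constants $K(N, k, p)$ through the $p$ skeleta, accounts for the $p$-dependence of the final constants $C(k,p,M)$, $n(k,p)$, and $N(k,p)$.
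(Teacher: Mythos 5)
Your first step coincides with the paper's: apply Theorem \ref{Thm discrete delta approx cycles} to $F=\partial G$ and then lift $G$ over the resulting $F'$. But the heart of your argument --- the per-cell filling of $H_C(w)=F'(w)-F(v_C)$ by Theorem \ref{Thm filling small families} followed by a ``discrepancy correction'' to match the values already defined on $\partial C$ --- has a genuine gap, and it is exactly the difficulty the paper isolates in Proposition \ref{Prop filling big families}. The discrepancy family on $\partial C$, namely $\bigl(G'(w)-G(v_C)\bigr)-\tau_C(w)$, is \emph{not} of small mass, contrary to what you assert: on a face $E\subseteq\partial C$ it contains the term $G(v_E)-G(v_C)$, the mod-$2$ difference of two chains each of mass up to $\max_{v\in V(C)}\mass(G(v))$, which has small flat norm but (think of two nearby parallel sheets with disjoint supports) essentially no mass cancellation. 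It is still legitimate to fill it by Theorem \ref{Thm filling small families} one dimension higher, since that theorem only needs small flat norm and localization; the failure comes when you add the boundary of the chopped $(k+2)$-filling across $C$ to restore consistency. In the transition region the corrected chain is $G(v_C)+\tau_C(w)+\partial[d_l(\sigma)](w)$, and $\partial[d_l(\sigma)]$ contains the discrepancy restricted to the complement of the first $l$ balls plus small slice terms; since nothing controls how the mass of the discrepancy is distributed among the balls, intermediate values have mass up to roughly $\mass(G(v_C))+\mass(G(v_E)-G(v_C))\approx 3\max_{v\in V(C)}\mass(G(v))$, which destroys conclusion (2) (the allowed excess is only $C(k,p,M)\varepsilon/\delta^{n(k,p)}$).

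The missing idea is a mass-efficient way to pass between the large chains $G(v_E)$ and $G(v_C)$, and this is precisely what the paper supplies in Proposition \ref{Prop filling big families}: it redoes the grid interpolation of Proposition \ref{Prop delta loc aprox cycles} for chains with constant boundary, writing $G_j(x)=\sum_{D\in\mathcal{D}_E}G(w^D_E(x))\llcorner D+I_j(x)$ and swapping $G(v)\llcorner D$ for the mass-minimizing vertex one grid domain $D$ at a time (via $\partial[\sigma_{vw}\llcorner D]$), so that all intermediate chains have mass at most $\max_v\mass(G(v))$ plus small error, while the small terms $I_j$ and the $\tilde X$/$\sim$ bookkeeping handle localization and matching across cell boundaries. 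Your skeleton-by-skeleton scheme with fill-and-correct works for the small-flat-norm cycle families (it is essentially the paper's proof of Proposition \ref{Prop filling small families}), but it cannot by itself give the sharp mass bound for chains of unbounded mass; some version of the greedy grid-swap interpolation has to replace the naive correction step.
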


Next we state the versions of Theorem \ref{Thm discrete delta approx cycles} and Theorem \ref{Thm discrete delta approx chains} for continuous families. They are proved in subsection \ref{Subsection approximating continuous families}.

\begin{theorem}\label{Thm continuous delta approx cycles}
    Let $F:X\to\mathcal{Z}_{k}(M)$ be a continuous map without concentration of mass. Let $\varepsilon,\delta>0$. Then there exists $q,q'\in\mathbb{N}$ and a continuous map $F':X\to\mathcal{Z}_{k}(M)$ without concentration of mass such that
    \begin{enumerate}
        \item $F'$ is $\delta$-localized in $X(qq')$.
        \item $F'(x)=F(x)$ for every $x\in X(q)_{0}$.
        \item $\mathcal{F}(F(x),F'(x))\leq\varepsilon$ for every $x\in X$.
        \item If $x\in C$ for some cell $C$ of $X(q)$ then 
        \begin{equation*}
            \mass(F'(x))\leq\max\{\mass(F(v)):v\in V(C)\}+\varepsilon.
        \end{equation*}
    \end{enumerate}
\end{theorem}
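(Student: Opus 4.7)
The plan is to reduce to the discrete approximation theorem (Theorem \ref{Thm discrete delta approx cycles}) and then continuously interpolate inside the localizing balls. First, I would exploit continuity together with the no-concentration-of-mass hypothesis and compactness of $X$ to fix an auxiliary small parameter $\varepsilon_{1}>0$ (to be chosen later as a function of $\varepsilon$, $\delta$, and the constants $C(k,p,M)$ and $n(k,p)$ appearing in Theorem \ref{Thm discrete delta approx cycles}) and then pick $q$ large enough that, for every cell $C$ of $X(q)$ and every $x,y\in C$, $\mathcal{F}(F(x),F(y))\leq\varepsilon_{1}$. This makes the restriction $F|_{X(q)_{0}}$ an $\varepsilon_{1}$-fine discrete family.

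Next I would apply Theorem \ref{Thm discrete delta approx cycles} to this discrete family with fineness $\varepsilon_{1}$ and localization $\delta$, obtaining some $q'\in\mathbb{N}$ and a discrete family $\tilde{F}':X(qq')_{0}\to\mathcal{Z}_{k}(M)$ which agrees with $F$ on $X(q)_{0}$, is $(N(k,p),\delta)$-localized in $X(qq')$, is $C(k,p,M)\varepsilon_{1}/\delta^{n(k,p)}$-fine on $X(q)$, and satisfies the discrete mass bound on every cell of $X(q)$. By Lemma \ref{Lemma monotonously localized}, after enlarging $N$ and $\delta$ by a dimensional constant I can assume $\tilde{F}'$ is monotonously localized.

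The core step is to extend $\tilde{F}'$ to a continuous map $F':X\to\mathcal{Z}_{k}(M)$ by induction on the skeleta of $X(qq')$. Suppose $F'$ has been built continuously over the $j$-skeleton of $X(qq')$. For each $(j+1)$-cell $C$ with associated $\delta$-admissible family $\{U_{i}^{C}\}_{i\in I_{C}}$, pick a vertex $v_{0}\in V(C)$; by monotonicity the map $x\mapsto F'(x)-\tilde{F}'(v_{0})$ already defined on $\partial C$ takes values in cycles supported in $U^{C}:=\bigcup_{i}U_{i}^{C}$. Since each $U_{i}^{C}$ is a ball (or a half-ball based on $\partial M$) and hence contractible, I can extend this boundary map to all of $C$ using a cone construction inside $U^{C}$: on $C$ identified with the cone on $\partial C$ with apex mapped to $0$, interpolate radially so that, restricted to each $U_{i}^{C}$, the chain is scaled toward the cycle at the corresponding $\partial C$-point. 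Adding back $\tilde{F}'(v_{0})$ produces the extension of $F'$ to $C$, and by construction differences $F'(x)-F'(y)$ for $x,y\in C$ remain supported in $U^{C}$, preserving $\delta$-localization.

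Finally I verify the four properties. Property (1) is immediate from the inductive construction. Property (2) holds because $\tilde{F}'$ agreed with $F$ on $X(q)_{0}$. For (3) and (4), since $F'(x)$ differs from some vertex value $\tilde{F}'(v)$ only by a cycle supported in $U^{C}$ with $\sum r_{i}<\delta$, and since the cone construction inside balls of total radii $\delta$ introduces flat-norm error controlled by $\delta$ times the ambient mass plus the discrete fineness $C\varepsilon_{1}/\delta^{n}$, choosing $\varepsilon_{1}$ small enough (as a function of $\varepsilon$, $\delta$) forces both $\mathcal{F}(F(x),F'(x))\leq\varepsilon$ and the desired mass bound. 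No concentration of mass for $F'$ is inherited from the fact that $F'(x)$ lies within flat distance $O(\delta)$ of a fixed vertex chain $\tilde{F}'(v)$ with mass $\leq\max_{v\in V(C)}\mass(F(v))+\varepsilon$, together with the fact that the interpolating pieces are supported in balls of total radius at most $\delta$, which contributes only a uniformly small mass within any ball of radius $r\to 0$.

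The main obstacle is the cone/interpolation step: one must produce a continuous family of genuine $k$-cycles (not merely chains) inside the admissible union $U^{C}$ that matches prescribed cycle values on $\partial C$, with flat-norm and mass errors controlled solely by $\delta$ and by the discrete fineness. This is exactly the construction already developed in \cite{GL22}[Section~2], and its compatibility across all cells of $X(qq')$ depends crucially on the monotone $(N,\delta)$-localization guaranteed by Lemma \ref{Lemma monotonously localized}; without monotonicity, the inductive step would fail because boundary values on $\partial C$ might be supported outside the balls available for filling in the interior of $C$.
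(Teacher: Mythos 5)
Your overall skeleton matches the paper's: refine $X$ so that $F|_{X(q)_{0}}$ is $\varepsilon'$-fine, apply Theorem \ref{Thm discrete delta approx cycles}, and then extend the discrete approximation continuously cell by cell using the interpolation of \cite{GL22}[Section~2] (the paper simply invokes \cite{GL22}[Proposition~2.6] rather than re-deriving the cone construction). However, your verification of (3) and (4) has a genuine gap. The mass added by the continuous extension over a cell $C$ of $X(qq')$, and likewise the flat distance $\mathcal{F}(F'(x),F'(y))$ for $x,y\in C$, are controlled not by ``$\delta$ times the ambient mass plus the discrete fineness'' but by $C(k,p)\max_{v\in V(C)}\mass(F'(v)\llcorner\bigcup_{i}U_{i}^{C})$, i.e.\ by the \emph{local} mass of the discrete approximation inside the admissible balls. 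This quantity is not damped by a factor of $\delta$ (a cycle supported in tiny balls can still carry mass comparable to $\sup_x\mass(F(x))$), it cannot be made small by shrinking $\varepsilon_{1}$, and nothing in the \emph{statement} of Theorem \ref{Thm discrete delta approx cycles} bounds it. Consequently ``choosing $\varepsilon_{1}$ small enough as a function of $\varepsilon,\delta$'' does not force the mass bound (4), and your flat-norm estimate $\delta\cdot\mass_{0}$ need not be $\leq\varepsilon$ either, since $\mass_{0}$ is not at your disposal.

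The missing ingredient is to use the no-concentration-of-mass hypothesis \emph{before} applying the discrete theorem: choose the localization scale $\delta_{1}\leq\delta$ so small that $\mass(F(x)\llcorner B(p,\delta_{1}))<\varepsilon_{1}/(2^{p}N(k,p))$ uniformly, and then transfer this smallness from $F$ to the discrete approximation $F'$ via the structural representation established in the proof of Proposition \ref{Prop delta loc aprox cycles}, namely $F'(x)=\sum_{D\in\mathcal{D}_{\overline{C}}}F(w_{\overline{C}}^{D}(x))\llcorner D+I_{p}(x)$ with $\mass(I_{p}(x))\leq\varepsilon_{1}$. This is how the paper obtains $\mass(F'(v)\llcorner U_{C})\leq 2\varepsilon_{1}$, which is exactly what makes both the mass increase in \cite{GL22}[Proposition~2.6] and the flat-norm deviation of order $\varepsilon_{1}$, hence $\leq\varepsilon$ after the right choice of constants. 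You invoke no-concentration only at the end, to check that $F'$ itself has no concentration of mass, which is the wrong place; without the local-mass bound on the discrete family inside the $\delta_{1}$-balls, your inductive cone extension can overshoot both (3) and (4).
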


\begin{theorem}\label{Thm continuous delta approx chains}
    Let $G:X\to\mathcal{I}_{k+1}(M)$ be a continuous map without concentration of mass. Denote $F=\partial G$ and let $\varepsilon,\delta>0$. Then there exists $q,q'\in\mathbb{N}$ and a continuous map $G':X\to\mathcal{I}_{k+1}(M)$ without concentration of mass such that if we denote $F'=\partial G'$ then
    \begin{enumerate}
        \item $G'$ and $F'$ are $\delta$-localized in $X(qq')$.
        \item $G'(x)=G(x)$ and $F'(x)=F(x)$ for every $x\in X(q)_{0}$.
        \item $\mathcal{F}(G(x),G'(x))\leq\varepsilon$ and $\mathcal{F}(F'(x),F(x))\leq\varepsilon$ for every $x\in X$.
        \item If $x\in C$ for some cell $C$ of $X(q)$ then 
        \begin{equation*}
            \mass(G'(x))\leq\max\{\mass(G(v)):v\in V(C)\}+\varepsilon.
        \end{equation*}
        and
        \begin{equation*}
            \mass(F'(x))\leq\max\{\mass(F(v)):v\in V(C)\}+\varepsilon.
        \end{equation*}
    \end{enumerate}
\end{theorem}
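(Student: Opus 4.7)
My plan is to bootstrap Theorem \ref{Thm continuous delta approx chains} from its discrete counterpart Theorem \ref{Thm discrete delta approx chains}, parallelling how Theorem \ref{Thm continuous delta approx cycles} is obtained from Theorem \ref{Thm discrete delta approx cycles} via the discrete-to-continuous techniques of \cite{GL22}[Section~2]. The key input is that since $G$ is continuous in the flat topology on the compact complex $X$ and has no concentration of mass, for any prescribed $\varepsilon'>0$ there is a refinement $X(q)$ such that the discrete family $G_{0}:=G|_{X(q)_{0}}$ is $\varepsilon'$-fine, and so that interpolating back from $G_{0}$ only loses flat distance at most $\varepsilon'$ from the original $G$, and similarly for $F=\partial G$ and $F_{0}=\partial G_{0}$.

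First I would fix small auxiliary constants $\varepsilon',\delta'>0$ (to be chosen at the end in terms of $\varepsilon,\delta,p,k,M$) and pass to a refinement $X(q)$ on which $G_{0}$ is $\varepsilon'$-fine. Applying Theorem \ref{Thm discrete delta approx chains} to $G_{0}$ with these parameters yields a further refinement $X(qq')$ and a discrete family $G'':X(qq')_{0}\to\mathcal{I}_{k+1}(M)$ that (i) agrees with $G_{0}$ at the original vertices $X(q)_{0}$, (ii) is $(N,\delta')$-localized, (iii) is $\eta$-fine in $X(q)$ with $\eta=C\varepsilon'/(\delta')^{n(k,p)}$, and (iv) satisfies the vertex mass bound, with the analogous assertions holding simultaneously for $F''=\partial G''$. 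Using Lemma \ref{Lemma monotonously localized}, I may assume $G''$ is monotonously $\delta'$-localized.

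Next I interpolate from the discrete $G''$ to a continuous family $G'$ by inductive extension over the skeleta of $X(qq')$. On a cell $C$ the $\delta'$-admissible family $\{U_{i}^{C}\}$ contains the support of every increment $G''(x)-G''(v)$ for a fixed basepoint $v\in V(C)$, so after pinning to $v$ we must continuously extend an increment $H:\partial C\to\mathcal{I}_{k+1}(M)$ supported in $\bigcup_{i}U_{i}^{C}$. Since each $U_{i}^{C}$ is contractible, a radial cone construction inside each $U_{i}^{C}$ produces a continuous extension of $H$ to $C$, with support still contained in $\bigcup_{i}U_{i}^{C}$; because $\partial$ interacts well with coning, the boundary family $\partial G'$ stays localized in the same balls. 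Thus $G'$ and $F':=\partial G'$ are simultaneously $(N,\delta')$-localized. The flat-norm closeness to $G$ and $F$ follows by combining the flat bound from the discrete theorem with the $\varepsilon'$-loss in sampling and the controlled contribution of each cone.

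The main obstacle will be twofold. First, the cone step can inflate mass by a factor depending on the ambient geometry of the balls $U_{i}^{C}$, so to reach the stated bound $\max_{v}\mass(G(v))+\varepsilon$ (and the analogous bound for $F'$) one must choose $\delta'$ small enough to handle the final $\delta$-localization requirement via the factor $c(p)$ from Lemma \ref{Lemma monotonously localized}, and then $\varepsilon'$ so tiny that $\eta=C\varepsilon'/(\delta')^{n(k,p)}$ is comfortably smaller than $\varepsilon$. Second, the mass bounds for $G'$ and for $F'=\partial G'$ are a priori independent and must be tracked in parallel through every $j$-skeleton of $X(qq')$; this forces the cone construction to be applied to both the chain increment and its boundary with coordinated estimates, which is where the simultaneous statement in Theorem \ref{Thm discrete delta approx chains} becomes essential. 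Once these constants are calibrated, the resulting $G'$ satisfies properties (1)--(4) with the desired $\delta$ in place of $\delta'$.
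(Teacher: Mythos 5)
Your high-level route (apply the discrete Theorem \ref{Thm discrete delta approx chains}, then interpolate discrete-to-continuous cell by cell using the $\delta$-localization, calibrating $\varepsilon',\delta'$ at the end) is the same as the paper's, which simply quotes \cite{GL22}[Proposition~2.6] for the interpolation instead of redoing the cone extension by hand. However, there is a genuine gap in how you close the mass and flat-norm estimates. The error produced by the cone/contraction step is \emph{not} controlled by the fineness $\eta=C\varepsilon'/(\delta')^{n(k,p)}$: what the interpolation actually gives (this is exactly the estimate in \cite{GL22}[Proposition~2.6]) is $\mass(G'(x))\leq\max_{v\in V(C)}\mass(G''(v))+C(k,p)\max_{v}\mass\big(G''(x)\llcorner\bigcup_{i\in I_{C}}U_{i}^{C}\big)$, and likewise $\mathcal{F}(G'(x),G'(y))\leq C(k,p)\delta_{1}\max_{v}\mass\big(G''(x)\llcorner\bigcup_{i}U_{i}^{C}\big)$ on each cell. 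The increments $G''(x)-G''(v)$ have small flat norm but a priori large mass inside the small balls, so taking $\varepsilon'$ tiny does nothing to tame this term; your proposal never establishes that $\mass(G''(x)\llcorner U_{C})$ and $\mass(F''(x)\llcorner U_{C})$ are small, and without that neither conclusion (3) nor (4) follows.

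This is precisely where the no-concentration-of-mass hypothesis enters, and your write-up misplaces it: you invoke it only to get $\varepsilon'$-fineness of the sampled family, but fineness already follows from flat continuity and compactness. In the paper's proof of Theorem \ref{Thm continuous delta approx cycles} (the chains case is declared analogous), one first chooses $\delta_{1}$ so that $\sup_{x}\sup_{p}\mass(F(x)\llcorner B(p,\delta_{1}))<\varepsilon_{1}/(2^{p}N(k,p))$ as in (\ref{Eq no concentration of mass}), and then uses the \emph{internal structure} of the discrete approximant produced in the proof of Theorem \ref{Thm discrete delta approx cycles} / \ref{Thm discrete delta approx chains} — namely $F'(x)=\sum_{D\in\mathcal{D}_{\overline C}}F(w^{D}_{\overline C}(x))\llcorner D+I_{p}(x)$ (property (B1), resp. (C4) for $G$) — to transfer this smallness to the approximant: $\mass(F'(x)\llcorner U_{C})\leq\sum_{v\in V(\overline C)}\mass(F(v)\llcorner U_{C})+\mass(I_{p}(x))\leq 2\varepsilon_{1}$, and similarly for $G'$. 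Note this uses the discrete theorem's proof, not just its statement, so a purely black-box application as in your proposal cannot produce the needed restricted-mass bound. To repair the argument you must (a) use the no-concentration-of-mass of $G$ and $F$ to pick $\delta_{1}$, and (b) carry the decomposition of $G''$ and $F''$ over grids through the interpolation step; the rest of your outline (simultaneous treatment of $G'$ and $\partial G'$, the $c(p)$ and $C(k,p)$ factors in the localization bookkeeping) is consistent with the paper.
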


\subsection{Filling discrete families of small cycles}\label{Subsection Filling small families}

The main goal of this subsection is to prove the following proposition. We use the notation from Section \ref{Section Preliminaries}. It implies Theorem \ref{Thm filling small families} by the choice of $L$ detailed below.

\begin{proposition}\label{Prop filling small families}
    Given $0\leq k\leq n-1$ and $p,N\in\mathbb{N}$ there exist natural numbers $K(N,k,p)$, $n(k,p)$ and positive constants $c(k,p)$, $\varepsilon_{0}(k,p,M)$ and $\delta_{0}(k,p,M)$ so that the following is true. Let $\varepsilon\leq\varepsilon_{0}$, $\delta\leq\delta_{0}$ and let $F:X_{0}\to\mathcal{Z}_{k}(M)$ be a discrete $(N,\delta)$-localized family of cycles with $\mathcal{F}(F(x))\leq\varepsilon$ for every $x\in X_{0}$ and $\dim(X)=p$. Then there exists a $(K(N,k,p),K(N,k,p)\delta)$-localized family $\tau:X(q)_{0}\to\mathcal{I}_{k+1}(M)$ such that $\partial\tau(x)=R^{q}F(x)$ for $q=q(k,p,M,\delta)$ and $\mass(\tau(x))\leq c(k,p)(\frac{L}{\delta})^{n(k,p)}\varepsilon$.
\end{proposition}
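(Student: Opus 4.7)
The plan is to prove the proposition by induction on $p=\dim(X)$, constructing the filling one skeleton at a time. The key input that makes the induction go through is a \emph{localized filling lemma} for individual $k$-cycles that are both flat-small and supported in an admissible family.

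The base case $p=0$ reduces to a single-cycle filling. For each isolated vertex $v\in X_{0}$, since $\mathcal{F}(F(v))\leq\varepsilon$ I can write $F(v)=A+\partial B$ with $\mass(A)+\mass(B)\leq\varepsilon$, and $A$ is automatically a $k$-cycle because $F(v)$ is. For $\varepsilon\leq\varepsilon_{0}(M,g)$ sufficiently small, the standard filling inequality in $M$ gives $A=\partial C$ with $\mass(C)\leq c(M)\mass(A)\leq c(M)\varepsilon$, so $\tau(v):=B+C$ fills $F(v)$ with $\mass(\tau(v))\leq(1+c(M))\varepsilon$.

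For the inductive step, assume the result for complexes of dimension $p-1$. I apply the inductive hypothesis to $F|_{(X_{p-1})_{0}}$, which is still $(N,\delta)$-localized, to produce a $(K_{p-1},K_{p-1}\delta)$-localized filling $\tau_{p-1}:X_{p-1}(q_{1})_{0}\to\mathcal{I}_{k+1}(M)$. After refining to $X(q)$ with $q$ a sufficiently large multiple of $q_{1}$, for each top cell $C$ of $X$ I fix a base vertex $v_{C}\in V(C)\cap X_{0}$ and extend $\tau_{p-1}$ to the interior vertices of $C(q)_{0}$ by setting
\begin{equation*}
    \tau(v):=\tau_{p-1}(v_{C})+\gamma^{C}(v),
\end{equation*}
where $\gamma^{C}(v)$ is a $(k+1)$-chain, produced by the localized filling lemma, with boundary $\partial\gamma^{C}(v)=R^{q}F(v)-F(v_{C})$; this boundary is a cycle of flat norm $\leq 2\varepsilon$ supported in $\bigcup_{i\in I_{C}}U_{i}^{C}$ by the $(N,\delta)$-localization of $F$ on $C$. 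Then $\partial\tau(v)=R^{q}F(v)$, and differences $\tau(v)-\tau(w)=\gamma^{C}(v)-\gamma^{C}(w)$ for $v,w$ in $C$ lie in a controlled enlargement of $\bigcup_{i\in I_{C}}U_{i}^{C}$, yielding the required $(K,K\delta)$-localization on $C$. Compatibility with $\tau_{p-1}$ along $\partial C$ is built into the induction by arranging, as part of the inductive data at stage $p-1$, that the restriction of $\tau_{p-1}$ to each face $\partial C$ already has the form $\tau_{p-1}(v_{C})+\gamma^{C}(v)$ for a coherent choice of transition chains.

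The main obstacle, and the heart of the argument, is the localized filling lemma itself: given any $k$-cycle $T$ with $\support(T)\subseteq\bigcup_{i}U_{i}$ for an $(N,\delta)$-admissible family and $\mathcal{F}(T)\leq 2\varepsilon$, produce a filling $\sigma$ supported in an enlarged $(cN,cN\delta)$-admissible family with $\mass(\sigma)\leq c(k,p)(L/\delta)^{n(k,p)}\varepsilon$. The difficulty is that the flat-norm decomposition $T=A+\partial B$ carries no a priori support control, so one cannot directly localize $A$ and $B$ to a thickening of $\bigcup U_{i}$. I would address this by combining a Federer--Fleming deformation onto the $k$-skeleton of a cubical grid of scale proportional to $\delta$, which inflates masses by a factor $\sim\delta^{-n(k,p)}$ but produces a cellular representative supported near $\bigcup U_{i}$, with a coarea/slicing argument that cuts $B$ along spheres of carefully chosen radii so that slice masses remain controlled. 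The exponent $n(k,p)$ reflects the amplification incurred by iterating this deformation--slicing scheme at scale $\delta$ across the relevant dimensions, and the parameter $L$ absorbs the geometric constants of $M$ entering each deformation step.
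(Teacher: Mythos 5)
Your plan has a genuine gap, and it sits exactly where the real difficulty of the proposition lies: the seam between the boundary data and your interior ansatz. Inside a top cell $C$ you set $\tau(v)=\tau_{p-1}(v_{C})+\gamma^{C}(v)$, so differences of interior vertices are supported near $\bigcup_{i}U_{i}^{C}$; but localization must hold for every cell of the \emph{refined} complex $X(q)$, and a small cell $E\subseteq C$ can have one vertex $v$ on $\partial C$ (where $\tau=\tau_{p-1}$) and another vertex $w$ in the interior. Then $\tau(v)-\tau(w)=\bigl(\tau_{p-1}(v)-\tau_{p-1}(v_{C})\bigr)-\gamma^{C}(w)$, and the first term is not supported in any $K\delta$-admissible family: the inductive localization of $\tau_{p-1}$ only controls differences of vertices lying in a \emph{common} cell of the refined $(p-1)$-skeleton, while $v$ and $v_{C}$ may be far apart on $\partial C$, so the accumulated difference can be supported essentially anywhere. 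Saying that compatibility is ``built into the induction by arranging that $\tau_{p-1}|_{\partial C}$ already has the form $\tau_{p-1}(v_{C})+\gamma^{C}(v)$ with coherent transition chains'' is circular: the statement you are inducting on does not provide that structure, and producing it is the bulk of the paper's argument. The paper does this via the dyadic subdivision into the cells $\overline{C}_{v}$ (on which $R^{q}F$ is constant), the auxiliary complex $\tilde{X}$ with the equivalence relation $\sim$ and transition chains $\Delta^{j}_{EE'}$, and, crucially, the chopping procedure (Lemmas \ref{Prelemma chopping}--\ref{Lemma chopping}): the extension into a cell is not a one-shot ansatz but an interpolation that removes one coarea-chosen ball $B(x_{l},r_{l})$ at a time from a higher-dimensional filling of the boundary differences, so that adjacent refined vertices differ by a chain supported in a single ball $B(x_{l},2r)$ and the values near $\partial E$ match the boundary family. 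This is also where the quantities $L$, $1/\delta$ and the recursion $n(k,j)=n(k+1,j-1)+n(k,j-1)+1$ actually arise; note too that the paper's outer induction is on the codimension $n-k$ (base case $k=n-1$ via the Constancy Theorem), filling the $(k+1)$-cycle differences of the partial fillings by $(k+2)$-chains, rather than on $p$ alone.

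Your ``localized filling lemma'' is also not established by the argument you sketch. The flat-norm decomposition $T=A+\partial B$ carries no support information, and a Federer--Fleming deformation onto a grid of scale $\sim\delta$ only displaces points by $O(\delta)$: it does not bring the supports of $A$ or $B$ anywhere near $\bigcup_{i}U_{i}$, so it cannot produce the claimed localized filling. For a \emph{single} cycle one can salvage a statement of this type by a different route (slice $B$ along spheres of coarea-chosen radii in annuli around the admissible balls, after merging as in Lemma \ref{Lemma non disjoint}, and cone the resulting small-mass cycles inside the enlarged balls), but even granting such a lemma, applying it vertex-by-vertex gives no mechanism for the gradual, ball-by-ball interpolation between boundary and interior values that the $(K,K\delta)$-localization of the full family requires; that mechanism is the missing idea.
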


In order to prove the proposition, we need to introduce some definitions and do some constructions first. Let $r<\inj(M,g)$ and consider a collection of $L$ points $x_{1},...,x_{L}\in M$ such that $\{B(x_{l},r):1\leq l\leq L\}$ covers $M$ and $L\leq c(n)\frac{\Vol(M,g)}{r^{n}}$ for some universal constant $c(n)$ depending only on $n$. For this section, we will consider $r=\delta$ and the corresponding $L$ is the one that appears in the statement of Proposition \ref{Prop filling small families}. In the next sections, we will choose other values of $r$, being that the reason why we do the following constructions for arbitrary $r<\inj(M,g)$. 

\begin{definition}
    Given a $(k+1)$-chain $\tau$ supported in $B(x_{l},2r)$, we consider a radius $r_{l}(\tau)\in[r,2r]$ such that
    \begin{equation*}
        \mass(\tau\llcorner \partial B(x_{l},r_{l}(\tau)))\leq\frac{\mass(\tau)}{r}.
    \end{equation*}
    The existence of such radius is guaranteed by the Coarea Inequality. Given $\tau\in\mathcal{I}_{k+1}(M)$ (which is not necessarily supported in $B(x_{l},2r)$) we define
    \begin{equation*}
        r_{l}(\tau)=r_{l}(\tau\llcorner B(x_{l},2r)).
    \end{equation*}
    Observe that under this definition, if $\support(\tau-\tau')\cap B(x_{l},2r)=\emptyset$ then $r_{l}(\tau)=r_{l}(\tau')$.
\end{definition}

\begin{definition}\label{Def chopping}
    Given $1\leq l\leq L$ and $\tau\in\mathcal{I}_{k+1}(M)$, we denote
    \begin{equation*}
        d_{l}(\tau)=\tau\llcorner(M\setminus\bigcup_{l'\leq l}B(x_{l'},r_{l'}(\tau)).
    \end{equation*}
    Observe that $d_{0}(\tau)=\tau$ and $d_{l}(\tau)=0$ for all $l\geq L$.
\end{definition}

We introduce the following definitions, which will be used in the rest of the section.

\begin{definition}
    Let $\phi:[0,1]\to[0,1]$ be the piecewise linear function
    \[
    \phi(x)=\begin{cases}
        0 & \text{ if }x\leq\frac{1}{3}\\
        3(x-\frac{1}{3})
        & \text{ if } \frac{1}{3}\leq x\leq\frac{2}{3}\\
        1 & \text{ if }\frac{2}{3}\leq x\leq 1.
    \end{cases}
    \]
    For each $j\in\mathbb{N}$, define $\Xi_{j}:[0,1]^{j}\to[0,1]^{j}$ as $\Xi_{j}(x_{1},...,x_{j})=(\phi(x_{1}),...,\phi(x_{j}))$. Given a cubical complex $X$, denote $\Xi:X\to X$ the map which restricts to $\Xi_{j}$ on each $j$-dimensional cell $C$ of $X$ (identifying $C\cong[0,1]^{j}$) and $\Xi^{3}:X\to X$ the map which restricts to $\Xi_{j}$ at each $j$-cell $C$ of $X(3)$.
\end{definition}

\begin{definition}\label{Def metrics on X}
    For each cell $j$-cell $C$ of $X$, identify $C\cong[0,1]^{j}$ and consider the metrics
    \begin{align*}
        d_{0}(x,y) & =\max\{|x_{i}-y_{i}|:1\leq i\leq j\},\\
        d_{1}(x,y) & =\sum_{i=1}^{j}|x_{i}-y_{i}|
    \end{align*}
    where $x=(x_{1},...,x_{j})$ and $y=(y_{1},...,y_{j})$. Given $q\in\mathbb{N}$, we consider the following metric $d^{q}$ in $X(q)_{0}$
    \begin{equation*}
        d^{q}(x,y)=qd_{1}(x,y).
    \end{equation*}
    $d^{q}(x,y)$ equals the minimal number of vertices in $X(q)_{0}$ that we must visit if we want to travel from $x$ to $y$ by jumping between vertices in $X(q)_{0}$ differing in only one coordinate. When $q$ is clear (because we are considering $x,y\in X(q)_{0}$ for some specific value of $q$) we may just denote $d^{q}$ by $d$.
\end{definition}

Later we will be interested in constructing discrete $\delta$-localized families inductively skeleton by skeleton on a cubical complex $X$. When doing that, we will have to extend a family $\tau(x)$ defined on the boundary of a certain cell $C$ of $X$ to the interior of $C$, preserving $\delta$-localization. The next lemma shows that this is possible if the extension is done by a chopping procedure as in Definition \ref{Def chopping}. The subsequent lemma shows that if in addition $\mass(\tau(x))$ and $\mass(\partial\tau(x))$ are small, then the extension and its boundary also have small mass.

\begin{lemma}\label{Prelemma chopping}
    Let $\tau,\tau'\in\mathcal{I}_{k+1}(M)$ be such that $\tau-\tau'$ is supported in the union of a certain collection of balls $\{B(y_{i},s_{i})\}_{1\leq i\leq N}$. Then
    \begin{equation*}
        \support(d_{l}(\tau)-d_{l}(\tau'))\subseteq\bigcup_{i\leq N}B(y_{i},s_{i}+4r)
    \end{equation*}
    for every $1\leq l\leq L$.
\end{lemma}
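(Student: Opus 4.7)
The plan is to show that at any point $p$ lying outside $\bigcup_{i \le N} B(y_i, s_i + 4r)$, both chains $d_l(\tau)$ and $d_l(\tau')$ locally agree, from which the desired support containment follows.

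First I would separate the indices $1 \leq l' \leq L$ into two classes: those $l'$ that are \emph{affected}, meaning $B(x_{l'}, 2r) \cap B(y_i, s_i) \neq \emptyset$ for some $i$, and those that are \emph{unaffected}, for which $B(x_{l'}, 2r)$ is disjoint from every $B(y_i, s_i)$. For unaffected $l'$, the restrictions $\tau \llcorner B(x_{l'}, 2r)$ and $\tau' \llcorner B(x_{l'}, 2r)$ coincide (since $\support(\tau-\tau') \subseteq \bigcup B(y_i,s_i)$), so by the remark following the definition of $r_{l'}$, the radii match: $r_{l'}(\tau) = r_{l'}(\tau')$.

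Next I would handle the affected indices. If $l'$ is affected via the ball $B(y_i, s_i)$, then $d(x_{l'}, y_i) < s_i + 2r$, and since $r_{l'}(\tau), r_{l'}(\tau') \in [r, 2r]$, the triangle inequality gives
\begin{equation*}
    B(x_{l'}, r_{l'}(\tau)) \cup B(x_{l'}, r_{l'}(\tau')) \subseteq B(y_i, s_i + 4r).
\end{equation*}
In particular, any point $p$ outside $\bigcup_{i} B(y_i, s_i + 4r)$ lies outside these two balls, so $p$ is excluded from (or equivalently, included in the complement of) every ball indexed by an affected $l'$, for both $\tau$ and $\tau'$ simultaneously.

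To conclude, fix $p \notin \bigcup_i B(y_i, s_i + 4r)$. Since $p \notin \bigcup_i B(y_i, s_i)$, the chains $\tau$ and $\tau'$ coincide on a neighborhood of $p$. Moreover, the preceding two observations show that
\begin{equation*}
    p \in \bigcup_{l' \le l} B(x_{l'}, r_{l'}(\tau)) \iff p \in \bigcup_{l' \le l} B(x_{l'}, r_{l'}(\tau')),
\end{equation*}
because the contribution of each unaffected $l'$ is identical for $\tau$ and $\tau'$, and no affected $l'$ contributes $p$ to either union. Hence the two characteristic sets used to define $d_l(\tau)$ and $d_l(\tau')$ agree near $p$, and so $d_l(\tau) = d_l(\tau')$ in a neighborhood of $p$, proving the desired support containment. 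The only mildly delicate point is verifying the radii equality for unaffected $l'$, which I expect to follow cleanly from the definition $r_{l'}(\tau) = r_{l'}(\tau \llcorner B(x_{l'}, 2r))$.
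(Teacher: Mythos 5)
Your proof is correct and is essentially the paper's argument: the paper uses exactly the same two facts, namely that $r_{m}(\tau)=r_{m}(\tau')$ whenever $B(x_{m},2r)$ misses $\support(\tau-\tau')$ (your unaffected case) and that an affected ball $B(x_{m},2r)$ is contained in some $B(y_{i},s_{i}+4r)$ by the triangle inequality, only packaged as an algebraic identity for $d_{l}(\tau)-d_{l}(\tau')$ rather than as a local statement at a point $p$. The one step to spell out in your phrasing is why the sets agree \emph{near} $p$ and not just at $p$: this holds because each affected index has $d(x_{l'},y_{i})<s_{i}+2r$ and radius at most $2r$, so its closed ball lies at positive distance from any $p\notin\bigcup_{i}B(y_{i},s_{i}+4r)$, and there are only finitely many such indices.
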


\begin{proof}
    Denote $r_{m}=r_{m}(\tau)$ and $r_{m}'=r_{m}(\tau')$. Then
    \begin{multline*}
        d_{l}(\tau)-d_{l}(\tau') =\tau-\tau'-[\tau\llcorner\bigcup_{m=1}^{l}B(x_{m},r_{m})-\tau'\llcorner\bigcup_{m=1}^{l}B(x_{m},r'_{m})]\\
          =\tau-\tau'-[(\tau-\tau')\llcorner\bigcup_{m=1}^{l}B(x_{m},r_{m}')+\tau\llcorner\bigcup_{m=1}^{l}B(x_{m},r_{m})\setminus B(x_{m},r'_{m})\\
         -\tau\llcorner\bigcup_{m=1}^{l}B(x_{m},r'_{m})\setminus B(x_{m},r_{m})].
    \end{multline*}
    The previous implies that
    \begin{equation*}
        \support(d_{l}(\tau)-d_{l}(\tau'))\subseteq\support(\tau-\tau')\cup\bigcup_{\substack{1\leq m\leq l \\ r_{m}\neq r_{m}'}}B(x_{m},2r).
    \end{equation*}
    But by a previous observation, $r_{m}\neq r'_{m}$ implies that $B(x_{m},2r)\cap\support(\tau-\tau')\neq\emptyset$. Therefore $B(x_{m},2r)\subseteq B(y_{i},s_{i}+4r)$ for some $1\leq i\leq N$ which yields the desired result.
\end{proof}

\begin{lemma}\label{Lemma chopping}
    Let $C=[0,1]^{j}$ and $q\in\mathbb{N}$, $q\geq L$. Let $\tau:\partial C\cap C(q)_{0}\to\mathcal{I}_{k+1}(M)$ be an $(N,\delta)$-localized family with $\mass(\tau(x))\leq\varepsilon_{1}$ and $\mass(\partial\tau(x))\leq\varepsilon_{2}$. Then the family $\overline{\tau}:C(3q)_{0}\to\mathcal{I}_{k+1}(M)$ given by
    \[
        \overline{\tau}(y)=\begin{cases}
            d_{l(y)}(\tau(\Xi(y)) & \text{ if } l(y)=d_{\infty}(y,\partial C)\leq q\\
            0 & \text{ otherwise}
        \end{cases}
    \]
    is $(N+1,3[\delta+(4N+2)r])$-localized, $\mass(\overline{\tau}(x))\leq\varepsilon_{1}$ and
    \begin{equation*}
        \mass(\partial\overline{\tau}(x))\leq \varepsilon_{2}+L\frac{\varepsilon_{1}}{r}.
    \end{equation*}
     
\end{lemma}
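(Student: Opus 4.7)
The plan is to verify the three conclusions in turn. The first two (mass bounds) are direct calculations; the localization is the main content.

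For the mass bound, note that $\overline\tau(y)$ is either $0$ or equals $d_{l(y)}(\tau(\Xi(y)))$, which is a restriction of $\tau(\Xi(y))$ to a subset of $M$. Hence $\mass(\overline\tau(y)) \leq \mass(\tau(\Xi(y))) \leq \varepsilon_1$. For the boundary mass bound, set $U_l = \bigcup_{l' \leq l} B(x_{l'}, r_{l'}(\tau))$ and write $\partial d_l(\tau) = (\partial\tau)\llcorner(M\setminus U_l) + \tau\llcorner\partial U_l$. The first term has mass at most $\mass(\partial\tau) \leq \varepsilon_2$. The second is controlled by $\sum_{l'\leq l}\mass(\tau\llcorner\partial B(x_{l'}, r_{l'}(\tau)))$, and the defining property $\mass(\tau\llcorner\partial B(x_{l'}, r_{l'}(\tau))) \leq \mass(\tau)/r$ gives a bound of $L\varepsilon_1/r$.

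For the localization, first observe that $\Xi$ sends cells of $C(3q)$ into cells of $C(q)$: direct inspection of $\phi$ shows that for consecutive vertices $k/(3q)$ and $(k+1)/(3q)$ the values $\phi(k/(3q))$ and $\phi((k+1)/(3q))$ are either equal or consecutive vertices of $[0,1](q)$. Next, within any cell $D$ of $C(3q)$ the integer $l(y) = d_\infty(y,\partial C)$ varies by at most $1$, since $D$ has $L^\infty$-diameter one in the refined metric. Fix a cell $D$ and split into cases. If all vertices of $D$ lie in $\{l > q\}$, then $\overline\tau \equiv 0$ on $D$. If the vertices are mixed (some $l \leq q$, some $l > q$), then by the diameter bound the only value with $l\leq q$ is $l = q$; but $q \geq L$ and $\bigcup_{l'=1}^{L} B(x_{l'}, r_{l'}(\tau)) \supseteq \bigcup_{l'=1}^{L} B(x_{l'}, r) = M$, so $d_q(\tau) = 0$ and again $\overline\tau \equiv 0$ on $D$.

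The substantive case is when every vertex of $D$ satisfies $l \leq q$. Fix a base vertex $v \in V(D)$ and decompose, for each $y \in V(D)$,
\begin{equation*}
\overline\tau(y) - \overline\tau(v) = \bigl[d_{l(y)}(\tau(\Xi(y))) - d_{l(y)}(\tau(\Xi(v)))\bigr] + \bigl[d_{l(y)}(\tau(\Xi(v))) - d_{l(v)}(\tau(\Xi(v)))\bigr].
\end{equation*}
Since $\Xi(y)$ and $\Xi(v)$ lie in a common cell of $C(q)$, the $(N,\delta)$-localization of $\tau$ supplies balls $\{B(y_i, s_i)\}_{i\leq N}$ with $\sum s_i \leq \delta$ that cover $\support(\tau(\Xi(y)) - \tau(\Xi(v)))$. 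Lemma \ref{Prelemma chopping} then places the first bracket inside $\bigcup_i B(y_i, s_i + 4r)$. The second bracket is a difference of $d_l$'s of the \emph{same} chain at consecutive indices, so it equals $\pm\tau(\Xi(v))\llcorner B(x_{l^*}, r_{l^*}(\tau(\Xi(v))))$ for the single index $l^* = \max(l(y), l(v))$, hence supported in $B(x_{l^*}, 2r)$. Because $l$ takes at most two values on $V(D)$, the union over all $y$ introduces only one such extra ball, giving a cover by $N+1$ balls with total radius $\leq \delta + (4N+2)r$. Lemma \ref{Lemma non disjoint} then consolidates these into an $(N+1, 3[\delta + (4N+2)r])$-admissible family, which witnesses the required localization on $D$.

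The main obstacle is the interplay between the two sources of variation on a single cell $D$: the movement of $\tau(\Xi(\cdot))$ coming from the localization of $\tau$, and the movement of the chopping level $l(\cdot)$. Lemma \ref{Prelemma chopping} tames the first by inflating each ball by an additive $4r$, while the bound $|l(y_1)-l(y_2)|\leq 1$ confines the second to at most one extra ball per cell; together these give exactly the $N+1$ balls and the radius bound $\delta + (4N+2)r$ claimed.
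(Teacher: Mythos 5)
Your proof is correct and takes essentially the same approach as the paper's: in both, $\overline\tau(y_1)-\overline\tau(y_2)$ is split into a piece with the chopping index held fixed and the argument of $\tau$ varying (controlled by Lemma~\ref{Prelemma chopping}, which inflates each ball by $4r$) plus a piece with the argument held fixed and the index moving by one (supported in a single ball $B(x_{l^*},2r)$), and the two mass bounds are computed identically. The only cosmetic differences are that the paper routes through an explicit intermediate vertex $y_3$ of the cell while you use an abstract intermediate chain at a base vertex, and your preliminary case split (all $l>q$, mixed, all $l\leq q$) is spelled out more systematically.
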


\begin{proof}[Proof of Lemma \ref{Lemma chopping}]
    First we prove $(N,\delta)$-localization. Let $y_{1},y_{2}$ be vertices of the same cell $E$ in $C(3q)$. If $y_{2},y_{3}\in\Center(C)$ the result follows, so we can assume $l(y_{1}),l(y_{2})\leq q$. Let $E'$ be a $(j-1)$-face of $\partial C(q)$ such that $\Xi(E)\subseteq E'$. Then without loss of generality, there exists another vertex $y_{3}$ of $E$ such that $\Xi(y_{2})=\Xi(y_{3})$, $l(y_{2})=l(y_{3})+1$, $l(y_{1})=l(y_{3})$ and $\Xi(y_{1}),\Xi(y_{3})$ belong to $E'$. By Lemma \ref{Prelemma chopping} applied to the chains $\tau=\tau(\Xi(y_{1}))$, $\tau'=\tau(\Xi(y_{3}))$, the number $l=l(y_{1})=l(y_{3})$ and the $(N,\delta)$-admissible family $\{U_{i}\}_{i\in I_{E'}}$ with $U_{i}=B(y_{i},s_{i})$ it holds
    \begin{equation*}
        \support(\overline{\tau}(y_{1})-\overline{\tau}(y_{3}))\subseteq\bigcup_{i\in I_{E'}}B(y_{i},s_{i}+4r).
    \end{equation*}
    On the other hand, by definition 
    \begin{equation*}
        \support(\overline{\tau}(y_{2})-\overline{\tau}(y_{3}))\subseteq B(x_{l(y_{2})},2r).
    \end{equation*}
    This implies that $\overline{\tau}(y_{1})-\overline{\tau}(y_{2})$ is supported in $B(x_{l(y_{2})},2r)\cup\bigcup_{i\in I_{E'}}B(y_{i},s_{i}+4r)$ and thus in an $(N+1,3[\delta+(4N+2)r])$-localized family by Lemma \ref{Lemma non disjoint}. Observe that if $l=l(x)\leq q$ and we denote $\tau=\tau(\Xi(x))$,
    \begin{align*}
        \partial\overline{\tau}(x) & =\partial d_{l}(\tau)\\
        & =\partial[\tau\llcorner M\setminus\bigcup_{l'\leq l}B(x_{l'},r_{l'}(\tau))]\\
        & =\partial\tau\llcorner [M\setminus\bigcup_{l'\leq l}B(x_{l'},r_{l'}(\tau))]+\tau\llcorner\partial [M\setminus\bigcup_{l'\leq l}B(x_{l'},r_{l'}(\tau))]
    \end{align*}
    hence
\begin{align*}
    \mass(\partial\overline{\tau}(x)) & \leq \mass(\partial\tau)+\sum_{l'=1}^{l}\mass(\tau\llcorner\partial B(x_{l'},r_{l'}(\tau)))\\
    & \leq\varepsilon_{2}+L\frac{\varepsilon_{1}}{r}.
\end{align*}

\end{proof}

\begin{proof}[Proof of Proposition \ref{Prop filling small families}]
    For each cell $C$ of $X$, let $\{U_{i}^{C}\}_{i\in I_{C}}$ be the $\delta$-admissible family associated to $C$ and let $U_{C}=\bigcup_{i\in I_{C}}U_{i}^{C}$. We proceed by induction on $n-k$, being $k=n-1$ the base case.

    \textbf{Base case 1.} Let $k=n-1$. For each $x\in X_{0}$, as $\mathcal{F}(F(x))\leq\varepsilon$ there exists an $n$-chain $\tau(x)$ such that $\partial\tau(x)=F(x)$ and $\mass(\tau(x))\leq\varepsilon$. Consider the corresponding family $\tau:X_{0}\to\mathcal{I}_{n}(M)$. We claim that if $C$ is a cell of $X$ and $x,y\in X_{0}\cap C$ then $\tau(x)-\tau(y)$ is supported in $U_{C}$. Indeed, as $\support(F(x)-F(y))\subseteq U_{C}$, there exists $\tau_{xy}\in\mathcal{I}_{n}(U_{C})$ such that $\partial\tau_{xy}=F(x)-F(y)$ and $\mass(\tau_{xy})\leq 2\varepsilon$. Therefore $\eta_{xy}=\tau(x)-\tau(y)-\tau_{xy}$ is an $n$-cycle on $M$ of mass at most $4\varepsilon$. Thus if we set $\varepsilon_{0}<\frac{1}{4}\Vol(M^{n},g)$, by the Constancy Theorem it must be $\eta_{xy}=0$ and therefore $\tau(x)-\tau(y)=\tau_{xy}$ thus it is supported in $U_{C}$ as desired. Therefore $q(n-1,p,M,\delta)=1$, $K(N,n-1,p)=N$, $n(n-1,p)=0$, $\delta_{0}=\inj(M,g)$ and $c(n-1,p)=1$ for every $p\in\mathbb{N}$ and every $N\in\mathbb{N}.$

    \textbf{Inductive step 1.} Assume that the result holds for families of $(k+1)$-cycles and let $F:X_{0}\to\mathcal{Z}_{k}(M)$ be as in the statement of the proposition. For each $x\in X_{0}$, we define $\tau(x)=\tau_{0}(x)$ to be an arbitrary $(k+1)$-chain such that $\partial\tau(x)=F(x)$ and $\mass(\tau(x))\leq\varepsilon$. Following the strategy of Guth and Liokumovich \cite{GL22}, we define a sequence of maps $\tau_{j}:X_{j}(q_{j})_{0}\to\mathcal{I}_{k+1}(M)$ ($q_{j}=q(k,j,p,M,\delta)$ odd for each $j$) which agree with $\tau_{0}$ at $X_{0}$, are $\delta$-localized, small in mass and verify $\partial\tau_{j}(x)=R^{q_{j}}F(x)$. In order to do this, we consider the dyadic subdivision $X(2)$ of $X$ and for each $C\in\faces_{p}(X)$ and $v\in V(C)$ we denote
    \begin{equation*}
        \overline{C}_{v}=\{x\in C:d_{1}(x,v)\leq\frac{1}{2}\}
    \end{equation*}
    being $\{\overline{C}_{v}:C\in\faces_{p}(X),v\in V(C)\}$ the collection of top dimensional cells of $X(2)$. Observe that by definition, if $x\in \overline{C}_{v}\cap X(q_{j})_{0}$ then $R^{q_{j}}F(x)=F(v)$ and hence $R^{q_{j}}F$ is constant on each $\overline{C}_{v}$. Therefore, in order to define $\tau_{j}$ inductively it will be easier to proceed one $\overline{C}_{v}$ at a time. This is because if $\tau_{j-1}$ was already defined in the boundary of a $j$-face $E$ of some $\overline{C}_{v}$ and $v_{E}$ is some vertex of $E$, then $\tilde{\tau}(x)=\tau_{j-1}(x)-\tau_{j-1}(v_{E})$ is a discrete family of $(k+1)$-cycles with domain $\partial E\cap X(q_{j-1})_{0}$ (observe that this is not true if $E$ is a cell of $C$ as the values of $\partial\tau_{j-1}(x)=R^{q_{j-1}}F(x)$ may vary). Therefore by the inductive hypothesis on the codimension of the cycles, there will exist $\sigma_{E}:\partial E\cap X(q_{j})_{0}\to\mathcal{I}_{k+2}(M)$ such that (up to refinement) $\partial\sigma_{E}(x)=\tilde{\tau}_{j}(x)$ and thus applying Lemma \ref{Lemma chopping} to $\sigma_{E}$ we can extend $\sigma_{E}$ and therefore $\tau_{j-1}$ to all of $E$ as $\tau_{j}(x)=\tau_{j-1}(v_{E})+\partial\sigma_{E}(x)$. But we have to be careful with these extensions in order to obtain $\delta$-localized maps, because it may happen that two vertices $x,y$ are in the same cell of $X(q_{j})_{0}$ but do not belong to the same $\overline{C}_{v}$ (hence we can not define the $\tau_{j}$ on each $\overline{C}_{v}$ independently, some relations must hold for adjacent $\overline{C}_{v}$'s). We need to make sure that in that case the difference $\tau_{j}(x)-\tau_{j}(y)$ is still supported in a $\delta$-admissible family. For that purpose, we will consider a certain equivalence relation on the faces of the $\overline{C}_{v}$'s and extend $\tau_{j-1}$ one class at a time so that we maintain a certain relation between $\tau_{j}(x)$ and $\tau_{j}(y)$ if $x$ and $y$ are adjacent  (namely, that the difference $\tau_{j}(x)-\tau_{j}(y)$ is supported in a $\delta$-admissible family).

    With the previous in mind, it will be easier for us to work with a complex $\tilde{X}$ whose cells are essentially the $\overline{C}_{v}$'s and to define maps there which verify certain compatibility relations between different cells $\overline{C}_{v}$ instead of working with our initial cubical complex $X$. We make this idea precise in the definitions below.

    \begin{definition}
        Given $C\in\faces_{p}(X)$, $v\in V(C)$ and $\lambda\in[0,1]$, let $H_{\lambda}=H_{\lambda}^{C,v}:\overline{C}_{v}\to \overline{C}_{v}$ be the linear homotecy on $\overline{C}_{v}$ centered at $v$ of ratio $\lambda$. If $\lambda>1$ we set $H_{\lambda}=H_{\lambda^{-1}}^{-1}: H_{\lambda^{-1}}(\overline{C}_{v})\to\overline{C}_{v}$. Define $C_{v}=H_{\frac{2}{3}}(\overline{C}_{v})$, notice that
        \begin{align*}
            \overline{C}_{v} & =\{x\in C:d_{1}(x,v)\leq\frac{1}{2}\}\\
            C_{v} & =\{x\in C:d_{1}(x,v)\leq\frac{1}{3}\}
        \end{align*}
        and hence $C_{v}\in\faces_{p}(X(3))$.
    \end{definition}

    \begin{definition}
        We denote
        \begin{equation*}
            \tilde{X}=\bigsqcup_{C\in\faces_{p}(X)}\bigsqcup_{v\in V(C)}C_{v}.
        \end{equation*}
        being $\tilde{X}\subseteq X(3)$ a subcomplex. Let $\iota:\tilde{X}\to X(2)$ the map which restricts to $H_{\frac{3}{2}}^{C,v}$ on each $C_{v}$.
    \end{definition}

    \begin{definition}
        For each $q\in\mathbb{N}$ we define $\iota_{q}:\tilde{X}(q)\to X(2q+1)$ the map sending each $x\in C_{v}$ to $H^{C,v}_{\frac{3q}{2q+1}}(x)$.
    \end{definition}

    \begin{remark}
        For every $q$, $\iota_{q}:\tilde{X}(q)\to X(2q+1)$ is a simplicial map. Moreover, its restriction to the $0$-skeleton $\iota_{q}:\tilde{X}(q)_{0}\to X(2q+1)_{0}$ is bijective and is the identity when $q=1$. Therefore, there is a bijective correspondence between discrete maps $\tilde{F}:\tilde{X}(q)_{0}\to \mathcal{I}_{k}(M)$ and discrete maps $F:X(2q+1)_{0}\to \mathcal{I}_{k}(M)$ given by by $F\circ\iota_{q}=\tilde{F}$. We will often refer to $F$ and $\tilde{F}$ using the same symbol considering this identification.
    \end{remark}

    \begin{definition}
        Let $\sim$ be the equivalence relation on the cells of $\tilde{X}$ given by $E\sim E'$ if and only if $\iota(E)=\iota(E')$. If $E\sim E'$, we define an affine map $T_{EE'}:E\to E'$ given by $T_{EE'}=\iota|_{E'}^{-1}\circ\iota|_{E}$. Given $F,F'\in\faces_{j}(\tilde{X}(q_{j}))$ we say that $F\sim F'$ if there exist $E,E'\in\faces_{j}(X)$ such that $F\subseteq E$, $F'\subseteq E'$, $E\sim E'$ and $T_{EE'}(F)=F'$. In that case, we denote $T_{FF'}=T_{EE'}$.
    \end{definition}

    \begin{remark}
        Let $x,y\in \tilde{X}(q)_{0}$. Observe that if $\iota_{q}(x)$ is adjacent to $\iota_{q}(y)$ in $X(2q+1)_{0}$ (meaning that $d^{2q+1}(\iota_{q}(x),\iota_{q}(y))=1$ as in Definition \ref{Def metrics on X}), then either $x$ is adjacent to $y$ in $\tilde{X}(q)_{0}$ or there exists $F,F'\in\faces_{j}(\tilde{X}_{j}(q_{j}))$ such that $x\in F$, $y\in F'$ and $F\sim F'$.
    \end{remark}

    Taking the previous observation in mind, we are going to inductively define $\delta_{j}$-localized maps $\tau_{j}:\tilde{X}_{j}(q_{j})_{0}\to\mathcal{I}_{k+1}(M)$ satisfying the following: if $F\sim F'$, $F,F'\in\faces_{j}(\tilde{X}(q_{j}))$ and $x\in F$, the chain $\Delta^{j}_{FF'}(x):=\tau_{j}(x)-\tau_{j}(T_{FF'}(x))$ is supported in a certain $\delta_{j}$-admissible family $\{B_{i}^{F}\}_{i\in I_{F}}$. This will allow us to induce a map $\tau$ on $X(2q_{p}+1)_{0}$ from $\tau_{p}$ which will be $\delta$-localized and will have the other desired properties.

    We start by defining $\tau_{0}:\tilde{X}_{0}\to\mathcal{I}_{k+1}(M)$. As we observed previously, $\tilde{X}_{0}=X(3)_{0}$ so we will regard $X(3)_{0}$ as the domain of $\tau_{0}$. As $\tau_{0}$ was already defined at $X_{0}$, it is enough to define it in $C(3)_{0}\setminus V(C)$ for each $C\in\faces_{p}(X)$. Observe that
    \begin{equation*}
        C(3)_{0}\setminus V(C)=\bigsqcup_{E\text{ face of }C}V(\Center(E))
    \end{equation*}
    where $V(\Center(E))=\Center(E)\cap C(3)_{0}$ is the set of vertices of $\Center(E)$. Hence it suffices to define $\tau_{0}$ on $V(\Center(E))$ for each face $E$ of $C$. Given a face $E$ of $C$, we choose $v_{E}\in V(E)$ (arbitrary) and we define $\tau_{0}(w)=\tau_{0}(v_{E})+\tau(\Xi(w),v_{E})$ where given $v,v'\in V(E)$ we select $\tau(v,v')\in\mathcal{I}_{k+1}(M)$ a chain such that $\partial\tau(v,v')=F(v)-F(v')$, $\support(\tau(v,v'))\subseteq U_{E}$ and $\mass(\tau(v,v'))\leq 2\varepsilon$. Notice that $\mass(\tau_{0}(w))\leq 3\varepsilon$ for every $w\in C(3)_{0}$ and that in fact this definition is independent of the cell $C$ (we get the same $\tau_{0}$ if we regard $E$ as a face of another top-dimensional cell $C'$) hence it extends $\tau_{0}$ to $X(3)_{0}$. But this refinement has some extra advantages. Given $x,y\in X(3)_{0}$ with $d(x,y)=1$ (here $d=d^{3}$ as in Definition \ref{Def metrics on X}), either there exists $v$ such that $x,y\in C_{v}$ and hence $\partial\tau_{0}(x)=\partial\tau_{0}(y)=F(v)$ or $\tau_{0}(x)-\tau_{0}(y)$ is supported in $U_{E}$ because $x,y\in\Center(E)$ for some face $E$ of $C$. Thus to obtain $\delta$-localization we will perform refinements to the domain $\tilde{X}$ to replace ``jumps'' as in the first case from a chain $\tau_{0}(x)$ to another chain $\tau_{0}(y)$ with (possibly) very different support but the same boundary by ``smaller jumps'' involving consecutive chains with similar supports, taking advantage of the fact that $\tau(x)-\tau(y)$ is a $(k+1)$-cycle which we can chop using Lemma \ref{Lemma chopping}. We proceed by induction in the skeleta of $\tilde{X}$.
    
    \textbf{Inductive property 2.} Given $1\leq j\leq p$, denote $q_{j}=q(k,j,p,M,\delta)$. There exists  a discrete family $\tau_{j}:\tilde{X}(q_{j})_{0}\to\mathcal{I}_{k+1}(M)$ and for each $j$-cell $F$ of $\tilde{X}_{j}(q_{j})$ a $(K(N,k,j),K(N,k,j)\delta)$-admissible family $\{B_{i}^{F}\}_{i\in I_{F}}$ such that
    \begin{enumerate}[label=(A\arabic*)]
        \item $\partial\tau_{j}(x)=R^{q_{j}}F(x)$.
        \item $\mass(\tau_{j}(x))\leq c(k,j)(\frac{L}{\delta})^{n(k,j)}\varepsilon$.
        \item If $F\in\faces_{j}(\tilde{X}_{j}(q_{j}))$ and $x,y\in V(F)$ then $\support(\tau(x)-\tau(y))$ is covered by $\{B_{i}^{F}\}_{i\in I_{F}}$.
        \item If $F\sim F'$ then $\{B_{i}\}_{i\in I_{F}}=\{B_{i}\}_{i\in I_{F'}}$. Furthermore, let $\Delta^{j}_{FF'}(x)=\tau_{j}(x)-\tau_{j}(T_{FF'}(x))$ for $x\in F\cap\tilde{X}(q_{j})_{0}$. Then $\Delta^{j}_{FF'}(x)$ is supported in $\bigcup_{i\in I_{F}}B_{i}^{F}$.
    \end{enumerate}

\textbf{Base case 2.} Let $q_{1}=q_{1}(k,1,p,M,\delta)$ be the smallest odd number which is greater or equal than $L$. Define $\tau_{1}(x)=\tau_{0}(x)$ for $x\in\tilde{X}_{0}$. Our goal is to extend $\tau_{1}$ to $\tilde{X}_{1}(q_{1})_{0}$.
To do that, we pick for each class $\mathcal{E}$ of $1$-cells under $\sim$ a representative $E\in\mathcal{E}$. We first define $\tau_{1}$ on $E$ and then we extend it to the other edges $E'\in\mathcal{E}$. Let $C\in\faces_{p}(X)$ and $v\in V(C)$ be such that $E\subseteq C_{v}$. Let $v_{E},w_{E}$ be the vertices of $E$. Observe that $\tau_{1}(w_{E})-\tau_{1}(v_{E})$ is a $(k+1)$-cycle because $\partial\tau_{1}(v_{E})=F(v)=\partial\tau_{1}(w_{E})$. Let $\sigma_{E}\in\mathcal{I}_{k+2}(M)$ be such that $\partial\sigma_{E}=\tau_{1}(w_{E})-\tau_{1}(v_{E})$ and $\mass(\sigma_{E})\leq (6\varepsilon)^{\frac{k+2}{k+1}}\leq 6\varepsilon$ (such $\sigma_{E}$ exists by the Isoperimetric Inequality). Using the Coarea Inequality, we can define for each $1\leq l\leq L$ a radius $r_{l}^{E}\in[\delta,2\delta]$ such that
\begin{equation*}
    \mass(\sigma_{E}\llcorner\partial B(x_{l},r_{l}^{E}))\leq\frac{\mass(\sigma_{E})}{\delta}\leq 6\frac{\varepsilon}{\delta}.
\end{equation*}
For each $1\leq l\leq L$, denote $M_{l}^{E}=\bigcup_{1\leq l'\leq l} B(x_{l'},r_{l'}^{E})$ and $M_{0}^{E}=\emptyset$. Define
\begin{equation*}
    \tau_{1}(x)=\tau_{1}(v_{E})+\partial[\sigma_{E}\llcorner M_{l(x)}^{E}]
\end{equation*}
where $l(x)=d(x,v_{E})$ (here $d$ denotes the metric $d^{q_{1}}$ on $\tilde{X}$ as in Definition \ref{Def metrics on X}). Then by construction, if $x,y\in E\cap \tilde{X}(q_{1})_{0}$ verify $d(x,y)=1$ then $\tau_{1}(x)-\tau_{1}(y)$ is supported in a certain ball $B(x_{l},2\delta)$ and
\begin{equation*}
    \mass(\tau_{1}(x))\leq 3\varepsilon +6L\frac{\varepsilon}{\delta}\leq 9 L\frac{\varepsilon}{\delta}.
\end{equation*}

Now assume $|\mathcal{E}|>1$ and let $E'$ be another element of the equivalence class $\mathcal{E}$ and let $v'$ be the vertex of $C$ such that $E'\subseteq C_{v'}$. Define $\Delta_{EE'}^{1}(x)=\tau_{1}(x)-\tau_{1}(T_{EE'}(x))$ for $x=v_{E},w_{E}$. For $x\in E\cap\tilde{X}(q_{1})_{0}\setminus\{v_{E},w_{E}\}$ set
\begin{equation*}
    \Delta_{EE'}^{1}(x)=\tau_{1}(v_{E})-\tau_{1}(T_{EE'}(v_{E})).
\end{equation*}
As we are assuming that $|\mathcal{E}|>1$, there exist faces $E_{1}$ and $E_{2}$ of $C$ such that $v_{E}\in V(\Center(E_{1}))$ and $w_{E}\in V(\Center(E_{2}))$. This implies that $T_{EE'}(v_{E})\in V(\Center(E_{1}))$ and $T_{EE'}(w_{E})\in V(\Center(E_{2}))$ for every $E'\in\mathcal{E}$. Therefore, for each $x\in E\cap \tilde{X}(q_{1})_{0}$ the chain $\Delta^{1}_{EE'}(x)$ is supported in $\{U_{i}^{E_{1}}\}_{i\in I_{E_{1}}}$ or in $\{U_{i}^{E_{2}}\}_{i\in I_{E_{2}}}$ by construction of $\tau_{0}:\tilde{X}_{0}\to\mathcal{I}_{k+1}(M)$. In addition, $\partial\Delta^{1}_{EE'}(x)=F(v)-F(v')=R^{q_{1}}F(x)-R^{q_{1}}F(T_{EE'}(x))$ and it has mass at most $2\varepsilon$  for every $x\in E\cap\tilde{X}(q_{1})_{0}$. Define for $x\in E'\cap\tilde{X}(q_{1})_{0}$
\begin{equation*}
    \tau_{1}(x)=\tau_{1}(T_{E'E}(x))-\Delta_{EE'}(x)
\end{equation*}
so that we have $\partial\tau_{1}(x)=R^{q_{1}}F(x)$. For each $1$-cell $F$ of $\tilde{X}_{1}(q_{1})$ contained in $E$, let $l=d_{\infty}(F,\{v_{E}\})$ and let $\{B_{i}^{F}\}_{i\in I_{F}}$ be an $(2N+1,\delta_{1})$-admissible family which covers $B(x_{l+1},2\delta)\cup\bigcup_{i\in I_{E_{1}}}U_{i}^{E_{1}}\cup\bigcup_{i\in I_{E_{1}}}U_{i}^{E_{2}}$, which exists by Lemma \ref{Lemma non disjoint} for $\delta_{1}=12\delta$. For $F'=T_{EE'}(F)$, define $\{B_{i}^{F'}\}_{i\in I_{F'}}=\{B_{i}^{F}\}_{i\in I_{F}}$. Then we have $\mass(\tau_{1}(x))\leq 15L\frac{\varepsilon}{\delta}$ and if $x,y\in F'$ then
\begin{equation*}
    \tau_{1}(x)-\tau_{1}(y)=\tau_{1}(T_{E'E}(x))-\tau_{1}(T_{E'E}(y))+\Delta_{EE'}(y)-\Delta_{EE'}(x)
\end{equation*}
is supported in $\bigcup_{i\in I_{F'}}B_{i}^{F'}$. In case $|\mathcal{E}|=1$, we just define the admissible family to be $\{B(x_{l+1},2\delta)\}$. Thus properties (A1) to (A4) hold with $c(k,1)=15$, $n(k,1)=1$, and $K(N,k,1)=12$.

\textbf{Inductive step 2.} Assume $\tau_{j-1}$ has been defined in  $\tilde{X}_{j-1}(q_{j-1})_{0}$ ($q_{j-1}=q(k,j-1,p,M,\delta)$) verifying properties (A1) to (A4). Define 
\begin{align*}
    d_{j} & =q(k+1,j-1,M,K(N,k,j-1)\delta),\\
    \overline{q}_{j}& =q_{j-1}d_{j},\\
    q_{j} & =3\overline{q}_{j}
\end{align*}
and $\overline{\tau}_{j}(x)=R^{d_{j}}\tau_{j-1}(x)$ for $x\in X_{j-1}(q_{j})_{0}$. Let $\mathcal{E}$ be a class of $j$-dimensional faces under $\sim$ and $E\in\mathcal{E}$. We know that the family $\tau_{j-1}:\partial E\cap \tilde{X}(q_{j-1})_{0}\to\mathcal{I}_{k+1}(M)$ verifies \textbf{Inductive property 2}. Then consider the family of $(k+1)$-cycles $\tilde{\tau}_{j-1}(x)=\tau_{j-1}(x)-\tau_{j-1}(v_{E})$ ($v_{E}$ preferred vertex of $E$) with domain $\partial E\cap \tilde{X}(q_{j-1})_{0}$. As it is $(K(N,k,j-1),K(N,k,j-1)\delta)$-localized and has small mass, by \textbf{Inductive Hypothesis 1} the refined family $\tilde{\tau}_{j}=R^{d_{j}}\tilde{\tau}_{j-1}:\partial E\cap\tilde{X}(\overline{q}_{j})_{0}\to\mathcal{Z}_{k+1}(M)$ admits an $(K_{1}(N,k,j),K_{1}(N,k,j)\delta)$-localized filling $\tilde{\sigma}_{E}:\partial E\cap C_{v}(\overline{q}_{j})_{0}\to\mathcal{I}_{k+2}(M)$ with
\begin{equation*}
    \mass(\tilde{\sigma}_{E}(x))\leq c(k+1,j-1)c(k,j-1)(\frac{L}{\delta})^{n(k,j-1)+n(k+1,j-1)}\varepsilon
\end{equation*}
and $K_{1}(N,k,j)=K(K(N,k,j-1),k+1,j-1)K(N,k,j-1)$. We can extend $\tilde{\sigma}_{E}$ to $E\cap \tilde{X}(q_{j})_{0}$ (using that $q_{j}=3\overline{q}_{j}\geq 3q_{1}\geq 3L$) as
\[
    \sigma_{E}(x)=\begin{cases}
        0 & \text{ if } x\in\Center(E)\\
        d_{l(x)}(\tilde{\sigma}_{E}(\Xi(x)) & \text{ otherwise }
    \end{cases}
\]
being $l(x)=d_{\infty}(x,\partial E)$. We define
\begin{equation*}
    \tau_{j}(x)=\tau_{j-1}(v_{E})+\partial\sigma_{E}(x).
\end{equation*}
Then it follows that $\partial\tau_{j}(x)=R^{q_{j}}F(x)$,
\begin{equation*}
    \mass(\tau_{j}(x))\leq 3c(k+1,j-1)c(k,j-1)(\frac{L}{\delta})^{n(k,j-1)+n(k+1,j-1)+1}\varepsilon
\end{equation*}
and for each $j$-cell $F\subseteq E$ of $X_{j}(q_{j})$ the family $\tau_{j}:V(F)\to\mathcal{Z}_{k}(M)$ is localized in a $(K_{2}(N,k,j),K_{2}(N,k,j)\delta)$ admissible family by Lemma \ref{Lemma chopping} with $r=\delta$ which yields $K_{2}(N,k,j)=12(K_{1}(N,k,j)+1)$. If the equivalence class $\mathcal{E}$ of $E$ has more than one element, there exists a unique cell $C_{E}$ of $X$ such that $v_{E}\in\Center(C_{E})$ and it holds that $T_{EE'}(v_{E})\in\Center(C_{E})$ for every $E'\in\mathcal{E}$. Let us enlarge the previously constructed admissible family to a $(K_{2}(N,k,j)+N,3(K_{2}(N,k,j)+1)\delta)$-localized family which also covers $\{U_{i}^{C_{E}}:i\in I_{C_{E}}\}$ (using Lemma \ref{Lemma non disjoint}). This will be the family $\{B_{i}\}_{i\in I_{F}}$ and thus we set $K(N,k,j)=3(K_{2}(N,k,j)+1)$. Notice that $\tau_{0}(v_{E})-\tau_{0}(T_{EE'}(v_{E}))$ is supported in $\bigcup_{i\in I_{C_{E}}}U_{i}^{C_{E}}$ and therefore in $\{B_{i}\}_{i\in I_{F}}$ for every $E'\sim E$.

Define $\tau_{j}(x)=R^{d_{j}}\tau_{j-1}(\Xi(x))$ for each $x\in X_{j-1}\cap X(q_{j})_{0}$, which extends the previous definition of $\tau_{j}$ along $\partial E$. For $x\in X_{j-1}\cap X(q_{j})_{0}\cap E$ set $\Delta_{EE'}^{j}(x)=\tau_{j}(x)-\tau_{j}(T_{EE'}(x))$. Let $E'\in \mathcal{E}$, $E'\neq E$. We extend $\Delta^{j}_{EE'}(x)$ to $\interior(E)\cap\tilde{X}(q_{j})_{0}$ by
\begin{equation*}
    \Delta^{j}_{EE'}(x)=\Delta^{j}_{EE'}(v_{E})=\tau_{0}(v_{E})-\tau_{0}(T_{EE'}(v_{E})).
\end{equation*}
Observe that if $E\subseteq C_{v}$ and $E'\subseteq C_{v'}$ then $\partial\Delta^{j}_{EE'}(x)=F(v)-F(v')$ and $\Delta^{j}_{EE'}(x)$ is supported in $\{B_{i}\}_{i\in I_{F}}$ for every $x\in F\subseteq E$, $F\in\faces_{j}(\tilde{X}_{j}(q_{j}))$. We set
\begin{equation*}
    \tau_{j}(x)=\tau_{j}(T_{E'E}(x))-\Delta^{j}_{EE'}(T_{E'E}(x))
\end{equation*}
where $\Delta^{j}_{E'E}(x)=-\Delta^{j}_{EE'}(T_{E'E}(x))$. Then it follows that
\begin{equation*}
    \partial\tau_{j}(x)=R^{q_{j}}F(T_{E'E}(x))-F(v)+F(v')=F(v')=R^{q_{j}}F(x)
\end{equation*}
for all $x\in E'\cap \tilde{X}(q_{j})_{0}$ and that
\begin{equation*}
    \mass(\tau_{j}(x))\leq\mass(\tau_{j}(T_{E'E}(x)))+\mass(\Delta^{j}_{EE'}(T_{E'E}(x)))\leq c(k,j)(\frac{L}{\delta})^{n(k,j)}\varepsilon
\end{equation*}
if we set $c(k,j)=9c(k+1,j-1)c(k,j-1)$ and $n(k,j)=n(k+1,j-1)+n(k,j-1)+1$. Additionally, if $x,y\in F'\cap \tilde{X}(q_{j})_{0}$ then
\begin{equation*}
    \tau_{j}(x)-\tau_{j}(y)=\tau_{j}(T_{E'E}(x))-\tau_{j}(T_{E'E}(y))-\Delta^{j}_{EE'}(T_{E'E}(x))+\Delta^{j}_{EE'}(T_{E'E}(y))
\end{equation*}
which by construction of $\tau_{j}$ along $F$ is supported in $\bigcup_{i\in I_{F}}B_{i}^{F}$. In case $\mathcal{E}$ has only one element, the $(K_{2}(N,k,j),K_{2}(N,k,j)\delta)$-admissible families constructed first already have the desired properties.
\end{proof}

\subsection{Approximating discrete families of cycles}\label{Subsection approximating families of cycles}

The main goal of this subsection is to prove the following proposition. We use the notation from sections \ref{Section Almgren-Pitts} and \ref{Subsection Filling small families}. It implies Theorem \ref{Thm discrete delta approx cycles} by the choice of $L$ detailed below.

\begin{proposition}\label{Prop delta loc aprox cycles}
    There exist constants $\varepsilon_{0}(k,p,M)>0$, $\delta_{0}(k,p,M)>0$, $q(k,p,M,\delta)\in\mathbb{N}$ and $N(k,p)\in\mathbb{N}$ such that the following is true. Let $F:X_{0}\to\mathcal{Z}_{k}(M)$ be an $\varepsilon$-fine family, $\varepsilon\leq\varepsilon_{0}$, $0\leq k\leq n-1$, $\dim(X)=p$. Let $0<\delta<\delta_{0}$. Then there exists a discrete family $F':X(q)_{0}\to\mathcal{Z}_{k}(M)$ such that
    \begin{enumerate}
        \item $F'(x)=F(x)$ for every $x\in X_{0}$.
        \item Given a cell $C$ of $X$ and $x\in C$,
        \begin{equation*}
            \mass(F'(x))\leq\max_{v\in V(C)}\{\mass(F(v))\}+c(k,p)(\frac{L}{\delta})^{n(k,p)}\varepsilon.
        \end{equation*}
        \item $F'$ is $c(k,p)(\frac{L}{\delta})^{n(k,p)}\varepsilon$-fine in $X$ and $(N(k,p),\delta)$-localized in $X(q)$.
    \end{enumerate}
\end{proposition}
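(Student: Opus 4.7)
I will proceed by induction on the skeleta $X_0\subseteq X_1\subseteq\cdots\subseteq X_p$ of $X$, constructing at each stage a refinement $F_j:X_j(q_j)_0\to\mathcal{Z}_k(M)$ which extends $F$ from $X_0$, is $(N_j,\delta)$-localized, is $\varepsilon_j$-fine, and satisfies the mass bound $\mass(F_j(x))\leq\max_{v\in V(C)}\mass(F(v))+\varepsilon_j$ on every cell $C$ of $X$ containing $x$, with $\varepsilon_j$ of the form $c_j(L/\delta)^{n_j}\varepsilon$ and $N_j,c_j,n_j$ growing polynomially in $j$. The base case $j=0$ is immediate by setting $F_0=F$.

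For the inductive step, fix a $j$-cell $C$ of $X$ with a basepoint $v_0\in V(C)$; by induction $F_{j-1}$ is already defined on $\partial C\cap X(q_{j-1})_0$ with the stated properties. Consider the shifted family $H_C(x):=F_{j-1}(x)-F_{j-1}(v_0)$, which inherits $(N_{j-1},\delta)$-localization from $F_{j-1}$ and satisfies $\mathcal{F}(H_C(x))\leq\varepsilon_{j-1}$ because $x$ and $v_0$ lie in the common cell $C$ of $X$. Apply Proposition~\ref{Prop filling small families} to $H_C$, regarded as a discrete family on the $(j-1)$-dimensional complex $\partial C\cap X(q_{j-1})$, to obtain a $\delta$-localized filling $\tau_C:\partial C\cap X(\tilde{q}_j)_0\to\mathcal{I}_{k+1}(M)$ with $\partial\tau_C=R^{\tilde{q}_j/q_{j-1}}H_C$ and $\mass(\tau_C(x))=O((L/\delta)^{n}\varepsilon_{j-1})$. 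Then apply Lemma~\ref{Lemma chopping} to extend $\tau_C$ to a $\delta$-localized family $\overline{\tau}_C:C(q_j)_0\to\mathcal{I}_{k+1}(M)$ with $q_j=3\tilde{q}_j$, and define $F_j(x):=F_{j-1}(v_0)+\partial\overline{\tau}_C(x)$; this is automatically a cycle, agrees with $F_{j-1}$ on $\partial C$ (since the chopping is the identity there), and inherits $\delta$-localization through the chopping. Consistency of these extensions between adjacent $j$-cells meeting at a shared face is enforced by the equivalence-class construction under the relation $\sim$ from the proof of Proposition~\ref{Prop filling small families}: one performs the above construction on a representative of each class and transports to equivalent cells via the affine maps $T_{EE'}$.

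The main obstacle will be verifying the mass bound. Expanding $\partial\overline{\tau}_C(x)=\partial[\tau_C(\Xi(x))\llcorner(M\setminus M^{\Xi(x)}_{l(x)})]$ via the identity $\partial(A\llcorner B)=\partial A\llcorner B+A\llcorner\partial B$ (mod $2$), and substituting $F_{j-1}(v_0)=F_{j-1}(\Xi(x))+H_C(\Xi(x))$ on the small region, yields
\[
\mass(F_j(x))\leq\mass(F_{j-1}(\Xi(x)))+\mass(H_C(\Xi(x))\llcorner M^{\Xi(x)}_{l(x)})+L\mass(\tau_C(\Xi(x)))/\delta.
\]
The first term is bounded by $\max_V\mass(F)+\varepsilon_{j-1}$ through the inductive hypothesis applied at $\Xi(x)\in\partial C$, and the third by $O((L/\delta)^{n+1}\varepsilon_{j-1})$ via the coarea choice of chopping radii. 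The delicate middle term, which controls the local contribution of the potentially large difference cycle $H_C$ to the small chopping region, is the technical heart of the argument: it is handled by combining a secondary coarea inequality applied to $\tau_C$ with the identity $H_C\llcorner B=\partial(\tau_C\llcorner B)+\tau_C\llcorner\partial B$ on each chopping ball $B$, so that its contribution is absorbed into the same $(L/\delta)^{n+1}\varepsilon_{j-1}$ error scale. The resulting recursion $\varepsilon_j=O((L/\delta)^{n+1}\varepsilon_{j-1})$ yields the claimed exponent $n(k,p)$ and polynomial coefficient $c(k,p)$ after $p$ iterations, and the $\varepsilon_j$-fineness of $F_j$ on $X$ is a consequence of the mass bound applied to differences $F_j(x)-F_j(y)$ for $x,y$ in a common cell.
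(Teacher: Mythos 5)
Your overall scaffolding (induction on skeleta, shifting by a base vertex, filling via Proposition~\ref{Prop filling small families}, extending into a cell by the chopping of Lemma~\ref{Lemma chopping}) reproduces the mechanism of the paper's proof of Proposition~\ref{Prop filling small families} itself, but it does not prove the present proposition: the mass bound (2) is a genuine gap. Writing $F_j(x)=F_{j-1}(v_0)+\partial\overline{\tau}_C(x)$ with $\overline{\tau}_C(x)=d_{l(x)}(\tau_C(\Xi(x)))$ gives, away from the error terms,
\begin{equation*}
F_j(x)=F_{j-1}(v_0)\llcorner M^{\Xi(x)}_{l(x)}+F_{j-1}(\Xi(x))\llcorner\bigl(M\setminus M^{\Xi(x)}_{l(x)}\bigr)+\tau_C(\Xi(x))\llcorner\partial M^{\Xi(x)}_{l(x)},
\end{equation*}
i.e.\ a splice of two cycles of possibly very large mass along a region chosen without any reference to where their masses sit. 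In the worst case ($F(v_0)$ concentrated inside the chopping region and $F(\Xi(x))$ concentrated outside it) the mass is close to $\mass(F(v_0))+\mass(F(\Xi(x)))\approx 2\max_{v\in V(C)}\mass(F(v))$, which violates (2) because the masses $\mass(F(v))$ are not assumed small — only the flat-norm differences are. Your attempt to absorb the middle term $\mass(H_C(\Xi(x))\llcorner M^{\Xi(x)}_{l(x)})$ does not work: the identity $H_C\llcorner B=\partial(\tau_C\llcorner B)+\tau_C\llcorner\partial B$ is circular, since $\mass(\partial(\tau_C\llcorner B))$ is not controlled by $\mass(\tau_C\llcorner B)$ (the boundary of a small-mass $(k+1)$-chain restricted to a ball can have enormous mass), and in fact $\partial(\tau_C\llcorner B)=H_C\llcorner B+\tau_C\llcorner\partial B$, so bounding it is exactly the problem you started with. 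The coarea inequality only controls $\tau_C\llcorner\partial B$, never $\partial\tau_C\llcorner B$.

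This is precisely the difficulty the paper's proof is built to avoid. Instead of splicing $F_{j-1}(x)$ against a fixed base value $F_{j-1}(v_0)$, the paper interpolates through expressions of the form $F_j(x)=\sum_{D\in\mathcal{D}_C}F(w^{D}_{C}(x))\llcorner D+I_j(x)$, where the grid $\mathcal{D}_C$ is chosen by coarea so that the chains $\tau_{vw}\llcorner\partial D$ are small, and the vertex $w^{D}_{C}(x)\in V(C)$ is moved, one domain at a time, toward the vertex minimizing $\mass(F(\cdot)\llcorner D)$; each replacement only decreases $\sum_D\mass(F(w^{D}_{C}(x))\llcorner D)$, which is how the bound by $\max_{v\in V(C)}\mass(F(v))$ is preserved, while all the genuinely small objects are collected in $I_j(x)$. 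Proposition~\ref{Prop filling small families} is then invoked only for the residual family $\overline{F}_j(x)-z_C$ near $\Center(C)$, where every chain involved really has mass of order $(L/r)^{n(k,j)}\varepsilon$, so the chopping there is harmless. Without some mass-comparison selection of this kind (per grid domain), your construction cannot yield conclusion (2), and hence also not the fineness estimate you derive from it.
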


We start by fixing for each pair of adjacent vertices $v,w\in X_{0}$ a $(k+1)$-chain $\tau_{vw}=\tau(v,w)$ such that $\mass(\tau_{vw})\leq\varepsilon$ and $\partial\tau_{vw}=F(v)-F(w)$. We let $r$ be a sufficiently small number compared to $\delta$ so that the following constructions yield a $\delta$-localized family (it will be enough to choose $r\leq\frac{\delta}{N(k,p)}$ for the constant $N(k,p)$ which is inductively constructed below). We fix points $p_{1},...,p_{L}$ in the manifold $M$ such that $\{B(p_{l},r):1\leq l\leq L\}$ covers $M$. They can be chosen so that $Lr^{n}\leq c(n)\Vol(M,g)$ for some universal constant $c(n)$ which depends only on the dimension of the manifold $M$. For each cell $C$ of $X$ we denote $V(C)=C\cap X_{0}$ the set of its vertices and we choose a preferred vertex $v_{C}\in V(C)$.

Following the strategy in \cite{GL22}, we will inductively define for each $1\leq j\leq p=\dim(X)$ a $\delta_{j}$-localized and $\varepsilon_{j}$-fine map $F_{j}:X_{j}(q_{j})_{0}\to\mathcal{Z}_{k}(M)$, where $q_{1}<...<q_{p}=q$. All of the previous maps will agree with $F$ at $X_{0}$. Given a $j$-dimensional cell $C$ of $X(q)$, the map $F_{j}|_{C\cap X(q_{j})_{0}}$ will be given by an interpolation of the values of $F(v)$ over the vertices $v$ of $C$ which does not increase the mass too much with respect to $\max\{\mass(F(v)):v\in C\cap X_{0}\}$. Let us explain how this interpolation works for $j=1$. 

Let $C\in\faces_{1}(X)$ and let $v,w$ be the vertices of $C$. We are going to choose a certain grid $\mathcal{D}_{C}=\{D_{1},...,D_{L}\}$ on $M$ consisting of $L$ domains with disjoint interiors and piecewise smooth boundaries. Roughly speaking, the idea to interpolate between $F(v)$ and $F(w)$ is to start from $F(v)$ and replace $F(v)\llcorner D_{l}$ by the chain with smallest mass among $F(w)\llcorner D_{l}$ and $F(v)\llcorner D_{l}$ for each $l=1,...,L$. This operation is done one domain at a time. Thus we start from $z_{0}=F(v)$ and inductively define $z_{l}=z_{l-1}$ if $\mass(F(v)\llcorner D_{l})\leq\mass(F(w)\llcorner D_{l})$ and
\begin{equation*}
    z_{l}=z_{l-1}-\partial[\tau_{vw}\llcorner D_{l}]
\end{equation*}
otherwise. As
\begin{equation*}
    \partial[\tau_{vw}\llcorner D_{l}]=[F(v)-F(w)]\llcorner D_{l}+\tau_{vw}\llcorner\partial D_{l}
\end{equation*}
in order to avoid increasing the mass too much we need to choose the grid so that $\mass(\tau_{vw}\llcorner\partial D)$ is small for all $D\in\mathcal{D}_{C}$. This can be done using Coarea Inequality in the following way. For each $1\leq l\leq L$, we can pick a radius $r_{l}^{C}\in [r,2r]$ such that
\begin{equation*}
    \mass(\tau_{vw}\llcorner\partial B(x_{l},r_{l}^{C}))\leq\frac{\mass(\tau_{vw})}{r}\leq\frac{\varepsilon}{r}.
\end{equation*}
Notice that the collection of balls $\{B(x_{l},r_{l}^{C}):1\leq l\leq L\}$ covers $M$, and therefore the domains
\begin{equation*}
    D_{l}=\overline{B}(x_{l},r_{l}^{C})\setminus\bigcup_{l'<l}B(x_{l'},r_{l'}^{C})
\end{equation*}
form a grid in $M$ which verifies
\begin{equation*}
    \sum_{l=1}^{L}\mass(\tau_{vw}\llcorner \partial D_{l})\leq 2L\frac{\varepsilon}{r}.
\end{equation*}
The previous implies that $\mass(z_{l})\leq\mass(F(v))+2L\frac{\varepsilon}{r}$ for every $1\leq l\leq L$. In a similar way, we can start from $\tilde{z}_{0}=F(w)$ and recursively define $\tilde{z}_{1},...,\tilde{z}_{L}\in\mathcal{Z}_{k}(M)$ as $\tilde{z}_{l}=\tilde{z}_{l-1}$ if $\mass(F(w)\llcorner D_{l})\leq\mass(F(v)\llcorner D_{l})$ and
\begin{equation*}
    \tilde{z}_{l}=\tilde{z}_{l-1}+\partial[\tau_{vw}\llcorner D_{l}^{C}]
\end{equation*}
otherwise. It can be shown that $\tilde{z}_{L}=z_{L}$ and hence this allows to define $F_{1}$ along $C(2l)_{0}$ by $z_{0},z_{1},...,z_{L},\tilde{z}_{L-1},...,\tilde{z}_{1},\tilde{z}_{0}$. This has the desired $\delta$-localization property as $z_{l}-z_{l-1}$ is supported in $D_{l-1}\subseteq B(x_{l-1},2r)$ for every $l$ (respectively $\tilde{z}_{l}-\tilde{z}_{l-1}$). We can formalize the previous in the following way. For each $x\in C\cap X(3L)_{0}$  and each $D\in\mathcal{D}_{C}$, we have a certain vertex $w^{D}_{C}(x)\in\{v,w\}$ such that
\begin{equation}\label{Expression 1 for F1}
    F_{1}(x)=F(v)-\sum_{D\in\mathcal{D}_{C}}\partial[\tau(v,w_{C}^{D}(x))\llcorner D]
\end{equation}
where the $w_{C}^{D}(x)$ are chosen as described above to avoid increasing the mass too much (an explicit formula is given later). But we can also write
\begin{equation}\label{Expression 2 for F1}
   F_{1}(x)=\sum_{D\in\mathcal{D}_{C}}F(w^{D}_{C}(x))\llcorner D+I_{1}(x) 
\end{equation}
where
\begin{equation*}
    I_{1}(x)=\sum_{D\in\mathcal{D}_{C}}\tau(v,w_{C}^{D}(x))\llcorner\partial D
\end{equation*}
is a $k$-chain of mass bounded by $2 L\frac{\varepsilon}{r}$. Both expressions could be generalized for higher-dimensional cells. Given $C\in\faces_{j}(X)$, $j\geq 1$, we will need to choose a certain grid $\mathcal{D}_{C}$ as before with the property that $\tau_{vw}\llcorner\partial D$ has controlled mass for each $v,w\in V(C)$. And then we will have to define for each $x\in C\cap X(q_{j})_{0}$ and $D\in\mathcal{D}_{C}$ a certain vertex $w^{D}_{C}(x)\in V(C)$ so that $F_{j}$ is given by
\begin{equation}\label{Expression 1 for Fj}
    F_{j}(x)=F(v_{C})+\sum_{D\in\mathcal{D}_{C}}\partial[\tau(v_{C},w_{C}^{D}(x))\llcorner D]
\end{equation}
for some $v_{C}\in V(C)$ if we want to generalize (\ref{Expression 1 for F1}), or
\begin{equation}\label{Expression 2 for Fj}
    F_{j}(x)=\sum_{D\in\mathcal{D}_{C}}F(w_{C}^{D}(x))\llcorner D+I_{j}(x)
\end{equation}
for some family $I_{j}:X_{j}(q_{j})_{0}\to\mathcal{I}_{k}(M)$ with small mass in case we try to generalize (\ref{Expression 2 for F1}). It will be convenient to work with (\ref{Expression 2 for Fj}) because when proceeding inductively from the $(j-1)$-skeleton to the $j$-skeleton, the preferred vertex $v_{E}$ of a $(j-1)$-face $E$ of $C$ will not always coincide with $v_{C}$ (for example when $v_{C}\notin E)$ and therefore we would need to do a certain adjustment to the $\tau(v_{E},w_{j}^{D}(x))$ in order to have (\ref{Expression 1 for Fj}) which is not necessary if we adopt (\ref{Expression 2 for Fj}). This problem already arises when $j=2$, as for $x\in E\subseteq \partial C$ with $v_{E}\neq v_{C}$ we would have
\begin{align*}
    F_{1}(x) & =F(v_{C})+\partial\tau(v_{C},v_{E})+\sum_{D\in\mathcal{D}_{E}}\partial[\tau(v_{E},w^{D}_{E}(x))\llcorner D]\\
    & =F(v_{C})+\sum_{D\in\mathcal{D}_{E}}\partial[(\tau(v_{C},v_{E})+\tau(v_{E},w_{E}^{D}(x)))\llcorner D]
\end{align*}
and it might be the case that $\tau(v_{C},v_{E})+\tau(v_{E},w_{E}^{D}(x))\neq\tau(v_{C},w^{E}_{D}(x))$ (if the $\tau_{vw}$ could be chosen so that they verified $\tau(v_{1},v_{2})+\tau(v_{2},v_{3})=\tau(v_{1},v_{3})$ for every triplet of adjacent vertices $v_{1},v_{2},v_{3}\in X_{0}$, then we could construct a filling $G:X_{0}\to\mathcal{I}_{k+1}(M)$ of $F$ such that $G(v)-G(w)=\tau(v,w)$ and hence $\mass(G(v)-G(w))\leq\varepsilon$ for any adjacent vertices $v$ and $w$, and that is not possible for an arbitrary $\varepsilon$-fine family $F$ as the one we have). Several attempts to use this approach or a similar one have been made by the author (for example, allowing finite sums of the $\tau_{vw}$) but all of them have failed. Nevertheless, (\ref{Expression 2 for Fj}) seems to be much more versatile to work with because we do not have the previously mentioned problem with the vertex $v_{C}$ and also it allows to circumvent other technical issues that arise when interpolating in cells of dimension greater than $1$. Let us explain first how that interpolation works.

Given a cell $C$ and $D\in\mathcal{D}_{C}$, let $w^{D}_{C}$ be the vertex of $C$ that minimizes $\mass(F(v)\llcorner D)$ among all $v\in V(C)$. As we want to decrease mass, the strategy is to start from $x\in\partial C$ and replace $F(w^{D}_{C}(x))\llcorner D$ by $F(w^{D}_{C})\llcorner D$ one domain $D\in\mathcal{D}_{C}$ at a time as we approach the center of the cell $C$. This is done by subtracting at each step $\partial[\tau(w_{C}^{D}(x),w_{C}^{D})]$ to our $k$-cycle, which causes a change in $I_{j}(x)$ by subtracting $\tau(w_{C}^{D}(x),w_{C}^{D})\llcorner\partial D$ (which is small in mass provided the grid $\mathcal{D}_{C}$ is chosen accurately). When we get to the centre of $C$ we have $w_{C}^{D}(x)=w_{C}^{D}$ for all $D$ by construction. Nevertheless, it might happen that $I_{j}(x_{1})\neq I_{j}(x_{2})$ for some $x_{1},x_{2}\in\partial\Center(C)$. This is due to the fact that in general
\begin{equation*}
    \tau(v_{1},v_{2})+\tau(v_{2},v_{3})\neq\tau(v_{1},v_{3})
\end{equation*}
as discussed before. Hence we could have the following situation. Consider the case of a $2$-cell $C$ with $V(C)=\{v_{1},v_{2},v_{3},v_{4}\}$ and two $1$-faces $E_{1},E_{2}$ with vertices $v_{1},v_{2}$ and $v_{1},v_{3}$ respectively. Suppose that for some $D_{1}\in\mathcal{D}_{C}$ 
\begin{equation*}
    \mass(F(v_{4})\llcorner D_{1})>\mass(F(v_{1})\llcorner D_{1})>\mass(F(v_{2})\llcorner D_{1})>\mass(F(v_{3})\llcorner D_{1}),
\end{equation*}
that for $D\in\mathcal{D}_{C}\setminus\{D_{1}\}$
\begin{equation*}
    \mass(F(v_{1})\llcorner D)<\mass(F(v_{j})\llcorner D)
\end{equation*}
for $j=2,3,4$ and that $\mathcal{D}_{E_{1}}=\mathcal{D}_{E_{2}}=\mathcal{D}_{C}$ for simplicity. Then we will have points $x_{1}\in E_{1}$ and $x_{2}\in E_{2}$ such that $F_{1}(x_{1})=F(v_{1})+\partial[\tau(v_{1},v_{2})\llcorner D_{1}]$ and $F_{1}(x_{2})=F(v_{1})+\partial[\tau(v_{1},v_{3})\llcorner D_{1}]$. And when extending to the $2$-skeleton, we will get points $y_{1},y_{2}\in\partial\Center(C)$ such that $F_{2}(y_{1})=F(v_{1})+\partial[(\tau(v_{1},v_{2})+\tau(v_{2},v_{3}))\llcorner D_{1}]$ and $F_{2}(y_{2})=F(v_{1})+\partial[\tau(v_{1},v_{3})\llcorner D_{1}]$. Thus $w^{D}_{C}(y_{1})=w_{C}^{D}(y_{2})$ for every $D\in\mathcal{D}_{C}$ but maybe $F_{2}(y_{1})\neq F_{2}(y_{2})$ as it could be the case that
\begin{equation*}
    (\tau(v_{1},v_{2})+\tau(v_{2},v_{3}))\llcorner\partial D_{1}\neq\tau(v_{1},v_{3})\llcorner\partial D_{1}.
\end{equation*}
Nevertheless, if
\begin{equation*}
    z_{C}=F(v_{C})-\sum_{D\in\mathcal
    {D}_{C}}\partial\tau(v_{C},w_{C}^{D})
\end{equation*}
the family of $k$-cycles $I_{j}^{C}(x)=F_{j}(x)-z_{C}$ with domain $\partial\Center(C)$ is $\delta_{j}$-localized and has very small mass. Therefore we can contract it in a $\delta$-localized and mass-controlled way using Proposition \ref{Prop filling small families} and hence extend $F_{j}$ to $\Center(C)$ by $F_{j}(x)=z_{C}+I_{j}^{C}(x)$. This is the second advantage of (\ref{Expression 2 for Fj}) that we previously mentioned: the term $I_{j}(x)$ allows to compensate the differences in $F_{j}(x)$ generated by the failure of $\tau(v_{1},v_{2})+\tau(v_{2},v_{3})=\tau(v_{1},v_{3})$. This step also justifies why we proved Proposition \ref{Prop filling small families}. We now proceed to formalize this argument, starting by stating the required definitions.

\begin{definition}
    Given $\tau_{1},...,\tau_{K}\in\mathcal{I}_{k+1}(M)$ and a collection of balls $\mathcal{B}=\{B_{1},...,B_{L}\}$ we say that $\mathcal{B}$ is compatible with $\tau_{1},...,\tau_{K}$ if
    \begin{equation*}
        \mass(\tau_{i}\llcorner \partial B_{l})\leq K\frac{\mass(\tau_{i})}{r}
    \end{equation*}
    for every $1\leq i\leq K$ and every $1\leq l\leq L$.
\end{definition}

\begin{lemma}[Coarea inequality for several chains]\label{Lemma balls}
    Given $\tau_{1},...,\tau_{K}\in\mathcal{I}_{k+1}(M)$, for each $1\leq l\leq L$ we can find a radii $r_{l}\in[r,2r]$ such that the balls $B_{l}=B(x_{l},r_{l})$ form a family which is compatible with $\tau_{1},...,\tau_{K}$.
\end{lemma}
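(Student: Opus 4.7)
The plan is to fix an index $l$ and produce a single radius $r_l \in [r, 2r]$ that works simultaneously for every chain $\tau_1, \ldots, \tau_K$ by averaging the standard one-chain coarea estimate. Concretely, for each $i$ with $\mass(\tau_i) > 0$, applying the coarea inequality to the Lipschitz function $x \mapsto d(x, x_l)$ on the annulus $B(x_l, 2r) \setminus B(x_l, r)$ yields
\begin{equation*}
    \int_r^{2r} \mass\bigl(\tau_i \llcorner \partial B(x_l, t)\bigr)\, dt \;\leq\; \mass\bigl(\tau_i \llcorner (B(x_l, 2r) \setminus B(x_l, r))\bigr) \;\leq\; \mass(\tau_i).
\end{equation*}

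Next I would normalize and add: define
\begin{equation*}
    g_l(t) \;=\; \sum_{i \,:\, \mass(\tau_i) > 0} \frac{\mass(\tau_i \llcorner \partial B(x_l, t))}{\mass(\tau_i)}
\end{equation*}
on $[r, 2r]$. Summing the coarea bounds above shows $\int_r^{2r} g_l(t)\, dt \leq K$, so the average of $g_l$ on $[r, 2r]$ is at most $K/r$. By the mean value argument (or by Markov, noting that if $g_l > K/r$ almost everywhere on the interval then the integral would exceed $K$, a contradiction), there exists a radius $r_l \in [r, 2r]$ with $g_l(r_l) \leq K/r$. Since every term in the sum defining $g_l(r_l)$ is non-negative, each individual summand satisfies
\begin{equation*}
    \frac{\mass(\tau_i \llcorner \partial B(x_l, r_l))}{\mass(\tau_i)} \;\leq\; \frac{K}{r},
\end{equation*}
which rearranges to the desired bound $\mass(\tau_i \llcorner \partial B(x_l, r_l)) \leq K \mass(\tau_i) / r$. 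For indices $i$ with $\mass(\tau_i) = 0$ the chain vanishes as a flat chain, so the inequality holds trivially.

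Finally, since the choice of $r_l$ at each $x_l$ is made independently (the $B_l$ need not be pairwise disjoint), repeating the argument for $l = 1, \ldots, L$ produces the compatible family $\{B_l = B(x_l, r_l)\}_{l=1}^{L}$. The only step that requires any thought is ensuring the pigeonhole/averaging delivers a bound that is uniform in $i$, and this is handled cleanly by normalizing each $\tau_i$ by its mass before summing, so that the single scalar inequality $g_l(r_l) \leq K/r$ implies the full vector of estimates; no tighter analysis of the coarea slices is needed.
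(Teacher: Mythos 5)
Your proof is correct and takes essentially the same approach as the paper: both rest on the coarea estimate $\int_{r}^{2r}\mass(\tau_{i}\llcorner\partial B(x_{l},t))\,dt\leq\mass(\tau_{i})$ followed by a Markov/pigeonhole selection of a single radius good for all $K$ chains, independently at each center $x_{l}$. The only cosmetic difference is that the paper bounds the measure of each per-chain bad set $A_{i}$ by $r/K$ and takes a union bound, whereas you apply the averaging once to the mass-normalized sum $g_{l}$; the two yield the identical constant $K/r$.
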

\begin{proof}
    Fix $l\in\{1,...,L\}$ and for each $1\leq i\leq K$ define $A_{i}=\{s\in(r,2r):\mass(\tau_{i}\llcorner\partial B_{s})>K\frac{\mass(\tau_{i})}{r}\}$ where $B_{s}=B(x_{l},s)$. Observe that
    \begin{equation*}
        \mass(\tau_{i}) \geq\int_{r}^{2r}\mass(\tau_{i}\llcorner\partial B_{s})ds>\int_{A_{i}}K\frac{\mass(\tau_{i})}{r}ds=K\frac{\mass(\tau_{i})}{r}\mathcal{L}^{1}(A_{i})
    \end{equation*}
    which implies that $\mathcal{L}^{1}(A_{i})<\frac{r}{K}$. Therefore $\mathcal{L}^{1}(\bigcup_{i=1}^{K}A_{i})<r$ and hence there exists $r_{l}\in(r,2r)\setminus\bigcup_{i=1}^{K}A_{i}$ which verifies the required coarea inequality.
\end{proof}

\begin{definition}
    A grid $\mathcal{D}=\{D_{1},...,D_{L'}\}$ on $M$ is a finite collection of closed domains with piecewise smooth boundary such that $D_{l}\cap D_{l'}=\partial D_{l}\cap\partial D_{l'}$ for every $l,l'$ and $\bigcup_{l=1}^{L'}D_{l}=M$.
\end{definition}

\begin{definition}\label{From balls to grids}
    Given a list of balls $\{B_{1},...,B_{L}\}$ with $B_{l}=B(x_{l},r_{l})$ and $r_{l}\in[r,2r]$ as before, it induces a grid $\mathcal{D}=\{D_{1},...,D_{L}\}$ given by
    \begin{equation*}
        D_{l}=\overline{B}_{l}\setminus\bigcup_{l'<l}B_{l'}.
    \end{equation*}
\end{definition}

\begin{definition}
    Given a vertex $v$ of $X$, we denote
    \begin{align*}
        \Adj(v) & =\{w\in X_{0}:\exists C\text{ cell of }X\text{ s.t. }v,w\in C\}\\
        & =\bigcup_{\substack{C\in\faces_{p}(X) \\ v\in C}}V(C).
    \end{align*}
    As $\dim(X)=p$, by Lemma \ref{Lemma equiv def of widths} we can assume $X\subseteq[0,1]^{2p+1}$ and therefore $|\Adj(v)|\leq C(p)$.
\end{definition}

\begin{definition}
    Given a cell $C$ of $X$, we denote  $\mathcal{T}_{C}=\{\tau_{vw}:v,w\in C\}$. 
\end{definition}

\begin{definition}
    Given $q\in\mathbb{N}$ and a cell $C$ of $X(q)$, we denote by $V(C)$ its set of vertices: $V(C)=C\cap X(q)_{0}$.
\end{definition}

\begin{definition}\label{Def grids 1 cells}
    Let $v$ be a vertex of $X$. Using Lemma \ref{Lemma balls}, we associate to $v$ a collection of balls $B_{l}^{v}=B(x_{l},r_{l}^{v})$ such that for each cell $C$ containing $v$, $B_{1}^{v},...,B_{L}^{v}$ is compatible with the elements of $\mathcal{T}_{C}$. Equivalently, $\{B_{l}^{v}:1\leq l\leq L\}$ is compatible with $\tau_{w,w'}$ for every $w,w'\in\Adj(v)$. We consider the induced grid $\{D_{1}^{v},...,D_{L}^{v}\}$ in $M$ as in Definition \ref{From balls to grids}.
\end{definition}

\begin{remark}\label{Rk compatibility with grids}
    Let $v$ be a vertex of $X$, let $C$ be a cell containing $v$ and let $\tau\in\mathcal{T}_{C}$. Then 
\begin{equation*}
    \mass(\tau\llcorner\partial B_{l}^{v})\leq C(p)\frac{\mass(\tau)}{r}
\end{equation*}
for every $1\leq l\leq L$ and therefore
\begin{equation*}
    \sum_{l=1}^{L}\mass(\tau\llcorner\partial D_{l}^{v})\leq C(p)L\frac{\mass(\tau)}{r}.
\end{equation*}
\end{remark}

\begin{definition}
    Given $\mathcal{D}_{1},...,\mathcal{D}_{J}$ grids in $M$, we define the intersection $\mathcal{D}=\bigcap_{j=1}^{J}\mathcal{D}_{j}$ as the grid whose domains are all possible finite intersections $D_{1}\cap...\cap D_{J}$ with $D_{j}\in\mathcal{D}_{j}$.
\end{definition}

\begin{definition}\label{Def grids for any cell}
    To each cell of $X$ we associate a grid in the following way. For $0$-dimensional cells we just consider $\mathcal{D}_{v}=\{D_{1}^{v},...,D_{L}^{v}\}$ as defined above. If $1\leq\dim(C)\leq p$ we define
    \begin{equation*}
        \mathcal{D}_{C}=\bigcap_{v\in V(C)}\mathcal{D}_{v}.
    \end{equation*}
    We enumerate the elements of $\mathcal{D}_{C}$ and denote $l(D)$ the number assigned to the domain $D\in\mathcal{D
    }_{C}$. Observe that if $C\subseteq C'$ then $\mathcal{D}_{C'}$ is a refinement of $\mathcal{D}_{C}$ and that if $\dim(C)=j$ then $|\mathcal{D}_{C}|\leq L^{2^{j}}$.
\end{definition}

\begin{remark}
    If $\dim(C)=j$ and $v,w$ are vertices of $C$ then by Remark \ref{Rk compatibility with grids}
    \begin{equation*}
        \sum_{D\in\mathcal{D}_{C}}\mass(\tau_{vw}\llcorner\partial D)\leq C(p)2^{j} L\frac{\mass(\tau_{vw})}{r}
    \end{equation*}
    where $2^{j}$ appears in the formula because of being the number of vertices of a $j$-dimensional cube, which is the number of grids that we are intersecting.
\end{remark}

\begin{definition}
    Given odd numbers $q,q'\in\mathbb{N}$, we define $\Lambda_{q}:X(qq')_{0}\to X(q)_{0}$ to be the function which maps each $x$ to the closest $v\in X(q)_{0}$. Notice that if $F:X(q)\to\mathcal{I}_{k}(M)$ then $R^{q'}F(x)=F(\Lambda_{q}(x))$.
\end{definition}

Now we proceed to prove Proposition \ref{Prop delta loc aprox cycles} by induction.

\textbf{Inductive property.} For each $1\leq j\leq p$, there exist discrete families $F_{j}:X_{j}(q_{j})_{0}\to\mathcal{Z}_{k}(M)$ and $I_{j}:X_{j}(q_{j})_{0}\to\mathcal{I}_{k}(M)$ so that the following is true. Let $C$ be a cell of $X$ with $\dim(C)= j$. For each $x\in C\cap X_{j}(q_{j})_{0}$ there exists a collection $(w_{C}^{D}(x))_{D\in\mathcal{D}_{C}}$ with $w_{C}^{D}(x)\in V(C)$ for every $D\in\mathcal{D}_{C}$; and for each $j$-cell $E$ of $X_{j}(q_{j})$ contained in $C$ there exists an $(N(k,j),N(k,j)r)$-admissible family $\{B_{i}^{E}\}_{i\in I_{E}}$ verifying the following properties.
    \begin{enumerate}[label=(B\arabic*)]
        \item \label{ite : 2.1} $F_{j}(x)=\sum_{D\in\mathcal{D}_{C}}F(w_{C}^{D}(x))\llcorner D+I_{j}(x)$.
        \item \label{ite : 2.2} $\sum_{D\in\mathcal{D}_{C}}\mass(F(w_{C}^{D}(x))\llcorner D)\leq \max\{\mass(F(v)):v\in V(C)\}$.
        \item \label{ite : 2.3} $\mass(I_{j}(x))\leq C(p)c(k,j)(\frac{L}{r})^{n(k,j)}\varepsilon$.
        \item \label{ite : 2.4} For each $E\subseteq C$, $E\in\faces_{j}(X_{j}(q_{j}))$ there exists a subset $\mathcal{D}_{C}^{E}\subseteq\mathcal{D}_{C}$ such that
        \begin{enumerate}
            \item If $x,y\in V(E)$ then $w^{D}_{C}(x)=w^{D}_{C}(y)$ for all $D\in\mathcal{D}_{C}\setminus\mathcal{D}_{C}^{E}$.
            \item $\{B_{i}^{E}\}_{i\in I_{E}}$ covers $\bigcup_{D\in\mathcal{D}_{C}^{E}}D$.
        \end{enumerate}
        \item \label{ite : 2.5} $I_{j}|_{E\cap X(q_{j})_{0}}$ is localized in $\{B_{i}^{E}\}_{i\in I_{E}}$.
        \item \label{ite : 2.6} $F_{j}$ is $C(p)c(k,j)(\frac{L}{r})^{n(k,j)}\varepsilon$-fine in $X_{j}$.
       \item \label{ite : 2.7} If $E\in\faces_{j-1}(X)$ is contained in $\partial C$ and $x\in E\cap X_{j}(q_{j})_{0}$, then
       \begin{equation*}
           w_{C}^{D}(x)=w_{E}^{D'}(\Sigma_{j}(x))
       \end{equation*}
       where $D'\in\mathcal{D}_{E}$ is the unique domain such that $D\subseteq D'$ and
       \[
       \Sigma_{j}(x)=\begin{cases}
           x & \text{ if } j=1\\
           \Xi\circ\Lambda_{3q_{j-1}}\circ\Xi^{3}(x) & \text{ if } j\geq 2.
       \end{cases}
       \]
    \end{enumerate}

We need to introduce the following definitions.

\begin{definition}\label{Def ZE}
    Given a $j$-cell $C$ of $X(q)$ and $D\in\mathcal{D}_{C}$, we define $w^{D}_{C}\in V(C)$ to be a vertex such that
    \begin{equation*}
        \mass(F(w^{D}_{C})\llcorner D)=\min\{\mass(F(w)\llcorner D):w\in V(C)\}.
    \end{equation*}
    We denote
    \begin{align*}
        \tau_{C} & =\sum_{D\in\mathcal{D}_{C}}\tau(v_{C},w^{D}_{C})\llcorner D;\\
         z_{C} & =F(v_{C})-\partial\tau_{C};\\
         I_{C} & =-\sum_{D\in\mathcal{D}_{C}}\tau(v_{C},w^{D}_{C})\llcorner \partial D.
    \end{align*}
\end{definition}

\begin{remark}
    Notice that
    \begin{equation*}
        z_{C}=\sum_{D\in\mathcal{D}_{C}}F(w^{D}_{C})\llcorner D+I_{C}
    \end{equation*}
    minimizes
    \begin{equation}\label{Sum of mass F(w) in D}
        \sum_{D\in\mathcal{D}_{C}}\mass(F(w^{D}_{C}(x))\llcorner D)
    \end{equation}
    among all possible values of $F_{j}(x)$ for $x\in C\cap X_{j}(q_{j})_{0}$ according to our inductive hypothesis. Thus when extending $F_{j}$ from $\partial C$ to all of $C$, we will homotop $F_{j}|_{\partial C\cap X_{j}(q_{j})_{0}}$ to $z_{C}$ in a way that decreases (\ref{Sum of mass F(w) in D}) as we approach $\Center(C)$.
\end{remark}

\noindent\textbf{Base case.} Let $C$ be a $1$-cell of $X$. Denote $w_{C}$ the vertex of $C$ which is not $v_{C}$. Let $q_{1}\in\mathbb{N}$ be the smallest odd number such that $q_{1}\geq 3L^{2^{p}}$ so that $q_{1}\geq 3|\mathcal{D}_{C'}|$ for every cell $C'$ of $X$. For $x\in C\cap X(q_{1})_{0}$, we define

\[
w^{D}_{C}(x)=\begin{cases}
    w^{D}_{C} & \text{ if } l(D)\leq d_{\infty}(x,\partial C)\\
    v_{C} & \text{ if } l(D)>d_{\infty}(x,v_{C})\\
    w_{C} & \text{ if } l(D)>d_{\infty}(x,w_{C})
\end{cases}
\]
Notice that $w^{D}_{C}(x)\in\{v_{C},w_{C}\}$ for every $x$ and if $d_{\infty}(x,\partial C)\geq L$ we have $w^{D}_{C}(x)=w^{D}_{C}$ for every $D$ (in particular, $w^{D}_{C}(x)=w^{D}_{C}$ for $x\in\Center(C)$). We set
\begin{equation*}
    \tau_{C}(x)=\sum_{D\in\mathcal{D}_{C}}\tau(v_{C},w^{D}_{C}(x))\llcorner D
\end{equation*}
and
\begin{equation*}
    F_{1}(x)=F(v_{C})-\partial\tau_{C}(x)
\end{equation*}
in both cases for $x\in C\cap X(q_{1})_{0}$. Notice that the previous induces a well-defined extension $F_{1}:X_{1}(q_{1})_{0}\to\mathcal{Z}_{k}(M)$ of $F_{0}$. Define
\begin{equation*}
    I_{1}(x)=-\sum_{D\in\mathcal{D}_{C}}\tau(v_{C},w^{D}_{C}(x))\llcorner\partial D
\end{equation*}
so that
\begin{equation*}
    F_{1}(x)=\sum_{D\in\mathcal{D}_{C}}F(w^{D}_{C}(x))\llcorner D +I_{1}(x).
\end{equation*}
Given $E\in\faces_{1}(C(q_{1}))$ with vertices $x$ and $y$, by construction we know that there exists $D_{E}\in\mathcal{D}_{C}$ such that $w^{D}_{C}(x)=w^{D}_{C}(y)$ for every $D\neq D_{E}$. Define $\mathcal{D}_{C}^{E}=\{D_{E}\}$ and $\{B_{i}^{E}\}_{i\in I_{E}}=\{B(x_{l},2r)\}$ if $D=D_{l}^{E}$ (see Definition \ref{Def grids 1 cells} and Definition \ref{Def grids for any cell}). Because of the fact that
\begin{align*}
    F_{j}(y)-F_{j}(x) & =\partial[\tau_{C}(x)-\tau_{C}(y)]\\
    & =\partial[(\tau(v_{C},w_{C}^{D_{E}}(y))-\tau(v_{C},w_{C}^{D_{E}}(x)))\llcorner D_{E}]
\end{align*}
and
\begin{align*}
    I_{j}(y)-I_{j}(x)=(\tau(v_{C},w_{C}^{D_{E}}(y))-\tau(v_{C},w_{C}^{D_{E}}(x)))\llcorner \partial D_{E}
\end{align*}
we can see that \ref{ite : 2.1} to \ref{ite : 2.7} in the inductive property hold with $N(k,1)=n(k,1)=1$ and $c(k,1)=2$.

\textbf{Inductive step.} Assume $F_{j-1}:X_{j-1}(q_{j-1})_{0}\to\mathcal{Z}_{k}(M)$ has been defined satisfying the inductive property. Let $\overline{q}_{j}=3q_{j-1}$. First we will define $\overline{F}_{j}(x)$, 
$\overline{I}_{j}(x)$ and $\overline{w}^{D}_{C}(x)$ with domain $(C\setminus\Center(C))\cap X(\overline{q}_{j})_{0}$ for each $j$-cell $C$ of $X(q)$. Fix a $j$-cell $C$. For $x\in\partial C\cap X(\overline{q}_{j})_{0}$ we set $\overline{F}_{j}(x)=F_{j-1}(\Xi(x))$, $\overline{I}_{j}(x)=I_{j-1}(\Xi(x))$ and we define $w^{D}_{\partial C}(x)=w^{D'}_{E}(x)$ where $E\subseteq\partial C$ is a $(j-1)$-cell of $X$ containing $x$ and $D'\in\mathcal{D}_{E}$ is the unique domain such that $D\subseteq D'$. The previous is well defined by (B7) in the \textbf{Inductive Property}. For $x\in (C\setminus\Center(C))\cap X(\overline{q}_{j})_{0}$ we set
\[
\overline{w}^{D}_{C}(x)=\begin{cases}
    w^{D}_{C} & \text{ if }l(D)\leq d_{\infty}(x,\partial C)\\
    w_{\partial C}^{D}(\Xi(x)) & \text{ otherwise}
\end{cases}
\]
and
\begin{equation*}
    \overline{F}_{j}(x)=F_{j-1}(\Xi(x))-\partial\big[\sum_{\substack{D\in\mathcal{D}_{C} \\ l(D)\leq d_{\infty}(x,\partial C)}}\tau(w^{D}_{\partial C}(\Xi(x)),w^{D}_{C})\llcorner D\big].
\end{equation*}
Then it is clear that if
\begin{equation*}
    \overline{I}_{j}(x)=I_{j-1}(\Xi(x))-\sum_{\substack{D\in\mathcal{D}_{C} \\ l(D)\leq d_{\infty}(x,\partial C)}}\tau(w^{D}_{\partial C}(\Xi(x)),w^{D}_{C})\llcorner\partial D
\end{equation*}
it holds
\begin{equation*}
    \overline{F}_{j}(x)=\sum_{D\in\mathcal{D}_{C}}F(\overline{w}^{D}_{C}(x))\llcorner D+\overline{I}_{j}(x)
\end{equation*}
and $\overline{F}_{j}$ is well-defined as it extends $F_{j-1}\circ\Xi:X_{j-1}(\overline{q}_{j})_{0}\to\mathcal{Z}_{k}(M)$ (same for $\overline{I}_{j}$). Consider $E\in\faces_{j}(X_{j}(\overline{q}_{j}))$ such that $E\subseteq C\setminus\Center(C)$ and let $E'\in\faces_{j-1}(X_{j-1}(q_{j-1}))$ and $C'\in\faces_{j-1}(X)$ be such that $\Xi(E)\subseteq E'\subseteq C'$ (there could be more than one pair of cells $(E',C')$ with this property, in that case we just pick one). Denote $l(x)=d_{\infty}(x,\partial C)$, then there exists $l\in\mathbb{N}$ such that if $x\in E$ either $l(x)=l-1$ or $l(x)=l$. Define
\begin{equation*}
    \mathcal{D}_{C}^{E}=\{D\in\mathcal{D}_{C}:D\subseteq D'\text{ for some }D'\in\mathcal{D}_{C'}^{E'}\}\cup\{D_{E}\}
\end{equation*}
where $D_{E}\in\mathcal{D}_{C}$ is the unique domain which verifies $l(D_{E})=l$. Let $\{B_{i}^{E}\}_{i\in I_{E}}$ be an admissible family which covers $D_{E}\cup \bigcup_{i\in I_{E'}}B_{i}^{E'}$. Notice that under this choice, \ref{ite : 2.4} and \ref{ite : 2.5} follow. We now prove the $\varepsilon$-fineness of $\overline{F}_{j}$. Let $C'$ be a face of $\partial C$ which contains $\Xi(x)$. Then

\begin{equation*}
    \mathcal{F}(\overline{F}_{j}(x),F(v_{C}))\leq\mathcal{F}(\overline{F}_{j}(x),F_{j-1}(\Xi(x)))+\mathcal{F}(F_{j-1}(\Xi(x)),F(v_{C'}))+\mathcal{F}(F(v_{C'}),F(v_{C})).
\end{equation*}

The last two terms of the RHS are bounded by $C(p)c(k,j-1)(\frac{L}{r})^{n(k,j-1)}\varepsilon$ and $\varepsilon$, and the first term is bounded by

\begin{align*}
    \sum_{D\in\mathcal{D}_{C}}\mass\big(\tau(w^{D}_{\partial C}(\Xi(x)),w^{D}_{C})\llcorner D\big) & \leq\sum_{D\in \mathcal{D}_{C}}\sum_{v,w\in V(C)}\mass(\tau_{v,w}\llcorner D)\\
    & =\sum_{v,w\in V(C)}\mass(\tau_{vw}) \\
    & \leq {2^{j} \choose 2}\varepsilon.
\end{align*}
Therefore, properties \ref{ite : 2.1} to \ref{ite : 2.7} follow.


Observe that for $x\in\partial\Center(C)\cap X(\overline{q}_{j})_{0}$ it holds $\overline{w}^{D}_{C}(x)=w^{D}_{C}$ for every $D$. Thus by \ref{ite : 2.1} and \ref{ite : 2.5} we can see that
\begin{equation*}
    \overline{I}^{C}_{j}(x)=\overline{F}_{j}(x)-z_{C}=\overline{I}_{j}(x)-I_{C}
\end{equation*}
is an $(N(k,j),N(k,j)r)$-localized family of $k$-cycles with
\begin{equation*}
    \mass(\overline{I}_{j}^{C}(x))\leq C(p)c(k,j)(\frac{L}{r})^{n(k,j)}\varepsilon.
\end{equation*}
Therefore we can refine it and find a $\delta$-localized and small in mass filling of it by Proposition \ref{Prop filling small families}. To be precise, we can define $\tilde{q}_{j}=\overline{q}_{j} q(j,k,M,N(k,j)r)$, $\tilde{I}^{C}_{j}=R^{q(j,k,M)}\overline{I}^{C}_{j}$ and $\tilde{\tau}_{j}:(\partial \Center(C))\cap X(\tilde{q}_{j})_{0}\to\mathcal{I}_{k+1}(M)$ such that $\partial\tilde{\tau}_{j}(x)=\tilde{I}^{C}_{j}(x)$, $\tilde{\tau}_{j}$ is $(N(k,j),N(k,j)r)$-localized and is bounded in mass by $C(p)c(k,j)(\frac{L}{r})^{n(k,j)}\varepsilon$. Then we can ``extend'' $\tilde{\tau}_{j}$ to a map $\tau_{j}:\Center(C)\cap X(q_{j})_{0}\to\mathcal{I}_{k+1}(M)$, $q_{j}=3\tilde{q}_{j}$ by a chopping procedure as in Lemma \ref{Lemma chopping}:
\[
\tau_{j}(x)=\begin{cases}
    d_{l(x)}(\tilde{\tau}_{j}(\Xi(x))) & \text{ if } l(x)=d_{\infty}(x,\partial\Center(C))\leq L\\
    0 & \text{ otherwise.}
\end{cases}
\]
Defining 
\begin{align*}
    I_{j}^{C}(x) & =\tilde{I}_{j-1}\circ\Xi^{3}(x)\\
    F_{j}(x) & =R^{q(j,k,M)}\overline{F}_{j}(\Xi^{3}(x))\\
    w^{D}_{C}(x) & =R^{q(j,k,M)}\overline{w}^{D}_{C}(\Xi^{3}(x))
\end{align*}
for $x\in (C\setminus\Center(C))\cap X_{j}(q_{j})_{0}$ we can extend $I_{j}^{C}$ to $\Center(C)$ as $I_{j}^{C}(x)=\partial\tau_{j}(x)$ and use that extension to extend $F_{j}$ to $\Center(C)$ as
\begin{equation*}
    F_{j}(x)=z_{C}+I_{j}^{C}(x).
\end{equation*}
All the previous constructions preserve $\delta$-localization and the required upper bounds in mass. Setting $w_{C}^{D}(x)=w_{D}^{C}$ and $I_{j}(x)=I_{C}+I_{j}^{C}(x)$ for all $x\in\Center(C)\cap X(q_{j})_{0}$, and $\mathcal{D}_{C}^{E}=\emptyset$ for each $j$-face $E\subseteq\Center(E)$ of $X_{j}(q_{j})$ we see that properties \ref{ite : 2.1} to \ref{ite : 2.7} hold.

\subsection{Approximating discrete families of chains}\label{Subsection approximating families of chains}

Now we want to prove a version of Proposition \ref{Prop delta loc aprox cycles} for families of chains.

\begin{proposition}\label{Prop delta loc aprox chains}
    There exist constants $\varepsilon_{0}(k,p,M)>0$, $\delta_{0}(k,p,M)>0$, $C(k,p,M)\in\mathbb{N}$, $n(k,p)\in\mathbb{N}$ and $N(k,p)\in\mathbb{N}$ such that the following is true. Let $G:X_{0}\to\mathcal{I}_{k+1}(M)$ be an $\varepsilon$-fine family, $\varepsilon\leq\varepsilon_{0}$, $0\leq k\leq n-1$, $\dim(X)=p$ and denote $F=\partial G$. Let $0<\delta<\delta_{0}$. Then there exists a discrete family $G':X(q)_{0}\to\mathcal{I}_{k+1}(M)$ for $q=q(k,p,M,\delta)$ such that if we denote $F'=\partial G'$ then
    \begin{enumerate}
        \item $G'(x)=G(x)$ for every $x\in X_{0}$.
        \item Given a cell $C$ of $X$ and $x\in C$,
        \begin{equation*}
            \mass(G'(x))\leq\max_{v\in V(C)}\{\mass(G(v))\}+c(k,p)(\frac{L}{\delta})^{n(k,p)}\varepsilon
        \end{equation*}
        and
        \begin{equation*}
            \mass(F'(x))\leq\max_{v\in V(C)}\{\mass(F(v))\}+c(k,p)(\frac{L}{\delta})^{n(k,p)}\varepsilon.
        \end{equation*}
        \item $G'$ is $c(k,p)(\frac{L}{\delta})^{n(k,p)}\varepsilon$-fine in $X$ and $(N(k,p),\delta)$-localized in $X(q)$.
    \end{enumerate}
\end{proposition}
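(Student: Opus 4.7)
The plan is to mirror the proof of Proposition \ref{Prop delta loc aprox cycles}, tracking the chain family $G$ and its boundary $F = \partial G$ simultaneously. The key new ingredient is an auxiliary family of $(k+2)$-chains: for each pair of adjacent vertices $v,w \in X_0$, I would pick a $(k+1)$-chain $\gamma_{vw}$ with $\partial \gamma_{vw} = F(v) - F(w)$ and $\mass(\gamma_{vw}) \leq 2\varepsilon$, together with a $(k+2)$-chain $\sigma_{vw}$ satisfying $G(v) - G(w) = \gamma_{vw} + \partial \sigma_{vw}$ and $\mass(\sigma_{vw}) \leq 2\varepsilon$; both exist because $\mathcal{F}(G(v)-G(w)) \leq \varepsilon$. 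Using Lemma \ref{Lemma balls}, I would choose grids $\mathcal{D}_C$ simultaneously compatible with all the $\gamma_{vw}$'s and all the $\sigma_{vw}$'s, so that $\mass(\gamma_{vw} \llcorner \partial D)$ and $\mass(\sigma_{vw} \llcorner \partial D)$ are both controlled.

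I would then build two discrete families in parallel, following the skeleton-by-skeleton induction of Proposition \ref{Prop delta loc aprox cycles}. First, apply Proposition \ref{Prop delta loc aprox cycles} to $F$ with the $\gamma_{vw}$'s playing the role of the $\tau_{vw}$'s, producing the target boundary family $F' : X(q_1)_0 \to \mathcal{Z}_k(M)$ satisfying all its conclusions; denote by $w_C^{D,F}(x)$ its vertex selectors, which are obtained by swap-interpolation minimizing $\mass(F(v)\llcorner D)$. Second, construct an auxiliary chain family $G''$ by the analogous interpolation for chains: on a $j$-cell $C$ of $X$ set
\begin{equation*}
G''(x) = \sum_{D \in \mathcal{D}_C} G(w_C^{D,G}(x)) \llcorner D + \sum_{D \in \mathcal{D}_C} \sigma(v_C, w_C^{D,G}(x)) \llcorner \partial D,
\end{equation*}
where $w_C^{D,G}(x) \in V(C)$ is selected by the same swap-type scheme as in Proposition \ref{Prop delta loc aprox cycles} but now minimizing $\mass(G(v)\llcorner D)$. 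A direct computation using the identity $\partial\sigma_{vw} = G(v) - G(w) - \gamma_{vw}$ and the $\mathbb{Z}_2$-cancellation of $\sum_D G(v_C)\llcorner\partial D$ along interior grid faces shows both that $G''(v) = G(v)$ at each vertex $v \in X_0$ and that $\partial G'' = F''$, where $F''$ is the interpolation of $F$ built with $w_C^{D,G}$ instead of $w_C^{D,F}$. The swap argument then yields $\mass(G''(x)) \leq \max_{v \in V(C)} \mass(G(v)) + O((L/\delta)^{n(k,p)}\varepsilon)$.

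Finally, the families $F'$ and $F''$ are $(N,\delta)$-localized discrete families of $k$-cycles that agree on $X_0$; since both have the form $F(v_C) + \partial\bigl[\sum_D \gamma(v_C, \cdot)\llcorner D\bigr]$ with $w$'s ranging over $V(C)$, their difference satisfies $\mathcal{F}(F'(x) - F''(x)) \leq 2^{p+2}\varepsilon$. I would then invoke Theorem \ref{Thm filling small families} to obtain a $\delta$-localized filling $U : X(q)_0 \to \mathcal{I}_{k+1}(M)$ with $\partial U = F' - F''$, $U(v) = 0$ at $v \in X_0$, and $\mass(U) \leq c(k,p)(L/\delta)^{n(k,p)}\varepsilon$, and set $G'(x) := G''(x) + U(x)$. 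Then $\partial G' = F'$ (so the mass and localization bounds for $F'$ are inherited from the cycle case), $G'(v) = G(v)$ at vertices, and the remaining bounds for $G'$ combine those of $G''$ and $U$ via Lemma \ref{Lemma non disjoint}. The main obstacle is the reconciliation of $G$-optimality with the target boundary $F'$, which is $F$-optimal: the two-step construction succeeds because the discrepancy $F'-F''$ only involves slices of the small chains $\gamma_{vw}$, so its flat norm is $O(\varepsilon)$ and the filling correction from Theorem \ref{Thm filling small families} does not spoil the tight $G$-mass bound obtained from $G''$.
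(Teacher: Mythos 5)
Your first step (apply Proposition \ref{Prop delta loc aprox cycles} to $F=\partial G$ to fix the target boundary $F'$) matches the paper, but the step you treat as routine is exactly where the paper has to do genuinely new work, and as written it is a gap. The paper's proof reduces everything to Proposition \ref{Prop filling big families}: after producing $F'$ it crudely sets $G'(x)=G(v(x))+\tau(F'(v(x)),F'(x))$, which is fine and has boundary $F'$, and then invokes Proposition \ref{Prop filling big families}, whose proof runs the chain interpolation on the auxiliary complex $\tilde{X}=\bigsqcup_{C}\bigsqcup_{v}C_{v}$ precisely because there the refined boundary $R^{q}F$ is constant on each top cell, so differences of chains are honest $(k+1)$-cycles and the cycle machinery (in particular the contraction of the discrepancy over $\partial\Center(C)$ via Proposition \ref{Prop filling small families} and the chopping of Lemma \ref{Lemma chopping}) applies verbatim.

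Your auxiliary family $G''$ is supposed to come from ``the same swap-type scheme'' run directly on $X$, where the boundary varies, and this is not a transcription. First, each swap now adds $\partial[\sigma_{vw}\llcorner D]=(G(v)-G(w)-\gamma_{vw})\llcorner D+\sigma_{vw}\llcorner\partial D$, so it changes the boundary by $\gamma_{vw}\llcorner D$ plus slice terms; the identity $\partial G''=F''$ must therefore be maintained through a skeleton-by-skeleton induction in which $F''$ and $G''$ are built in tandem. Your ``direct computation'' instead uses the closed-form expression with a single preferred vertex $v_{C}$, i.e.\ the analogue of (\ref{Expression 1 for Fj}), which the paper explains does not describe the actual construction and does not glue across faces (adjacent cells have different preferred vertices and $\sigma(v_{1},v_{2})+\sigma(v_{2},v_{3})\neq\sigma(v_{1},v_{3})$); as a symptom, that formula gives $G''(v)=G(v)+\sum_{D}\sigma(v_{C},v)\llcorner\partial D$ at a vertex $v\in X_{0}$, not $G(v)$. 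Second, and more seriously, at the center-contraction step of the inductive scheme the cycle proof uses that the discrepancy family over $\partial\Center(C)$ consists of small $k$-cycles, which is what makes Proposition \ref{Prop filling small families} applicable; in your chain setting the analogous discrepancy between $G''_{j}$ and its target interpolation at the center has varying boundary (the corresponding $F''$-discrepancies), hence is not a cycle family, and even after correcting boundaries the two candidate extensions differ by a family of $(k+1)$-cycles that must itself be filled and chopped in a $\delta$-localized way. Supplying these arguments is essentially re-proving Proposition \ref{Prop filling big families}, which your proposal neither invokes nor reconstructs. (A smaller quantitative point: $\mathcal{F}(F'(x)-F''(x))$ is not $O(2^{p}\varepsilon)$ but of order $(L/\delta)^{n(k,p)}\varepsilon$, because the difference contains the slice corrections $\gamma\llcorner\partial D$; this is still usable, but the constants must be tracked so that the smallness hypothesis of Theorem \ref{Thm filling small families} is met.) The final assembly $G'=G''+U$ is fine once a $G''$ with the stated properties exists, but producing it is the heart of the matter.
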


In order to obtain the previous, we first prove the following result which is an extension of Proposition \ref{Prop filling small families} to families of cycles with large flat norm.

\begin{proposition}\label{Prop filling big families}
    There exist constants $\varepsilon_{0}(k,p,M)>0$, $\delta_{0}(k,p,M)>0$, $q(k,p,M)\in\mathbb{N}$ and $K(N,k,p)\in\mathbb{N}$ such that the following is true. Let $\varepsilon\leq\varepsilon_{0}$ and let $G:X_{0}\to\mathcal{I}_{k+1}(M)$ be an $\varepsilon$-fine discrete family of chains with $\dim(X)=p$ such that the family $F=\partial G:X_{0}\to\mathcal{I}_{k}(M)$ is $(N,\delta)$-localized for some $\delta\leq\delta_{0}$. Then there exists a discrete family $G':X(q)_{0}\to\mathcal{I}_{k+1}(M)$ such that
    \begin{enumerate}
        \item $G'(x)=G(x)$ for every $x\in X_{0}$.
        \item $\partial G'(x)=R^qF(x)$.
        \item $G'$ is $c(k,p)(\frac{L}{\delta})^{n(k,p)}\varepsilon$-fine in $X$ and $(K(N,k,p),K(N,k,p)\delta)$-localized in $X(q)$.
        \item Given $C$ a cell of $X$ and $x\in C\cap X(q)_{0}$,
        \begin{equation*}
            \mass(G'(x))\leq\max_{v\in V(C)}\{\mass(G(v))\}+c(k,p)(\frac{L}{\delta})^{n(k,p)}\varepsilon.
        \end{equation*}
    \end{enumerate}
\end{proposition}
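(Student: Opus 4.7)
The plan is to follow the inductive scheme of Proposition~\ref{Prop delta loc aprox cycles}, adapted so that the constructed family $G'$ satisfies the prescribed boundary condition $\partial G' = R^q F$. The central formula will be
\[
G_j(x) = G(\Lambda_1(x)) + \partial\sigma_j(x),
\]
where $\Lambda_1(x)$ denotes the vertex of $X_0$ closest to $x \in X_j(q_j)_0$ (so that $\partial G_j(x) = F(\Lambda_1(x)) = R^{q_j} F(x)$ automatically) and $\sigma_j \in \mathcal{I}_{k+2}(M)$ is a correction chain chosen to enforce $\delta$-localization. The task then reduces to constructing $\sigma_j$ so that the differences $G_j(x) - G_j(y)$ for adjacent vertices of $X_j(q_j)$ end up supported in a $\delta$-admissible family.

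The key preparatory ingredient is, for each pair of adjacent vertices $v, w \in V(C)$, a filling $\rho_{vw} \in \mathcal{I}_{k+1}(M)$ of $F(v) - F(w)$ that is supported in a slight thickening of the admissible family $\{U_i^C\}$ and has mass bounded by $c(p)\varepsilon/\delta^{m(p)}$. Since $F(v) - F(w)$ is a $k$-cycle supported in the union of small disjoint balls $U_i^C$, such a filling is constructed using the isoperimetric inequality inside slightly enlarged balls together with the flat bound $\mathcal{F}(G(v), G(w)) \leq \varepsilon$, which provides a small-mass chain $A$ with $\partial A = F(v) - F(w)$ that one retracts onto the thickening of $\bigcup_i U_i^C$ via coarea. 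With $\rho_{vw}$ available, the difference $G(v) - G(w) - \rho_{vw}$ becomes a $(k+1)$-cycle of controlled flat norm, while $\rho_{vw}$ itself is supported in an enlarged $\delta$-admissible family for $C$.

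We then construct $\sigma_j$ by induction on skeleta, using grids $\mathcal{D}_C$ adapted to $\{\rho_{vw}, G(v) - G(w) : v, w \in V(C)\}$ via coarea, and interpolating one grid domain at a time, exactly as in the proof of Proposition~\ref{Prop delta loc aprox cycles}. The main obstacle appears at the center of each cell of dimension $\geq 2$: when traversing the boundary of the dual-cell structure inside $C$, the accumulated filling corrections produce a residual monodromy cocycle of the form $\rho_{v_1 v_2} + \rho_{v_2 v_3} + \cdots + \rho_{v_m v_1}$, which is a $(k+1)$-cycle supported in an $(N',\delta')$-admissible family and of small mass. This residual cycle is filled using Proposition~\ref{Prop filling small families}; the resulting $(k+2)$-chain is added to $\sigma_j$ in the interior of $C$, which preserves the boundary condition $\partial G_j = R^{q_j} F$ and closes the induction. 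All mass, fineness, and $\delta$-localization estimates then propagate through the skeleta exactly as in Proposition~\ref{Prop delta loc aprox cycles}, yielding the required $G'$ on $X(q)_0$.
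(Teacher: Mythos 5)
Your overall architecture is the one the paper actually uses: it proves Proposition~\ref{Prop filling big families} by running the skeleton-by-skeleton scheme of Proposition~\ref{Prop filling small families} together with the grid interpolation of Proposition~\ref{Prop delta loc aprox cycles} one dimension up, arranging (as you do via the nearest-vertex formula) that the prescribed boundary $R^{q_j}F$ is locally constant, so that the chains being interpolated share a boundary, mass gets swapped domain-by-domain rather than added, and the residual small localized cycles at cell centers are filled by Proposition~\ref{Prop filling small families}. Where you genuinely differ is the treatment of the seams where the nearest vertex changes: you transition via localized fillings $\rho_{vw}$ of $F(v)-F(w)$ and a monodromy cocycle $\rho_{v_1v_2}+\dots+\rho_{v_mv_1}$ (in which the large $G$-terms do cancel, so this is a small localized $(k+1)$-cycle, as you say), whereas the paper uses the $(k+1)$-part $\tau_{vw}$ of the flat decomposition of $G(v)-G(w)$ together with the $\tilde X$, $\sim$, $T_{EE'}$, $\Delta_{EE'}$ bookkeeping. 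Your construction of $\rho_{vw}$ from the localization of $F=\partial G$ alone actually matches the stated hypotheses more faithfully than the paper's proof, which at this point invokes a localization property of $G$ itself.

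There is, however, one concrete gap in your estimates as written. The elementary interpolation move in the chain setting has to be the addition of $\partial[\Sigma\llcorner D]$ for a \emph{small} $(k+2)$-chain $\Sigma$ filling a same-boundary difference of $G$-values (equivalently, filling the cycle $G(v)-G(w)-\rho_{vw}$, whose controlled flat norm you note but never exploit); it is the slices $\Sigma\llcorner\partial D$ of these chains that enter the error term $I_j$ and the fineness estimate, so the grids must be chosen by coarea compatible with \emph{them} --- this is exactly the role of the $\sigma_{vw}$'s in the paper. You never introduce these $(k+2)$-fillings, and instead adapt the grids to $\{\rho_{vw},\,G(v)-G(w)\}$. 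Coarea adaptation to $G(v)-G(w)$ buys nothing: the resulting slice bound is of order $\mass(G(v)-G(w))/\delta\sim\max_{v}\mass(G(v))/\delta$, which is not of order $\varepsilon$, and any such term entering the corrections would destroy conclusions (3) and (4), whose error must be $c(k,p)(L/\delta)^{n(k,p)}\varepsilon$ independently of $\max_v\mass(G(v))$. (Restrictions $G(v)\llcorner D$ to the closed grid domains need no coarea control at all, since they are only swapped, never stacked.) To close the argument you should: construct, within each constant-boundary region, $(k+2)$-chains of mass controlled by $\varepsilon$ (via $\varepsilon$-fineness and the isoperimetric inequality) filling the $G$-differences, adapt the grids to these chains, and run the domain-by-domain replacement with them; with that repair your sketch coincides with the paper's proof.
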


\begin{proof}[Proof of Proposition \ref{Prop filling big families}]
    We are going to use the same strategy as in the proof of Proposition \ref{Prop filling small families}. Namely, we will inductively construct refinements of
    \begin{equation*}
    \tilde{X}=\bigsqcup_{C\in\faces_{p}(X)}\bigsqcup_{v\in V(C)}C_{v}
    \end{equation*}
    and define maps $G_{j}:\tilde{X}_{j}(q_{j})_{0}\to\mathcal{I}_{k+1}(M)$ taking the equivalence relation $\sim$ on $\faces(\tilde{X})$ into account so that  $\varepsilon$-fineness and $\delta$-localization also hold for adjacent vertices in $X(2q_{j}+1)$ which are not neighbours in $\tilde{X}(q_{j})$. The main difference with Proposition \ref{Prop filling small families} is that the chains $G(x)$ may have very large mass and then we have to be careful in our construction in order to avoid amplifying the volumes of our chains by big factors. We will do that by using the interpolation techniques from Proposition \ref{Prop delta loc aprox cycles}.

    We define $G_{0}:\tilde{X}_{0}\to \mathcal{I}_{k+1}(M)$ in the a very similar way as in Proposition \ref{Prop filling small families}. For $v\in X_{0}\subseteq X(3)_{0}=\tilde{X}_{0}$ we set $G_{0}(v)=G(v)$. Given two adjacent vertices $x,y$ in $\tilde{X}_{0}\setminus X_{0}$, let $E$ be the unique cell of $X$ such that $x,y\in\Center(E)$ and denote $v=\Xi(x)$, $w=\Xi(y)$ being $v,w\in V(E)$. As $\mathcal{F}(F(v),F(w))\leq \mathcal{F}(G(v),G(w))\leq\varepsilon$ and $F$ is $(N,\delta)$-localized, there exist $\tau(v,w)\in\mathcal{I}_{k+1}(M)$ supported in $\bigcup_{i\in I_{E}}U_{i}^{E}$ such that $\partial\tau(v,w)=F(v)-F(w)$ and $\mass(\tau(v,w))\leq\varepsilon$. Thus given a face $E$ of $X$ and $x\in V(\Center(E))$ we define
    \begin{equation*}
        G_{0}(x)=G(v_{E})+\tau(\Xi(x),v_{E}).
    \end{equation*}
    This implies that $\partial G_{0}(x)=F(\Xi(x))$ and therefore $G_{0}$ verifies
    \begin{enumerate}
        \item If $x,y$ belong to the same top cell $C_{v}$ of $\tilde{X}_{0}$ then the chains $G_{0}(x)$, $G_{0}(y)$ have the same boundary $F(v)$.
        \item If $d(x,y)=1$ in $\tilde{X}_{0}$ but $x$ and $y$ do not belong to the same $C_{v}$ then there exists a unique face $E$ of $X$ such that $x,y\in\Center(E)$ and $G_{0}(x)-G_{0}(y)$ is supported in $\bigcup_{i\in I_{E}}U_{i}^{E}$ and has mass at most $\varepsilon$.
    \end{enumerate}
    These properties (or an analog one in the case of property 2) will be maintained when we go to higher dimensional skeleta and will allow us to interpolate with the method from Proposition \ref{Prop delta loc aprox cycles} as if the families of chains $G_{j}:C_{v}\cap X_{j}(q_{j})_{0}\to\mathcal{I}_{k+1}(M)$ with constant boundary $F(v)$ were families of cycles. The maps $G_{1},...,G_{p}$ are recursively constructed according the following.

    \textbf{Inductive Property.} For each $1\leq j\leq p$, there exist discrete families $G_{j},I_{j}:\tilde{X}_{j}(q_{j})_{0}\to \mathcal{I}_{k+1}(M)$ such that the following is true. For each $E\in\faces_{j}(\tilde{X}_{j})$ and $x\in E\cap\tilde{X}_{j}(q_{j})_{0}$ there are coefficients $(w_{E}^{D}(x))_{D\in\mathcal{D}_{E}}\in V(E)^{\mathcal{D}_{E}}$, and for each $j$-cell $F$ of $\tilde{X}_{j}(q_{j})$ there exists a $(K(N,k,j),K(N,k,j)\delta)$-admissible family $\{B_{i}\}_{i\in I_{F}}$ such that
    \begin{enumerate}[label=(C\arabic*)]
        \item $G_{j}(x)=G_{0}(x)$ for every $x\in\tilde{X}_{0}$.
        \item $\partial G_{j}(x)=R^{q_{j}}F(x)$.
        \item $G_{j}$ is $c(k,j)(\frac{L}{\delta})^{n(k,j)}\varepsilon$-fine.
        \item If $x\in E$ for some $j$-cell $E$ of $\tilde{X}$ then
        \begin{equation*}
            G_{j}(x)=\sum_{D\in\mathcal{D}_{E}}G(w^{D}_{E}(x))\llcorner D+I_{j}(x).
        \end{equation*}
        \item $\sum_{D\in\mathcal{D}_{E}}\mass(G(w_{E}^{D}(x))\llcorner D)\leq \max\{\mass(G(v)):v\in V(E)\}+C(j)\varepsilon$.
        \item $\mass(I_{j}(x))\leq C(p)c(k,j)(\frac{L}{\delta})^{n(k,j)}$.
        \item $I_{j}|_{F\cap X(q_{j})_{0}}$ is localized in $\{B_{i}^{F}\}_{i\in I_{F}}$.
        \item For each $F\subseteq E$, $E\in\faces_{j}(X_{j}(q_{j}))$ there exists a subset $\mathcal{D}_{E}^{F}\subseteq\mathcal{D}_{E}$ such that
        \begin{enumerate}
            \item If $x,y\in V(F)$ then $w^{D}_{E}(x)=w^{D}_{E}(y)$ for all $D\in\mathcal{D}_{E}\setminus\mathcal{D}_{E}^{F}$.
            \item $\{B_{i}^{F}\}_{i\in I_{F}}$ covers $\bigcup_{D\in\mathcal{D}_{E}^{F}}D$.
        \end{enumerate}
        \item If $E'\in\faces_{j-1}(X)$ is contained in $\partial E$ and $x\in E'\cap X_{j}(q_{j})_{0}$, then
       \begin{equation*}
           w_{E}^{D}(x)=w_{E'}^{D'}(\Sigma_{j}(x))
       \end{equation*}
       where $D'\in\mathcal{D}_{E'}$ is the unique domain such that $D\subseteq D'$ and
       \[
       \Sigma_{j}(x)=\begin{cases}
           x & \text{ if } j=1\\
           \Xi\circ\Lambda_{3q_{j-1}}\circ\Xi^{3}(x) & \text{ if } j\geq 2.
       \end{cases}
       \]
       \item If $F\sim F'$ then $\{B_{i}\}_{i\in I_{F}}=\{B_{i}\}_{i\in I_{F'}}$ and $\mathcal{D}_{F}^{E}=\mathcal{D}_{E'}^{F'}$. Furthermore, let $\Delta^{j}_{FF'}(x)=\tau_{j}(x)-\tau_{j}(T_{FF'}(x))$ for $x\in F\cap\tilde{X}(q_{j})_{0}$ then $\Delta^{j}_{FF'}(x)$ is supported in $\bigcup_{i\in I_{F}}B_{i}^{F}$ and
        \begin{equation*}
            \mass(\Delta_{FF'}(x))\leq C(j)\varepsilon.
        \end{equation*}
    \end{enumerate}

    Assume the Inductive Property holds for $j-1$, we sketch how to do the inductive step (obtaining the base case $j=1$ from $G_{0}$ is analog). Let $\mathcal{E}$ be an equivalence class of $j$-faces under $\sim$ and let $E\in\mathcal{E}$. As $R^{q_{j}}F$ is constant on $E$ (because so it is on the top dimensional cells $C_{v}$ of $\tilde{X}$), we know that $\partial G_{j-1}(x)=F(v_{E})$ for every $x\in\partial E\cap\tilde{X}(q_{j-1})_{0}$. Using this, we can interpolate between $\{G(v):v\in V(E)\}$ in the same way as we did between the cycles $\{F(v):v\in V(E)\}$ in the proof of Proposition \ref{Prop delta loc aprox cycles}, just replacing a ``family of cycles'' by a ``family of chains with constant boundary''. Indeed, the only thing we need to use for such interpolation is that for $x,y$ vertices of some $C_{v}$, the difference $G(x)-G(y)$ is a $(k+1)$-cycle of small flat norm, hence there exists $\sigma_{xy}\in\mathcal{I}_{k+2}(M)$ such that $\partial\sigma_{xy}=G(x)-G(y)$ and $\mass(\sigma_{xy})\leq\varepsilon$. We define the corresponding grids $\mathcal{D}_{E}$ for each cell $E$ of $\tilde{X}$ in the same way as done in the proof of Proposition \ref{Prop delta loc aprox cycles} but also requiring that $\mathcal{D}_{E}=\mathcal{D}_{E'}$ if $E\sim E'$ which can be done in the following way. Consider the equivalence relation $\approx$ in $\tilde{X}_{0}$ given by $v\approx v'$ if and only if there exist $E,E'\in\faces(\tilde{X})$ such that $v\in E$, $v'\in E'$, $E\sim E'$ and $T_{EE'}(v)=v'$. Given an equivalence class $\mathcal{V}$ of $\sim$, we fix a grid $\mathcal{D}_{\mathcal{V}}$ compatible with $\{\sigma(w,w'):w,w'\in\bigcup_{v\in\mathcal{V}}\Adj(v)\}$ where $\Adj(v)$ is taken in the simplicial complex $\tilde{X}$ and we set $\mathcal{D}_{v}=\mathcal{D}_{\mathcal{V}}$ if $v\in\mathcal{V}$. Then by construction, $\mathcal{D}_{v}=\mathcal{D}_{v'}$ if $v\approx v'$ and this implies that $\mathcal{D}_{E}=\mathcal{D}_{E'}$ if $E\sim E'$ following Definition \ref{Def grids for any cell} (where grids for higher dimensional cells are constructed as the intersection of the grids of their vertices). We will use the operation of adding
    \begin{equation*}
        \partial[\sigma_{vw}\llcorner D]=[G(v)-G(w)]\llcorner D+\sigma_{vw}\llcorner\partial D
    \end{equation*}
    to change $w^{D}_{E}(x)=w$ by $w^{D}_{E}(x)=v$ and $I_{j}(x)$ by $I_{j}(x)+\sigma_{vw}\llcorner\partial D$. That way we can extend $G_{j}$, $I_{j}$ and $w^{D}_{E}(x)$ to all of $E$ verifying properties (C1) to (C9) for $j$-faces $F\subseteq E$ following the proof of Proposition \ref{Prop delta loc aprox cycles}. Given $E'\in\mathcal{E}$, $E'\neq E$, we extend $\Delta^{j}_{EE'}:E\cap\tilde{X}(q_{j})_{0}$ to be constantly equal to $\Delta^{j}_{EE'}(v_{E})=G_{0}(v_{E})-G_{0}(T_{EE'}(v_{E}))$ on $E\setminus\partial E$. Then by inductive hypothesis, $\mass(\Delta_{EE'}^{j}(x))\leq C(j-1)\varepsilon$ for every $x$. We set
    \begin{equation*}
        G(x)=G(T_{E'E}(x))-\Delta^{j}_{EE'}(T_{EE'}(x))
    \end{equation*}
    for $x\in E'\cap\tilde{X}(q_{j})_{0}$ and define $\{B_{i}\}_{i\in I_{F}}=\{B_{i}\}_{i\in I_{F'}}$ accordingly as in Proposition \ref{Prop filling small families}. Notice that we have
    \begin{equation*}
        G_{j}(x)=\sum_{D\in\mathcal{D}_{E}}G(w^{D}_{E}(y))\llcorner D+I_{j}(y)-\Delta_{EE'}^{j}(y)
    \end{equation*}
    for $y=T_{E'E}(x)$. By construction, $\mathcal{D}_{E}=\mathcal{D}_{E'}$ but $w^{D}_{E}(y)\in V(E)$ and we require $w^{D}_{E'}(y)\in V(E')$. But if we define $w^{D}_{E'}(x)=T_{EE'}(w^{D}_{E}(y))$, using the fact that $\mass(G_{0}(w_{E}^{D}(y))-G_{0}(w_{E'}^{D}(x)))\leq 2\varepsilon$ because $w_{E}^{D}(y)$ and $w_{E'}^{D}(x)$ belong to the center of some cell of $X$, we can rewrite
    \begin{equation*}
        G_{j}(x)=\sum_{D\in\mathcal{D}_{E'}}G(w_{E'}^{D}(x))\llcorner D+I_{j}(x)
    \end{equation*}
    where
    \begin{equation*}
        I_{j}(x)=I_{j}(y)-\Delta_{EE'}^{j}(x)+\sum_{D\in\mathcal{D}_{E}}(G(w^{D}_{E}(y))-G(w^{D}_{E'}(x)))\llcorner D.
    \end{equation*}
    As the second term has mass bounded by $C(j-1)\varepsilon$ and the third one by $|V(E)|\varepsilon=2^{j}\varepsilon$, we obtain the desired upper bound for $\mass(I_{j}(x))$ and also (C5) by observing that
    \begin{equation*}
        \sum_{D\in\mathcal{D}_{E}}\mass(G(w^{D}_{j}(x)\llcorner D)\leq \sum_{D\in\mathcal{D}_{E}}\mass(G(w^{D}_{j}(y)\llcorner D)+\sum_{D\in\mathcal{D}_{E}}\mass((G(w^{D}_{j}(y))-G(w^{D}_{j}(x)))\llcorner D).
    \end{equation*}
    The fact that $I_{j}$ is localized in $\{B_{i}^{F'}\}_{i\in I_{F'}}$ for any $j$-face $F'\subseteq E'$ of $\tilde{X}(q_{j})$ is a consequence of the same property holding for $G_{j}:V(F')\to\mathcal{I}_{k+1}(M)$ and of (C8) being true with $\mathcal{D}^{F'}_{E'}=\mathcal{D}^{F}_{E}$.

\end{proof}

\begin{proof}[Proof of Proposition \ref{Prop delta loc aprox chains}]
    We start by applying Proposition \ref{Prop delta loc aprox cycles} to the family $F:X_{0}\to\mathcal{Z}_{k}(M)$ to obtain an $(N(k,p),\delta)$-localized and $c(k,p)(\frac{L}{\delta})^{n(k,p)}\varepsilon$-fine family $F':X_{0}\to\mathcal{Z}_{k}(M)$ satisfying 
    \begin{equation*}
        \mass(F'(x))\leq\max\{\mass(F(v)):v\in C\cap X(q)_{0}\}+c(k,p)(\frac{L}{\delta})^{n(k,p)}\varepsilon
    \end{equation*}
    if $x\in C\cap X(q)_{0}$. We will extend $G$ to a map $G':X(q)_{0}\to\mathcal{I}_{k+1}(M)$ which will also be $c(k,p)(\frac{L}{\delta})^{n(k,p)}\varepsilon$-fine and will verify $\partial G'(x)=F'(x)$. With that purpose, for each $x\in X(q)_{0}$ we denote by $C_{x}$ the unique cell such that $x\in\interior(C_{x})$ and we choose a vertex $v(x)\in C_{x}$. Notice that $v(x)=x$ if $x\in X_{0}$. Given $C\in\faces_{p}(X)$ and $x,y\in C\cap X(q)_{0}$,  we construct $\tau(F'(x),F'(y))$ to be a $(k+1)$-chain with boundary $F'(x)-F'(y)$ and mass bounded by $c(k,p)(\frac{L}{\delta})^{n(k,p)}\varepsilon$.
    Set $G'(x)=G(v(x))+\tau(F'(v(x)),F'(x))$. It follows that $\partial G'(x)=F'(x)$ and
    \begin{equation*}
        \mass(G'(x))\leq\mass(G(v(x))+c(k,p)(\frac{L}{\delta})^{n(k,p)}\varepsilon.
    \end{equation*}
    Given $x,y\in C$, $v(x),v(y)\in C$ and therefore as $G$ is $\varepsilon$-fine we can see that
    \begin{equation*}
        \mathcal{F}(G'(x),G'(y))\leq\mathcal{F}(G(v(x)),G(v(y)))+\mass(\tau(F'(v(x)),F'(x)))+\mass(\tau(F'(v(y)),F'(y)))
    \end{equation*}
    hence $G'$ is $c(k,p)(\frac{L}{\delta})^{n(k,p)}\varepsilon$-fine. Then $G'$ satisfies the hypothesis of Proposition \ref{Prop filling big families} and applying it we get the desired result.

\end{proof}

\subsection{Approximating continuous families}\label{Subsection approximating continuous families}

We start by proving Theorem \ref{Thm continuous delta approx cycles}. The proof of Theorem \ref{Thm continuous delta approx chains} is analog.

\begin{proof}[Proof of Theorem \ref{Thm continuous delta approx cycles}]
    In the proof we are going to apply Theorem \ref{Thm discrete delta approx cycles}. Let us make the following observation about that result first. To avoid bad notation, we will denote by $\delta'$ and $\varepsilon'$ the small numbers denoted by $\delta$ and $\varepsilon$ respectively in the statement of the theorem. Notice that if we set
    \begin{equation*}
        \delta'=\delta'(\varepsilon')=(C(k,p,M)\sqrt{\varepsilon'})^{\frac{1}{n(k,p)}}
    \end{equation*}
    and $\varepsilon'$ is sufficiently small, then $\delta'<\delta_{0}$ and $\varepsilon'<\varepsilon_{0}$; allowing us to apply the theorem. Moreover, given $\varepsilon_{1}>0$ and $\delta_{1}>0$ we can also make $\delta'<\delta_{1}$ and
    \begin{equation*}
        C(k,p,M)\frac{\varepsilon'}{(\delta')^{n(k,p)}}=\sqrt{\varepsilon'}<\varepsilon_{1}
    \end{equation*}
    provided $\varepsilon'$ is sufficiently small. We will do such choice of $\varepsilon'$ and $\delta'$ in Theorem \ref{Thm discrete delta approx cycles} for certain values of $\delta_{1}>0$ and $\varepsilon_{1}>0$ to be determined below.
    
    Let $F:X\to\mathcal{Z}_{k}(M)$ be a continuous family without concentration of mass and let $\varepsilon,\delta>0$. Let $\varepsilon_{1}=\frac{\varepsilon}{2(C(k,p)+1)}$. Let $\delta_{1}\leq\min\{\frac{\delta}{C(p,k)},1\}$ be such that
    \begin{equation}\label{Eq no concentration of mass}
        \sup_{x\in X}\sup_{p\in M}\{\mass(F(x)\llcorner B(p,\delta_{1}))\}<\frac{\varepsilon_{1}}{2^{p}N(k,p)}.
    \end{equation}
    The previous is possible by the no concentration of mass property of $F$. Define $\delta'>0$ and $\varepsilon'>0$ such that $\delta'<\delta_{1}$ and
    \begin{equation*}
        C(k,p,M)\frac{\varepsilon'}{(\delta')^{n(k,p)}}=\sqrt{\varepsilon'}<\varepsilon_{1}
    \end{equation*}
    as indicated above. Consider a refinement of $X$ (which for simplicity we will still call $X$) for which $F$ is $\varepsilon'$-fine. Apply Theorem \ref{Thm discrete delta approx cycles} to the $\varepsilon'$-fine family $F|_{X_{0}}$. Then we obtain $q\in\mathbb{N}$ and $F':X(q)_{0}\to\mathcal{Z}_{k}(M)$ such that
    \begin{enumerate}
        \item $F'(x)=F(x)$ for every $x\in X_{0}$.
        \item Given a cell $C$ of $X$ and $x\in C$,
        \begin{equation*}
            \mass(F'(x))\leq\max_{v\in V(C)}\{\mass(F(v))\}+\varepsilon_{1}.
        \end{equation*}
        \item $F'$ is $\varepsilon_{1}$-fine in $X$ and $(N(k,p),\delta_{1})$-localized in $X(q)$.
    \end{enumerate}
    Now we want to extend $F'$ to a continuous family with domain $X(q)$. For that purpose, we apply \cite{GL22}[Proposition~2.6] to the discrete $\varepsilon_{1}$-fine and $(N(k,p),\delta_{1})$-localized map $F':X(q)_{0}\to\mathcal{Z}_{k}(M)$, obtaining an extension $F':X(q)\to\mathcal{Z}_{k}(M)$ such that
    \begin{enumerate}
        \item The extension is $C(k,p)\delta_{1}$-localized.
        \item If $C$ is a cell of $X(q)$ and $x\in C$,
        \begin{equation*}
            \mass(F'(x))\leq\max_{v\in V(C)}\{\mass(F'(v))\}+C(k,p)\max_{v\in V(C)}\{\mass(F'(x)\llcorner(\bigcup_{i\in I_{C}}U_{i}^{C}))\}.
        \end{equation*}
    \end{enumerate}
    In the previous $\{U_{i}^{C}\}_{i\in I_{C}}$ denotes the $(N(k,p),\delta_{1})$-admissible family where $F'|_{C}$ is localized. Using the construction of $F'$, we are going to show that 
    \begin{equation*}
        \mass(F'(x)\llcorner U_{C})\leq\varepsilon_{1}
    \end{equation*}
    for every $x\in X(q)_{0}$, where $U_{C}=\bigcup_{i\in I_{C}}U_{i}^{C}$. Indeed, recall that if $x\in\overline{C}$ where $\overline{C}$ is a top cell of $X$, by the proof of Theorem \ref{Thm discrete delta approx cycles}
    \begin{equation*}
        F'(x)=\sum_{D\in\mathcal{D}_{\overline{C}}}F(w_{\overline{C}}^{D}(x))\llcorner D+I_{p}(x)
    \end{equation*}
    and therefore
    \begin{align*}
        \mass(F'(x)\llcorner U_{C}) & \leq\sum_{D\in\mathcal{D}_{\overline{C}}}\mass(F(w_{\overline{C}}^{D}(x))\llcorner(D\cap U_{C}))+\mass(I_{p}(x))\\
        & \leq\sum_{D\in\mathcal{D}_{\overline{C}}}\sum_{v\in V(\overline{C})}\mass(F(v)\llcorner(D\cap U_{C}))+C(k,p,M)\frac{\varepsilon'}{(\delta')^{n(k,p)}}\\
        & \leq\sum_{v\in V(C)}\mass(F(v)\llcorner U_{C})+\varepsilon_{1}\\
        & \leq 2\varepsilon_{1}
    \end{align*}
    using (\ref{Eq no concentration of mass}), the fact that $|V(C)|=2^{p}$ and that $U_{C}$ is the union of at most $N(k,p)$ balls of radius less than $\delta_{1}$. Using that and the upper bounds for $\mass(F'(x))$ for $x\in X(q)_{0}$, we can see that if $x\in\overline{C}$ for a top cell $\overline{C}$ of $X$ then
    \begin{equation*}
        \mass(F'(x))\leq \max_{v\in V(\overline{C})}\{\mass(F(v))\}+(2C(k,p)+1)\varepsilon_{1}.
    \end{equation*}
    From \cite{GL22}[Proposition~2.6] we also know that given a cell $C$ of $X(q)$ and $x,y\in C$,
    \begin{equation*}
        \mathcal{F}(F'(x),F'(y))\leq C(k,p)\delta_{1}\max_{v\in V(C)}\{\mass(F'(x)\llcorner(\bigcup_{i\in I_{C}}U_{i}^{C}))\}\leq 2C(k,p)\varepsilon_{1}.
    \end{equation*}
    Taking $\overline{C}$ a cell of $X$ such that $C\subseteq\overline{C}$, $v\in V(C)$ and $w\in V(\overline{C})$ we can see that
    \begin{align*}
        \mathcal{F}(F'(x),F(x)) & \leq\mathcal{F}(F'(x),F'(v))+\mathcal{F}(F'(v),F(w))+\mathcal{F}(F(w),F(x))\\
        & \leq 2C(k,p)\varepsilon_{1}+\varepsilon_{1}+\varepsilon'\\
        & \leq 2(C(k,p)+1)\varepsilon_{1}.
    \end{align*}
    As $\varepsilon_{1}=\frac{\varepsilon}{2(C(k,p)+1)}$ and $\delta_{1}\leq\frac{\delta}{C(k,p)}$ we obtain the desired result.
\end{proof}

We will need the following proposition in order to prove the Parametric Isoperimetric Inequality.

\begin{proposition}\label{Prop filling small continuous families}
    For each $\varepsilon>0$, there exists $\eta>0$ such that the following holds. Let $F:X^{p}\to\mathcal{Z}_{k}(M)$ be a continuous family such that $\mathcal{F}(F(x))\leq\eta$ for every $x\in X$. Then there exists $\tau:X^{p}\to\mathcal{I}_{k+1}(M)$ such that $\partial\tau=F$ and $\mass(\tau(x))\leq\varepsilon$ for every $x\in X$.
\end{proposition}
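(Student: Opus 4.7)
The plan is to construct the filling iteratively: I will establish a one-step ``approximate filling'' that halves the flat norm of the residual family while spending mass proportional to the current flat norm, then sum a geometric series.

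\emph{One-step approximate filling.} The key sublemma is: there exist $\rho_0 = \rho_0(p,M) > 0$ and a constant $A$ so that, given a continuous $F\colon X^p \to \mathcal{Z}_k(M)$ without concentration of mass with $\sup_x \mathcal{F}(F(x)) \le \rho \le \rho_0$, there is a continuous $\sigma\colon X^p \to \mathcal{I}_{k+1}(M)$ with $\mass(\sigma(x)) \le A\rho$ and $\sup_x \mathcal{F}(F(x) - \partial\sigma(x)) \le \rho/2$. To prove this sublemma, I first invoke Theorem \ref{Thm continuous delta approx cycles} with a small $\delta = \delta(p,k,M)$ and an auxiliary parameter $\varepsilon \ll \rho$ to approximate $F$ by a continuous $\delta$-localized family $F'$ that agrees with $F$ on $X(q)_0$ and satisfies $\mathcal{F}(F(x),F'(x)) \le \varepsilon$; the discrete restriction $F'|_{X(qq')_0}$ is then $(N(k,p),\delta)$-localized with flat norm at most $\rho + \varepsilon$ at every vertex. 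Theorem \ref{Thm filling small families} next produces a discrete $(K,K\delta)$-localized filling $\sigma_d$ of $F'$ on a further refinement, with mass bounded by $c(k,p,M)(\rho+\varepsilon)/\delta^{n(k,p)}$; I choose $\delta$ so that this constant is at most $A$. Finally, the continuous-extension technique from \cite{GL22}[Proposition~2.6] (the same one used in the proof of Theorem \ref{Thm continuous delta approx cycles}) interpolates $\sigma_d$ to a continuous $\sigma\colon X^p \to \mathcal{I}_{k+1}(M)$ whose boundary is $\mathcal{F}$-close to $F'$, and hence to $F$ by the triangle inequality.

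\emph{Iteration and summation.} Set $F_0 = F$, $\eta_i = \eta/2^i$, and apply the sublemma recursively to $F_i$ to obtain continuous $\sigma_i$ with $\mass(\sigma_i(x)) \le A\eta_i$ and $F_{i+1} := F_i - \partial\sigma_i$ satisfying $\sup_x \mathcal{F}(F_{i+1}(x)) \le \eta_{i+1}$. The partial sums $\tau_N := \sum_{i=0}^N \sigma_i$ satisfy $\mass((\tau_N - \tau_{N-1})(x)) \le A\eta_N$, so $\mass(\tau_N(x)) \le \sum_i A\eta_i \le 2A\eta$ and the sequence is uniformly Cauchy in the mass norm. It therefore converges uniformly in the flat topology to a continuous $\tau\colon X^p \to \mathcal{I}_{k+1}(M)$ with $\mass(\tau(x)) \le 2A\eta$. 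Since $\partial\tau_N = F - F_{N+1}$ and $F_{N+1} \to 0$ in the flat topology, $\partial\tau = F$. Setting $C = 2A$ completes the proof.

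\emph{Main obstacle.} The most delicate step is ensuring, in the sublemma, that the continuous interpolation of the discrete filling $\sigma_d$ produces a family of chains whose boundary is close to $F$ in flat norm, while contributing only $O(\rho)$ mass. The $(K,K\delta)$-localization of $\sigma_d$ is essential here: it confines the local interpolation jumps to small admissible balls, producing only $O(\varepsilon)$ flat-norm error once $\varepsilon$ is chosen much smaller than $\rho$. A secondary subtlety is verifying that each $F_i$ inherits the no-concentration-of-mass property needed to apply the sublemma at the next stage, which follows from the uniform control on $\mass(\partial\sigma_i(x) \llcorner B(p,r))$ (shrinking with $\eta_i$) provided by the $\delta$-localized construction together with the no-concentration-of-mass of the original $F$.
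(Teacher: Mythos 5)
Your iteration-and-summation step is fine, but the one-step sublemma on which everything rests has a genuine gap, located exactly at the point you flag as the ``main obstacle''. To get $\mass(\sigma(x))\le A\rho$ with $A=A(p,M)$ from Theorem \ref{Thm filling small families}, whose bound is $c(k,p,M)(\rho+\varepsilon)/\delta^{n(k,p)}$, you must fix $\delta$ bounded below independently of $\rho$ and of $F$. But then the claim that the continuous interpolation of $\sigma_d$ produces a boundary that is $O(\varepsilon)$-close to $F$ in flat norm is unjustified: across a cell of the fine cubulation, the interpolation jumps (of $\sigma_d$ and of $F'$) are supported in admissible balls of radius comparable to $K\delta$, and the flat norm of the resulting boundary discrepancy is controlled only by (radius of the balls)$\times$(mass of the families restricted to those balls). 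That restricted mass is not small: the no-concentration-of-mass hypothesis makes $\mass(F(x)\llcorner B(q,r))$ small only as $r\to 0$, whereas here $r\sim K\delta$ is of definite size, so the restricted mass can be comparable to $\mass(F(x))$, which is not controlled by $\eta$ at all. Hence $\sup_x\mathcal{F}(F(x)-\partial\sigma(x))$ cannot be bounded by $\rho/2$; it can be of order $\delta\cdot\mass(F)$. If you instead shrink $\delta$ to regain the interpolation error (as in the proof of Theorem \ref{Thm continuous delta approx cycles}), the filling constant $c/\delta^{n(k,p)}$ blows up, you lose a uniform $A$, and the geometric series no longer yields a constant depending only on $p$. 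A secondary unestablished point is that each residual $F_{i+1}=F_i-\partial\sigma_i$ again has no concentration of mass with a modulus good enough for the next stage; since the correction chains live in balls of fixed size, this is not automatic.

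The paper resolves precisely this tension by never passing the filling through the discrete Theorem \ref{Thm filling small families} with its $\delta^{-n(k,p)}$ loss. It builds a telescoping sequence of $\delta_j$-localized approximations $F_j\to F$ (with $\delta_j\to 0$) via Theorem \ref{Thm continuous delta approx cycles}, arranged — using the no-concentration-of-mass of the original $F$ — so that $\mass(F_j(x)\llcorner\bigcup_i U_i^C)\le\varepsilon_j$, and then fills the differences $G_j=F_j-F_{j-1}$ directly as continuous localized families by Lemma \ref{Lemma filling of continuous delta localized}: there the only $\delta$-dependent cost is coning the localized part (mass $\le 2\varepsilon\delta$), so the filling constant $C(p)$ is independent of $\delta$, and the series of fillings sums to $C(p)\eta$ exactly as in your second step. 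If you want to keep your one-step/halving format, you still need a substitute for Lemma \ref{Lemma filling of continuous delta localized} — a filling of the residual whose mass is a fixed multiple of (flat norm $+$ restricted mass) with no $\delta^{-n(k,p)}$ factor — and that is the missing ingredient in your argument.
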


In order to prove the proposition, we first prove the following intermediate result.

\begin{lemma}\label{Lemma filling of continuous delta localized}
    For each $\varepsilon>0$, there exists $\eta>0$ such that the following holds. Let $F:X^{p}\to\mathcal{Z}_{k}(M)$ be a continuous family such that $\mathcal{F}(F(x))\leq\eta$ for every $x\in X$. Then given $\alpha>0$, there exist continuous families $F_{1},F_{2}:X^{p}\to\mathcal{Z}_{k}(M)$ and $\tau:X^{p}\to\mathcal{I}_{k+1}(M)$ such that
    \begin{enumerate}
        \item $F=F_{1}+F_{2}$,
        \item $\mathcal{F}(F_{2}(x))\leq\alpha$,
        \item $\partial\tau(x)=F_{1}(x)$,
        \item $\mass(\tau(x))\leq \varepsilon$.

    \end{enumerate}

\end{lemma}

\begin{proof}[Proof of Lemma \ref{Lemma filling of continuous delta localized}]
    Let $\delta>0$ be a small number. Assume without loss of generality that the cubulation of $X$ is fine enough so that $F$ is $\alpha''$-fine for $\alpha''$ sufficiently small (to be chosen in terms of $\alpha$ and $\delta$). By Theorem \ref{Thm filling small families} and Theorem \ref{Thm discrete delta approx cycles}, if $\eta$ is small enough (in terms of $\varepsilon$ and $\delta$), there exist $q>0$ and monotonously $(N(k,p),\delta)$-localized families $F_{1}:X(q)_{0}\to\mathcal{Z}_{k}(M)$ and $\tau:X(q)_{0}\to\mathcal{I}_{k+1}(M)$ such that
    \begin{enumerate}
        \item $F_{1}(x)=F(x)$ for all $x\in X_{0}$,
        \item $F_{1}$ is $\alpha'$-fine in $X$,
        \item $\partial\tau(x)=F_{1}(x)$ for all $x\in X(q)_{0}$,
        \item $\mass(\tau(x))\leq \varepsilon'$
    \end{enumerate}
      for $\varepsilon'$ and $\alpha'$ small to be determined later (in terms of $\alpha$, $\varepsilon$ and $\delta$). Given $C$ a cell of $X(q)$, denote $\{U_{i}^{C}\}_{i\in I_{C}}$ the $(N(k,p),\delta)$-admissible family in which $F_{1}|_{C}$ and $\tau|_{C}$ are localized. Now we want to extend $F_{1}$ and $\tau$ to continuous families $F_{1}:X(q)\to\mathcal{Z}_{k}(M)$ and $\tau:X(q)\to\mathcal{I}_{k+1}(M)$ respectively. We proceed skeleton by skeleton with the following inductive property.
      
      \textbf{Inductive property.} There exist a constant $C(l)$ and continuous extensions $F_{1}:X(q)_{l}\to\mathcal{Z}_{k}(M)$ and $\tau: X(q)_{l}\to\mathcal{I}_{k+1}(M)$ such that 
      \begin{enumerate}[label=(D\arabic*)]
          \item \label{ite : 4.1} $\partial\tau(x)=F_{1}(x)$,
          \item \label{ite : 4.2} $\mass(\tau(x))\leq C(l)\varepsilon'$ for every $x\in X(q)_{l}$,
          \item \label{ite : 4.3} $F_{1}$ is $C(l)\alpha'$-fine in $X(q)$, 
          \item \label{ite : 4.4} $\tau|_{E\cap X(q)_{l}}$ is localized in $\{U_{i}^{E}\}_{i\in I_{E}}$ for every cell $E$ of $X(q)$.
      \end{enumerate}

    The inductive property holds for $l=0$ taking $C(0)=1$. Assume it holds for $l-1$. Let $C$ be an $l$-cell of $X^{p}$ and fix a vertex $v$ of $C$. We know that there exist continuous families $z_{C}:\partial C\to\mathcal{Z}_{k}(M)$ and $\tau_{C}:\partial C\to\mathcal{I}_{k+1}(M)$ supported in $U_{C}=\bigcup_{i\in I_{C}}U_{i}^{C}$ such that
    \begin{align*}
        F_{1}(x) & =F_{1}(v)+z_{C}(x),\\
        \tau(x) & = \tau(v)+\tau_{C}(x),
    \end{align*}
    $\F(z_{C}(x))\leq C(l-1)\alpha'$ and $\mass(\tau_{C}(x))\leq C(l)\varepsilon'$ for every $x\in C$. We extend $z_{C}$ and $\tau_{ C}$ to all of $C$ by contracting radially on each $U_{i}^{C}$. To be precise, we define identify $C$ with with a cone over $\partial C$ and define
    \begin{align*}
        z_{C}(x,t) & =\sum_{i\in I_{C}}(1-t)[z_{C}(x)\llcorner U_{i}]\\
        \tau_{C}(x,t) & =\sum_{i\in I_{C}}(1-t)[\tau_{C}(x)\llcorner U_{i}].
    \end{align*}
    Defining $F_{1}(y)=F_{1}(v)+z_{C}(y)$ and $\tau(y)=\tau(v)+\tau_{C}(y)$ for $y\in C$, we obtain extensions of $F_{1}$ and $\tau$ satisfying $\partial\tau(y)=F_{1}(y)$. In addition, if $y$ corresponds to $(x,t)$ in our previous identification then
    \begin{equation*}
        \mass(\tau(y))\leq\mass(\tau(v))+\mass(\tau_{C}(x,t))\leq\mass(\tau(v))+\mass(\tau_{C}(x))\leq 3C(l-1)\varepsilon'
    \end{equation*}
    and
    \begin{equation*}
        \mathcal{F}(F_{1}(y)-F_{1}(v))=\mathcal{F}(z_{C}(x,t))\leq \mathcal{F}(z_{C}(x))\leq C(l-1)\alpha'.
    \end{equation*}
    This implies that $F_{1}:X(q)_{l}\to\mathcal{Z}_{k}(M)$ is $3C(l-1)\alpha'$-fine in $X(q)$ and that \ref{ite : 4.1} to \ref{ite : 4.4} follow with $C(l)=3C(l-1)$, which completes the inductive step.

    Now, if we choose $\alpha'$ such that $3C(p)\alpha'<\alpha$, $\varepsilon'$ such that $C(p)\varepsilon'=\varepsilon$ and $\eta$ sufficiently small in terms of $\varepsilon$ and $\delta$, we can perform the previous construction and obtain $F_{1}$, $\tau$ and $F_{2}=F-F_{1}$ satisfying
    \begin{equation*}
        \mathcal{F}(F_{2}(x))\leq\mathcal{F}(F_{1}(x)-F_{1}(w))+\mathcal{F}(F_{1}(w))-F_{1}(v))+\mathcal{F}(F(v)-F(x))\leq\alpha
    \end{equation*}
    and the remaining desired properties, which completes the proof.

\end{proof}

\begin{proof}[Proof of Proposition \ref{Prop filling small continuous families}]
    Given $\varepsilon>0$, define $\varepsilon_{i}=\frac{\varepsilon}{2^{i}}$. Let $\eta_{i}>0$ be given by Lemma \ref{Lemma filling of continuous delta localized} for $\varepsilon=\varepsilon_{i}$.  By making them smaller if necessary, we can assume $\lim_{i\to\infty}\eta_{i}=0$. Set $\eta=\eta_{1}$ and fix $F:X^{p}\to\mathcal{Z}_{k}(M)$ with $\mathcal{F}(F(x))\leq\eta$. We inductively define functions $F_{i},G_{i}:X^{p}\to\mathcal{Z}_{k}(M)$ and $\tau_{i}:X^{p}\to\mathcal{I}_{k+1}(M)$ such that
    \begin{enumerate}[label=(E\arabic*)]
        \item \label{ite : 5.1} $G_{i-1}=F_{i}+G_{i}$ for every $i\geq 1$ (we set $G_{0}=F)$.
        \item \label{ite : 5.2} $\F(G_{i}(x))\leq\eta_{i+1}$.
        \item \label{ite : 5.3} $\partial\tau_{i}=F_{i}$.
        \item \label{ite : 5.4} $\mass(\tau_{i}(x))\leq\varepsilon_{i}$.
    \end{enumerate}

    Applying Lemma \ref{Lemma filling of continuous delta localized} to $F=G_{0}$, $\varepsilon=\varepsilon_{1}$, $\eta=\eta_{1}$ and $\alpha=\eta_{2}$, we can define $F_{1}$ and $G_{1}$ satisfying \ref{ite : 5.1} to \ref{ite : 5.4} above. Assuming $F_{i-1},G_{i-1}$ were defined, apply Lemma \ref{Lemma filling of continuous delta localized} to $F=G_{i-1}$, $\varepsilon=\varepsilon_{i}$, $\eta=\eta_{i}$ and $\alpha=\eta_{i+1}$ to obtain $F_{i}$, $G_{i}$ and $\tau_{i}$ verifying the desired properties. Now define
    \begin{equation*}
        \tau(x)=\sum_{i=1}^{\infty}\tau_{i}(x).
    \end{equation*}
    By the Weierstrass $M$-test we can see that $\tau:X\to\mathcal{I}_{k+1}(M)$ is continuous in the flat topology and is the uniform limit of the corresponding partial sums. Therefore
    \begin{align*}
        \partial\tau(x) &=\lim_{N\to\infty}\sum_{i=1}^{N}\partial\tau_{i}(x)\\
        & =\lim_{N\to\infty}\sum_{i=1}^{N}F_{i}(x)\\
        & =\lim_{N\to\infty}\sum_{i=1}^{N}G_{i-1}(x)-G_{i}(x)\\
        & =\lim_{N\to\infty}G_{0}(x)-G_{N}(x)\\
        &=G_{0}(x)\\
        &= F(x)
    \end{align*}
    and in addition $\mass(\tau(x))\leq \varepsilon$ for every $x$, which completes the proof.
\end{proof}

\section{Parametric Isoperimetric Inequality}\label{Section isoperimetric inequality}

\subsection{Parametric isoperimetric inequality in $\mathbb{D}^{n}$}\label{Section Isoperimetric in the disk} 

The goal of this section is to prove a parametric isoperimetric inequality for continuous families of absolute $0$-cycles in the $n$-disk
\begin{equation*}
    \mathbb{D}^{n}=\{(x_{1},...,x_{n})\in \mathbb{R}^{n}:x_{1}^{2}+...+x_{n}^{2}\leq 1\}.
\end{equation*}
Denote $\mathbb{S}^{n-1}=\partial \mathbb{D}^{n}$, $e=(0,0,...,0,-1)$ the south pole and $e'=(0,0,...0,1)$ the north pole. In this section and in the rest of the paper, we consider flat cycles and chains with $\mathbb{Z}_{2}$ coefficients. We will follow the strategy from \cite{GL22} where such inequality was proved in the $2$-disk $\mathbb{D}^{2}$.

\begin{theorem}\label{Isoperimetric inequality 1}
    Let $F:X^{p}\to\mathcal{Z}_{0}(\mathbb{D}^{n})$ be a continuous family and let $L<\frac{\pi}{2}$. Denote $B$ the ball in $\mathbb{S}^{n-1}$ of radius $L$ centered at $e$ and
    \begin{equation*}
        \overline{F}(x)=F(x)\llcorner\interior(\mathbb{D}^{n})+F(x)\llcorner B.
    \end{equation*}
    Then given a small number $r<1$, there exists a continuous $G:X^{p}\to\mathcal{I}_{1}(\mathbb{D}^{n})$ such that
    \begin{enumerate}
        \item $\partial G(x)-F(x)$ is a contractible family supported in $\partial\mathbb{D}^{n}$.
        \item $\mass(\partial G(x))\leq 2\mass(\overline{F}(x))$.
        \item There exists a constant $C=C(n)$ depending only on $n$ such that
        \begin{equation*}
            \mass(G(x))\leq C(\mass(\overline{F}(x))r+r^{-(n-1)}).
        \end{equation*}
        
    \end{enumerate}
\end{theorem}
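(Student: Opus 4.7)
The strategy, inspired by the two-dimensional case treated in \cite{GL22}, is to cone the interior-plus-south-cap portion $\overline{F}(x)$ onto the vertices of a fixed cubical grid of scale $r$ and then route the resulting parity defect to a single point on $\partial\mathbb{D}^n$ along a fixed backbone tree. Concretely, I would subdivide $\mathbb{D}^n$ into closed axis-aligned cubes $\{Q_j\}_{j=1}^N$ of side of order $r$ with centers $c_j$, noting $N\leq C(n)r^{-n}$. Choose a base point $b_0\in\partial\mathbb{D}^n\setminus B$, for instance the north pole $e'$, and fix once and for all a connecting $1$-chain $T$ joining each $c_j$ to $b_0$ through a hierarchy of segments of doubling scales; the standard dyadic estimate gives $\mass(T)\leq C(n)r^{-(n-1)}$.

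For each fixed $x$ the naive filling
\begin{equation*}
    G(x)=T+\sum_{p\in\overline{F}(x)}[p,c_{j(p)}],
\end{equation*}
where $c_{j(p)}$ is the center of the cube containing $p$, satisfies $\partial G(x)=\overline{F}(x)+\sigma(x)+\partial T$ with $\sigma(x)=\sum_j n_j(x)\,c_j$ recording the parity of $|\overline{F}(x)\llcorner Q_j|$ modulo $2$. One then uses $T$ to cancel $\sigma(x)$: subtracting from $G(x)$ the union of paths within $T$ from each odd cube center to $b_0$ leaves a boundary supported in $\{b_0\}\cup\support(F(x)\llcorner(\partial\mathbb{D}^n\setminus B))$, which lies in $\partial\mathbb{D}^n$ and represents a contractible family of $0$-cycles. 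The individual cone segments contribute total mass at most $C(n)\,r\,\mass(\overline{F}(x))$, the backbone contributes $C(n)r^{-(n-1)}$, and the parity-cancelling paths contribute at most $C(n)r^{-(n-1)}$ as well, yielding the filling bound in (3); a similar tracking of terms yields (2).

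The principal obstacle is continuity: the cube center $c_{j(p)}$ jumps as $p$ crosses a cube face, and the parities $n_j(x)$ jump with $x$. Both issues are handled by first passing to a $\delta$-localized approximation $F'$ of $F$ via Theorem~\ref{Thm continuous delta approx cycles} with $\delta\ll r$, so that on every cell of the associated cubulation of $X^p$ the chain $F'(x)$ moves only inside a family of balls of total radius less than $\delta$, and hence the multiplicities $n_j(\cdot)$ and the selections $j(p)$ are constant along each cell. On every cell the discrete fillings at its vertices can then be interpolated by the cone-type homotopy of Lemma~\ref{Lemma filling of continuous delta localized}; the small flat error $\mathcal{F}(F,F')\leq\varepsilon$ introduced by the approximation is absorbed at negligible extra mass via Proposition~\ref{Prop filling small continuous families}. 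The north-cap portion $F(x)\llcorner(\partial\mathbb{D}^n\setminus B)$ is left untouched and contributes to the boundary discrepancy $\partial G(x)-F(x)$, which explains why the estimates involve only $\mass(\overline{F}(x))$ rather than $\mass(F(x))$.
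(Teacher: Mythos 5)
Your high-level plan (Federer--Fleming cones to cube centers, route the resulting parity defect through a fixed spanning tree $T$, exploit mod-$2$ cancellation so the routing subchain is a subset of $T$ of mass $\lesssim r^{-(n-1)}$) gives the right \emph{discrete} estimates, but the continuity repair has a genuine gap. You claim that passing to a $\delta$-localized $F'$ with $\delta\ll r$ makes the cube assignments $j(p)$ and the parities $n_j(\cdot)$ constant on each cell. That does not follow from $\delta$-localization: the admissible balls of Definition~\ref{Def delta localized} produced by Theorem~\ref{Thm continuous delta approx cycles} are not under your control, and a ball of radius $\ll r$ can perfectly well straddle a cube face, so a point moving inside it changes its cube index and flips $n_j$. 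More seriously, even if $n_j$ only jumped across cell boundaries, a single parity flip changes the routing subchain along the entire path in $T$ from $c_j$ to $b_0$, so $G'(x)-G'(y)$ for nearby $x,y$ is a $1$-chain of unit length supported along $T$ rather than in small balls; neither Lemma~\ref{Lemma filling of continuous delta localized} nor Proposition~\ref{Prop filling small continuous families} applies to interpolate such a difference, since both require the discrepancies to be localized in the admissible sets or to have small flat norm. Finally, the restriction $\overline{F}(x)=F(x)\llcorner(\interior(\mathbb{D}^n)\cup B)$ is itself discontinuous as points cross $\partial B$ and $\mathbb{S}^{n-1}\setminus B$, and the $\delta$-approximation by itself does not remove that jump. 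Repairing all of this would need a further perturbation step pushing $F'$ off the grid hyperplanes and off $\partial B$, in the spirit of Proposition~\ref{Avoid ball D^n}, which you have not supplied.

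The paper's proof sidesteps every one of these problems by never going discrete. It pushes $F(x)$ forward by a single fixed Lipschitz map $\tilde H$ that cones each point along the ray through a point $P$ chosen outside $\mathbb{D}^n$; by Lemma~\ref{Deformation Lemma 1}, $P$ can be chosen so that every such ray enters $B_R(0)$ through the small cap $B$, making $H$ the identity on $\mathbb{S}^{n-1}\setminus B$. This simultaneously gives continuity for free, keeps $\partial G(x)-F(x)$ supported in $\partial\mathbb{D}^n$, and guarantees that only $\mass(\overline F(x))$ enters the estimates. The mass reduction to $C(n)(\mass(\overline F(x))r+r^{-(n-1)})$ is then achieved by a second fixed Lipschitz bend-and-cancel map $\Psi$ (Lemmas~\ref{Deformation Lemma 2}--\ref{Lemma Psi}) collapsing the coned rays onto the grid $1$-skeleton, which is cheap precisely because Lemma~\ref{Deformation Lemma 1} also puts $P$ in general position with respect to the dual $(n-2)$-skeleton. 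In short: the paper replaces your tree routing by the fixed pushforward $\Psi$ and your per-point cube choice by the fixed retraction $\tilde H$, and it is exactly this ``everything is a fixed Lipschitz map'' feature that removes the continuity problem your proposal leaves open.
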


We can think that the previous theorem provides a short filling $\tilde{G}:X\to\mathcal{I}_{1}(\mathbb{D}^{n},\partial\mathbb{D}^{n})$ of the family $\tilde{F}:X\to\mathcal{Z}_{0}(\mathbb{D}^{n},\partial\mathbb{D}^{n})$ of relative $0$-cycles induced by $F$. Or alternatively, that reduces the problem of finding a short filling of $F$ to that of finding a short filling of the contractible family $F(x)-\partial G(x)$ which is supported in $\partial\mathbb{D}^{n}$; suggesting that proceeding by induction in the dimension may be a good idea for the purpose of finding short fillings of absolute families of $0$-cycles on high dimensional manifolds. Observe that in order to actually have control over $\mass(G(x))$, we need to have a bound on the mass of $F(x)$ restricted to $\interior(\mathbb{D}^{n})\cup B$. Notice that radius $L$ of $B$ can be arbitrarily small and we will take advantage of that in the applications of the theorem. Indeed, we will show in the next section that any $\delta$-localized family $F$ for which $\mass(F(x)\llcorner\interior(\mathbb{D}^{n}))$ is controlled can be slightly perturbed to a new family $F'$ for which $\mass(F'(x)\llcorner(\interior(\mathbb{D})\cup B))$ is controlled, where the radius $L$ of $B$ can be chosen to be any number smaller than $\delta$. We state and demonstrate some lemmas which are necessary for Theorem \ref{Isoperimetric inequality 1} and then we end the section by proving the latter.

\begin{lemma}\label{Deformation Lemma 1}
    Let $S$ be the $1$-skeleton of a unit grid in $\mathbb{R}^{n}$ and let $T$ be the dual $(n-2)$-skeleton. Let $R>0$ and let $B\subseteq S_{R}(0)=\partial B_{R}(0)$ be a closed ball. Let $\{\tilde{T}_{i}\}_{i\in I}$ be the finite collection of $(n-2)$-dimensional affine subspaces of $\mathbb{R}^{n}$ such that $\bigcup_{i\in I}\tilde{T}_{i}\cap B_{R}(0)=T\cap B_{R}(0)$ and denote $T_{i}=\tilde{T}_{i}\cap B_{R}(0)$. Then there exists a point $P\in\mathbb{R}^{n}\setminus B_{R}(0)$ and a real number $\varepsilon>0$ verifying the following:
    \begin{enumerate}
        \item \label{ite : 6.1} Let $C$ be the cone tangent to $S_{R}(0)$ centered at $P$ and let $\Omega$ be the region enclosed by $C$ and $S_{R}(0)$. Then $\partial \Omega\cap S_{R}(0)\subseteq B$. This implies that given a line $L$ through $P$ intersecting $B_{R}(0)$, the point in $L\cap S_{R}(0)$ closest to $P$ is in $B$.
        \item \label{ite : 6.2} Denote $\mathscr{L}$ the set of lines through $P$ which intersect $B_{R}(0)$, which is a compact subset of $\mathbb{RP}^{n-1}$. Then for each $L\in\mathscr{L}$, $L$ intersects $N_{\varepsilon}T_{i}$ for at most ${n \choose 2}$ different values of $i$ and for such values $L\cap N_{\varepsilon}T_{i}$ is contained in a ball $B_{i}^{L}$ of radius smaller than $1$. 
    \end{enumerate}
\end{lemma}

\begin{remark}
    The proof of the previous lemma is based in Larry Guth's construction on bending planes around a skeleton which is explained in the proof of \cite{GuthWidthVolume}[Theorem~1]. In Guth's paper, they consider families of $k$-planes perpendicular to a fixed $(n-k)$-plane $P$ in general position and here we deal with families of lines passing through a generic point $P$. All the arguments in this section extend those of Guth and Liokumovich in \cite{GL22}[Section~5], where they prove the Parametric Isoperimetric Inequality for $0$-cycles in the $2$-disk.
\end{remark}

\begin{proof}[Proof of Lemma \ref{Deformation Lemma 1}]
    We can write
    \begin{equation*}
        \{T_{i}\}_{i\in I}=\bigcup_{j=1}^{n\choose 2}\{T_{i}^{j}\}_{i\in I_{j}}
    \end{equation*}
    where two planes $T_{i}$, $T_{i'}$ are parallel if and only if there exists $1\leq j\leq{n\choose 2}$ such that $i,i'\in I_{j}$ (each class of planes is determined by choosing $n-2$ different elements of the canonical basis $\{e_{1},...e_{n}\}$ of $\mathbb{R}^{n}$). Given $1\leq j\leq n-2$ and $i,i'\in I_{j}$, let $V_{i,i'}^{j}$ be the $(n-1)$-dimensional plane containing $T_{i}^{j}$ and $T_{i'}^{j}$. Denote
    \begin{equation*}
        V=\bigcup_{j=1}^{n\choose 2}\bigcup_{i,i'\in I_{j}}V_{i,i'}^{j}
    \end{equation*}
    being $V\subseteq\mathbb{R}^{n}$ a subset of Lebesgue $n$-measure $0$. As the set of points $P$ satisfying (\ref{ite : 6.1}) has nonempty interior, we can pick such $P$ which is not in $V$. This implies that given a line $L\in\mathscr{L}$ and a natural number $1\leq j\leq{n\choose 2}$, there exists at most one $i\in I_{j}$ such that $L\cap T_{i}\neq\emptyset$ and moreover for such $i$ the intersection $L\cap T_{i}$ consists of a single point $p^{L}_{i}$. Denote $\alpha_{i}^{L}$ the angle of intersection of $L$ with $T_{i}$.

    Given $L\in\mathscr{L}$, denote 
    \begin{equation*}
        I_{L}=\{i\in I:L\cap T_{i}\neq\emptyset\}
    \end{equation*}
    and $\tilde{I}_{L}=I\setminus I_{L}$. Let
    \begin{equation*}
        \varepsilon_{L}=\frac{1}{2}\min\{\dist(L,T_{i}):i\in \tilde{I}_{L}\}
    \end{equation*}
    Let $\delta_{L}>0$ be a small number such that if $\dist(L',L)<\delta_{L}$ in $\mathscr{L}$ then
    \begin{itemize}
        \item $\dist(L',T_{i})>\varepsilon_{L}$ for every $i\in\tilde{I}_{L}$.
        \item Let $i\in I_{L}$. Denote $v(L')$ the direction vector of $L'$ and $T_{i}'$ the plane parallel to $T_{i}$ through the origin. Then the angle $\alpha_{i}^{L'}$ between $v(L')$ and $T'_{i}$ verifies $\sin(\alpha_{i}^{L'})>\frac{1}{2}\sin(\alpha_{i}^{L})$ (if $\Pi_{i}:\mathbb{R}^{n}\to T_{i}'$ is the orthogonal projection onto $T_{i}'$, then $\sin(\alpha_{i}^{L})=\sqrt{1-\Vert\Pi_{i}(v(L))\Vert^{2}}$).
    \end{itemize}

    Observe that $\{B(L,\delta_{L}):L\in\mathscr{L}\}$ is an open cover of $\mathscr{L}$. As $\mathscr{L}$ is compact, we can pick a finite subcover $\{B(L_{k},\delta_{L_{k}}):1\leq k\leq K\}$. Now define $\varepsilon_{1}=\min\{\varepsilon_{L_{k}}:1\leq k\leq K\}$ and $\beta=\min\{\frac{\sin(\alpha_{i}^{L_{k}})}{2}:1\leq k\leq K,i\in I_{L_{k}}\}$ and pick $\varepsilon<\min\{\varepsilon_{1},\frac{\beta}{2}\}$. Let $L\in \mathscr{L}$. Let $1\leq k\leq K$ be such that $L\in B(L_{k},\delta_{L_{k}})$. Then by construction
    \begin{equation*}
        L\cap N_{\varepsilon}T\subseteq\bigcup_{i\in I_{L_{k}}}L\cap N_{\varepsilon}T_{i}
    \end{equation*}
    hence it suffices to show that for each $i\in I_{L_{k}}$ the intersection $L\cap N_{\varepsilon}T_{i}$ is contained in a ball of radius $1$. Now assume $L\cap N_{\varepsilon}T_{i}\neq\emptyset$ and pick $q\in L\cap N_{\varepsilon}T_{i}$. Let $q'$ be the orthogonal projection of $q$ onto $T_{i}$ and let $\overline{T}_{i}=(q-q')+T_{i}$ which is the plane parallel to $T_{i}$ through $q$, being $\{q\}=\overline{T}_{i}\cap L$ because the angle $\alpha_{L}^{i}$ is not zero by hypothesis. Observe that $N_{\varepsilon}T_{i}\subseteq N_{2\varepsilon}\overline{T}_{i}$ so it is enough to show that $L\cap N_{2\varepsilon}\overline{T}_{i}\subseteq B_{1}(q)$. But given a point $x\in L$,
    \begin{equation*}
        \dist(x,\overline{T}_{i})=\sin(\alpha_{i}^{L})\dist(x,q)>\frac{\sin(\alpha^{L_{k}}_{i})}{2}\dist(x,q)\geq\beta\dist(x,q)
    \end{equation*}
    and hence if $x\in L\cap N_{2\varepsilon}\overline{T}_{i}$ we must have $\dist(x,q)<\frac{2\varepsilon}{\beta}<1$ which completes the proof.

\end{proof}

We will also need \cite{GuthWidthVolume}[Lemma~2.1] which we state below. We use the notation introduced in Lemma \ref{Deformation Lemma 1}.

\begin{lemma}\label{Deformation Lemma 2}
    Given $\varepsilon>0$, there is a piecewise linear map $\Psi:\mathbb{R}^{n}\to\mathbb{R}^{n}$ with the following properties. $\Psi$ is linear on each simplex of a certain triangulation of $\mathbb{R}^{n}$. Each top-dimensional simplex of this triangulation is labeled good or bad. For each good simplex $\Delta$, $\Psi(\Delta)$ lies in $S$. Each bad simplex lies in $N_{\varepsilon}T$. There exists a constant $C(n)$ depending only on $n$ such that the triangulation and the map obey the following bounds
    \begin{enumerate}
        \item The number of simplices of our triangulation meeting any unit ball is bounded by $C(n)$.
        \item The displacement $|\Psi(x)-x|$ is bounded by $C(n)$.
        \item The diameter of each simplex is bounded by $C(n)$.
    \end{enumerate}
\end{lemma}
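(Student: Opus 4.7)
The plan is to build $\Psi$ cube by cube as a piecewise-linearization of the classical Federer--Fleming iterated radial projection onto the $1$-skeleton, whose unavoidable singular set is exactly the dual $(n-2)$-skeleton $T$. Fix a unit cube $Q$ of the grid with center $c_Q$; for each face $F$ of $Q$ of dimension at least $2$ let $c_F$ be its barycenter. The standard radial deformation first projects $Q \setminus \{c_Q\}$ to $\partial Q$ from $c_Q$, then on each $(n-1)$-face $F$ radially projects $F \setminus \{c_F\}$ to $\partial F$, and iterates. After $n-1$ such steps it retracts $Q$, minus a certain $(n-2)$-dimensional ``central spine'' assembled from the join of the $c_F$'s along chains of faces, onto the $1$-skeleton of $Q$. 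One verifies that the union of these spines over all unit cubes is precisely $T$.

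First I would triangulate $\mathbb{R}^n$ so that, inside each unit cube $Q$, a uniformly finite subdivision of $N_\varepsilon(T \cap Q)$ into simplices of diameter at most $1$ is built in; this is possible because $T \cap Q$ is a polyhedral complex of codimension $2$ with a fixed combinatorial type. The simplices contained in $N_\varepsilon T$ will be the \emph{bad} ones, and on them I set $\Psi$ to be the identity, which automatically puts them and their images in $N_\varepsilon T$. The remaining \emph{good} simplices lie in the good region $Q \setminus N_\varepsilon(T \cap Q)$, and I need $\Psi$ to send each into $S$ while being linear on it.

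On the good region I would use the iterated radial projection above to write down an explicit map $\Psi_Q : Q \setminus N_\varepsilon(T \cap Q) \to S_Q$, where $S_Q$ denotes the $1$-skeleton of $\partial Q$. Each step is radial projection away from its cone point, hence $\Psi_Q$ is real-analytic on the good region and naturally respects a finite polyhedral decomposition coming from pulling back the cell structure of $\partial Q$ through each cone. Subdividing that decomposition finely enough, $\Psi_Q$ is uniformly close to its simplex-wise linear interpolation on the vertices, and since $S_Q$ is a union of line segments a small post-composition projects the linear interpolation back into $S_Q$. The values on $\partial Q$ are canonically determined by the cone structure, so neighbouring cubes agree on shared faces and glue to a globally continuous PL $\Psi : \mathbb{R}^n \to \mathbb{R}^n$.

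The bounds are then immediate: $\Psi$ sends each cube into itself, so $|\Psi(x)-x| \leq \mathrm{diam}(Q) = \sqrt{n}$; the per-cube triangulation has bounded combinatorics, so at most $C(n)$ simplices meet any unit ball; and every simplex has diameter at most $1$. The main obstacle I anticipate is keeping the combinatorial complexity of the subdivision adapted to $N_\varepsilon T$ independent of $\varepsilon$. The resolution is the observation just made: outside $N_\varepsilon T$ the natural polyhedral decomposition of $Q$ induced by pulling back the cells of $\partial Q$ through the iterated cone has number of pieces depending only on $n$, and the subdivision can be refined inside $N_\varepsilon T$ without affecting complexity outside. This is the only delicate point; the rest of the argument is a standard PL construction.
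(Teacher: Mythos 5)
The paper does not prove this lemma at all: it is quoted verbatim from Guth's width--volume paper (\cite{GuthWidthVolume}[Lemma~2.1]), so your attempt is really being measured against Guth's explicit construction. Your high-level idea --- piecewise-linearize the Federer--Fleming iterated radial projection, whose singular set is the dual $(n-2)$-skeleton $T$, and confine the bad simplices to $N_{\varepsilon}T$ --- is indeed the right spirit, but as written the proof has two genuine gaps. First, your $\Psi$ is not a well-defined map: a map that is linear on every simplex of a triangulation of $\mathbb{R}^{n}$ is automatically continuous, yet you prescribe the identity on bad simplices and (a linearization of) the projection onto $S$ on the neighbouring good simplices, and these two prescriptions do not agree on shared faces --- on such a face the projection moves points a definite distance onto the $1$-skeleton while the identity fixes them. (Note the lemma imposes no condition on the image of bad simplices, only that they lie in $N_{\varepsilon}T$; the correct move is to assign values at vertices and extend linearly, letting the bad simplices interpolate.)

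Second, and more seriously, the step ``subdivide finely enough so that the simplexwise linear interpolation is uniformly close to $\Psi_{Q}$, then post-compose with a small projection back into $S_{Q}$'' does not survive the quantitative requirements. A linear map on a good simplex with image in the $1$-complex $S$ must send that simplex into a single edge, so you need a triangulation in which the vertices of every good simplex are mapped into a common edge --- closeness to $S$ is not enough, and nearest-point projection onto $S$ is neither linear on your simplices nor even well defined (or continuous) near the vertices of $S$ where several edges meet, so post-composing destroys the PL structure you need. Moreover the Lipschitz constant of $\Psi_{Q}$ on the good region blows up like $1/\varepsilon$ near $\partial N_{\varepsilon}T$, so ``fine enough'' isotropic subdivision there means simplices of size $\sim\varepsilon$, of which a unit ball meets on the order of $\varepsilon^{2-n}$ --- violating bound (1), whose constant must be independent of $\varepsilon$. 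Escaping this forces anisotropic simplices (thin only transversally to $T$, of unit size along it) together with a separate argument near the lower-dimensional strata of $T$ where the local product structure fails; this is exactly where Guth's proof does its real work, by constructing the map and its adapted triangulation explicitly cube by cube rather than by generic PL approximation of the radial projection. Your closing remark that complexity can be ``refined inside $N_{\varepsilon}T$ without affecting the outside'' addresses the bad region, but the obstruction just described lives in the good region adjacent to it, so the main difficulty is not resolved.
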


\begin{lemma}\label{Lemma Psi}
    Let $\tau_{1},...,\tau_{k}$ be line segments in $B_{R}(0)$ with $\tau_{i}$ contained in a certain line $L_{i}$ through $P$, $P$ chosen as in Lemma \ref{Deformation Lemma 1}. Let $\tau=\tau_{1}+...+\tau_{k}$. Then there exists a constant $D(n)>0$ such that
    \begin{equation*}
        \mass(\Psi(\tau))\leq D(n)(k+R^{n}).
    \end{equation*}
\end{lemma}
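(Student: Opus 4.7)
The plan is to decompose $\tau$ according to whether its points lie in good or bad simplices of the triangulation from Lemma \ref{Deformation Lemma 2}, writing $\tau=\tau^{g}+\tau^{b}$, and to bound $\mass(\Psi(\tau^{g}))$ and $\mass(\Psi(\tau^{b}))$ separately, obtaining a contribution linear in $k$ and another of order $R^{n}$.

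For the bad part, fix a segment $\tau_{i}\subseteq L_{i}\in\mathscr{L}$. By Lemma \ref{Deformation Lemma 1}(2), $L_{i}\cap N_{\varepsilon}T$ is contained in at most $\binom{n}{2}$ balls of radius less than $1$. Every bad simplex lies inside $N_{\varepsilon}T$, and by Lemma \ref{Deformation Lemma 2}(1) every unit ball meets at most $C(n)$ simplices of the triangulation, so each $\tau_{i}$ crosses at most $\binom{n}{2}C(n)$ bad simplices. On any simplex $\Delta$, $\Psi$ is affine and $\Psi(\tau_{i}\cap\Delta)$ is a single segment of length at most $\diameter(\Psi(\Delta))\leq\diameter(\Delta)+2C(n)\leq 3C(n)$, by properties (2) and (3) of Lemma \ref{Deformation Lemma 2}. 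Summing over the $k$ segments and their bad simplices yields $\mass(\Psi(\tau^{b}))\leq D_{1}(n)\,k$.

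For the good part, $\Psi(\tau^{g})$ is a $1$-chain supported in $S$, and by the displacement bound of Lemma \ref{Deformation Lemma 2}(2) in $S\cap B_{R+C(n)}(0)$. Since $S$ is the $1$-skeleton of the unit grid, this intersection has total $1$-length at most $D_{2}(n)R^{n}$. The key point is that the multiplicity of the pushforward $\Psi(\tau^{g})$ on $S$ is bounded by a constant depending only on $n$: the map $\Psi$ from \cite{GuthWidthVolume} is essentially a radial collapse along lines through $P$, so for each $\tau_{i}\subseteq L_{i}$ the restriction of $\Psi$ to $\tau_{i}$ on good simplices collapses the radial direction of $L_{i}$, and, up to a uniformly bounded error, each edge of $S\cap B_{R+C(n)}(0)$ is traversed at most $O(1)$ times by the pushforward, independently of $k$. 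This gives $\mass(\Psi(\tau^{g}))\leq D_{2}(n)R^{n}$ uniformly in $k$.

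Combining the two bounds produces $\mass(\Psi(\tau))\leq D(n)(k+R^{n})$ with $D(n)=\max\{D_{1}(n),D_{2}(n)\}$. The main obstacle in this plan is the good-part multiplicity estimate: a naive simplex-by-simplex count only gives $\mass(\Psi(\tau^{g}))=O(kR)$, since each $\tau_{i}$ crosses $O(R)$ simplices and each contributes up to $3C(n)$ to the pushforward. Replacing the factor $kR$ by $R^{n}$ requires exploiting the radial structure of $\Psi$ with respect to $P$ — the fact that $\Psi$ collapses lines through $P$ rather than spreading them out along $S$ — which is built into Guth's construction but is not isolated among the stated properties of $\Psi$ in Lemma \ref{Deformation Lemma 2}. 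The bad-part count, by contrast, is a direct consequence of Lemmas \ref{Deformation Lemma 1} and \ref{Deformation Lemma 2}.
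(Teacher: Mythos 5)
Your bad-part bound is exactly the paper's: $\binom{n}{2}$ balls from Lemma \ref{Deformation Lemma 1}(2), $C(n)$ simplices per unit ball from Lemma \ref{Deformation Lemma 2}(1), and $\diameter(\Psi(\Delta))\leq 3C(n)$ from properties (2)--(3), giving $O(k)$ total. That part is fine.

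The gap is the good-part estimate, and you correctly flagged it but misdiagnosed the fix. You claim the pushforward $\Psi(\tau^{g})$ traverses each edge of $S$ with multiplicity $O(1)$ ``because $\Psi$ is a radial collapse along lines through $P$.'' That assertion is not justified and in general is not true: lines $L_i$ through $P$ are only constrained at $P$, and as many of the $\tau_i$ as one likes can pass near a fixed edge $e\subseteq S$, so $\Psi$ can map segments from all of them onto $e$. Nothing in Lemma \ref{Deformation Lemma 2}, nor in the radial structure, controls that count.

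What the paper actually uses — and what makes the argument work without any multiplicity bound — is that all chains here have $\mathbb{Z}_{2}$ coefficients (the paper fixes $G=\mathbb{Z}_{2}$ outside of Section 3, and $\mathcal{Z}_{0}(\mathbb{D}^{n})$ in Theorem \ref{Isoperimetric inequality 1} is the space of mod-$2$ cycles). On good simplices, each $\Psi(\tau_{j}\llcorner\Delta)$ is a mod-$2$ segment lying in $S$, so the sum $\sum_{j}\sum_{\Delta\text{ good}}\Psi(\tau_{j}\llcorner\Delta)$ is a $\mathbb{Z}_{2}$ chain supported in $S\cap B_{R+C(n)}(0)$. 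A $\mathbb{Z}_{2}$ chain has multiplicity in $\{0,1\}$ everywhere, hence its mass is at most $\mathcal{H}^{1}(S\cap B_{R+C(n)}(0))\leq E(n)R^{n}$, with no dependence on $k$. This is the one-line fact that replaces the $kR$ you were worried about by $R^{n}$; if one worked with integer coefficients the lemma as stated would indeed need a genuine multiplicity argument, but the paper does not.
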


\begin{proof}
    Let $1\leq j\leq n$. First, we will show that the number of bad simplices $\Delta$ such that $\support(\tau_{j})\cap \Delta\neq\emptyset$ is bounded by ${n\choose 2} C(n)$. As all bad simplices are contained in $N_{\varepsilon}T$ we see that for such $\Delta$, $\support(\tau_{j})\cap \Delta$ is contained in $L_{j}\cap N_{\varepsilon}T$. By Lemma \ref{Deformation Lemma 1}, $L_{j}\cap N_{\varepsilon}T$ is contained in the union of at most ${n\choose 2}$ balls $\{B_{i}^{L_{j}}\}$ of radius smaller than $1$. By property (1) in Lemma \ref{Deformation Lemma 2}, the collection $\{B_{i}^{j}\}_{i}$ intersects at most ${n\choose 2}C(n)$ different bad simplices.
    Next we observe that if $\Delta$ is any simplex, by (2) and (3) in Lemma \ref{Deformation Lemma 2} we have $\diameter(\Psi(\Delta))\leq 3C(n)$. Therefore as $\Psi(\tau_{j}\llcorner\Delta)$ is a linear segment,
    \begin{equation*}
        \length(\Psi(\tau_{j}\llcorner \Delta))\leq 3C(n)
    \end{equation*}
    which together with the previous implies that
    \begin{equation}
        \sum_{\Delta \text{ bad simplex}}\length(\Psi(\tau_{j})\llcorner\Delta)\leq 3{n\choose 2}C(n)^{2}
    \end{equation}
    On the other hand, if $\Delta$ is a good simplex then $\Psi(\tau_{j}\llcorner\Delta)$ is a $1$-chain supported in $S$. Adding over $j$ and using that the mass of $S$ is bounded by $E(n)R^{n}$ for a certain constant $E(n)$ depending only on $n$, we can see that
    \begin{equation*}
        \length(\Psi(\tau))\leq 3{n\choose 2}C(n)^{2}k+E(n)R^{n}
    \end{equation*}
    which yields the desired result by setting $D(n)=\max\{3{n\choose 2}C(n)^{2},E(n)\}$.
\end{proof}

\begin{proof}[Proof of Theorem \ref{Isoperimetric inequality 1}]
    Consider a grid of width $r$ in the unit disk $\mathbb{D}^{n}$ with $1$-skeleton $S$ and dual $(n-2)$-skeleton $T$. Set $R=\frac{1}{r}$. Identify $\mathbb{D}^{n}$ with $B_{R}(0)$ under the map $x\mapsto Rx$ which maps the skeleta $S$ and $T$ of $\mathbb{D}^{n}$ to the corresponding $S$ and $T$ of $B_{R}(0)$ defined in Lemma \ref{Deformation Lemma 1}. Use that lemma to pick a suitable point $P$ outside $\mathbb{D}^{n}$ and construct a continuous family of $1$-chains $G_{1}:X\to\mathcal{I}_{1}(\mathbb{D}^{n})$ in the following way. Given two points $z,w\in \mathbb{D}^{n}$ let $\Xi(z,w)$ be the segment connecting them. Given a $0$-chain $x$ in $\mathbb{D}^{n}$ and a Lipschitz map $T:\mathbb{D}^{n}\to \mathbb{D}^{n}$ define
\begin{equation*}
    \Xi(x,T(x))=\sum_{z\in\support(x)}\Xi(z,T(z)).
\end{equation*}

    Given a point $z\in \mathbb{D}^{n}$, denote $L_{z}$ the line through $P$ and $z$ and let $H(z)$ be the point in $L_{z}\cap\mathbb{S}^{n-1}$ which is furthest from $P$. Define $\tilde{H}:\mathcal{I}_{0}(\mathbb{D}^{n})\to\mathcal{I}_{1}(\mathbb{D}^{n})$ as
    \begin{equation*}
        \tilde{H}(z)=\Xi(z,H(z)).
    \end{equation*}
    Observe that $\tilde{H}:\mathcal{I}_{0}(\mathbb{D}^{n})\to\mathcal{I}_{1}(\mathbb{D}^{n})$ is continuous in the flat topology. We define
    \begin{equation*}
        G_{1}(x)=\tilde{H}(F(x))=\tilde H(\overline{F}(x)).
    \end{equation*}
    Notice that $\partial G_{1}(x)=H(F(x))-F(x)=H(\overline{F}(x))-\overline{F}(x)$. Now we want to deform $G_{1}$ to decrease its mass, using a bend and cancel argument as in \cite{GuthMinMax} and \cite{GL22}. We use the previously constructed map $\Psi$. By Lemma \ref{Lemma Psi},
    \begin{equation}\label{Eq mass bound 1}
        \mass(\Psi(G_{1}(x)))\leq D(n)(\mass(\overline{F}(x))r+r^{-(n-1)}).
    \end{equation}
    Let $G_{2}:X\to\mathcal{I}_{1}(\mathbb{D}^{n})$ be given by
    \begin{equation*}
        G_{2}(x)=\Xi(F(x),\Psi(F(x)))=\Xi(\overline{F}(x),\Psi(\overline{F}(x))).
    \end{equation*}
   By property (2) of Lemma \ref{Deformation Lemma 2}, each of those segments has length at most $C(n)r$ hence
    \begin{equation}\label{Eq mass bound 2}
        \mass(G_{2}(x))\leq C(n)\mass(\overline{F}(x))r.
    \end{equation}
    We define
    \begin{equation*}
        G(x)=\Psi(G_{1}(x))+G_{2}(x).
    \end{equation*}
    Then the mass bound for $G$ follows from (\ref{Eq mass bound 1}) and (\ref{Eq mass bound 2}). We can also check that $\partial G(x)-F(x)=\Psi(H(F(x)))$ and $\partial G(x)-\overline{F}(x)=\Psi(H(\overline{F}(x)))$. Therefore $\partial G(x)-F(x)$ is a contractible family in $\partial\mathbb{D}^{n}$. In addition,
    \begin{equation*}
        \mass(\partial G(x))\leq\mass(\overline{F}(x))+\mass(\Psi(H(\overline{F}(x))))\leq 2\mass(\overline{F}(x))
    \end{equation*}
    for every $x\in X$.
\end{proof}

\subsection{Two important propositions to go from disks to manifolds}\label{Section Two important propositions}

To prove the Weyl law, we will need to deal with families of relative $0$-cycles supported in a compact manifold with boundary $(M,\partial M)$ rather than with absolute families supported in $\mathbb{D}^{n}$ as in Theorem \ref{Isoperimetric inequality 1}. To extend our result to such families, the strategy will be to pick a triangulation $\{Q_{j}\}_{j}$ of the underlying compact manifold $M$ where our family $F$ is supported and find a short filling of $F(x)\llcorner Q_{j}$ for each $j$. But $F(x)\llcorner Q_{j}$ is not a continuous family of absolute cycles in $Q_{j}$ but only a continuous family of relative cycles in $(Q_{j},\partial Q_{j})$ (even if $\partial M=\emptyset$). Assuming that the original family $F$ is contractible, it is possible to represent $F\llcorner Q_{j}$ by a continuous family $F_{j}$ of absolute cycles. However, $F_{j}$ may have very large mass in $\partial Q_{j}$, even in arbitrary tiny balls in $\partial Q_{j}$ as the ones used in the proof of Theorem \ref{Isoperimetric inequality 1}; preventing from its direct application to find a short filling of $F_{j}$. The next proposition provides a way to slightly perturb a family $F:X^{p}\to\mathcal{Z}_{0}(\mathbb{D}^{n})$ into a new family $F'(x)$ for which there exists a small ball $B\subseteq\mathbb{S}^{n-1}$ such that $\mass(F'(x)\llcorner B)$ is controlled for all $x\in X$. We first state and prove the proposition for $n=3$  and then we discuss the case $n\geq 4$. The analog proposition for $n=2$ was proved by Guth and Liokumovich in \cite{GL22}[Lemma~5.2] and it was used to obtain the Parametric Isoperimetric Inequality for $0$-cycles on surfaces. 

\begin{proposition}\label{Avoid ball D^3}
    There exists $\delta_{0}=\delta_{0}(p)>0$ such that the following is true. Let $F:X^{p}\to\mathcal{Z}_{0}(\mathbb{D}^{3})$ be a $\delta$-localized family, $\delta<\delta_{0}$. Let $0<L<\delta$ be a very small real number and let $B$ be the open ball in $\mathbb{S}^{2}$ centered at $e$ of radius $L$. There exists a continuous family $F':X^{p}\to\mathcal{Z}_{0}(\mathbb{D}^{3})$ with the following properties: 
    \begin{enumerate}
        \item \label{ite : 7.1} $F'$ is $L+(p+2)\delta$-localized and therefore $(L+(p+2)\delta)\max_{x\in X_{0}}\mass(F'(x))$-fine.
        \item \label{ite : 7.2} $F'(x)\llcorner(\mathbb{D}^{3}\setminus B)=F(x)\llcorner (\mathbb{D}^{3}\setminus B)$ for every $x\in X^{p}_{0}$.
        \item \label{ite : 7.3} If $\tilde{B}=B_{L+(p+2)\delta}(e)$ then $F'(x)\llcorner(\mathbb{D}^{3}\setminus\tilde{B})=F(x)\llcorner(\mathbb{D}^{3}\setminus\tilde{B})$ for every $x\in X^{p}$.
        \item \label{ite : 7.4} Given a cell $C$, denote
        \begin{equation*}
            \mass_{C}=\max_{x\in C\cap X_{0}}\{\mass(F(x)\llcorner\interior(\mathbb{D}^{3}))\}.
        \end{equation*}
        Then if $x\in C$ and $\dim(C)=k$ it holds
        \begin{equation*}
        \mass(F'(x)\llcorner\interior(\mathbb{D}^{3}))+\mass(F'(x)\llcorner B)\leq \mass_{C}+k+1.
        \end{equation*}
        Therefore if
        \begin{equation*}
            \mass_{0}=\max_{x\in X_{0}}\{\mass(F(x)\llcorner\interior(\mathbb{D}^{3}))\}
        \end{equation*}
        it holds
        \begin{equation*}
            \mass(F'(x)\llcorner\interior(\mathbb{D}^{3}))+\mass(F'(x)\llcorner B)\leq \mass_{0}+p+1
        \end{equation*}
        for every $x\in X$.

    \end{enumerate}
\end{proposition}

Before proving the proposition, we introduce the following definition.

\begin{definition}
    Take polar coordinates $(r,\theta)\in[0,\infty)\times[0,2\pi)$ in the $xy$-plane and consider the induced cylindrical coordinates $(r,\theta,z)$ and spherical coordinates $(r,\theta,\varphi)$ in $\mathbb{R}^{3}$ ($z\in\mathbb{R}$, $\varphi\in[-\frac{\pi}{2},\frac{\pi}{2})$). Given $\theta\in[0,2\pi)$, let $s_{\theta}$ denote the geodesic segment in $\mathbb{S}^{2}$ contained in the meridian passing by $e=(0,0,-1)$ and the point $(\cos\theta,\sin\theta,0)$ given by the set of points there with
    \begin{equation*}
        -\frac{\pi}{2}+\frac{L}{2}\leq\varphi\leq 0
    \end{equation*}
    or equivalently
    \begin{equation*}
        -\cos(\frac{L}{2})\leq z\leq 0.
    \end{equation*}
    Given $0<\eta<1$ denote
\begin{equation*}
    T_{\theta,\eta}=\{(1-t)q:q\in s_{\theta},0\leq t\leq\eta\}.
\end{equation*}
\end{definition}

\begin{proof}[Proof of Proposition \ref{Avoid ball D^3}]
    We define $F'$ inductively skeleton by skeleton. At the $0$-skeleton, we proceed as in \cite{GL22}. Namely, given $x\in X_{0}$ we define
    \begin{equation*}
        F'(x)=F(x)-F(x)\llcorner B+e(x)
    \end{equation*}
    where $e(x)=e$ (the $0$-chain associated to the point $e=(0,0,-1)\in \mathbb{S}^{2}$) if $\mass(F(x)\llcorner B)$ is odd and $e(x)=0$ otherwise. Notice that (\ref{ite : 7.1}) to (\ref{ite : 7.3}) hold and also that
    \begin{equation*}
        \mass(F'(x)\llcorner\interior(\mathbb{D}^{3}))+\mass(F'(x)\llcorner B)\leq\mass(F(x)\llcorner\interior(\mathbb{D}^{3}))+1.
    \end{equation*}
    To motivate our construction to extend $F'$ to the higher-dimensional skeleta, let us first explain what is done in \cite{GL22} for the case $n=2$. We first introduce the following notation. Given $r\in(0,\frac{\pi}{2})$, let
    \begin{align*}
        B_{r} & =\{q\in\mathbb{S}^{1}:\dist_{\mathbb{S}^{1}}(q,e)\leq r\}\\
        C_{r} & =\{(y,z)\in\mathbb{D}^{2}:z=-\cos(r)\}\\
        U_{r} &=\{(y,z)\in\mathbb{D}^{2}:z\leq -\cos(r)\}.
    \end{align*}
    Observe that $\partial U_{r}=B_{r}\cup C_{r}$ and $B=B_{L}$. Denote $U=U_{L}$. Suppose $F'$ was defined in $\partial C$ for a certain cell $C$ of $X$ verifying that $F'|_{\partial C}$ is localized in the $\delta$-admissible family $\{U_{i}^{C}\}_{i\in I_{C}}$. Our goal is to contract $F'|_{\partial C}$ in order to extend $F'$ to all of $C$. Consider a radius $r_{C}\in[L,L+\delta]$ such that $F'(x)\llcorner C_{r_{C}}=0$ for every $x\in\partial C$ (such $r_{C}$ exists by the $\delta$-localization property). Denote $U_{C}=U_{r_{C}}$ and $B_{C}=B_{r_{C}}$. The previous allows us to write $F'|_{\partial C}$ as the sum of two continuous families
    \begin{equation*}
        F'(x)=F'(x)\llcorner(\mathbb{D}^{2}\setminus U_{C})+F_{C}(x)
    \end{equation*}
    where $F_{C}(x)=F(x)\llcorner U_{C}$. The challenging part is how to contract $F_{C}$ without introducing too many points in $B$. In particular, just applying a radial homotopy on $U_{C}$ towards $e$ will not work, because there might be arbitrarily many points in $B_{C}\setminus B$ which are pushed to $B$ by this homotopy. Guth and Liokumovich proposed the following solution. Let $R_{C}:U_{C}\times[0,1]\to U_{C}$ be the map which is the identity on $U$ and retracts the region $U_{C}\setminus U$ radially onto the line $C_{L}$, mapping points in $\mathbb{S}^{1}$ to $\mathbb{S}^{1}$. For $x\in\partial C$, consider
    \begin{equation*}
        F_{C}(x,t)=R_{C}(F_{C}(x),t)
    \end{equation*}
    Then it is clear that $F_{C}(x,t)\llcorner B=F_{C}(x)\llcorner B$ for every $t\in[0,1)$, and at $t=1$ because of $\mathbb{Z}_{2}$ cancellation we introduce at most two new points to $B$ (the only two points in $\mathbb{S}^{1}$ at distance $L$ from $e$). If we then homotop $F_{C}(x,1)$ to the constant family $F_{C}(x,2)=e$ by contracting $U$ radially towards $e$, we get an extension with the desired properties. 
    
    We can perform the same constructions on $\mathbb{D}^{3}$. In this case, the definitions are
     \begin{align*}
        B_{r} & =\{q\in\mathbb{S}^{2}:\dist_{\mathbb{S}^{2}}(q,e)\leq r\}\\
        C_{r} & =\{(x,y,z)\in\mathbb{D}^{3}:z=-\cos(r)\}\\
        U_{r} &=\{(x,y,z)\in\mathbb{D}^{3}:z\leq -\cos(r)\}.
    \end{align*}
    Nevertheless, in this case $F_{C}(x,1)$ can potentially have arbitrarily many points in $\partial B$, because it is a circle (and not a set of two points as before). Therefore for $t\in(1,2)$, $F_{C}(x,t)\llcorner B$ can have a very large number of points. To fix the previous, we will perform a different homotopy to make sure that $F_{C}(x,1)\llcorner \partial B$ is supported in a discrete set which is independent of $x$, instead of in a $1$-dimensional set as we obtained with the previous construction. This will be obtained by first homotoping $F_{C}=F_{C}^{0}$ to a map $F^{1}_{C}$ for which $F^{1}_{C}\llcorner (B_{C}\setminus B)$ is supported in a finite collection of meridians and then applying the homotopy $R_{C}$ to $F^{1}_{C}$. To illustrate this, suppose that we want $F^{1}_{C}\llcorner (B_{C}\setminus B)$ to be supported in exactly two meridians $s_{\theta_{1}}$ and $s_{\theta_{2}}$, for example with $\theta_{1}=0$ and $\theta_{2}=\pi$. If we had two other meridians $s_{\theta_{1}'}$ and $s_{\theta_{2}'}$ (say $\theta_{1}'=\frac{\pi}{2}$ and $\theta_{2}'=\frac{3\pi}{2})$ such that $\support(F_{C}\llcorner (B_{C}\setminus B))$ does not intersect $s_{\theta_{1}'}\cup s_{\theta_{2}'}$, then as $(B_{C}\setminus B)\setminus(s_{\theta_{1}'}\cup s_{\theta_{2}'})$ retracts onto $s_{\theta_{1}}\cup s_{\theta_{2}}$, we could obtain the desired $F^{1}_{C}$ by applying a suitable extension (to $U_{E}$) of the previous retraction to $F_{C}$. We will extend $F'$ inductively such that for each cell $C$ of $X$, $F_{C}$ avoids a certain collection of meridians which depend on $C$, like $s_{\theta'_{1}}$ and $s_{\theta'_{2}}$ in the previous example. The latter property will allow us to extend $F'$ to the skeleton which is $1$-dimension higher.
    
    We start by explaining the construction of the meridians and then we proceed to build $F'$ by induction. By Lemma \ref{Lemma monotonously localized}, we can assume that $F$ is monotonously $\delta$-localized. Given $C$ a top dimensional cell of $X$, let $\{B_{i}^{C}\}_{i\in I_{C}}$ be the $\delta$-admissible family where $F|_{C}$ is localized.
    Let
   \begin{equation*}
       \mathcal{S}=\bigcup_{x\in X_{0}}\support(F'(x))
   \end{equation*}
   being $\mathcal{S}$ a finite set of points.
   
   Given $C\in\faces_{p}(X)$, let $r_{C}\in[L+\delta,L+2\delta]$ be a real number such that $C_{r_{C}}\cap B_{i}^{C}=\emptyset$ for every $i\in I_{C}$ and $C_{r_{C}}\cap\mathcal{S}=\emptyset$ (such $r_{C}$ exists because $\sum_{i\in I_{C}}\diameter(B_{i}^{C})<\delta$). Denote
   \begin{equation*}
       J_{C}=\{i\in I_{C}:B_{i}^{C}\cap U_{r_{C}}=\emptyset\}.
   \end{equation*}
   We claim that there exists $\theta_{C}\in[0,2\pi)$ such that $s_{\theta_{C}}\cap B_{i}^{C}=\emptyset$ for every $i\in J_{C}$ and $s_{\theta_{C}}\cap \mathcal{S}=\emptyset$. Given $i\in J_{C}$, denote
   \begin{equation*}
       \tilde{I}_{i}=\{\theta\in[0,2\pi):s_{\theta}\cap B_{i}^{C}\neq\emptyset\}
   \end{equation*}
   which is an interval in $S^{1}=[0,2\pi]/0\sim 2\pi$. Let
   \begin{equation*}
       I_{i}=\{(\sin(r_{C})\cos\theta,\sin(r_{C})\sin\theta,-\cos(r_{C})):\theta\in\tilde{I}_{i}\}
   \end{equation*}
   be the interval in the circle $\partial B_{r_{C}}\subseteq\mathbb{S}^{2}$ consisting of those points with $\theta\in\tilde{I}_{i}$ in cylindrical coordinates. Notice that
   \begin{equation*}
       \diameter(I_{i})\leq\diameter(B_{i}^{C})
   \end{equation*}
   and therefore
   \begin{equation*}
       \sum_{i\in J_{C}}\diameter(I_{i})\leq\sum_{i\in I_{C}}\diameter(B_{i}^{C})<\delta
   \end{equation*}
   and as the length of the circle $\partial B_{r_{C}}$ is $2\pi\sin(r_{C})$ we can see
   \begin{equation*}
       \sum_{i\in J_{C}}\diameter(I_{i})<\delta<2\pi\sin(L+\delta)\leq 2\pi\sin(r_{C})
   \end{equation*}
   provided $\delta$ is sufficiently small. Therefore, there must exist a point in $\partial B_{r_{C}}\setminus(\mathcal{S}'\cup\bigcup_{i\in J_{C}}I_{i})$ which yields the existence of the desired $s_{\theta_{C}}$ (here $\mathcal{S'}$ is the set of points in $\partial B_{r_{C}}$ whose $\theta$-coordinate coincides with that of some point in $\mathcal{S}$). By compactness, there exists $\eta_{C}>0$ such that the $\eta_{C}$-tubular neighborhood of $s_{\theta_{C}}$ in $\mathbb{D}^{3}$ does not intersect $\mathcal{S}\cup\bigcup_{i\in J_{C}}B_{i}^{C}$. We set $T_{C}=T_{\theta_{C},\eta_{C}}$. Then by construction, we have an $L+3\delta$-admissible family
   \begin{equation*}
       \{B_{i}^{C}\}_{i\in I^{*}_{C}}=\{U_{r_{C}}\}\cup\{B_{i}^{C}:i\in J_{C}\}
   \end{equation*}
   in which $F'|_{C\cap X_{0}}$ is localized. This is because if $x,y\in C\cap X_{0}$
   \begin{equation*}
       F'(x)-F'(y)=F(x)-F(y)-(F(x)-F(y))\llcorner B+e(x)-e(y),
   \end{equation*}
   $F(x)-F(y)$ is supported in $\bigcup_{i\in I_{C}}B_{i}^{C}\subseteq\bigcup_{i\in I^{*}_{C}}B_{i}^{C}$ and
   \begin{equation*}
       \support((F(x)-F(y))\llcorner B)+e(x)-e(y))\subseteq B\subseteq U_{r_{C}}\subseteq\bigcup_{i\in I^{*}_{C}}B_{i}^{C}.
   \end{equation*}
   In addition, by construction $F'(x)\llcorner T_{C}=0$ for every $x\in X_{0}\cap C$ and $B_{i}^{C}\cap T_{C}=\emptyset$ for every $i\in J_{C}$. We will use these constructions to extend $F'$ inductively, skeleton by skeleton.

   \textbf{Inductive property.} Given $1\leq k\leq p$ and a $k$-cell $E$ of $X$, there exists a real number $r_{E}$ such that
   \begin{equation*}
       L+k\delta\leq r_{E}\leq L+(k+1)\delta
   \end{equation*}
   and an $L+(k+2)\delta$-admissible family of the form
   \begin{equation*}
       \{B_{i}^{E}\}_{i\in I_{E}^{*}}=\{U_{r_{E}}\}\cup\{B_{i}^{E}:i\in J_{E}\}
   \end{equation*}
   where
   \begin{equation*}
       J_{E}=\{i\in I_{E}:B_{i}^{E}\cap U_{r_{E}}=\emptyset\}.
   \end{equation*}
   such that 
   \begin{enumerate}[label=(F\arabic*)]
    \item \label{ite : 8.1} $F'|_{E}$ is localized in $\{B_{i}^{E}\}_{i\in I_{E}^{*}}$.
    \item \label{ite : 8.2} $F'(x)\llcorner (\mathbb{D}^{3}\setminus U_{r_{E}})=F(x)\llcorner(\mathbb{D}^{3}\setminus U_{r_{E}})$. 
    \item \label{ite : 8.3} $F'(x)\llcorner T_{C}=0$ for every $x\in E$ and every $C\in\faces_{p}(X)$ containing $E$.
    \item \label{ite : 8.4} $B_{i}^{E}\cap T_{C}=\emptyset$ for every $i\in J_{E}$ and every $C\in\faces_{p}(X)$ containing $E$.
    \item \label{ite : 8.5} For every $x\in E$,
    \begin{equation*}
        \mass(F'(x)\llcorner\interior(\mathbb{D}^{3}))+\mass(F'(x)\llcorner B)\leq M_{E}+k+1
    \end{equation*}
    where
    \begin{equation*}
        M_{E}=\max_{x\in E\cap X_{0}}\{\mass(F(x)\llcorner\interior(\mathbb{D}^{3}))\}.
    \end{equation*}
    
   \end{enumerate}

    We show by induction that $F'$ can be extended verifying the previous inductive property.

    \textbf{Base case.} Let $k=1$. Let $E\in\faces_{1}(X)$. Define
    \begin{equation*}
        r_{E}=\max\{r_{C}:C\in\faces_{p}(X),C\supseteq E\}.
    \end{equation*}
    
    Denote $U_{E}=U_{r_{E}}$ and $C_{E}=C_{r_{E}}$. Observe that $L+\delta\leq r_{E}\leq L+2\delta$ and that if $C_{0}\in\faces_{p}(X)$ is such that $r_{E}=r_{C_{0}}$, then $C_{E}$ does not intersect $B_{i}^{C_{0}}$ for every $i\in I_{C_{0}}$ and hence neither intersects $B_{i}^{E}$ for any $i\in I_{E}$ (by monotonously $\delta$-localization). In addition, $C_{E}\cap\mathcal{S}=\emptyset$. Define $J_{E}$ and $\{B_{i}^{E}\}_{i\in I_{E}^{*}}$ as in the inductive property, so that 
    \begin{equation*}
        \bigcup_{i\in I_{E}}B_{i}^{E}\subseteq\bigcup_{i\in I_{E}^{*}}B_{i}^{E}.
    \end{equation*}
    Let $x$ and $y$ be the vertices of $E$. Identify $E$ with the interval $[0,1]$ where $x\cong 0$ and $y\cong 1$ and define
    \begin{equation*} F'(t)=F(t)\llcorner(\mathbb{D}^{3}\setminus U_{E})+F_{E}(t)
    \end{equation*}
    where we define $F_{E}$ as follows. As $\mass(F'(x)\llcorner U_{E})$ and $\mass(F'(y)\llcorner U_{E})$ have the same parity, we can homotop both $F'(x)\llcorner U_{E}$ and $F'(y)\llcorner U_{E}$ to $e(x)$ defined as $0$ if $\mass(F'(x)\llcorner U_{E})$ is even and $e$ otherwise. To be precise, for $t\in[0,\frac{1}{2}]$ we construct $F_{E}(t)$ by moving one point $q\in\support(F'(x)\llcorner U_{E})$ towards $e$ at a time. The trajectories $\gamma_{q}$ of such points are curves in $\mathbb{D}^{3}$ which do not intersect
    \begin{equation*}
    \bigcup_{\substack{C\in\faces_{p}(X) \\ C\supseteq E}}T_{C} 
    \end{equation*} and which are supported in $\mathbb{S}^{2}=\partial\mathbb{D}^{3}$ for $q\in\mathbb{S}^{2}$. For $t=[\frac{1}{2},1]$ we proceed in the same way but starting from $F_{E}(1)=F'(y)\llcorner U_{E}$. By construction, it holds 
        \begin{equation*}
            \mass(F'(t)\llcorner\interior(\mathbb{D}^{3}))+\mass(F'(t)\llcorner B)\leq M_{E}+2.
        \end{equation*}
        This yields \ref{ite : 8.1} to \ref{ite : 8.5}.

    
    \textbf{Inductive step.} Let $E$ be a $(k+1)$-face of $X$. Choose $r_{E}\in [L+(k+1)\delta,L+(k+2)\delta]$ so that $C_{r_{E}}\cap B_{i}^{E}=\emptyset$ and $C_{r_{E}}\cap\mathcal{S}=\emptyset$ for every $i\in I_{E}$. Define $J_{E}$ and $\{B_{i}^{E}\}_{i\in I_{E}^{*}}$ as in the inductive property. Denote $U_{E}=U_{r_{E}}$, $B_{E}=B_{r_{E}}$ and $C_{E}=C_{r_{E}}$. Notice that $F'|_{\partial E}$ is localized in $\{B_{i}^{E}\}_{i\in I_{E}^{*}}$ because of the monotonously $\delta$-localization of $F$ and the fact that
    \begin{equation*}
        r_{E}\geq L+(k+1)\delta\geq r_{E'}
    \end{equation*}
    for every $k$-face $E'$ of $E$. The same holds for $F|_{E}$ and together with the fact that $C_{E}\cap\mathcal{S}=\emptyset$ yields 
    \begin{equation*}
        F(x)\llcorner C_{E}=0
    \end{equation*}
    for every $x\in E$. In particular, $F(x)\llcorner (\mathbb{D}^{3}\setminus U_{E})$ is a continuous function on $E$, and the same holds for $F'(x)\llcorner U_{E}$ on $\partial E$. We will extend $F'|_{\partial E}$ to $E$ so that properties \ref{ite : 8.1} to \ref{ite : 8.5} hold. We will identify $E$ with the cone over $\partial E$ and we will use coordinates $(x,t)\in\partial E\times[0,1]$ where $(x,0)$ corresponds to $x$ and $(x,1)$ to the center of $E$ for each $x\in\partial E$. We define
    \begin{equation*}
        F'(x,t)=F(x,t)\llcorner(\mathbb{D}^{3}\setminus U_{E})+F_{E}(x,t)
    \end{equation*}
    with $F_{E}$ supported in $U_{E}$ and $F_{E}(x,0)=F'(x)\llcorner U_{E}$ for all $x\in\partial E$. The previous will imply that $F'(x,t)$ extends $F'|_{\partial E}$. Thus, our goal is to homotop $F_{E}^{0}:\partial E\to\mathcal{Z}_{0}(U_{E})$ given by $F_{E}^{0}(x)=F'(x)\llcorner U_{E}$ to a constant map while keeping properties \ref{ite : 8.1} to \ref{ite : 8.5}.

    Denote $S^{1}_{E}=\partial B_{E}$. As the number of points in $F_{E}^{0}(x)\llcorner (B_{E}\setminus B)$ is a priori unbounded for $x\in \partial E$, as explained before we can not just apply a homotopy $H_{E}:U_{E}\times[0,1]\to U_{E}$ contracting $U_{E}$ to the point $e$ to the family $F_{E}^{0}$ because that could introduce too many points to $B\cup\interior(\mathbb{D}^{3})$. Instead, we will contract $F_{E}^{0}$ in three steps.
    \begin{enumerate} 
        \item \label{ite : 9.1} The first one will homotop it to a family $F_{E}^{1}:\partial E\to\mathcal{Z}_{0}(\mathbb{D}^{3})$ supported in $\overline{U}$ ($U=U_{L}$) such that $F_{E}^{1}(x)\llcorner\partial B$ is supported in a discrete set $\{y_{1},...,y_{N}\}$ for every $x\in\partial E$ (the discrete set is independent of $x\in \partial E$). The mentioned homotopy $F_{E}^{0}:\partial E\times[0,1]\to\mathcal{Z}_{0}(B_{E})$ will keep the property that
        \begin{equation*}
            \mass(F_{E}^{0}(x,t)\llcorner\interior(\mathbb{D}^{3}))+\mass(F_{E}^{0}(x,t)\llcorner\interior(B))\leq M_{E}+k+1
        \end{equation*}
        where $F_{E}^{0}(x,0)=F_{E}^{0}(x)$ and $F_{E}^{0}(x,1)=F_{E}^{1}(x)$ for $x\in\partial E$.
        \item \label{ite : 9.2} The second one will homotop $F_{E}^{1}$ to a family $F_{E}^{2}$ also supported in $\overline{U}$ such that $F_{E}^{2}(x)\llcorner\partial B$ is supported in a set $\{q\}$ of $1$ element. The corresponding homotopy $F_{E}^{1}:\partial E\times[0,1]\to\overline{U}$ will verify
        \begin{equation*}
            \mass(F_{E}^{1}(x,t)\llcorner\interior(\mathbb{D}^{3}))+\mass(F_{E}^{1}(x,t)\llcorner\interior(B))\leq M_{E}+k+2.
        \end{equation*}
        for every $t\in[0,1]$ and in addition
        \begin{equation*}
            \mass(F_{E}^{2}(x)\llcorner\interior(\mathbb{D}^{3}))+\mass(F_{E}^{2}(x)\llcorner\interior(B))\leq M_{E}+k+1
        \end{equation*}
        where $F^{1}_{E}(x,0)=F^{1}_{E}(x)$ and $F^{1}_{E}(x,1)=F^{2}_{E}(x)$.
        \item \label{ite : 9.3} The third homotopy will just contract $F_{E}^{2}$ radially towards $e$ in $\overline{U}$ (mapping points of $\mathbb{S}^{2}$ into $\mathbb{S}^{2}$), increasing the mass bound from $M_{E}+k+1$ to at most $M_{E}+k+2$.
    \end{enumerate}

    We proceed to describe how to construct the first two homotopies, being them induced by deformation retractions of subspaces of $U_{E}$. For (\ref{ite : 9.1}), let $T_{1},...,T_{N}$ be an enumeration of $\{T_{C}:C\in\faces_{p}(X),C\supseteq E\}$ according to an increasing ordering of the corresponding $\theta_{C}$. Let $T'_{1},...,T'_{N}$ be intermediate $T_{\theta,\eta}$ with $\theta'_{i}\in (\theta_{i},\theta_{i+1})$ modulo $2\pi$ and $\eta\leq\eta_{i}$ for every $1\leq i\leq N$. By inductive hypothesis, $F_{E}^{0}(x)=F'(x)\llcorner U_{E}$ is supported in $U_{E}^{*}:=U_{E}\setminus\cup_{i=1}^{N}T_{i}$ for $x\in\partial E$. 
    Consider a homotopy $H_{E}^{1}:U_{E}^{*}\times[0,1]\to U_{E}^{*}$ which is the identity in $U_{\frac{L}{2}}$ and acts as follows in $U_{r_{E}}\setminus U_{\frac{L}{2}}$. We can write
    \begin{equation*}
        U_{r_{E}}\setminus U_{\frac{L}{2}}=\bigcup_{r\in[\frac{L}{2},r_{E}]}C_{r}
    \end{equation*}
    as a disjoint union (each $C_{r}$ corresponds to a $z$-slice of $U_{r_{E}}\setminus U_{\frac{L}{2}}$). For $r\in[L,r_{E}]$, the homotopy $H_{E}^{1}$ restricted to $C_{r}$ retracts $C_{r}\setminus\cup_{i=1}^{N}T_{i}$ (which is a circle minus $N$ straight short segments emanating from boundary points) to the smaller concentric circle $C_{r}'$ of radius $\radius(C_{r})-\eta$ union the $N$ segments $T'_{i}\cap C_{r}$ (each of them of length $\eta$). The homotopy is chosen so that points in $\partial C_{r}$ remain in $\partial C_{r}$ and at the end they are mapped to $\partial C_{r}\cap\cup_{i=1}^{N}T'_{i}$ (which is a finite set of points). For $r\in[\frac{L}{2},L]$, we interpolate between the $H_{E}^{1}$ previously defined and the identity. Observe that the points in $H_{E}^{1}(F_{E}^{0}(x),1)\llcorner (B_{E}\setminus B)$ lie in the $N$ geodesic segments $T_{i}'\cap\mathbb{S}^{2}$. Then our next step is to retract $U_{E}$ onto $U$ radially towards $e$. Hence, we obtain a family $F_{E}^{1}$ which is supported in $\overline{U}$ with the property that $F_{E}^{1}\llcorner\partial B$ is supported in $\{y_{1},...,y_{N}\}$ where $y_{i}=T_{i}'\cap\partial B$. A more rigorous description of these homotopies will be provided when the result is extended to $n\geq 4$.

    To obtain $F_{E}^{2}(x,t)$, we need to apply $N-1$ homotopies $\overline{H}_{E}^{1},...,\overline{H}_{E}^{N-1}$ to $F_{E}^{1}$ so that if $\overline{F}_{E}^{i}$ is the map obtained after applying the  first $i$, $\overline{F}_{E}^{i}(x)\llcorner\partial B$ is supported in $\{y_{i+1},...,y_{N}\}$. We describe $\overline{H}_{E}^{i}$ in the circle $C_{L}$. We consider a curve $\gamma$ connecting $y_{i}$ with $y_{i+1}$ in $C_{L}$ without touching any of the $T_{i}$. We take a tiny ``tubular neighborhood'' $N_{\gamma}$ of $\gamma$ in $C_{L}$ which also avoids the $T_{i}$ and only intersects $\{y_{1},...,y_{N}\}$ at $\{y_{i},y_{i+1}\}$. It is possible to retract $C_{L}$ onto $C_{L}\setminus N_{\gamma}$ so that the trajectory of $y_{i}$ is given by $\gamma$. Then we can extend this to a homotopy $\overline{H}_{E}^{i}$ in $\overline{U}$ which is not the identity only in a tiny neighborhood $\tilde{N}_{\gamma}\supseteq N_{\gamma}$ of $\gamma$ in $\overline{U}$ (and in particular, away from the $T_{i}$). We can think of the deformation induced by $\overline{H}_{E}^{i}$ as that of digging a narrow tunnel in $U$ along the curve $\gamma$ starting at the point $y_{i}$ and ending at $y_{i+1}$. By applying to the family $F_{2}^{E}$ the composition of the $\overline{H}_{E}^{i}$ we obtain a map with the properties in (\ref{ite : 9.2}) above.
   
\end{proof}

Now our goal is to extend Proposition \ref{Avoid ball D^3} to $\mathbb{D}^{n}$, $n\geq 4$. We will need the following two preliminary lemmas.

\begin{lemma}\label{Lemma skeleton}
    Let $\mathbb{S}^{n-1}\subseteq\mathbb{R}^{n}$ be the unit sphere. Let $P^{d}\subseteq \mathbb{S}^{n-1}$, $1\leq d\leq n-1$, be an embedded $d$-dimensional polyhedron in $\mathbb{S}^{n-1}$ whose cells are geodesically convex in $\mathbb{S}^{n-1}$. Suppose that a certain hyperplane $\Pi\subseteq\mathbb{R}^{n}$ intersects $P$. Then $\Pi$ intersects the $k$-skeleton of $P$ for every $1\leq k\leq d$.
\end{lemma}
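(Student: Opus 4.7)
The plan is to proceed by descending induction on $k$, from $k=d$ (where the conclusion is exactly the hypothesis) down to $k=1$. Throughout I will use the following structural fact about the cells of $P$: every $j$-dimensional geodesically convex cell $\sigma\subseteq\mathbb{S}^{n-1}$ lies in a unique totally geodesic subsphere $\mathbb{S}^{j}_\sigma=V_\sigma\cap\mathbb{S}^{n-1}$, where $V_\sigma\subseteq\mathbb{R}^n$ is a $(j+1)$-dimensional linear subspace, and $\sigma$ is in fact contained in an open hemisphere of $\mathbb{S}^{j}_\sigma$ (otherwise two antipodal points of $\sigma$ would be joined by two distinct minimizing geodesics, violating convexity).

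For the inductive step, suppose $\Pi$ cuts $P_k$ but, for contradiction, does not cut $P_{k-1}$. Then $\Pi$ meets the relative interior of some $k$-cell $\sigma$ of $P$ while $\Pi\cap\partial\sigma=\emptyset$. Intersect $\Pi$ with $V_\sigma$. If $V_\sigma\subseteq\Pi$ then $\sigma\subseteq\Pi$, in particular $\partial\sigma\subseteq\Pi$, contradicting the assumption at once. Otherwise $\Pi\cap V_\sigma$ is a linear hyperplane of $V_\sigma$ through the origin, and $\Sigma:=\Pi\cap\mathbb{S}^{k}_\sigma$ is a great $(k-1)$-sphere of $\mathbb{S}^{k}_\sigma$. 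Since $\Sigma$ contains pairs of antipodal points, it is not contained in any open hemisphere, and in particular $\Sigma\not\subseteq\sigma$. On the other hand $\Sigma\cap\sigma\supseteq\Pi\cap\sigma\neq\emptyset$, so $\Sigma$ has points both inside and outside $\sigma$. For $k\geq 2$ -- which always holds in the inductive step, since the induction only needs to reach $k-1\geq 1$ -- the great sphere $\Sigma$ is connected, so any path in $\Sigma$ joining a point of $\sigma$ to a point of $\mathbb{S}^{k}_\sigma\setminus\sigma$ must pass through $\partial\sigma$. This produces a point of $\Pi\cap\partial\sigma\subseteq\Pi\cap P_{k-1}$ and contradicts the assumption.

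The delicate step on which the whole argument pivots is the identification of $\Sigma$ as a \emph{great} subsphere of $\mathbb{S}^{k}_\sigma$: this requires $\Pi\cap V_\sigma$ to be linear rather than merely affine, which is automatic as soon as $\Pi$ passes through the origin -- the natural setting for the lemma, since a small parallel lying entirely in the interior of a sufficiently large geodesically convex cell would otherwise give a counterexample. Once this great-sphere structure is in place, the rest is a soft topological argument combining the hemisphere confinement of $\sigma$ (so $\Sigma$ cannot be swallowed by $\sigma$) and the connectedness of $\Sigma$ (so $\Sigma$ must exit $\sigma$ through its boundary).
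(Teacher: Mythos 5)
Your argument is correct (under the through-the-origin reading discussed below), but the key step is executed differently from the paper's. The paper inducts on the dimension of the polyhedron and argues at a top cell $C$: if $\Pi$ misses $\partial C$, then $\partial C$, being connected, is covered by the two disjoint open half-spaces $H^{+},H^{-}$ and hence lies entirely in one of them; since $H^{\pm}\cap\mathbb{S}^{n-1}$ is geodesically convex and contains the vertices of $C$, convexity forces all of $C$ into that half-space, contradicting $\Pi\cap C\neq\emptyset$. You instead intersect $\Pi$ with the totally geodesic $k$-subsphere spanned by the offending $k$-cell $\sigma$, note that the trace $\Sigma$ is a great $(k-1)$-sphere, and play its connectedness (available since every inductive step has $k\geq 2$) against the confinement of $\sigma$ to an open hemisphere of its spanning subsphere; the paper plays connectedness of $\partial C$ against convexity of a hemisphere. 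Both are soft convexity-plus-connectedness arguments relying on the same standard facts about convex spherical cells (yours: spanning subsphere, hemisphere confinement, relative boundary equal to $\partial\sigma$; the paper's: a cell is the geodesic convex hull of its vertices), so neither is substantially harder, but yours has the merit of making explicit where linearity of $\Pi$ enters. Your remark about the hypothesis is indeed on point: for an affine hyperplane not through the origin the statement fails (a small circle inscribed in the relative interior of a large convex top cell misses the lower skeleta), and the paper's own proof silently uses that $H^{\pm}\cap\mathbb{S}^{n-1}$ are geodesically convex, which holds only for hemispheres, i.e.\ when $\Pi$ contains the origin --- exactly the case needed in the application (Lemma \ref{Lemma delta hyperplane}, where the Crofton argument ranges over planes through the origin). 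One small point to tighten: the hemisphere confinement of $\sigma$ is best justified by $\sigma$ being a compact convex cell spanned by its vertices (a weakly convex set such as a closed hemisphere does contain antipodal points), but this is the same implicit use of the cell structure that the paper itself makes.
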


\begin{proof}
    We proceed by induction in $d$. If $d=1$ it is automatic. Suppose it is true for $d-1$ and consider a $d$-dimensional polyhedron $P\subseteq\mathbb{S}^{n-1}$ and a hyperplane $\Pi\subseteq\mathbb{R}^{n}$ which intersects $P$. We want to show first that $\Pi\cap P_{d-1}\neq\emptyset$. Suppose the contrary. Pick $Q$ a top dimensional cell of $P$ such that $\Pi\cap Q\neq\emptyset$ and a point $q\in \Pi\cap Q$. It must be $q\in\interior(Q)$ as $\partial Q\subseteq P_{d-1}$. Denote $H^{+}, H^{-}$ the two open half spaces of $\mathbb{R}^{n}$ determined by $\Pi$. We have
    \begin{equation*}
        \partial Q=\partial Q\cap H^{+}\cup\partial Q\cap H^{-}
    \end{equation*}
    as a disjoint union of open sets, so by connectedness it must be $\partial Q\subseteq H^{+}$ or $\partial C\subseteq H^{-}$. Assume the first is true without loss of generality. Then as $H^{+}\cap \mathbb{S}^{n-1}$ is geodesically convex and contains the vertices of the cell $Q$, it must contain all of $Q$. But this is absurd because $q\notin H^{+}$ (as it is in $\Pi$). Therefore, $\Pi\cap P_{d-1}\neq\emptyset$ and by inductive hypothesis $\Pi$ intersects $P_{k}$ for every $1\leq k\leq d-1$ as desired.
\end{proof}

\begin{lemma}\label{Lemma delta hyperplane}
    For each $n\in\mathbb{N}$, there exists $\delta_{n}>0$ such that the following holds. Given a finite collection $\{B_{i}\}_{i\in I}$ of balls in $\mathbb{S}^{n-1}\subseteq\mathbb{R}^{n}$ with
    \begin{equation*}
        \sum_{i\in I}\diameter(B_{i})<\delta_{n}
    \end{equation*}
    there exists a plane $\Pi$ through the origin such that $\Pi\cap B_{i}=\emptyset$ for every $i\in I$.
\end{lemma}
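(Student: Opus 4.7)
The plan is to parameterize hyperplanes through the origin by their unit normal vectors $v \in \mathbb{S}^{n-1}$ (identifying $v$ with $-v$), so that $\Pi_v = v^{\perp}$. The key observation is that $\Pi_v$ intersects a spherical ball $B_i \subseteq \mathbb{S}^{n-1}$ centered at $c_i$ of spherical radius $r_i$ if and only if the great $(n-2)$-sphere $v^{\perp}\cap \mathbb{S}^{n-1}$ comes within distance $r_i$ of $c_i$, which is equivalent to the inequality $|d_{\mathbb{S}^{n-1}}(v,c_i) - \pi/2| \leq r_i$. Thus the set of ``bad'' normals $v$ for which $\Pi_v$ meets $B_i$ is precisely the closed spherical belt
\begin{equation*}
    \mathcal{B}_i = \{v \in \mathbb{S}^{n-1} : |d_{\mathbb{S}^{n-1}}(v,c_i) - \pi/2| \leq r_i\}
\end{equation*}
of width $2r_i$ around the equator dual to $c_i$.

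The next step is a standard measure estimate: using the coarea formula on $\mathbb{S}^{n-1}$ with respect to the distance function to the equator of $c_i$, one obtains
\begin{equation*}
    \mathcal{H}^{n-1}(\mathcal{B}_i) = 2\,\mathcal{H}^{n-2}(\mathbb{S}^{n-2})\int_0^{r_i}\cos(t)^{n-2}\,dt \leq 2\,\mathcal{H}^{n-2}(\mathbb{S}^{n-2})\,r_i = A_n\,\mathrm{diam}(B_i),
\end{equation*}
where $A_n = \mathcal{H}^{n-2}(\mathbb{S}^{n-2})$. Summing over $i$ and using the hypothesis $\sum_i \mathrm{diam}(B_i) < \delta_n$ gives
\begin{equation*}
    \mathcal{H}^{n-1}\Big(\bigcup_{i\in I}\mathcal{B}_i\Big) \leq A_n\,\delta_n.
\end{equation*}

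Finally, define $\delta_n$ to be any positive constant strictly smaller than $\mathcal{H}^{n-1}(\mathbb{S}^{n-1})/A_n$ (for instance $\delta_n = \mathcal{H}^{n-1}(\mathbb{S}^{n-1})/(2A_n)$). Then the total measure of bad normals is strictly less than the measure of $\mathbb{S}^{n-1}$, so there exists $v \in \mathbb{S}^{n-1}\setminus\bigcup_{i\in I}\mathcal{B}_i$, and the hyperplane $\Pi = v^{\perp}$ avoids every $B_i$. No step here is really an obstacle; the only point to be checked is that the ``diameter'' in the statement is interpreted as the intrinsic spherical diameter (equivalently, one absorbs a dimensional constant into $\delta_n$ when comparing to the chordal diameter), and that the estimate on the belt volume is uniform in the position of $c_i$ since $\mathbb{S}^{n-1}$ is homogeneous.
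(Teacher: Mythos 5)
Your proof is correct, but it takes a genuinely different route from the paper. The paper encloses each ball $B_{i}$ in a geodesically convex spherical cube $Q_{i}$ of comparable diameter, invokes the preceding lemma (that a hyperplane cutting a convex spherical polyhedron must cut its $1$-skeleton), and then applies Crofton's formula to the $1$-skeleton $\bigcup_i (Q_i)_1$: if every $\Pi$ hit the skeleton, the average intersection count would force $\length(Q_{1})\gtrsim\mu(\mathcal{P})$, contradicting $\sum_i\diameter(B_i)<\delta_n$. You instead parametrize hyperplanes by unit normals and observe directly that the normals $v$ for which $v^{\perp}$ meets the cap $B_i$ form a belt $\{\,|d_{\mathbb{S}^{n-1}}(v,c_i)-\pi/2|\leq r_i\,\}$ of measure at most $A_n\,\diameter(B_i)$, so a union bound over $i$ leaves a good normal once $\delta_n<\mathcal{H}^{n-1}(\mathbb{S}^{n-1})/A_n$. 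This is a first-moment argument of the same flavor as Crofton, but applied directly to the caps; it bypasses both the auxiliary convex-polyhedron/skeleton lemma and the Crofton formula for curves, and is arguably more elementary and self-contained. The only points to keep explicit, as you note, are the equivalence $\Pi_v\cap B_i\neq\emptyset \iff |d(v,c_i)-\pi/2|\leq r_i$ (valid since $B_i\subseteq\mathbb{S}^{n-1}$), the antipodal symmetry making the sphere versus $\mathbb{RP}^{n-1}$ parametrization immaterial, and the harmless discrepancy (a dimensional constant absorbed into $\delta_n$) between intrinsic and chordal diameter; none of these affects how the lemma is used later, since $\delta_n$ is only required to depend on $n$.
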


\begin{proof}
    For each $i\in I$, let $Q_{i}$ be an embedded convex $(n-1)$-simplex in $\mathbb{S}^{n-1}$ containing $B_{i}$ whose diameter is at most $d(n)\diameter(B_{i})$ for a certain dimensional constant $d(n)$. Let $Q_{(1)}=\bigcup_{i\in I}(Q_{i})_{1}$ be the union of the $1$-skeletons of the $Q_{i}$. Observe that if a hyperplane $\Pi\subseteq\mathbb{R}^{n}$ intersects $\bigcup_{i\in I}B_{i}$ then it intersects $Q_{i}$ for some $i\in I$ and hence by Lemma \ref{Lemma delta hyperplane} it also cuts its $1$-skeleton, which implies that it intersects $Q_{(1)}$. Therefore, it suffices to find a hyperplane $\Pi\subseteq\mathbb{R}^{n}$ which does not intersect $Q_{(1)}$. Denote $n(\Pi,Q_{(1)})$ the number of points in $\Pi\cap Q_{(1)}$. By Crofton's formula, there is a universal constant $C(n)>0$ such that if $\mu$ denotes the rotation-invariant metric on the space $\mathcal{P}$ of planes through the origin then
    \begin{equation*}
        \int_{\mathcal{P}}n(\Pi,Q_{(1)})d\mu(\Pi)=C(n)\length(Q_{(1)}).
    \end{equation*}
    Therefore, if every hyperplane $\Pi\in\mathcal{P}$ intersected $Q_{(1)}$, as $n(\Pi,Q_{(1)})$ is integer valued we would have
    \begin{equation*}
        \mu(\mathcal{P})=\int_{\mathcal{P}}1d\mu(\Pi)\leq\int_{\mathcal{P}}n(\Pi,Q_{(1)})d\mu(\Pi)=C(n)\length(Q_{(1)})
    \end{equation*}
    and hence $\length(Q_{(1)})\geq\frac{\mu(\mathcal{P})}{C(n)}$. But
    \begin{equation*}
        \length(Q_{(1)})=\sum_{i\in I}\length((Q_{i})_{1})\leq \sum_{i\in I}e(n)\diameter(Q_{i})\leq e(n)d(n)\sum_{i\in I}\diameter(B_{i})
    \end{equation*}
    where $e(n)= {n\choose 2}$ denotes the number of $1$-faces of an $n$-dimensional simplex. Thus taking $\delta_{n}=\frac{\mu(\mathcal{P})}{C(n)d(n)e(n)}$ we get the desired result. 
\end{proof}

\begin{proposition}\label{Avoid ball D^n}
    There exists $\delta_{0}=\delta_{0}(n,p)>0$ such that the following is true. Let $F:X^{p}\to\mathcal{Z}_{0}(\mathbb{D}^{n})$ be a $\delta$-localized family, $\delta<\delta_{0}$. Let $0<L<\delta$ be a very small real number and let $B$ be the open ball in $\mathbb{S}^{n-1}$ centered at $e$ of radius $L$. There exists a continuous family $F':X^{p}\to\mathcal{Z}_{0}(\mathbb{D}^{n})$ with the following properties:
    \begin{enumerate}
        \item $F'$ is $c(n,p)\delta$-localized and therefore $c(n,p)\delta\max_{x\in X_{0}}\mass(F'(x))$-fine for some constant $c(n,p)$ depending only on $n$ and $p$.
        \item $F'(x)\llcorner(\mathbb{D}^{n}\setminus B)=F(x)\llcorner (\mathbb{D}^{n}\setminus B)$ for every $x\in X^{p}_{0}$.
        \item If $\tilde{B}=B_{c(n,p)\delta}(e)$ then $F'(x)\llcorner(\mathbb{D}^{n}\setminus\tilde{B})=F(x)\llcorner(\mathbb{D}^{n}\setminus\tilde{B})$ for every $x\in X^{p}$.
        \item Given a cell $C$, denote
        \begin{equation*}
            \mass_{C}=\max_{x\in C\cap X_{0}}\{\mass(F(x)\llcorner\interior(\mathbb{D}^{n}))\}.
        \end{equation*}
        Then if $x\in C$ and $\dim(C)=k$ it holds
        \begin{equation*}
        \mass(F'(x)\llcorner\interior(\mathbb{D}^{n}))+\mass(F'(x)\llcorner B)\leq \mass_{C}+k+1.
        \end{equation*}
        Therefore if
        \begin{equation*}
            \mass_{0}=\max_{x\in X_{0}}\{\mass(F(x)\llcorner\interior(\mathbb{D}^{n}))\}
        \end{equation*}
        it holds
        \begin{equation*}
            \mass(F'(x)\llcorner\interior(\mathbb{D}^{n}))+\mass(F'(x)\llcorner B)\leq \mass_{0}+p+1
        \end{equation*}
        for every $x\in X$.

    \end{enumerate}
\end{proposition}

We will need the following definitions.

\begin{definition}
    Let $\mathbb{R}^{n}_{-}=\{x\in\mathbb{R}^{n}:x_{n}\leq 0\}$ and $e=(0,0,...,0,-1)\in \mathbb{D}^{n}$.
\end{definition}

\begin{definition}
    Given $r\in[0,\frac{\pi}{2})$ and $\varepsilon\in(0,\frac{1}{2})$, denote
    \begin{align*}
        B_{r} & =B^{\mathbb{S}^{n-1}}_{r}(e),\\
        U_{r} & =\{(1-s)q:q\in B_{r},s\in[0,r]\},\\
        S^{n-2}_{r} & =\{x\in\mathbb{S}^{n-1}:x_{n}  =-\cos(r)\},\\
        S^{n-1}_{r} &= \{(1-s)q:q\in S^{n-2}_{r},s\in[0,r]\}\cup\{(1-r)q:q\in B_{r}\},\\
        A^{n-1}_{r} &=\{x\in\mathbb{S}^{n-1}:-\cos(r)\leq x_{n}\leq 0\},\\
        A^{n}_{r,\varepsilon} & =\{(1-s)q:q\in A^{n-1}_{r}, s\in[0,\varepsilon]\}
    \end{align*}
    where $B_{r}^{\mathbb{S}^{n-1}}(e)$ denotes the ball in $\mathbb{S}^{n-1}$ of radius $r$ centered at $e$.
\end{definition}

\begin{remark}
    Observe that
    \begin{align*}
        \partial B_{r} & =S^{n-2}_{r},\\
        \partial U_{r} & = S^{n-1}_{r}\cup B_{r},\\
        A^{n-1}_{r} & =A^{n}_{r,\varepsilon}\cap\mathbb{S}^{n-1}.
    \end{align*}
    In addition, $B=B_{L}$ and we denote $U=U_{L}$.
\end{remark}

\begin{remark}
    For every $0<r<\frac{\pi}{2}$ the map $\phi:\mathbb{S}^{n-2}\times[-\cos(r),0]\to A^{n-1}_{r}$ given by
    \begin{equation*}
        \phi(u,t)=(\sqrt{1-t^{2}}u,t)
    \end{equation*}
    is a diffeomorphism. Furthermore, for each $\varepsilon\in(0,\frac{1}{2})$ we can extend the previous to a map $\phi:\mathbb{S}^{n-2}\times[-\cos(r),0]\times[0,\varepsilon]\to A^{n}_{r,\varepsilon}$ (regarding $\mathbb{S}^{n-2}\times[-\cos(r),0]\subseteq\mathbb{S}^{n-2}\times[-\cos(r),0]\times[0,\varepsilon]$ via the embedding $(u,t)\mapsto(u,t,0)$) in the following way
    \begin{equation*}
        \phi(u,t,s)=(1-s)\phi(u,t)=(1-s)(\sqrt{1-t^{2}}u,t).
    \end{equation*}
    Hence we can use $(u,t,s)\in\mathbb{S}^{n-2}\times[-\cos(r),0]\times[0,\varepsilon]$ as a coordinate system for $A^{n}_{r,\varepsilon}$. Notice that if $q=\phi(u,t,s)$ the coordinate $s$ measures the distance from $q$ to $\mathbb{S}^{n-1}$ and if $s=0$, $t$ is the $x_{n}$-coordinate of $q$ and $u$ represents its location in the parallel $\mathbb{S}^{n-1}\cap\{x_{n}=t\}$.
\end{remark}

\begin{definition}
    Given a hyperplane $\Pi\subseteq\mathbb{R}^{n-1}$ through the origin and a number $\varepsilon\in(0,\frac{1}{2})$, let
    \begin{equation*}
        s_{\Pi}=\{\phi(u,t)\in A^{n-1}_{\frac{L}{2}}:u\in\Pi\}
    \end{equation*}
    and
    \begin{equation*}
        T_{\Pi,\varepsilon}=\{\phi(u,t,s)\in A^{n}_{\frac{L}{2},\varepsilon}:u\in\Pi\}
    \end{equation*}
    being $s_{\Pi}=T_{\Pi,\varepsilon}\cap\mathbb{S}^{n-1}$. 
\end{definition}

\begin{remark}
    Notice that when $n=3$, $s_{\Pi}$ is determined by two antipodal points in $\mathbb{S}^{1}$ and hence can be associated with a certain angle $\theta\in[0,\pi)$; being a generalization of the $s_{\theta}$ defined before (which corresponded to a certain angle $\theta\in[0,2\pi)$). This also yields an analogy between $T_{\Pi,\varepsilon}$ and $T_{\theta,\varepsilon}$.
\end{remark}

\begin{remark}\label{Rk Pi}
    Observe that $s_{\Pi}\subseteq(\Pi\times\mathbb{R})\cap\mathbb{S}^{n-1}$ and  $T_{\Pi,\varepsilon}\subseteq(\Pi\times\mathbb{R})\cap\mathbb{D}^{n}$, moreover
    \begin{align*}
        s_{\Pi} & =\{x\in(\Pi\times\mathbb{R})\cap\mathbb{S}^{n-1}:x_{n}\in[-\cos(\frac{L}{2}),0]\}\\
        T_{\Pi,\varepsilon} & =\{x=\phi(u,t,s)\in(\Pi\times\mathbb{R})\cap\mathbb{D}^{n}:t\in[-\cos(\frac{L}{2}),0],s\in[0,\varepsilon]\}\}.
    \end{align*}
\end{remark}

\begin{proof}[Proof of Proposition \ref{Avoid ball D^n}]
The strategy of the proof will be the same as for Proposition \ref{Avoid ball D^3}. We start by constructing $F'|_{X_{0}}$ in the same way as for $n=3$, then we define a certain $T_{C}$ for each top dimensional cell $C$ of $X$ and then we extend $F'$ skeleton to skeleton inductively with an inductive hypothesis similar to that stated before.

Given $C\in\faces_{p}(X)$, we choose $r_{C}\in [L+K\delta,L+(K+1)\delta]$ ($K=K(n)$ to be determined later) such that $S^{n-1}_{r_{C}}\cap B_{i}^{C}=\emptyset$ for every $i\in I_{C}$ and $S^{n-1}_{r_{C}}\cap\mathcal{S}=\emptyset$ (being $\{B_{i}^{C}\}_{i\in I_{C}}$ the $\delta$-admissible family associated to $C$ and $\mathcal{S}=\bigcup_{x\in X_{0}}\support(F'(x))$). We denote
\begin{equation*}
    J_{C}=\{i\in I_{C}:B^{C}_{i}\cap U_{r_{C}}=\emptyset\}.
\end{equation*}
We claim that if we choose $K$ suitably, there exists a hyperplane $\Pi\subseteq\mathbb{R}^{n-1}$ such that $s_{\Pi}\cap B_{i}^{C}=\emptyset$ for every $i\in J_{C}$ and also $s_{\Pi}\cap\mathcal{S}=\emptyset$.  Consider the map $\proj:A^{n-1}_{r_{C}}\to S^{n-2}_{r_{C}}$ given in $\phi$-coordinates by $ \proj(u,t)=(u,-\cos(r_{C}))$. Notice that $\proj$ is $1$-Lipschitz and that $B_{i}^{C}\cap\mathbb{R}^{n}_{-}\subseteq S^{n-1}_{r_{C}}$ for every $i\in J_{C}$. Therefore, if $\tilde{B}_{i}^{C}=\proj(B_{i}^{C}\cap\mathbb{R}^{n}_{-})$,
$\{\tilde{B}_{i}^{C}:\in J_{C}\}$ is a family of balls in $S^{n-2}_{r_{C}}$ with sum of radius less than $\delta$. As $S^{n-2}_{r_{C}}$ is an $(n-2)$-sphere of radius $\sin(r_{C})$, by Proposition \ref{Lemma delta hyperplane}, if
\begin{equation*}
    \frac{\delta}{\sin(r_{C})}<\delta_{n-2}
\end{equation*}
then there will exist a hyperplane $\Pi_{C}$ in $\mathbb{R}^{n-1}$ such that $\tilde{\Pi}_{C}\cap\tilde{B}_{i}^{C}=\emptyset$ for every $i\in J_{C}$, where $\tilde{\Pi}_{C}=\{\phi(u,-\cos(r_{C})):u\in\Pi_{C}\}$. But as $L+K\delta\leq r_{C}\leq L+(K+1)\delta$,
\begin{equation}\label{Eq delta localized}
    \frac{\delta}{\sin(r_{C})}\leq\frac{\delta}{\sin(L+K\delta)}<\frac{\delta}{\sin(K\delta)}=\frac{1}{K}\frac{K\delta}{\sin(K\delta)}.
\end{equation}
Define $K=K(n)=\frac{2}{\delta_{n-2}}$. Using that $\lim_{x\to 0}\frac{x}{\sin(x)}=1$, let $\eta_{0}>0$ be such that $0<x<\eta_{0}$ implies $0<\frac{x}{\sin(x)}<2$. Set $\overline{\delta}_{n}=\frac{\eta_{0}}{K}$. Hence if $0<\delta<\overline{\delta}_{n}$, $0<K\delta<\eta_{0}$ and by (\ref{Eq delta localized})
\begin{equation*}
    \frac{\delta}{\sin(r_{C})}<\frac{2}{K}=\delta_{n-2}.
\end{equation*}
Therefore if our original family is $\delta$-localized for $\delta\leq\overline{\delta}_{n}$, we can conclude that for each $C\in\faces_{p}(X)$ there exists a hyperplane $\Pi_{C}$ in $\mathbb{R}^{n-1}$ for which $s_{C}=s_{\Pi_{C}}$ does not intersect any $B_{i}^{C}$ for $i\in J_{C}$ or $\mathcal{S}$. By compactness, we can choose $\varepsilon_{C}>0$ sufficiently small so that $T_{\Pi_{C},\varepsilon_{C}}\cap B_{i}^{C}=\emptyset$ for every $i\in J_{C}$ and $T_{\Pi_{C},\varepsilon_{C}}\cap\mathcal{S}=\emptyset$. Taking
\begin{equation*}
    \varepsilon=\min\{\varepsilon_{C}:C\in\faces_{p}(X)\}
\end{equation*}
and defining $T_{C}=T_{\Pi_{C},\varepsilon}$ we can see that $T_{C}\cap B_{i}^{C}=\emptyset$ for every $i\in J_{C}$ and $T_{C}\cap\mathcal{S}=\emptyset$.

\textbf{Inductive property.} Given $1\leq k\leq p$ and a $k$-cell $E$ of $X$, there exists a real number $r_{E}$ such that
   \begin{equation*}
       L+(K+k)\delta\leq r_{E}\leq L+(K+k+1)\delta
   \end{equation*}
   and an $L+(K+k+2)\delta$-admissible family of the form
   \begin{equation*}
       \{B_{i}^{E}\}_{i\in I_{E}^{*}}=\{U_{r_{E}}\}\cup\{B_{i}^{E}:i\in J_{E}\}
   \end{equation*}
   where
   \begin{equation*}
       J_{E}=\{i\in I_{E}:B_{i}^{E}\cap U_{r_{E}}=\emptyset\}.
   \end{equation*}
   such that
   \begin{enumerate}[label=(G\arabic*)]
    \item \label{ite : 10.1} $F'|_{E}$ is localized in $\{B_{i}^{E}\}_{i\in I_{E}^{*}}$.
    \item \label{ite : 10.2} $F'(x)\llcorner(\mathbb{D}^{n}\setminus U_{r_{E}})=F(x)\llcorner(\mathbb{D}^{n}\setminus U_{r_{E}})$ for every $x\in E$.
    \item \label{ite : 10.3} $F'(x)\llcorner T_{C}=0$ for every $x\in E$ and every $C\in\faces_{p}(X)$ containing $E$.
    \item \label{ite : 10.4} $B_{i}^{E}\cap T_{C}=\emptyset$ for every $i\in J_{E}$ and every $C\in\faces_{p}(X)$ containing $E$.
    \item \label{ite : 10.5} For every $x\in E$
    \begin{equation*}
        \mass(F'(x)\llcorner\interior(\mathbb{D}^{n}))+\mass(F'(x)\llcorner B)\leq M_{E}+k+1
    \end{equation*}
    where
    \begin{equation*}
        M_{E}=\max_{x\in E\cap X_{0}}\{\mass(F(x)\llcorner\interior(\mathbb{D}^{n}))\}.
    \end{equation*}
    
   \end{enumerate}

    We show by induction that $F'$ can be extended verifying the previous inductive property.

    \textbf{Base case.} Let $k=1$ and let $E$ be a $1$-face of $X$. Same as for $n=3$, we set
    \begin{equation*}
        r_{E}=\max\{r_{C}:C\in\faces_{p}(X),C\supseteq E\}.
    \end{equation*}
    It follows $L+K\delta\leq r_{E}\leq L+(K+1)\delta$ and $S^{n-1}_{r_{E}}\cap B^{E}_{i}=\emptyset$ for all $i\in I_{E}$. Define $J_{E}$ and $\{B^{E}_{i}:i\in I_{E}^{*}\}$ as in the inductive property. Let $x$ and $y$ be the vertices of $E$. Identify $E$ with the interval $[0,1]$ where $x\cong 0$ and $y\cong 1$ and set
    \begin{equation*} F'(t)=F(t)\llcorner(\mathbb{D}^{n}\setminus U_{r_{E}})+F_{E}(t)
    \end{equation*}
    where we define $F_{E}$ exactly in the same way as for $n=3$. 
    

    \textbf{Inductive step.} Let $E$ be a $(k+1)$-face of $X$. Choose $r_{E}\in [L+(K+k)\delta,L+(K+k+1)\delta]$ so that $S^{n-1}_{r_{E}}\cap B_{i}^{E}=\emptyset$ for every $i\in I_{E}$ and $S^{n-1}_{r_{E}}\cap\mathcal{S}=\emptyset$. Define $J_{E}$ and $\{B_{i}^{E}\}_{i\in I_{E}^{*}}$ as in the inductive property. Denote
    \begin{align*}
        U_{E} & =U_{r_{E}},\\
        B_{E} & = B_{r_{E}}.
    \end{align*}
    We proceed as for $n=3$. The main difference is how to homotop the continuous family $F_{E}^{0}:\partial E\to\mathcal{Z}_{0}(U_{E})$ defined as $F_{E}^{0}(x)=F'(x)\llcorner U_{E}$ to a family $F_{E}^{1}$ for which $F_{E}^{1}\llcorner\partial B$ is supported in a discrete set which is independent of $x\in\partial E$ (recall that $B=B_{L}$ so $\partial B=S^{n-2}_{L}$). 

    Let us introduce some notation. Denote
    \begin{equation*}
        U_{E}^{*}=U_{E}\setminus\bigcup_{\substack{C\in\faces_{p}(X) \\ C\supseteq E}}T_{C}
    \end{equation*}
    and
    \begin{equation*}
        B_{E}^{*}=B_{E}\setminus\bigcup_{\substack{C\in\faces_{p}(X) \\ C\supseteq E}}s_{C}
    \end{equation*}
    (recall that $B_{E}=U_{E}\cap \mathbb{S}^{n-1}$). Define also
    \begin{equation*}
        \mathbb{S}^{n-2}_{E}=\mathbb{S}^{n-2}\setminus\bigcup_{\substack{C\in\faces_{p}(X) \\ C\supseteq E}}\Pi_{C}.
    \end{equation*}
    Each connected component of $\mathbb{S}^{n-2}_{E}$ is contractible. Let us pick points $y_{1},...,y_{N}\in\mathbb{S}^{n-2}_{E}$ one for each connected component and define a Lipschitz homotopy $R_{E}:\mathbb{S}^{n-2}_{E}\times[0,1]\to\mathbb{S}_{E}$ such that $R_{E}(x,0)=x$ and $R_{E}(x,1)=y_{j}$ if $x$ belongs to the same connected component as $y_{j}$. This yields a homotopy $H_{E}:U_{E}^{*}\times[0,1]\to U_{E}^{*}$ which is the identity outside of $A^{n}_{\frac{L}{2},\varepsilon}$ and inside it has the following expression in coordinates $(u,t,s)\in\mathbb{S}^{n-1}\times[-\cos(\frac{L}{2}),-\cos(r_{E})]\times[0,\varepsilon]$:

    \[
    H_{E}((u,t,s),t')=\begin{cases}
        (R_{E}(u,(1-\frac{s}{\varepsilon})t'),t,s) & \text{if} -\cos(L)\leq t\leq -\cos(r_{E})\\
        (R_{E}(u,(1-\frac{s}{\varepsilon})t'h(t)),t,s) & \text{if} -\cos(\frac{L}{2})\leq t\leq -\cos(L)
    \end{cases}
    \]
    where $h(t)$ is the linear function such that $h(-\cos(\frac{L}{2}))=0$ and $h(-\cos(L))=1$. One can check that $H_{E}$ is continuous because for $s=\varepsilon$ and $t=-\cos(\frac{L}{2})$ it holds $H_{E}((u,t,s),t')=(u,t,s)$. In addition, $\Image(H_{E})\subseteq U_{E}^{*}$ as the $u$-coordinate stays in $\mathbb{S}^{n-2}_{E}$, $H_{E}$ maps $(U_{E}\cap\interior(\mathbb{D}^{n}))\cup B$ into itself and the same holds for  $B_{E}^{*}\setminus B$. But moreover, we can see that
    \begin{equation*}
        H_{E}((B_{E}^{*}\setminus B)\times\{1\})=\{\phi(y_{j},t,0):1\leq j\leq N,-\cos(L)\leq t\leq -\cos(r_{E})\}
    \end{equation*}
    Let us define $F_{E}^{1}(x)$ to be the map obtained from $H_{E}(F_{E}^{0}(x),1)$ by applying the retraction $U_{E}^{*}\to\overline{U}$ given in coordinates by $(u,t,s)\mapsto (u,g(t),s)$ with
    \[
    g(t)=\begin{cases}
        t & \text{if } t\leq -\cos(L)\\
        -\cos(L) & \text{otherwise}.
    \end{cases}
    \]
    The underlying homotopy $F_{E}^{0}(x,t)$ from $F_{E}^{0}$ to $F_{E}^{1}$ verifies properties \ref{ite : 10.1} to \ref{ite : 10.4} above with 
    \begin{equation*}
    \mass(F_{E}^{0}(x,t)\llcorner\interior(\mathbb{D}^{n}))+\mass(F_{E}^{0}(x,t)\llcorner B)\leq M_{E}+k+1
    \end{equation*}
    for every $x\in\partial E$ (recall that $k=\dim(E)-1$ so this is stronger than \ref{ite : 10.5}). Additionally, $F_{E}^{1}(x)\llcorner\partial B$ is supported in $\{\phi(y_{j},-\cos(L),0):1\leq j\leq N\}$ for every $x\in\partial E$.

    Next we want to homotop $F_{E}^{1}$ into a family $F_{E}^{2}$ with the additional property that $F_{E}^{1}(x)\llcorner\partial B$ is supported in a fixed point for every $x\in\partial E$ (which will be chosen to be $\phi(y_{N},-\cos(L),0)$). We proceed as for $n=3$, defining $N-1$ homotopies $\overline{H}_{E}^{1},...,\overline{H}_{E}^{N-1}$ in $\overline{U}$ such that if $\overline{F}_{E}^{l}(x)$ is the map obtained after applying the first $l$ of them, then $\overline{F}_{E}^{l}(x)\llcorner\partial B$ is supported in $\{\phi(y_{j},-\cos(L),0):j\geq l+1\}$.

    We construct each $\overline{H}_{E}^{j}$ as follows. Denote $\overline{y}_{j}=(y_{j},-\cos(L),0)$. We consider an embedded curve $\gamma$ in $S^{n-1}_{L}=\partial U\setminus B$ connecting $\overline{y}_{j}$ with $\overline{y}_{j+1}$ and take a small tubular neighborhood $N_{\gamma}$ of $\gamma$ in $\overline{U}$ which does not contain any $\overline{y}_{l}$ for $l\notin\{j,j+1\}$. Consider a diffeomorphism between $N_{\gamma}$ and $\mathbb{D}^{n-1}\times[0,1]$ and choose coordinates $(u,s)\in\mathbb{D}^{n-1}\times[0,1]$. We define $\overline{H}_{E}^{j}$ in $N_{\gamma}$ in a way so that points with $\vert u\vert\leq\frac{1}{2}$ are retracted to $\mathbb{D}^{n}\times\{1\}$ and in $\frac{1}{2}\leq|u|\leq 1$ we interpolate between that map and the identity as we approach $\partial\mathbb{D}^{n}\times[0,1]$ so that we can extend $\overline{H}_{E}^{j}$ by the identity in $\overline{U}\setminus N_{\gamma}$. To be precise, we consider the following homotopy
    \[
    \overline{H}_{E}^{j}((u,s),t)=\begin{cases}
        (u,t) & \text{if } |u|\leq\frac{1}{2} \text{ and }s\leq t\\
        (u,a(|u|)t+(1-a(|u|))s) & \text{if }\frac{1}{2}\leq|u|\leq 1 \text{ and }s\leq t\\
        (u,s) & \text{if } s\geq t
        
    \end{cases}
    \]
    where $a:\mathbb{R}\to\mathbb{R}$ is a linear function with $a(\frac{1}{2})=1$ and $a(1)=0$. Then we can see that $\overline{H}_{E}^{j}$ is the identity in $\partial\mathbb{D}^{n-1}\times[0,1]\cup\mathbb{D}^{n-1}\times[1]$ providing a well defined homotopy in $\overline{U}$. In addition, $\overline{H}_{E}^{j}((0,0),t)=(0,t)$ which means that the trajectory of $\overline{y}_{j}$ along $\overline{H}_{E}^{j}$ is precisely $\gamma$ as desired. Thus the number of points in $\interior(\mathbb{D}^{n})\cup B$ gets increased by at most $1$ during the homotopy $\overline{H}_{E}^{j}(\overline{F}_{j-1}^{E}(x),t)$ and moreover at $t=1$ we have
    \begin{equation*}
        \mass(\overline{F}_{E}^{j}(x)\llcorner(\interior(\mathbb{D}^{n})\cup B))\leq\mass(\overline{F}_{E}^{j-1}(x)\llcorner(\interior(\mathbb{D}^{n})\cup B))
    \end{equation*}
    as the only point which may have entered that region during the homotopy is $\overline{y}_{j}$ which at the end of the homotopy overlaps with $\overline{y}_{j+1}$. We set $F_{E}^{2}=\overline{F}_{E}^{N-1}$, which by construction has the property that
    \begin{equation*}
        \mass(F_{E}^{2}(x)\llcorner(\interior(\mathbb{D}^{n})\cup B))\leq M_{E}+k+1
    \end{equation*}
    and verifies \ref{ite : 10.1} to \ref{ite : 10.4}. If we apply a radial contraction of $\overline{U}$ towards $e$  (specifically $(u,s,t)\mapsto(u,s,(1-t')t+t'(-1))$ for $t'\in[0,1]$), we obtain a contraction of $F_{E}^{2}$ through families verifying \ref{ite : 10.1} to \ref{ite : 10.5}, which allows to extend $F'$ to all of $E$.

\end{proof}

\subsection{Parametric isoperimetric inequality in compact manifolds}\label{Section Isoperimetric in manifolds}

The goal of this section is to prove the Parametric Isoperimetric Inequality for families of $0$-cycles supported in a Riemannian polyhedron (which yields the result in compact piecewise smooth Riemannian manifolds).

\begin{theorem}\label{Isoperimetric inequality 2}
    Let $P$ be an $m$-dimensional Riemannian polyhedron and let $P'$ be a subpolyhedron. There exists a constant $C(P)$ depending only on $P$ such that the following holds. Let $F:X^{p}\to\mathcal{Z}_{0}(P,P')$ be a contractible family of $0$-cycles on $P$ and denote 
    \begin{equation*}
            \mass_{0}=\sup\{\mass(F(x)):x\in X\}.
        \end{equation*}
    Then there exists a continuous family $G:X^{p}\to\mathcal{I}_{1}(P,P')$ such that
    \begin{enumerate}
        \item $\partial G(x)=F(x)$.
        \item $\mass(G(x))\leq C(P)(\mass_{0}p^{-\frac{1}{m}}+p^{\frac{m-1}{m}})$.
    \end{enumerate}
\end{theorem}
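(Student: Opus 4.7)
The plan is to induct on $m = \dim(P)$, with base case $m = 1$, using Proposition~\ref{Prop Q} to process the top-dimensional cells of $P$ one at a time and push the uncontrolled portion of the filling onto the $(m-1)$-skeleton.

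For the setup, since $F$ is contractible I would pick any continuous filling $G_0: X \to \mathcal{I}_1(P)$ of $F$ (its mass being irrelevant) and use Theorem~\ref{Thm continuous delta approx chains} to replace $G_0$ by a $\delta$-localized family $G$, choosing $\delta = c_0(P) p^{-1/m}$ small enough that the error term $C'(Q)(2\mass_Q + p + 1)\delta'$ in Proposition~\ref{Prop Q} gets absorbed into $C(Q)(\mass_Q p^{-1/m} + p^{(m-1)/m})$. Enumerate the top cells $Q_1, \dots, Q_N$ of $P$ and iteratively apply Proposition~\ref{Prop Q} with $Q = Q_i$, re-approximating in between via Theorem~\ref{Thm continuous delta approx chains} to maintain $\delta$-localization. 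At step $i$, the pieces $\overline{G}_{Q_j}$ produced at earlier steps ($j<i$) are supported in $Q_j$ whose interiors are disjoint from $\interior(Q_i)$, so $\mass_{Q_i} = \sup_x \mass(F(x) \llcorner \interior(Q_i)) \leq \mass_0$, and the step yields $G^{(i)}(x) = \overline{G}_{Q_i}(x) + G^{(i)}_{Q_i^C}(x)$ with $\mass(\overline{G}_{Q_i}(x)) \leq 2C(Q_i)(\mass_{Q_i} p^{-1/m} + p^{(m-1)/m})$ and $\mass(\partial \overline{G}_{Q_i}(x) \llcorner \partial Q_i) \leq 4(\mass_{Q_i} + p + 1)$. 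After step $N$, writing $\overline{G} = \sum_i \overline{G}_{Q_i} + G_{\mathrm{rest}}$ with $G_{\mathrm{rest}} \in \mathcal{I}_1(P_{m-1})$, summation and $N \leq N(P)$ give $\sum_i \mass(\overline{G}_{Q_i}(x)) \leq C(P)(\mass_0 p^{-1/m} + p^{(m-1)/m})$.

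For the base case $m = 1$, the $0$-skeleton $P_0$ carries no nontrivial $1$-chains, so $G_{\mathrm{rest}} = 0$ and $\overline{G}$ itself is the desired filling. For the inductive step $m \geq 2$, set $\tilde F := \partial G_{\mathrm{rest}}: X \to \mathcal{Z}_0(P_{m-1})$, which is contractible (being a continuous boundary) and satisfies
\begin{equation*}
\mass(\tilde F(x)) \leq \mass(F(x) \llcorner P_{m-1}) + \sum_i \mass(\partial \overline{G}_{Q_i}(x) \llcorner \partial Q_i) \leq 5\mass_0 + 4N(p+1).
\end{equation*}
The inductive hypothesis applied to $P_{m-1}$ produces $\tilde G: X \to \mathcal{I}_1(M)$ with $\partial \tilde G = \tilde F$ and $\mass(\tilde G(x)) \leq C(P_{m-1})\bigl((5\mass_0 + 4N(p+1))p^{-1/(m-1)} + p^{(m-2)/(m-1)}\bigr)$; using $p^{-1/(m-1)} \leq p^{-1/m}$ and $p^{(m-2)/(m-1)} \leq p^{(m-1)/m}$ for $p \geq 1$ together with $N \leq N(P)$, this is $\leq C(P)(\mass_0 p^{-1/m} + p^{(m-1)/m})$, and the filling $G := \sum_i \overline{G}_{Q_i} + \tilde G$ satisfies $\partial G = F$ with the required bound.

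The main obstacle is exponent matching in the recursion: each application of Proposition~\ref{Prop Q} deposits $O(\mass_{Q_i} + p)$ boundary mass on $\partial Q_i$, so the $(m-1)$-skeleton inherits a total boundary of size $O(\mass_0 + Np)$. Filling this inductively costs $O((\mass_0 + Np)p^{-1/(m-1)}) = O(\mass_0 p^{-1/(m-1)} + Np^{(m-2)/(m-1)})$, and it is precisely the inequality $(m-2)/(m-1) < (m-1)/m$ that allows the $Np^{(m-2)/(m-1)}$ term to stay under the target $p^{(m-1)/m}$; any weaker boundary control in Proposition~\ref{Prop Q} would break the recursion. A secondary subtlety is maintaining $\delta$-localization across the $N$ iterations, resolved by re-approximating with Theorem~\ref{Thm continuous delta approx chains} between steps and absorbing the resulting small flat perturbation of the boundary into a negligible correction $1$-chain.
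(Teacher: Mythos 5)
Your proposal follows essentially the same route as the paper: $\delta$-localize a filling of $F$, apply Proposition \ref{Prop Q} cell by cell to replace the part of the filling inside each top cell $Q_i$ by a piece of controlled mass and controlled boundary mass on $\partial Q_i$, observe that the remainder is supported in the $(m-1)$-skeleton with boundary of size $O(\mass_0+N(p+1))$, and close the recursion using $p^{-1/(m-1)}\leq p^{-1/m}$ and $p^{(m-2)/(m-1)}\leq p^{(m-1)/m}$. The skeleton of the argument is sound, but three steps would not survive as written. First, contractibility of $F$ does not directly give you an exact continuous filling $G_{0}$; the paper only obtains, via \cite{GL22}[Proposition~3.5], a family $\tilde G$ with $\mathcal{F}(\partial\tilde G(x)-F(x))<\eta$, and exactness of $\partial G(x)=F(x)$ is restored by filling the small flat-norm discrepancy with Proposition \ref{Prop filling small continuous families}. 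Your ``negligible correction $1$-chain'' is precisely this device, but it must be invoked explicitly (for the initial approximate filling and for every boundary perturbation introduced by re-approximation), otherwise conclusion (1) of the theorem is not met. Second, the choice $\delta=c_{0}(P)p^{-1/m}$ with $c_{0}$ depending only on $P$ does not absorb the error term of Proposition \ref{Prop Q}: there $\delta'=c(n,p)\delta$ with $c(n,p)$ growing roughly linearly in the parameter dimension $p$, so $C'(Q)(2\mass_{Q}+p+1)\delta'$ is of order $\mass_{Q}p^{1-1/m}+p^{2-1/m}$, which dominates the target $\mass_{Q}p^{-1/m}+p^{(m-1)/m}$. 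The remedy is harmless, since nothing bounds $\delta$ from below: take $\delta$ arbitrarily small depending on $p$ (and $\mass_0$, $Q_i$), which is exactly what the paper does, but your stated choice fails quantitatively.

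Third, the base case $m=1$ should not be routed through Proposition \ref{Prop Q}: its proof rests on the disk constructions of Section \ref{Section Isoperimetric in the disk} (grids with dual $(n-2)$-skeleta, hyperplanes meeting $\mathbb{S}^{n-1}$), which are meaningless for $1$-dimensional cells. The paper handles the base case directly: with $\mathbb{Z}_{2}$ coefficients any $1$-chain supported in the $1$-complex $P$ has mass at most the total length of $P$, so any (approximate plus corrected) filling already satisfies the bound. Finally, a stylistic point: the re-approximation between the $N$ applications of Proposition \ref{Prop Q} is unnecessary, since each application only homotopes the filling inside its own cell $Q_i$ and the cells have disjoint interiors, so the original localized family serves as input for every cell; dropping it spares you the accumulated boundary corrections you would otherwise have to absorb.
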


We start with a contractible family $F:X^{p}\to\mathcal{Z}_{0}(P,\partial P')$ supported in a Riemannian polyhedron $P$. We fix a smooth triangulation of $P$. In order to obtain a short filling $G$ of $F$, we will first start with a possibly very long filling $G:X^{p}\to\mathcal{Z}_{1}(M,\partial M)$, whose existence is guaranteed by the fact that $F$ is contractible. Then we will proceed to deform $G$ cell by cell, so that for each $m$-dimensional cell $Q$ of $M$ we homotop $G|_{Q}$ to the short filling $G_{Q}$ constructed in Theorem \ref{Isoperimetric inequality 1} (here we are using that $(Q,g)$ is bilipschitz homeomorphic to the Euclidean disk $\mathbb{D}^{n}$). The next proposition shows how to do such deformation. After doing that on each top dimensional cell $Q$ of $M$, $G$ could still have very large mass due to its part supported in the codimension-$1$ skeleton $P_{m-1}$ of $M$. Then what we do is to apply the same type of deformation at each $(m-1)$-cell of $M_{m-1}$, reducing the problem to the $(m-2)$ skeleton of $M$. And then we proceed inductively. This is explained after proving \ref{Prop Q}, completing the proof of Theorem \ref{Isoperimetric inequality 2}.

\begin{proposition}\label{Prop Q}
    Let $(P,g)$ be an $m$-dimensional Riemannian polyhedron. Let $G:X^{p}\to\mathcal{I}_{1}(P)$ be a continuous $\delta$-localized family of $1$-chains. Denote $F(x)=\partial G(x)$. Let $Q$ be a closed top dimensional cell of $P$ and let
    \begin{equation*}
        \mass_{Q}=\sup\{\mass(F(x)\llcorner\interior(Q)):x\in X\}.
    \end{equation*}
    There exists a homotopy $G:X^{p}\times[0,1]\to\mathcal{I}_{1}(P)$ with $\partial G(x,t)=F(x)$ for every $t\in[0,1]$ such that if $\overline{G}(x)=G(x,1)$ then
    \begin{enumerate}
    \item $G(x,t)-G(x)$ is supported in $Q$ for every $t\in[0,1]$.
    \item There exist continuous families $\overline{G}_{Q}:X\to\mathcal{I}_{1}(Q)$ and $\overline{G}_{Q^{C}}:X\to\mathcal{I}_{1}(M\setminus\interior{(Q)})$ such that
    \begin{equation*}
        \overline{G}(x)=\overline{G}_{Q}(x)+\overline{G}_{Q^{C}}(x),
    \end{equation*}
    
    \begin{equation*}
        \mass(\overline{G}_{Q}(x))\leq C(Q)(\mass_{Q}p^{-\frac{1}{m}}+p^{\frac{m-1}{m}})+C'(Q)(2\mass_{Q}+p+1)\delta'
    \end{equation*}
    and
    \begin{equation*}
        \mass(\partial \overline{G}_{Q}(x)\llcorner\partial Q)\leq 4(\mass_{Q}+p+1). 
    \end{equation*}
    
\end{enumerate}
In the previous, $\delta'=c(n,p)\delta$ and if $\rho_{Q}:(Q,\partial Q)\to(\mathbb{D}^{n},\partial\mathbb{D}^{n})$ is the optimal bilipschitz map (in terms of minimizing $\Lip(\rho_{Q})$), the constant $C(Q)$ depends linearly on $\Lip(\rho_{Q})$ and $C'(Q)=\Lip(\rho_{Q})\Lip(\rho_{Q}^{-1})$. 
\end{proposition}

\begin{proof}[Proof of Proposition \ref{Prop Q}]

Let $m=\dim(P)$. We start by applying Lemma \ref{Lemma G_Q} to write $G(x)=G_{Q}(x)+G_{Q^{C}}(x)$ where both $G_{Q}$ and $G_{Q^{C}}$ are continuous families of absolute $1$-cycles and they are supported in $Q$ and $\interior(Q)^{C}$ respectively. Let $F_{Q}(x)=\partial G_{Q}(x)$. In the following we identify $(Q,\partial Q)$ with $(\mathbb{D}^{m},\partial\mathbb{D}^{m})$ by a bilipschitz map with optimal Lipschitz constant. By Proposition \ref{Avoid ball D^n}, there exists a small ball $B\subseteq\partial Q$, a ball $\tilde{B}$ in $Q$ with $\tilde{B}\cap \partial Q\supseteq B$ and $\radius(\tilde{B})\leq\delta'=c(n,p)\delta$ and a family $F_{Q}':X^{p}\to\mathcal{Z}_{0}(Q)$ which coincides with $F_{Q}$ outside of $\tilde{B}$ such that
\begin{equation*}
    \mass(F'_{Q}(x)\llcorner Q^{B})\leq \mass_{Q}+p+1
\end{equation*}
where $Q^{B}=\interior(Q)\cup B$. As $F_{Q}'-F_{Q}$ is supported in $\tilde{B}$ which is convex, by taking cones centered at a point in $\tilde{B}$ we can construct a family $R_{Q}:X^{p}\to\mathcal{I}_{1}(\tilde{B})$ such that $\partial R_{Q}(x)=F'_{Q}(x)-F_{Q}(x)$. Let $S_{Q}(x)=G_{Q}(x)+R_{Q}(x)$. Observe that
\begin{align*}
    \partial S_{Q}(x) & =\partial G_{Q}(x)+\partial R_{Q}(x)\\
    & = F_{Q}(x)+(F'_{Q}(x)-F_{Q}(x))\\
    & = F'_{Q}(x)
\end{align*}
and hence $\mass(\partial S_{Q}(x)\llcorner Q^{B})\leq \mass_{Q}+p+1$. Our goal is to replace the family $S_{Q}$ by a short family (one whose mass is bounded by $C(Q)(\mass_{Q}p^{-\frac{1}{m}}+p^{\frac{m-1}{m}})$) with boundary $F'_{Q}$ using the construction in Theorem \ref{Isoperimetric inequality 1}. To be precise, we will construct a homotopy $S_{Q}:X^{p}\times[0,1]\to\mathcal{I}_{1}(Q)$ with $S_{Q}(x,0)=S_{Q}(x)$ such that
\begin{enumerate}
    \item $\partial S_{Q}(x,t)=F'_{Q}(x)$ for every $t\in[0,1]$.
    \item $\mass(S_{Q}(x,1)\llcorner\interior(Q))\leq C(Q)(\mass_{Q}p^{-\frac{1}{n}}+p^{\frac{n-1}{n}})$, in fact $\overline{S}_{Q}(x)=S_{Q}(x,1)\llcorner\interior(Q)$ is equal to the family constructed in Theorem \ref{Isoperimetric inequality 1} (in particular, it is continuous) and hence it also has the property that $\mass(\partial\overline{S}_{Q}(x)\llcorner\partial Q)\leq 2(\mass_{Q}+p+1)$.
\end{enumerate}
We will also construct a homotopy $R_{Q}:X^{p}\times[0,1]\to\mathcal{I}_{1}(Q)$ such that
\begin{enumerate}
    \item $\partial R_{Q}(x,t)=F_{Q}'(x)-F_{Q}(x)$ for every $t\in[0,1]$.
    \item Let $\overline{R}_{Q}(x)=R_{Q}(x,1)\llcorner\interior Q$. Then $\mass(\overline{R}_{Q}(x))\leq C'(Q)(2\mass_{Q}+p+1)\delta'$ and $\mass(\partial \overline{R}_{Q}(x)\llcorner\partial Q)\leq 2\mass_{Q}+p+1$.
\end{enumerate}
Hence the homotopy $G_{Q}(x,t)=S_{Q}(x,t)+R_{Q}(x,t)$ will allow to replace $G_{Q}(x)\llcorner\interior(Q)$ by the absolute family $\overline{G}_{Q}(x)=G_{Q}(x,1)\llcorner\interior(Q)=\overline{S}_{Q}(x)+\overline{R}_{Q}(x)$ which has controlled mass and boundary mass. Observe that $\partial G_{Q}(x,t)=F_{Q}(x)=\partial G_{Q}(x)$ hence in fact $G_{Q}$ extends to a homotopy $G(x,t)=G_{Q}(x,t)+G_{Q^{C}}(x)$ with $\partial G(x,t)=F(x)$ for every $t\in[0,1]$. This homotopy will provide the desired $\overline{G}(x)=G(x,1)$.

We introduce the following notation. Given two points $z,w\in Q$ let $\Xi(z,w)$ be the segment connecting them. Given a $0$-chain $x$ in $Q$ and a Lipschitz map $T:Q\to Q$ define
\begin{equation*}
    \Xi(x,T(x))=\sum_{z\in\support(x)}\Xi(z,T(z)).
\end{equation*}
Let $P_{0}$ be the unique point for which the tangent cone to $\partial Q$ centered at $P_{0}$ intersects $\partial Q$ at $\partial B$. For each $z\in Q$, let $H(z)$ be the point in the line through $P_{0}$ and $z$ and in $\partial Q$ which is furthest from $P_{0}$. Let $H(z,t)$ be a constant speed parametrization of the segment $\Xi(z,H(z))$ being $H(z,0)=z$ and $H(z,1)=H(z)$. Observe that $H:Q\times[0,1]\to Q$ is a Lipschitz homotopy which is the identity in $\partial Q\setminus B$. For $t\in[0,\frac{1}{2}]$ define
\begin{equation*}
    S_{Q}(x,t)=H_{2t}(S_{Q}(x))+\Xi(H_{2t}(F'_{Q}(x)),F'_{Q}(x))
\end{equation*}
and for $t\in[\frac{1}{2},1]$ set
\begin{equation*}
    S_{Q}(x,t)=\Psi_{2t-1}(\tilde{S}_{Q}(x))+\Xi(\Psi_{2t-1}(F'_{Q}(x)),F'_{Q}(x))
\end{equation*}
where $\tilde{S}_{Q}(x)=S_{Q}(x,\frac{1}{2})$ and $\{\Psi_{t}:0\leq t\leq 1\}$ is an homotopy from $\Psi_{0}=id$ to the PL map $\Psi_{1}=\Psi$ from Lemma \ref{Deformation Lemma 2}. It is clear that $\partial S_{Q}(x,t)=F_{Q}'(x)$ for every $t\in[0,1]$. In addition, if $\overline{S}_{Q}(x)=S_{Q}(x,1)\llcorner\interior (Q)$ as defined above then $\overline{S}_{Q}$ is equal to the map $G$ constructed in Theorem \ref{Isoperimetric inequality 1} with $r=p^{-\frac{1}{m}}$ and $F=F_{Q}'$ (in particular, it is a continuous family of absolute $1$-chains), hence
\begin{equation*}
    \mass(\overline{S}_{Q}(x))\leq C(Q)(\mass_{Q}p^{-\frac{1}{m}}+p^{\frac{m-1}{m}})
\end{equation*}
and 
\begin{equation*}
    \mass(\partial \overline{S}_{Q}(x)\llcorner\partial Q)\leq 2(\mass_{Q}+p+1). 
\end{equation*}

Now we proceed to define $R_{Q}(x,t)$. Let $q$ be the center of $Q$. Consider the map $I:\tilde{B}\to \tilde{B}\cap\partial Q$ which maps each point $z\in\tilde{B}$ to the point $I(z)$ in the line through $q$ and $z$ and in $\partial \tilde{B}$ which is furthest from $q$ (and hence it is in $\tilde{B}\cap\partial Q$). Let $I_{t}(z)$ be a constant speed parametrization of the segment from $z$ to $I(z)$. This yields a Lipschitz homotopy $I:\tilde{B}\times[0,1]\to\tilde{B}$ with $I(z,0)=z$ and $I(z,1)=I(z)$ for each $z\in\tilde{B}$. This is analog to the map $H$ previously defined but replacing $Q$ by $\tilde{B}$ and $P_{0}$ by $q$. We proceed to replace $R_{Q}(x)$ by the collection of segments $\Xi(\partial R_{Q}(x),I(\partial R_{Q}(x)))$ in the following way
\begin{equation*}
    R_{Q}(x,t)=I_{t}(R_{Q}(x))+\Xi(I_{t}(F'_{Q}(x)-F_{Q}(x)),F'_{Q}(x)-F_{Q}(x)).
\end{equation*}

Observe that the number of segments in $\Xi(I_{t}(F'_{Q}(x)-F_{Q}(x)),F'_{Q}(x)-F_{Q}(x))$ is equal to $\mass((F'_{Q}(x)-F_{Q}(x))\llcorner\interior(Q))$ because points in $\partial\tilde{B}\cap\partial Q$ are fixed by $I$ and $(F'_{Q}(x)-F_{Q}(x))\llcorner(\partial\tilde{B}\setminus\partial Q)=0$. As $\mass(F_{Q}(x)\llcorner\interior(Q))\leq \mass_{Q}$ and $\mass(F'_{Q}(x)\llcorner\interior{Q})\leq \mass_{Q}+p+1$, we deduce that
\begin{equation*}
    \mass\big (\Xi(I_{1}(F'_{Q}(x)-F_{Q}(x)),F'_{Q}(x)-F_{Q}(x))\big)\leq(2\mass_{Q}+p+1)\delta'.
\end{equation*}
From our definition, it also follows that
\begin{align*}
    \mass(\partial\Xi(I_{1}(F'_{Q}(x)-F_{Q}(x)),F'_{Q}(x)-F_{Q}(x))\llcorner\partial Q) & \leq\mass((F_{Q}(x)-F_{Q}'(x))\llcorner\interior(Q))\\
    & \leq 2\mass_{Q}+p+1.
\end{align*}
As $\overline{R}_{Q}(x)=R_{Q}(x,1)\llcorner\interior{Q}=\Xi(I_{1}(F'_{Q}(x)-F_{Q}(x)),F'_{Q}(x)-F_{Q}(x))$ the homotopy $R_{Q}(x,t)$ has the desired properties.
\end{proof}

\begin{proof}[Proof of Theorem \ref{Isoperimetric inequality 2}]
    By induction on $m=\dim(P)$. If $m=1$, let $C(P)$ be the length of the $1$-dimensional polyhedron $P$. Using \cite{GL22}[Proposition~3.5], we can construct a family $G:X\to\mathcal{I}_{1}(P)$ such that $\partial G(x)=F(x)$ (more details on how to do this are provided in the proof of the inductive step) and observe that as we are taking coefficients in $\mathbb{Z}_{2}$ it holds $\mass(G(x))\leq C(P)$, verifying the required estimate.
    Suppose the result is true for $m-1$ and let $P$ be an $m$-dimensional Riemannian polyhedron. By \cite{GL22}[Proposition~3.5], for every $\eta>0$ there exists a continuous family $\tilde{G}:X\to\mathcal{I}_{1}(M)$ such that 
    \begin{equation*}
        \mathcal{F}(\partial \tilde{G}(x)-F(x))<\eta.
    \end{equation*}
    Take $\eta<<p^{\frac{m-1}{m}}$ and use Theorem \ref{Thm continuous delta approx chains} to approximate $\tilde{G}$ by a $\delta$-localized family $G':X^{p}\to\mathcal{I}_{1}(M)$ such that $\mathcal{F}(G'(x),\tilde{G}(x))\leq\eta$ and if $F'=\partial G'$ then
    \begin{equation*}
        \sup_{x\in X}\{\mass(F'(x))\}\leq\mass_{0}.
    \end{equation*}
    In the previous, we used that $F'$ is a family of $0$-cycles. Notice that $\delta$ can also be taken to be arbitrarily small. As 
    \begin{align*}
        \mathcal{F}(F(x),F'(x)) & \leq\mathcal{F}(F(x),\partial\tilde{G}(x))+\mathcal{F}(\partial\tilde{G}(x),\partial G'(x))\\
        & \leq \eta+\mathcal{F}(\tilde{G}(x),G'(x))\\
        & \leq 2\eta,
    \end{align*}
    by Theorem \ref{Prop filling small continuous families} we can construct a family $G'':X^{p}\to\mathcal{I}_{1}(M)$ such that $\partial G''(x)=F(x)-F'(x)$ and $\mass(G''(x))\leq \varepsilon$ for a small $\varepsilon>0$ provided $\eta$ is small enough. Therefore if $G_{1}(x)=G'(x)+G''(x)$ then $\partial G_{1}(x)=F(x)$, $G''(x)$ has very small mass, $G'$ and $F'=\partial G'$ are $\delta$-localized and $F'$ has mass bounded by $\mass_{0}$. Therefore, it is enough to prove the theorem for the family $F'$ by deforming its $\delta$-localized filling $G'$ cell by cell of $P$.
    
    Let $\{Q_{j}\}_{1\leq j\leq J}$ be the set of $m$-cells of $P$. Applying Proposition \ref{Prop Q} to each $Q_{j}$, we can homotop $G'$ to a family $\overline{G}$ with $\partial\overline{G}(x)=F'(x)$ such that if $\overline{G}_{j}(x)=\overline{G}(x)\llcorner\interior(Q_{j})$ then
    \begin{equation*}
        \mass(\overline{G}_{j}(x))\leq C(Q_{j})(\mass_{j}p^{-\frac{1}{m}}+p^{\frac{m-1}{m}})
    \end{equation*}
    (plus a small error in terms of $\delta$ which will converge to $0$ as we choose $\delta\to 0$) and
    \begin{equation*}
        \mass(\partial\overline{G}_{j}(x)\llcorner\partial Q)\leq 4(\mass_{j}+p+1)
    \end{equation*}
    where
    \begin{equation*}
        \mass_{j}=\sup\{\mass(F'(x)\llcorner\interior(Q_{j})):x\in X\}.
    \end{equation*}
    Let $\hat{G}(x)=\overline{G}(x)-\sum_{j=1}^{J}\overline{G}_{j}(x)$ and $\hat{F}=\partial\hat{G}$. Let $P'$ denote the $(m-1)$-skeleton of $P$. Observe that as $\support(\hat{G}(x))\subseteq P'$, $\hat{F}$ is a contractible family of $0$-cycles supported in $P'$ verifying
    \begin{equation*}
        \mass(\hat{F}(x))\leq \tilde{\mass}_{0}=4\mass_{0}+4J(p+1).
    \end{equation*}
    By inductive hypothesis, there exists a continuous family $G_{0}:X\to\mathcal{I}_{1}(P')$ such that $\partial G_{0}(x)=\hat{F}(x)$ and
    \begin{align*}
        \mass(G_{0}(x)) & \leq C(P')(\tilde{\mass}_{0}p^{-\frac{1}{m-1}}+p^{\frac{m-2}{m-1}})\\
        &\leq C(P')(\tilde{\mass}_{0}p^{-\frac{1}{m}}+p^{\frac{m-1}{m}})\\
        & \leq 9J C(P')(\mass_{0}p^{-\frac{1}{m}}+p^{\frac{m-1}{m}}).
    \end{align*}
    If we set
    \begin{equation*}
        G(x)=G_{0}(x)+G''(x)+\sum_{j=1}^{J}\overline{G}_{j}(x)
    \end{equation*}
    and
\begin{equation*}
        C(P)=9JC(P')+\sum_{j=1}^{J}C(Q_{j}).
    \end{equation*}
    we obtain the desired result.
\end{proof}

\section{Weyl Law for the Volume Spectrum}\label{Section Weyl law}

In this section we prove the Weyl Law for the Volume Spectrum for $1$-cycles (Theorem \ref{Thm Weyl law}), using the Parametric Coarea Inequality and the Parametric Isoperimetric Inequality (Theorem \ref{Thm Parametric Coarea}). Fix an $n$-dimensional compact Riemannian manifold $(M^{n},g)$, possibly with boundary (in fact, this proof works for almost $1$-Lipschitz triangulable piecewise smooth Riemannian manifolds, see Definition \ref{Def almost 1-Lip triangulable}). We know that the Weyl law holds for contractible domains of $\mathbb{R}^{n}$. The previous means that there exists a universal constant $\alpha(n)$ such that if $U$ is a compact contractible domain in $\mathbb{R}^{n}$ with Lipschitz boundary (or Lipschitz domain according to the terminology in \cite{LMN}) then
\begin{equation*}
    \lim_{p\to\infty}\omega_{p}^{1}(U,\partial U)p^{-\frac{n-1}{n}}=\alpha(n)\Vol(U)^{\frac{1}{n}}.
\end{equation*}
In \cite{LMN} it was also showed that for a compact $n$-manifold $(M,g)$,
\begin{equation*}
    \alpha(n)\Vol(M,g)^{\frac{1}{n}}\leq\liminf_{p\to\infty}\omega^{1}_{p}(M,g)p^{-\frac{n-1}{n}}
\end{equation*}
so our goal will be to prove that
\begin{equation*}
    \limsup_{p\to\infty}\omega_{p}^{1}(M,g)p^{-\frac{n-1}{n}}\leq\alpha(n)\Vol(M,g)^{\frac{1}{n}}.
\end{equation*}
With that purpose, we want to construct for each $p\in\mathbb{N}$ an ``almost optimal'' (at least asymptotically up to an error of the order $o(p^{\frac{n-1}{n}})$) $p$-sweepout of $(M,g)$, to obtain good upper bounds for $\omega_{p}^{1}(M,g)$.

Fix $\varepsilon>0$. Using \cite{Bowditch}[Theorem 1.2 and 1.3], consider a $(1+\frac{\varepsilon}{2})$-bilipschitz smooth triangulation of $M$ via simplices $\{Q_{i}\}_{1\leq i\leq N}$ (see Definition \ref{Def smooth triangulation}). This implies that each $Q_{i}$ is $(1+\frac{\varepsilon}{2})$-bilipschitz diffeomorphic to a compact contractible domain $U_{i}$ in $\mathbb{R}^{n}$ (more precisely, each $U_{i}$ is a linear $n$-simplex). Denote $\Phi_{i}:Q_{i}\to U_{i}$ such bilipschitz diffeomorphisms. We can translate the $U_{i}$'s if necessary and connect each $U_{i}$, $i>1$ with $U_{1}$ via a tiny tube $T_{i}$ obtaining a compact contractible domain $U=U_{1}\cup\bigcup_{i=2}^{N}U_{i}\cup T_{i}$ with Lipschitz boundary such that $\Vol(U)<(1+\varepsilon)^{n}\Vol(M,g)$.

Now fix $p\in\mathbb{N}$. By definition of $p$-width, we can find a $p(n-1)$-dimensional cubical complex $X(p)$ and a $p$-sweepout $\tilde{F}^{p}:X(p)\to\mathcal{I}_{1}(U,\partial U)$ of $U$ such that 
\begin{equation*}
    \sup_{x\in X(p)}\mass(\tilde{F}^{p}(x))\leq (1+\varepsilon)\omega^{1}_{p}(U,\partial U).
\end{equation*}
Denote $F^{p}_{i}:X(p)\to\mathcal{I}_{1}(Q_{i},\partial Q_{i})$ the map $F^{p}_{i}(x)=\Phi_{i}^{-1}(\tilde{F}^{p}(x)\llcorner U_{i})$ and $F^{p}(x)=\sum_{i=1}^{N}F^{p}_{i}(x)$. Notice that $\mass(F^{p}_{i}(x))\leq (1+\varepsilon)\mass(\tilde{F}^{p}(x)\llcorner Q_{i})$ and therefore
\begin{equation}\label{Equation Fp wp}
    \mass(F^{p}(x))\leq (1+\varepsilon)\mass(\tilde{F}^{p}(x))\leq (1+\varepsilon)^{2}\omega_{p}^{1}(U,\partial U).
\end{equation}
However, $F^{p}$ is not a family of relative cycles in $(M,\partial M)$ because it may have an arbitrarily large boundary supported in $\bigcup_{i=1}^{N}\partial Q_{i}$ which is not contained in $\partial M$. Denote $V_{m}=\bigcup_{i=1}^{m}Q_{i}$. To fix the previous, we are going to construct inductively a short $p$-sweepout $G_{m}$ of $(V_{m},\partial V_{m})$ which will finally provide a short $p$-sweepout $G=G_{N}$ of $(M,\partial M)$. To be precise, we want $G_{m}$ to obey the following mass bounds
\begin{enumerate}
    \item $\mass(G^{p}_{m}(x))\leq (1+\gamma_{p})\mass(F^{p}(x)\llcorner V_{m})+C(Q_{1},...,Q_{m})p^{1+\alpha-\frac{1}{n-1}}$
    \item $\mass(\partial G^{p}_{m}(x))\leq C(Q_{1},,,.Q_{m})p^{1+\alpha}$
\end{enumerate}
where $\alpha$ and the $\gamma_{p}$ come from the Parametric Coarea inequality, and it holds $\lim_{p\to\infty}\gamma_{p}=0$.

In order to define the $G_{m}$, we will need to apply Theorem \ref{Thm Parametric Coarea} to each $F^{p}_{i}$ (this can be done because the theorem holds for simplices as they are PL submanifolds of $\mathbb{R}^{n}$, see \cite{StaCoarea}). For that purpose we fix $0<\alpha<\frac{1}{n-1}-\frac{1}{n}$. We obtain a new $p$-sweepout $\overline{F
}^{p}_{i}:X(p)\to\mathcal{Z}_{1}(Q_{i},\partial Q_{i})$ which is induced by a family of absolute chains $\overline{F}^{p}_{i}:X(p)\to\mathcal{I}_{1}(Q_{i})$ with $\support(\partial\overline{F}^{p}_{i}(x))\subseteq\partial Q_{i}$ such that
\begin{enumerate}
    \item $\mass(\overline{F}_{i}^{p}(x))\leq(1+\gamma_{p})\mass(F^{p}_{i}(x))+C(\partial Q_{i})p^{\frac{n-2}{n-1}+\alpha}$.
    \item $\mass(\partial\overline{F}^{p}_{i}(x))\leq C(\partial Q_{i})p^{1+\alpha}$.
\end{enumerate}

Next we proceed to construct the $G^{p}_{m}$ inductively. For $m=1$, we set $G^{p}_{1}=\overline{F}^{p}_{1}$ and $C(Q_{1})=C(\partial Q_{1})$. Suppose we already constructed $G^{p}_{m}$ verifying
\begin{enumerate}
    \item $\mass(G^{p}_{m}(x))\leq (1+\gamma_{p})\mass(F^{p}(x)\llcorner V_{m})+C(Q_{1},...,Q_{m})p^{1+\alpha-\frac{1}{n-1}}$
    \item $\mass(\partial G^{p}_{m}(x))\leq C(Q_{1},,,.Q_{m})p^{1+\alpha}$
\end{enumerate}
Denote $A_{m}=\partial V_{m}\cap\partial Q_{m+1}$. We want to glue the families $G_{m}^{p}$ and $\overline{F}_{m+1}^{p}$ along $A_{m}$ to get a $p$-sweepout of $V_{m}\cup Q_{m+1}=A_{m+1}$. We know that $\partial G_{m}^{p}(x)\llcorner A_{m}$ and $\partial\tilde{F}^{p}_{m+1}(x)\llcorner A_{m}$ are $p$-sweepouts of $A_{m}$ by $0$-cycles. Despite they may not coincide (preventing us from gluing $G_{m}^{p}$ and $F_{m+1}^{p}$ directly), their difference $\eta^{p}_{m}:X(p)\to\mathcal{Z}_{0}(A_{m},\partial A_{m})$ is a contractible family of $0$-cycles in $(A_{m},\partial A_{m})$ with
\begin{equation*}
    \mass(\eta^{p}_{m}(x))\leq \tilde{C}(Q_{1},...,Q_{m+1})p^{1+\alpha}.
\end{equation*}
By Theorem \ref{Isoperimetric inequality 2}, we can find a continuous filling $\tau^{p}_{m}:X(p)\to\mathcal{I}_{1}(A_{m})$ such that
\begin{equation*}
    \mass(\tau^{p}_{m}(x))\leq C(A_{m})(\tilde{C}(Q_{1},...,Q_{m+1})p^{1+\alpha-\frac{1}{n-1}}+p^{\frac{n-2}{n-1}})\leq \overline{C}(Q_{1},...,Q_{m+1})p^{1+\alpha-\frac{1}{n-1}}.
\end{equation*}
Thus if we set $G^{p}_{m+1}:=G^{p}_{m}+\overline{F}^{p}_{m+1}+\tau^{p}_{m}$, we obtain a $p$-sweepout of $(V_{m+1},\partial V_{m+1})$ with the desired bounds for the mass and the mass of the boundary.

Let $G^{p}=G^{p}_{N}$, by construction we know that it is a $p$-sweepout of $(M,\partial M)$ and
\begin{equation*}
    \mass(G^{p}(x))\leq (1+\gamma_{p})\mass(F^{p}(x))+C(Q_{1},...,Q_{N})p^{1+\alpha-\frac{1}{n-1}}.
\end{equation*}
Thus
\begin{equation*}
   \omega_{p}^{1}(M,g)\leq (1+\gamma_{p})\sup_{x\in X(p)}\mass(F^{p}(x))+C(Q_{1},...,Q_{N})p^{1+\alpha-\frac{1}{n-1}}.
\end{equation*}
Using the fact that $\alpha<\frac{1}{n-1}-\frac{1}{n}$, the previous implies that
\begin{align*}
    \limsup_{p\to\infty}\omega_{p}^{1}(M,g)p^{-\frac{n-1}{n}} & \leq\limsup_{p\to\infty}(1+\gamma_{p})\big(\sup_{x\in X(p)}\{\mass(F^{p}(x))\}\big )p^{-\frac{n-1}{n}}+C(Q_{1},...,Q_{m})p^{\alpha+\frac{1}{n}-\frac{1}{n-1}}\\
    & =\limsup_{p\to\infty}\big(\sup_{x\in X(p)}\mass(F^{p}(x))\}\big )p^{-\frac{n-1}{n}}\\
    & \leq (1+\varepsilon)^{2}\limsup_{p\to\infty}\omega_{1}^{p}(U)p^{-\frac{n-1}{n}}\\
    & =(1+\varepsilon)^{2}\alpha(n)\Vol(U)^{\frac{1}{n}}\\
    & \leq (1+\varepsilon)^{n+2}\alpha(n)\Vol(M,g)^{\frac{1}{n}}
\end{align*}
by (\ref{Equation Fp wp}). As that holds for every $\varepsilon>0$, we deduce
\begin{equation*}
    \limsup_{p\to\infty}\omega_{p}^{1}(M,g)p^{-\frac{n-1}{n}}\leq\alpha(n)\Vol(M,g)^{\frac{1}{n}}
\end{equation*}
which completes the proof of the Weyl Law.

\bibliography{Bibliography}
\bibliographystyle{amsplain}

\newcommand{\Addresses}{{
  \bigskip
  \footnotesize

  \textsc{Department of Mathematics, Rice University, Houston, TX 77005, USA}\par\nopagebreak
  \textit{Email address}: \texttt{bruno.staffa@rice.edu}
  }}

\Addresses

\end{document}